\numberwithin{equation}{section}
\newtheorem{theorem}{Theorem}[section]
\newtheorem{lemma}[theorem]{Lemma}
\newtheorem{corollary}[theorem]{Corollary}
\newtheorem{proposition}[theorem]{Proposition}
\newtheorem{conjecture}[theorem]{Conjecture}
\newtheorem{problem}[theorem]{Problem}
\theoremstyle{definition}
\newtheorem{definition}[theorem]{Definition}
\newtheorem{example}[theorem]{Example}
\newcommand{\IR}{\mathbb R}
\newcommand{\IN}{\mathbb N}
\newcommand{\w}{\omega}
\newcommand{\K}{\mathcal K}
\newcommand{\F}{\mathcal F}
\newcommand{\Ra}{\Rightarrow}
\newcommand{\U}{\mathcal U}
\newcommand{\MOP}{\mathsf{MOP}}
\newcommand{\DMOP}{\mathsf{DMOP}}
\newcommand{\WDMOP}{\mathsf{WDMOP}}
\newcommand{\GKF}{\mathsf{G_{KF}}}
\newcommand{\GNE}{\mathsf{G_{NE}}}
\newcommand{\GEN}{\mathsf{G_{EN}}}
\newcommand{\e}{\varepsilon}
\newcommand{\pr}{\mathrm{pr}}
\begin{document}

\title{On Baire category properties of function spaces $C_k'(X,Y)$}

\author{Taras Banakh}
\address{Institute of Mathematics, Jan Kochanowski University in Kielce (Ukraine), and
  Ivan Franko National University in Lviv (Ukraine)}
\curraddr{}
\email{t.o.banakh@gmail.com}
\thanks{}

\author{Leijie Wang}
\address{Department of Mathematics, Shantou University, Shantou, Guangdong,
515063, PR China}
\email{16ljwang@stu.edu.cn}
\thanks{}

\subjclass[2000]{Primary 54C35; Secondary 54E52}

\date{}

\dedicatory{}

\keywords{Function space, compact-open topology, Moving Off Property, meager, Baire}

\begin{abstract}
We prove that for a stratifiable scattered space $X$ of finite scattered height, the function space $C_k(X)$ endowed with the compact-open topology is Baire if and only if $X$ has the Moving Off Property of Gruenhage and Ma. As a byproduct of the proof we establish many interesting Baire category properties of the function spaces $C_k'(X,Y)=\{f\in C_k(X,Y):f(X')\subset\{*_Y\}\}$, where $X$ is a topological space, $X'$ is the set of non-isolated points of $X$, and $Y$ is a topological space with a distinguished point $*_Y$. 
\end{abstract}

\maketitle

\tableofcontents

\section{Introduction and Main Results}

This paper was motivated by the problem of characterization of scattered topological spaces $X$ whose function space $C_k(X)$ is Baire. Here $C_k(X)$ is the space of real-valued continuous functions on $X$, endowed with the compact-open topology.

A topological space $X$ is {\em Baire} if for any sequence $(U_n)_{n\in\w}$ of open dense sets in $X$, the intersection $\bigcap_{n\in\w}U_n$ is dense in $X$. In \cite{GMa} Gruenhage and Ma made a conjecture that for a Tychonoff space $X$,  the function space $C_k(X)$  is Baire if and only if $X$ has the Moving Off Property (abbreviated $\MOP$), which is defined as follows.

A family $\F$ of subsets of a topological space $X$ is called
\begin{itemize}
\item {\em discrete} if each point $x\in X$ has a neighborhood $O_x\subset X$ that meets at most one set of the family $\F$;
\item {\em strongly discrete} if each set $F\in\F$ has an open neighborhood $O_F\subset X$ such that the indexed family $\{O_F\}_{F\in\F}$ is discrete in the sense that each point $x\in X$ has a neighborhood $O_x\subset X$ that meets at most one set $O_F$, $F\in\F$;
\item {\em a moving off family} if for any compact set $K\subset X$ there exists a non-empty set $F\in\F$ such that $K\cap F=\emptyset$.
\end{itemize}

A topological space $X$ is defined to have the {\em Moving Off Property} (abbreviated $\MOP$) if each moving off family $\F$ of compact sets in $X$ contains an infinite strongly discrete subfamily $\mathcal D\subset\F$.

In Theorem 2.1 of \cite{GMa} Gruenhage and Ma observed that a Tychonoff space $X$ has $\MOP$ if its function space $C_k(X)$ is Baire, and made the following conjecture (see also \cite[Question 4.7]{Gru}).

\begin{conjecture}\label{h:GMa} A Tychonoff space $X$ has $\MOP$ if and only if its function space $C_k(X)$ is Baire.
\end{conjecture}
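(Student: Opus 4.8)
The plan is to prove the two implications separately, treating the forward direction as known and concentrating on the converse, which carries all the difficulty. The implication $C_k(X)\text{ Baire}\Ra X\text{ has }\MOP$ is exactly Theorem~2.1 of \cite{GMa}, so I would simply invoke it: given a moving off family $\F$ of compact sets, one produces a sequence of open dense subsets of $C_k(X)$ whose intersection, being dense by Baireness, yields a function witnessing an infinite strongly discrete subfamily of $\F$. All the new content therefore lies in proving $X\text{ has }\MOP\Ra C_k(X)\text{ is Baire}$.

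For the hard direction I would argue through the Banach--Mazur game $\BM$ and Oxtoby's theorem, which says that $C_k(X)$ is Baire if and only if the \emph{empty} player (the one trying to force an empty intersection) has no winning strategy in $\BM$ played on $C_k(X)$. Recall that a basic open set of $C_k(X)$ has the form $[K,g,\e]=\{f\in C_k(X):\sup_{x\in K}|f(x)-g(x)|<\e\}$ for a compact $K\subset X$, a function $g$, and $\e>0$. The first step is to set up a translation between $\BM$ on $C_k(X)$ and a moving off game $\GKF$ played on $X$: the compact sets $K_n$ appearing in a descending chain of basic open sets, read off from the empty player's moves, become the moves of the compact player $\K$ in $\GKF$, while the opponent answers with nonempty compact sets disjoint from the $K_n$ and forming a strongly discrete family. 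A descending chain of basic open sets has nonempty intersection precisely when the pieces of a limit function prescribed on the successively chosen strongly discrete compacta can be amalgamated, using Urysohn separation (here Tychonoffness enters), into a single continuous function on $X$ matching each $g_n$ on $K_n$ to within $\e_n$.

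The core step is then to show that $\MOP$ rules out a winning strategy for $\K$ in $\GKF$. Here one runs the putative strategy and, at each stage, uses the moving off property to supply a nonempty compact set disjoint from everything $\K$ has played so far; iterating produces a moving off family from which $\MOP$ extracts an infinite strongly discrete subfamily $\{F_n\}$, and this strongly discrete family is exactly what is needed to build the continuous limit function keeping the nested basic open sets nonempty. Transported back through the translation of the previous paragraph, this defeats the empty player in $\BM$, so by Oxtoby's theorem $C_k(X)$ is Baire.

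I expect the decisive obstacle to be precisely this core step: the passage from $\MOP$ --- a selection principle asserting that \emph{one} given moving off family has an infinite strongly discrete subfamily --- to the \emph{game}-theoretic statement that $\K$ has no winning strategy in $\GKF$. A strategy for $\K$ presents its compacta adaptively, so defeating it forces the opponent to choose responses along an entire tree of possible plays rather than within a single fixed family, and $\MOP$ does not a priori diagonalize across such a tree. Closing this gap amounts either to proving determinacy of $\GKF$, or to a fusion argument that reorganizes $\K$'s responses into countably many moving off families and applies $\MOP$ to each. For an arbitrary Tychonoff space the branching of $\K$'s strategy can be genuinely wild and neither route is automatic --- this is exactly the kernel at which the full conjecture resists a direct attack. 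My strategy for forcing it through in general would be an ordinal induction on a scattering-type rank of $X$ combined with a tree fusion driven by monotone-normality--type separation to tame the branching; controlling \emph{unbounded} branching without structural hypotheses of this kind is, in my assessment, the genuine hard part of the conjecture.
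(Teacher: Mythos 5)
This statement is Conjecture~\ref{h:GMa} itself, which remains open: the paper does not prove it (it only confirms special cases, culminating in Corollary~\ref{c2} for stratifiable scattered spaces of finite scattered height), and your proposal does not prove it either --- you explicitly leave the ``core step'' unresolved, so what you have is a strategy sketch, not a proof. Moreover, you misplace the genuine obstruction. The passage from $\MOP$ to the non-existence of a winning strategy for the moving-off player is \emph{not} the open kernel: Gruenhage and Ma already proved exactly this game characterization of $\MOP$ (Theorem 2.3 of \cite{GMa}), and the present paper reproves its $\DMOP$ analogue as Theorem~\ref{t:game} via the diagonalization Lemma~\ref{l:diag}. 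The tree fusion you flag as needing ``determinacy of $\GKF$'' or a new fusion argument is handled there in ZFC with no structural hypotheses: one restricts the strategy to a countable subtree, forms from the families along the tree a single moving off family of finite unions $D_n\cup F_1\cup\dots\cup F_n$, and applies the moving off property once to diagonalize across all branches. So your ``decisive obstacle'' is a solved problem.

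The step that actually fails for general Tychonoff $X$ is the one you pass over with ``Urysohn separation'': amalgamating the prescriptions from a run of the game into a single continuous function witnessing that the nested basic sets $[K_n,g_n,\e_n]$ have a common point. Strong discreteness of the responses $F_n$ controls the limit function on $\bigcup_n F_n$, but continuity at accumulation points of $\bigcup_n K_n$, where the successive targets $g_n$ need not cohere, is not automatic; this is precisely where \cite{GMa} needs the $q$-space hypothesis, and where this paper substitutes the space $C_k'(X,Y)$ of functions vanishing on $X'$ --- there the limit function is anchored at $*_Y$ on all of $X'$, and its continuity is verified against the neighborhood base $(O_n)_{n\in\w}$ at $*_Y$ (see the construction of $f$ in Lemma~\ref{l:B2}). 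Accordingly, the paper's route to Baireness of $C_k(X)$ is not your ordinal induction with tree fusion but a decomposition: for stratifiable $X$ the set $X'$ is a $G_\delta$-retract (Theorem~\ref{t:s'}), so $C_k(X)$ is homeomorphic to $C_k(X')\times C_k'(X,\IR)$ (Proposition~\ref{p:rectifiable}); Baireness of the factor $C_k'(X,\IR)$ is characterized by $\DMOP$ (Theorem~\ref{t:B}); and the product is Baire because $C_k'(X,\IR)$ has countable cellularity (Proposition~\ref{p:cellular}) and products of a Baire space with a countably cellular Baire space are Baire \cite{LZ}. Your forward direction, citing Theorem 2.1 of \cite{GMa}, is fine; everything else stops exactly at the amalgamation problem, which no argument in your proposal addresses.
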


In \cite{GMa} this conjecture was confirmed for all $q$-spaces, i.e., spaces whose any point $x\in X$ admits a sequence $(U_n)_{n\in\w}$ of neighborhoods  such that each sequence $(x_n)_{n\in\w}\in\prod_{n\in\w}U_n$ has an accumulation point in $X$. The class of $q$-spaces includes all locally compact and all first-countable spaces. In \cite{GG} Conjecture~\ref{h:GMa} has been confirmed for subspaces of linearly ordered spaces. More extensive information on $\MOP$ can be found in the Ph.D. dissertation of Hughes \cite{Hughes}. Some set-theoretical questions related to $\MOP$ were studied by Tall  in \cite{Tall}.

In this paper we confirm Conjecture~\ref{h:GMa} for stratifiable scattered spaces of finite scattered height. Let us recall that a topological space $X$ is {\em scattered} if each non-empty subspace of $X$ contains an isolated point. A point $x$ of a topological space $X$ is called {\em isolated} if its singleton $\{x\}$ is clopen in $X$.
A subset of a topological space is {\em clopen} if it is both closed and open. For a topological space $X$ by $\dot X$ we denote the set of isolated points in $X$.

The complexity  of a scattered space $X$ can be measured by an ordinal number $\hbar(X)$, called the scattered height of $X$. It is defined as follows.

For a subspace $A\subset X$ of $X$ denote by $A':=A\setminus\dot A$ the set of non-isolated points of $A$. Let $X^{[0]}=X$ and for any ordinal $\alpha>0$ define the $\alpha$-th derived set $X^{[\alpha]}$ of $X$ by the recursive formula $$X^{[\alpha]}=\bigcap_{\beta<\alpha}(X^{[\beta]})'.$$The smallest ordinal $\alpha$ with $X^{[\alpha]}=X^{[\alpha+1]}$ is called the {\em scattered height} of $X$ and is denoted by $\hbar[X]$.
Observe that a topological space $X$ is scattered if and only if $X^{[\hbar[X]]}=\emptyset$.

A regular topological space $X$ is called {\em stratifiable} if to each point $x\in X$ it is possible to assign a decreasing sequence $(U_n(x))_{n\in\w}$ of neighborhoods such that each closed set $F$ is equal to the intersection $\bigcap_{n\in\w}\overline{U_n[F]}$ where $U_n[F]=\bigcup_{x\in F}U_n(x)$. The class of stratifiable spaces includes all metrizable spaces and has many nice properties, see \cite[\S5]{Grue}. In Theorem~\ref{t:ss} we shall prove that a scattered space $X$ of finite scattered height is stratifiable if and only if for every $n<\hbar[X]$ the set $X^{[n]}$ is a retract of $X$ and $X^{[n]}$ is a $G_\delta$-set in $X$.

One of the principal results of this paper is the following theorem.

\begin{theorem}\label{t:main} For a stratifiable space $X$, the function space $C_k(X)$ is Baire if and only if $X$ has $\MOP$ and the function space $C_k(X')$ is Baire.
\end{theorem}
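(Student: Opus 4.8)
The plan is to reduce the Baire property of $C_k(X)$ to a product decomposition, and to locate all the real content in one game-theoretic lemma. First I would fix a continuous linear extension operator $E\colon C_k(X')\to C_k(X)$, which exists because $X'$ is closed in $X$ (the isolated points form an open set) and stratifiable spaces admit Dugundji--Borges linear extension operators that are continuous for the compact-open topology. Writing $\rho\colon C_k(X)\to C_k(X')$, $\rho(f)=f\restriction X'$, for the continuous linear surjective restriction map, one has $\rho\circ E=\mathrm{id}$, so the linear map $\Phi\colon C_k(X)\to C_k(X')\times C_k'(X)$, $\Phi(f)=(\rho(f),\,f-E\rho(f))$, is a homeomorphism with inverse $(g,h)\mapsto E(g)+h$. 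This realizes $C_k(X)$ as the topological product $C_k(X')\times C_k'(X)$, where $C_k'(X)=\{f\in C_k(X):f(X')\subset\{0\}\}=\ker\rho$ is exactly the function space of the abstract.

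For the forward implication, suppose $C_k(X)$ is Baire. That $X$ has $\MOP$ is the observation of Gruenhage and Ma (Theorem~2.1 of \cite{GMa}) and needs nothing beyond the hypothesis. That $C_k(X')$ is Baire then follows from the decomposition: since $C_k'(X)$ contains the zero function, the projection $C_k(X')\times C_k'(X)\to C_k(X')$ is an open continuous surjection, and an open continuous image of a Baire space is Baire.

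The content is in the converse. Assuming $X$ has $\MOP$ and $C_k(X')$ is Baire, I must show the product $C_k(X')\times C_k'(X)$ is Baire. Since a product of two Baire spaces can fail to be Baire, I would instead prove that $C_k'(X)$ is $\alpha$-favorable, i.e.\ the nonempty player $\mathsf N$ has a winning strategy in the Banach--Mazur game $\BM\bigl(C_k'(X)\bigr)$; such spaces are productively Baire, so $C_k'(X)\times B$ is Baire for every Baire space $B$, in particular for $B=C_k(X')$. The core lemma is therefore: if $X$ has $\MOP$ then $\mathsf N$ wins $\BM(C_k'(X))$. I expect to prove this by introducing an auxiliary game $\GKF$ played on $X$, in which one player produces compact sets and the other responds with a strongly discrete family of compact sets moving off them, showing that $\MOP$ is equivalent to $\mathsf N$ winning $\GKF$ (passing, if necessary, through the ``who moves first'' variants $\GEN$ and $\GNE$), and then transferring a $\GKF$-strategy on $X$ into a $\BM$-strategy on $C_k'(X)$: a basic open set in $C_k'(X)$ constrains $f$ on a compact set $K$, a moving-off strongly discrete response lets $\mathsf N$ shrink the open sets while reserving independent room on the moving-off compacta, and the strong discreteness guarantees that all the reserved constraints are simultaneously satisfiable by a single limit function, so the nested intersection is nonempty.

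The main obstacle is precisely this transfer step $\GKF$-strategy $\Ra$ $\BM$-strategy, since it must convert a combinatorial winning strategy about compact sets and strongly discrete families in $X$ into an analytic one producing a genuine continuous function lying in the intersection of the chosen basic neighborhoods. Here the strong discreteness of the moving-off family is exactly what makes the limit function continuous and simultaneously forces it to vanish on $X'$, so verifying membership in $C_k'(X)$ and non-emptiness of the intersection at once is the delicate part. A secondary point requiring care, underpinning both directions, is the continuity of the Dugundji--Borges operator $E$ in the compact-open topology, on which the product decomposition rests.
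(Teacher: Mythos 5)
Your decomposition $C_k(X)\cong C_k(X')\times C_k'(X)$ and your forward implication are fine, but the core lemma of your converse fails. You claim that $\MOP$ yields a winning strategy for $\mathsf N$ in $\BM(C_k'(X))$, i.e.\ that $C_k'(X)$ is Choquet, via the assertion that ``$\MOP$ is equivalent to $\mathsf N$ winning $\GKF$.'' That assertion is wrong in a way that matters: both Gruenhage--Ma's game characterization of $\MOP$ \cite{GMa} and the paper's game characterization of $\DMOP$ (Theorem~\ref{t:game}) say only that the moving-off player $\mathsf F$ has \emph{no winning strategy}; they do not produce a winning strategy for the compact-set player $\mathsf K$. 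These games are undetermined in general, so the gap between the two assertions is real, and switching between the ``who moves first'' variants does not bridge it. The property ``$\mathsf K$ has a winning strategy'' is precisely the paper's $\WDMOP$, and by Theorem~\ref{t:C} the Choquet property of $C_k'(X,Y)$ is \emph{equivalent} to $\WDMOP$, whereas $\DMOP$ is equivalent only to Baireness (Theorem~\ref{t:B}, Corollary~\ref{c:B}). Thus your lemma would prove $\MOP\Ra\WDMOP$ and hence that $C_k(X)$ Baire implies $C_k(X)$ Choquet in this setting, collapsing the distinction the paper maintains between Corollary~\ref{c:sBR} and Corollary~\ref{c:sCR}. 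All that $\MOP$ (through $\DMOP$) actually gives you is that $C_k'(X)$ is Baire; since a product of two Baire spaces need not be Baire, your proof of the converse does not close.

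The paper closes exactly this gap by a different device: it proves that $C_k'(X)$ has \emph{countable cellularity} (Proposition~\ref{p:cellular}), which uses the other half of stratifiability --- that $X'$ is a $G_\delta$-set in $X$, i.e.\ $\dot X$ is $F_\sigma$ --- a hypothesis your outline never touches; it then invokes the theorem of Li and Zsilinszky \cite{LZ} that the product of a Baire space with a countably cellular Baire space is Baire (see the proof of Theorem~\ref{t:sB}). If you want to salvage your outline, replace the Choquet lemma with this cellularity argument. Two minor remarks: the paper obtains the splitting without Borges' extension theorem, by composing with a retraction $r\colon X\to X'$ (which exists by Theorem~\ref{t:s'}) and using the group structure of $\IR$ (Proposition~\ref{p:rectifiable}), thereby sidestepping the compact-open continuity issue for the Dugundji--Borges operator that you flag; and the game-to-topology transfer you envision does occur in the paper, but in two separate forms --- Lemma~\ref{l:B2} (from ``$\mathsf F$ has no winning strategy,'' yielding Baire) and Lemma~\ref{l:C2} (from ``$\mathsf K$ has a winning strategy,'' yielding Choquet) --- never from $\MOP$ to Choquet.
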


Applying this theorem by induction, we get the following corollary.

\begin{corollary}\label{c:main} For a stratifiable space $X$, the
space $C_k(X)$ is Baire if and only if $X$ has $\MOP$ and for some $n\in\w$ the function space $C_k(X^{[n]})$ is Baire.
\end{corollary}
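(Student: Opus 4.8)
The plan is to derive Corollary~\ref{c:main} from Theorem~\ref{t:main} by an induction on $n$, the only genuinely new ingredient being that the Moving Off Property descends to closed subspaces. The forward implication is immediate: if $C_k(X)$ is Baire, then $X$ has $\MOP$ by Theorem~\ref{t:main} (equivalently, by Theorem~2.1 of \cite{GMa}), and the choice $n=0$ witnesses ``$C_k(X^{[n]})$ is Baire'' since $X^{[0]}=X$. Thus all the work lies in the reverse implication, for which I would first isolate two structural facts about the derived sets $X^{[k]}$.

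First, each $X^{[k]}$ is a closed subspace of $X$: the isolated points form an open set, so $X'=X\setminus\dot X$ is closed, and iterating the recursion $X^{[k+1]}=(X^{[k]})'$ shows every $X^{[k]}$ is closed, with the identity $(X')^{[k]}=X^{[k+1]}$ for all finite $k$. Consequently each $X^{[k]}$ is again stratifiable, since stratifiability is inherited by closed subspaces (indeed by arbitrary subspaces, see \cite[\S5]{Grue}). Second --- and this is the main point --- I claim that if $X$ has $\MOP$ then so does every closed subspace $C\subset X$, in particular $X'$. To see this, let $\F$ be a moving off family of compact sets in $C$. The crucial observation is that $\F$ is then moving off in $X$ as well: given a compact set $K\subset X$, the intersection $K\cap C$ is compact (here closedness of $C$ is used), so some $F\in\F$ misses $K\cap C$, and since $F\subset C$ this forces $K\cap F=\emptyset$. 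Applying $\MOP$ of $X$ yields an infinite strongly discrete subfamily $\mathcal D\subset\F$ with witnessing open sets $O_F\subset X$; restricting to $C$, the sets $O_F\cap C$ witness that $\mathcal D$ is strongly discrete in $C$, because discreteness of an indexed family is inherited by subspaces.

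With these facts in hand, I would prove the reverse implication by induction on $n$, in the form: for every stratifiable $X$ with $\MOP$, if $C_k(X^{[n]})$ is Baire then $C_k(X)$ is Baire. The case $n=0$ is trivial since $X^{[0]}=X$. For the inductive step, set $Y=X'$. By the first fact $Y$ is stratifiable, and by the second fact $Y$ has $\MOP$, while $Y^{[n-1]}=X^{[n]}$ shows that $C_k(Y^{[n-1]})$ is Baire; the induction hypothesis then gives that $C_k(Y)=C_k(X')$ is Baire. Finally, since $X$ is stratifiable and has $\MOP$ and $C_k(X')$ is Baire, Theorem~\ref{t:main} delivers that $C_k(X)$ is Baire, completing the induction.

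I expect the only real obstacle to be the closed-subspace inheritance of $\MOP$, and specifically the step that a moving off family in $C$ remains moving off in the ambient space $X$; once that observation is made, the transfer of strong discreteness is routine, and the rest is a mechanical unwinding of Theorem~\ref{t:main}. It is worth noting that the argument in fact shows the stronger equivalence that $C_k(X)$ is Baire if and only if $X^{[k]}$ has $\MOP$ for all $k<n$ and $C_k(X^{[n]})$ is Baire; the formulation in the corollary is clean precisely because $\MOP$ of $X$ already propagates to every $X^{[k]}$.
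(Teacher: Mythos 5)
Your proof is correct and follows exactly the paper's route: the paper derives Corollary~\ref{c:main} by the same induction on Theorem~\ref{t:main}, leaving the details implicit. The auxiliary facts you supply --- that each $X^{[k]}$ is closed, hence stratifiable, and that $\MOP$ passes to closed subspaces --- are precisely the ones the paper takes for granted (the latter is also asserted without proof in the paper's argument for Corollary~\ref{c:sBR}), and your verification of them is correct.
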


In its turn, Corollary~\ref{c:main} implies

\begin{corollary}\label{c2} For a stratifiable scattered space $X$ of finite scattered height, the
space $C_k(X)$ is Baire if and only if $X$ has $\MOP$.
\end{corollary}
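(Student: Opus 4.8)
The plan is to deduce this statement directly from Corollary~\ref{c:main}, which for stratifiable $X$ has already reduced the Baireness of $C_k(X)$ to the conjunction of $\MOP$ with the Baireness of $C_k(X^{[n]})$ for some finite $n$. The crucial extra input available in the scattered, finite-height setting is that the descending chain of derived sets terminates at the empty set: since $X$ is scattered, the observation recorded just after the definition of scattered height yields $X^{[\hbar[X]]}=\emptyset$, and by hypothesis $m:=\hbar[X]$ is a natural number, so this terminal derived set is reached within $\w$ many steps.

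With this in hand I would simply invoke Corollary~\ref{c:main} with the particular choice $n=m$. The relevant factor $C_k(X^{[m]})=C_k(\emptyset)$ is the space of continuous real-valued functions on the empty space, which consists of the single empty function; a one-point space is trivially Baire, since the only dense open subset is the whole space. Hence the clause ``for some $n\in\w$ the function space $C_k(X^{[n]})$ is Baire'' is automatically satisfied, and Corollary~\ref{c:main} collapses to the desired equivalence: $C_k(X)$ is Baire if and only if $X$ has $\MOP$.

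There is essentially no obstacle here beyond bookkeeping, as the genuine content has already been absorbed into Theorem~\ref{t:main} and its inductive consequence Corollary~\ref{c:main}. The only two points worth recording explicitly are that the finiteness of $\hbar[X]$ is exactly what allows us to reach a trivial derived set after finitely many derivations, and that the degenerate endpoint $C_k(\emptyset)$ is a bona fide Baire space rather than an empty or ill-defined object; both are immediate.
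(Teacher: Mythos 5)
Your proposal is correct and is essentially the paper's own (implicit) argument: the paper derives Corollary~\ref{c2} directly from Corollary~\ref{c:main}, exactly as you do, by noting that finite scattered height gives $X^{[\hbar[X]]}=\emptyset$, so the clause ``$C_k(X^{[n]})$ is Baire for some $n\in\w$'' holds automatically for $n=\hbar[X]$, where $C_k(\emptyset)$ is a singleton and hence trivially Baire.
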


Theorem~\ref{t:main} and Corollary~\ref{c:main} will be proved in Section~\ref{s5} after extensive preliminary work made in Sections~\ref{s2}--\ref{s11}. Namely, in Section~\ref{s2} we introduce a discrete version of $\MOP$, called $\DMOP$ and prove a game characterization of $\DMOP$, resembling the game characterization of $\MOP$, found by Gruenhage and Ma in \cite{GMa} (our game $\GKF(X)$ resembles also the games $\Gamma_1(X)$ and $\Gamma_2(X)$, studied by McCoy and Ntantu \cite[\S8]{McN}). In Section~\ref{s4} we apply the game characterization of $\DMOP$ to prove that a topological space $X$ has $\DMOP$ if and only if for any Polish space $Y$ with a distinguished point $*_Y$ the function space
$$C_k'(X,Y)=\big\{f\in C_k(X,Y):f(X')\subset\{*_Y\}\big\}$$ is Baire. In Section~\ref{s:W}, we study a ``winning'' modification of $\DMOP$, called $\WDMOP$ and prove that a topological space $X$ has $\WDMOP$ if and only if for any pointed Polish space $(Y,*_Y)$ the function space $C_k'(X,Y)$ is Choquet. In Sections~\ref{s:M} and \ref{s:CM} we characterize topological spaces $X,Y$ for which the function space $C_k'(X,Y)$ is metrizable or complete-metrizable. In Section~\ref{s:N} we characterize topological spaces $X$ for which the function space $C_k'(X,Y)$ has a countable network for any second-countable pointed space $Y$.
 As an application of the mentioned characterizations of Baire category properties of function spaces $C_k'(X,Y)$, in Section~\ref{s:a} we prove the following dichotomy: if for a topological space $X$ and a pointed Polish space $Y$ the function space $C_k'(X,Y)$ is analytic, then it is either Polish or meager (more precisely, $\infty$-meager). Also we prove that for a regular pointed space $Y$ the function space $C'_k(X,Y)$ is analytic if and only if it is cosmic and the function space $C_p'(X,Y)$ is analytic.

 The equivalence of meager and $\infty$-meager properties in the function spaces $C_k'(X,Y)$ is established in Section~\ref{s:mm}. In Section~\ref{s:ccm} we observe that for any topological space $X$ and pointed topological space $Y$, the function space $C_k'(X,Y)$ admits a continuous bijective map onto the function space $C_k'(X/X',Y)$ defined on the quotient space $X/X'$ having a unique non-isolated point, and give conditions under which this bijective map $C_k'(X,Y)\to C_k(X/X',Y)$ is (or is not) is a homeomorphism. In Section~\ref{s:ss} we prove that a scattered space $X$ of finite scattered height is stratifiable if and only if for every $k<\hbar[X]$ the set $X^{[k]}$ is a $G_\delta$-retract in $X$.
 In Section~\ref{s11} we study function spaces with values in rectifiable spaces $Y$ and prove that if $X'$ is a retract of $X$, then the function space $C_k(X,Y)$ is homeomorphic to $C_k(X',Y)\times C_k'(X,Y)$.   In the final section~\ref{s5} we return back to studying the function spaces $C_k(X,Y)$ over stratifiable scattered spaces with values in rectifiable spaces and using the game characterization of $\DMOP$, prove a more general version of Theorem~\ref{t:main} announced in the introduction. It should be mentioned that the results on Baire category properties of the function spaces $C_k'(X,Y)$ obtained in this paper are essentially used in our forthcoming paper \cite{BW},  devoted to studying the function spaces $C_{{\downarrow}\mathsf F}(X,Y)$ endowed with the Fell hypograph topology.

The characterization theorems proved in Sections~\ref{s4}--\ref{s:mm} can be unified in the following

\begin{theorem}\label{t:super} Let $Y$ be a pointed metrizable space containing more than one point and $X$ be a topological space containing an isolated point.
\begin{enumerate}
\item[\textup{(1)}] If $X$ does not have $\DMOP$, then $C_k'(X,Y)$ is meager and $\infty$-meager.
\item[\textup{(2)}] If $X$ has $\DMOP$ and $Y$ is Choquet, then the space $C_k'(X,Y)$ is Baire.
\item[\textup{(3)}] $C_k'(X,Y)$ is Choquet if and only if $Y$ is Choquet and $X$ has $\WDMOP$.
\item[\textup{(4)}] $C_k'(X,Y)$ is complete-metrizable if and only if $Y$ is complete-metrizable and $X$ is a $\dot\kappa$-space.
\item[\textup{(5)}] $C_k'(X,Y)$ is almost complete-metrizable if and only if $Y$ is Choquet and $X$ is a $\dot\kappa$-space.
\item[\textup{(6)}] $C_k'(X,Y)$ is Polish if and only if $Y$ is Polish and $X$ is a $\dot\kappa$-space with countable set $\dot X$ of isolated points.
\item[\textup{(7)}] $C_k'(X,Y)$ is almost Polish if and only if $Y$ is almost Polish and $X$ is a $\dot\kappa$-space with countable set $\dot X$.
\item[\textup{(8)}] $C_k'(X,Y)$ is metrizable if and only if  $X$ is a hemi-$\dot\kappa_\w$-space.
\item[\textup{(9)}] $C_k'(X,Y)$ is metrizable and separable if and only if $Y$ is separable and $X$ is a hemi-$\dot\kappa_\w$-space with countable set $\dot X$.
\item[\textup{(10)}] $C_k'(X,Y)$ has a countable network if and only if $Y$ is separable and $X$ has a countable $\dot\kappa$-network.
\end{enumerate}
\end{theorem}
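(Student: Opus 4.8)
The plan is to assemble Theorem~\ref{t:super} from the ten separate characterizations proved in Sections~\ref{s4}--\ref{s:mm}, after isolating the common structural features of $C_k'(X,Y)$ that let all the statements be treated uniformly. The key preliminary reduction is the continuous bijection $C_k'(X,Y)\to C_k'(X/X',Y)$ onto the function space over the quotient $X/X'$ with a single non-isolated point (Section~\ref{s:ccm}); since Baire category, the Choquet property, metrizability and network weight either transfer along this map or are detected on the quotient, I would analyze most statements on $X/X'$, where a function in the space is simply an assignment of values in $Y$ to the isolated points of $\dot X$ subject to convergence to $*_Y$ near the unique non-isolated point. Throughout, the compact-open topology is governed by the basic neighborhoods $[K,U]$ of the constant map $*_Y$ indexed by compact $K\subset\dot X$ and open $U\ni *_Y$ in $Y$, so every property of $C_k'(X,Y)$ is controlled by the interplay between the compact-covering behavior of $\dot X$ and the corresponding category or metric property of $Y$.

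For the Baire-type statements (1)--(3) I would use the game characterization of $\DMOP$ via the game $\GKF(X)$ from Section~\ref{s2}. For (2), a strategy defeating the empty player in the Banach--Mazur game $\BM(C_k'(X,Y))$ is built by running, in parallel, the moving-off game on $X$ (using that $X$ has $\DMOP$) and the Choquet game on $Y$: at each round the moving-off response furnishes a compact set on which the nonempty player refines the current function value using the Choquet strategy in $Y$, and the resulting fusion lands in the intersection of the prescribed open sets. Statement (1) is the contrapositive: when $\DMOP$ fails, a witnessing moving-off family with no infinite strongly discrete subfamily is converted into a decreasing sequence of open dense subsets of $C_k'(X,Y)$ with empty intersection, simultaneously refuting Baireness and exhibiting the space as meager; the strengthening to $\infty$-meagerness is exactly the independent input supplied by Section~\ref{s:mm}. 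Statement (3) matches winning strategies on both sides precisely, so that $Y$ Choquet together with the winning moving-off property $\WDMOP$ becomes simultaneously necessary and sufficient.

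The metrizability and network statements (8)--(10) follow by translating the compact-open topology into a countable-cofinality condition on the compact subsets of $\dot X$: $C_k'(X,Y)$ is metrizable precisely when the neighborhood filter of the constant map $*_Y$ has a countable base, which forces $\dot X$ to be covered in a hemicompact, $k_\w$-type fashion by a countable family of compacta --- this is the hemi-$\dot\kappa_\w$ condition of (8), and the hypothesis that $Y$ is metrizable makes this condition depend only on $X$. Adjoining countability of $\dot X$ and separability of $Y$ upgrades metrizability to separable metrizability (9) and to a countable network (10), the latter using the network characterization of Section~\ref{s:N}.

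Finally, the completeness statements (4)--(7) are obtained by intersecting the metric group with the category group, via the classical facts that a metrizable space is complete-metrizable iff it is Choquet and Polish iff it is separable and complete-metrizable. Thus (4) combines the metrizability condition of (8) with the Choquet conditions of (2)--(3), with the $\dot\kappa$-space property being precisely the completion-ready strengthening of hemi-$\dot\kappa_\w$; passing $Y$ from complete-metrizable to merely Choquet degrades the conclusion to the ``almost'' versions (5) and (7), and adding countability of $\dot X$ yields the Polish statements (6)--(7). The main obstacle is the strategy-transfer underlying (1)--(3): correctly fusing a run of the abstract game $\GKF(X)$ with the Choquet game on $Y$ so as to produce genuine winning strategies in $\BM(C_k'(X,Y))$, while controlling the compact-open neighborhoods tightly enough that the fused play converges to an honest continuous function sending $X'$ to $*_Y$. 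Sharpening meagerness to $\infty$-meagerness in (1), rather than stopping at plain meagerness, is the second delicate point, and it is the very reason Section~\ref{s:mm} is needed as a separate ingredient.
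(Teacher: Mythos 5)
Your top-level skeleton is the same as the paper's: the paper's entire proof of Theorem~\ref{t:super} is the single sentence assigning statements (1)--(10) to Theorems~\ref{t:m=M}, \ref{t:B}, \ref{t:C}, \ref{t:CM}, \ref{t:ACM}, \ref{t:P}, \ref{t:AP}, \ref{t:M}, \ref{t:MS}, \ref{t:N}. But since you reconstruct the ingredients blind, the reconstructions matter, and your declared ``key preliminary reduction'' is invalid. The continuous bijection goes $q^*\colon C_k'(X/X',Y)\to C_k'(X,Y)$, $f\mapsto f\circ q$, and it is a homeomorphism \emph{only} when $X$ is a $\dot\mu$-space (Theorem~\ref{t:q}); the Example in Section~\ref{s:ccm} (a non-compact pseudocompact $X$ with dense $\dot X$) has $C_k'(X,2)$ meager while $C_k'(X/X',2)$ is discrete, hence Polish, Choquet and Baire. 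So Baireness, the Choquet property, metrizability and networks are emphatically \emph{not} ``detected on the quotient'', and none of (1)--(10) can be analyzed on $X/X'$ without the $\dot\mu$ hypothesis, which Theorem~\ref{t:super} does not make. This is precisely why Sections~\ref{s4}--\ref{s:mm} never use that reduction and instead work on $X$ itself through the base $[K;U|F;u]$ of Lemma~\ref{l:base}.

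Two further steps fail. For (2), the fusion you describe presupposes a strategy for player $\mathsf K$ in $\GKF(X)$ (``the moving-off response furnishes a compact set''), i.e.\ it presupposes $\WDMOP$, and its output would be a winning strategy for $\mathsf N$, i.e.\ a proof that $C_k'(X,Y)$ is Choquet --- that is the content of statement (3) and Lemma~\ref{l:C2} (via Lemma~\ref{l:sW}), and it is strictly stronger than (2). Under $\DMOP$ alone one only knows that $\mathsf F$ has \emph{no} winning strategy, so (2) must be proved the way Lemma~\ref{l:B2} does it: assume a non-empty open meager set exists, convert its witnessing nowhere dense sets into a strategy for $\mathsf F$ in $\GKF(X)$, invoke Theorem~\ref{t:game} to see this strategy is not winning, and let a play defeating it produce a continuous function in the supposedly empty intersection. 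For (4)--(7), the ``classical fact'' you invoke is false: a metrizable Choquet space is only \emph{almost} complete-metrizable (\cite[8.17]{Ke}; e.g.\ the open upper half-plane together with the rational points of its boundary line), so (4) is not the conjunction of (8) with the Choquet statements. Theorem~\ref{t:CM} genuinely needs the topological group $C_k'(X,2)$ --- completeness of its uniformity of uniform convergence on compacta yields the $\dot\kappa_\w$-property of $X$ --- and, for the converse, a closed embedding of $C_k'(X,Y)$ into the complete-metrizable product $\prod_{n\in\w}C_k(K_n,Y)$. (Relatedly, passing between the $\dot\kappa$-space hypothesis in (4)--(7) and the $\dot\kappa_\w$-space appearing in Theorems~\ref{t:CM}--\ref{t:AP} requires Proposition~\ref{p:kappa} combined with statement (8); you gesture at this but never make it precise.) Your treatment of (1) and of (8)--(10), by contrast, is workable and close to Lemmas~\ref{l:M1}, \ref{l:M2} and Theorem~\ref{t:N}, with the $\infty$-meager upgrade correctly delegated to Theorem~\ref{t:m=M}.
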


The statements (1)--(10) of this theorem are proved in Theorems~\ref{t:m=M}, \ref{t:B}, \ref{t:C}, \ref{t:CM}, \ref{t:ACM}, \ref{t:P}, \ref{t:AP}, \ref{t:M}, \ref{t:MS}, \ref{t:N}, respectively. All undefined notions appearing in Theorem~\ref{t:super} can be found in the corresponding sections.

\section{The discrete moving off property}\label{s2}

In this section we discuss the discrete modification of $\MOP$, called $\DMOP$.

\begin{definition} A topological space $X$ has the {\em discrete moving off property} (briefly, $\DMOP$) if any moving off family $\F$ of  finite subsets of $\dot X$ contains an infinite subfamily $\mathcal D\subset\mathcal F$, which is discrete in $X$.
\end{definition}

We recall that $\dot X$ denotes the set of all isolated points in $X$, i.e. points $x\in X$ whose singleton $\{x\}$ is clopen in $X$.

It is clear that each space with $\MOP$ has $\DMOP$.

\begin{definition} A topological space $X$ is defined to have the {\em property of discrete diagonalization} if for any sequence $\F_0,\F_1,\F_2,\dots$ of infinite discrete families $\F_n$ of finite subsets of $\dot X$ there exists a sequence $(F_n)_{n\in\w}\in\prod_{n\in\w}\F_n$ such that the indexed family $\{F_n\}_{n\in\w}$ is discrete in $X$.
\end{definition}

\begin{lemma}\label{l:diag} Each topological space $X$ with $\DMOP$ has the property of discrete diagonalization.
\end{lemma}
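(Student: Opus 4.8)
The plan is to turn the statement into a \emph{selection} problem and then feed a single moving-off family into $\DMOP$. First I would discard the empty members and record a convenient reformulation of discreteness: since each $F\in\F_n$ is a finite subset of $\dot X$ and hence clopen (a finite union of clopen singletons), a family of such sets is discrete in $X$ if and only if it is pairwise disjoint and locally finite. Indeed, at an isolated point the singleton neighborhood meets only the members containing that point, while a non-isolated point lies in no member, so any neighborhood meeting finitely many members can be shrunk to meet none. Consequently the task is to choose $F_n\in\F_n$ so that $\{F_n\}_{n\in\w}$ is pairwise disjoint and locally finite. Pairwise disjointness is cheap: each $\F_n$ is a moving-off family (an infinite discrete family of nonempty finite sets meets any compact set in only finitely many members, so some member dodges it), and a finite subset of $\dot X$ is compact, so one may recursively pick $F_n$ disjoint from $F_0\cup\dots\cup F_{n-1}$. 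Thus the whole content lies in securing local finiteness at the non-isolated points, which is exactly the phenomenon that $\DMOP$ is designed to control.

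Next I would form the moving-off family of finite initial-segment transversals
$$\F=\Big\{\,F_0\cup F_1\cup\dots\cup F_n : n\in\w,\ F_i\in\F_i\ (i\le n),\ F_0,\dots,F_n\ \text{pairwise disjoint}\,\Big\},$$
which is moving off by the recursive dodging just described. Applying $\DMOP$ yields an infinite discrete subfamily $\mathcal D=\{D_k\}_{k\in\w}\subset\F$, say $D_k=\bigcup_{i\le m_k}F^{(k)}_i$. Being discrete, the sets $D_k$ are automatically pairwise disjoint, since a shared isolated point would be met by two members. The key extraction step is this: if the top indices $m_k$ are unbounded, then after passing to a subfamily we may assume $m_0<m_1<\dots$, and we can choose distinct indices $k(n)$ with $m_{k(n)}\ge n$ (possible because the $m_k$ increase); setting $F_n:=F^{(k(n))}_n$ produces a full transversal, and it is discrete because any neighborhood meeting at most one $D_k$ meets at most one $F_n$ (the $k(n)$ being distinct), while disjointness handles the isolated points. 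Hence once the discrete subfamily is cofinal through all rows, the lemma follows at once.

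The step I expect to be the main obstacle is precisely forcing this cofinality: $\DMOP$ delivers only \emph{some} infinite discrete subfamily, and nothing prevents all of its members from having top index bounded by a fixed $N$ — indeed $\F_0$ alone already sits inside $\F$ as a bounded infinite discrete family, so no single application of $\DMOP$ can be made to climb. The plan to overcome this is to iterate over tails: when the subfamily produced at a stage has top indices bounded by $N$, one extracts from a single of its members a finite discrete selection covering the rows $\le N$, then reapplies $\DMOP$ to the initial-segment-transversal family built over the remaining rows, additionally required to dodge the finite (hence compact) union of the blocks already chosen; this keeps the successive selections pairwise disjoint and, row by row, exhausts $\w$. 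The genuinely delicate point — and the place where $\DMOP$ rather than mere moving off is indispensable — is to guarantee that the family assembled from these $\w$ stages stays locally finite at every non-isolated point, i.e.\ that the chosen blocks do not accumulate somewhere in $X'$. Controlling this clustering across infinitely many stages, using local finiteness of each stage's output together with the clopen structure of finite subsets of $\dot X$, is the crux of the argument.
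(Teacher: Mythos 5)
Your reduction of discreteness (pairwise disjoint $+$ locally finite, for finite clopen subsets of $\dot X$), your transversal family, and your extraction step in the unbounded case are all correct, and you have correctly located the obstruction: $\DMOP$ applied to the family of all initial-segment transversals may return a discrete subfamily whose top indices are bounded (e.g.\ a subfamily of $\F_0$ itself). But the repair you sketch does not close this gap, and you say so yourself: iterating $\DMOP$ over tails, dodging at each stage only the finitely many blocks already chosen, guarantees pairwise disjointness of the assembled selection but gives no control whatsoever over its local finiteness at points of $X'$. Discreteness is something $\DMOP$ certifies only \emph{within} a single application; blocks produced at different stages may perfectly well accumulate at a non-isolated point, and there is no canonical sequence of compact sets you could dodge to prevent this --- producing such compact sets adaptively is essentially the game-theoretic content of Theorem~\ref{t:game}, which in the paper is deduced \emph{from} this lemma, so one cannot appeal to it here. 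The proof is therefore incomplete precisely at what you call ``the crux.''

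The paper avoids iteration entirely by a marker trick that manufactures the missing cofinality inside a single application of $\DMOP$. Fix pairwise disjoint nonempty sets $D_n\in\F_0$ ($n\in\w$) and put
$$\mathcal E_n=\{D_n\cup F_1\cup\dots\cup F_n:F_i\in\F_i \text{ for }i\le n\},\qquad \mathcal E=\bigcup_{n\ge1}\mathcal E_n.$$
This $\mathcal E$ is moving off, so $\DMOP$ yields an infinite discrete (hence pairwise disjoint) subfamily $\{E_k\}_{k\in\w}$. Since every member of $\mathcal E_n$ contains the fixed nonempty set $D_n$, two disjoint members cannot lie in the same level $\mathcal E_n$; thus the level map $k\mapsto n(k)$ is injective, and after passing to a subsequence one may assume $n(k)\ge k$, whence each $E_k$ contains some $F_k\in\F_k$. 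Discreteness passes to subfamilies of subsets, and the proof ends. In short: the cofinality you tried to force by an $\w$-stage iteration comes for free once each member of the $n$-th level carries a distinct marker from $\F_0$, because disjointness of the extracted family then forbids two of its members from sharing a level.
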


\begin{proof}  Assume that $X$ has $\DMOP$ and let  $(\F_n)_{n\in\w}$ be a sequence of infinite discrete families of finite subsets of $\dot X$. Fix a countable subfamily $\{D_n\}_{n\in\w}\subset\F_0$ consisting of pairwise disjoint non-empty finite sets $D_n$ in $\dot X$. For every $n\in\IN$ let
$$\mathcal E_n=\big\{D_n\cup F_1\cup\dots\cup F_n:(F_i)_{i=1}^n\in\prod_{i=1}^n\F_i\big\}.$$ It is easy to see that $\mathcal E:=\bigcup_{n=1}^\infty\mathcal E_n$ is a moving off collection of non-empty finite subsets of $\dot X$. By $\DMOP$, there exists an infinite discrete subfamily $\{E_k\}_{k\in\w}\subset\mathcal E$ consisting of pairwise disjoint sets $E_k$, $k\in\w$. For every $k\in\w$ find a number $n(k)\in\IN$ with $E_k\in\mathcal E_{n(k)}$ and observe that the correspondence $k\mapsto n(k)$ is injective (as the sets $E_k$ are pairwise disjoint). Replacing $(E_k)_{k\in\w}$ by a suitable subsequence, we can assume that the sequence $(n(k))_{k\in\w}$ is increasing and hence $n(k)\ge k$ for all $k\in\w$. Then for any $k\in\w$ there exists a set $F_k\in\F_k$ such that $F_k\subset E_k\in\mathcal E_{n(k)}$. The discreteness of the family $\{E_{k}\}_{k\in\w}$ implies that discreteness of the family $\{F_k\}_{k\in\w}$.
\end{proof}

Now we are going to present a game characterization of $\DMOP$ with help of the game $\GKF(X)$, played by two players, $\mathsf{K}$ and $\mathsf{F}$ on a topological space $X$. The player $\mathsf K$ starts the game. At the $n$-th inning the player $\mathsf K$ chooses a compact subset $K_n\subset X$  and player $\mathsf F$ responds by choosing a finite  subset $F_n$ of $\dot X$ such that $F_n\cap K_n=\emptyset$. At the end of the game, the player $\mathsf K$ is
declared the winner if the family $\{F_n\}_{n\in\IN}$ is discrete in $X$; otherwise the player $\mathsf F$ wins the game.

For a topological space $X$ by $\mathcal F(X)$ and $\K(X)$ we denote the families of finite and compact subsets of $X$, respectively.

For  set $A$ by $A^{<\w}$ we denote the family $\bigcup_{n\in\w}A^n$ of all finite sequences $(a_0,\dots,a_{n-1})$ of elements of $A$. The set $A^{<\w}$ is a tree with respect to the partial order $\le$ defined by $(a_0,\dots,a_n)\le (b_0,\dots,b_m)$ iff $n\le m$ and $a_i=b_i$ for $i\le n$. For a sequence $s=(a_0,\dots,a_{m-1})$ and a number $n\le m$ let $s{\restriction}n:=(a_0,\dots,a_{n-1})$ be the initial segment of $s$ of length $n$.


The following theorem is just a suitable modification of the game characterization of $\MOP$, proved by Gruenhage and Ma in  \cite[Theorem 2.3]{GMa}.

\begin{theorem}\label{t:game} A topological space $X$  has $\DMOP$ if and only if
the player $\mathsf F$ has no winning strategy in the game $\GKF(X)$.
\end{theorem}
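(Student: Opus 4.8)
The plan is to prove both implications in contrapositive form, following the blueprint of Gruenhage and Ma's game characterization of $\MOP$ in \cite[Theorem 2.3]{GMa}, but systematically replacing ``compact response / strongly discrete'' by ``finite subset of $\dot X$ / discrete''. Thus I would establish (i) if $X$ does not have $\DMOP$ then $\mathsf F$ has a winning strategy, and (ii) if $X$ has $\DMOP$ then $\mathsf F$ has no winning strategy. Two elementary facts are used throughout: that a moving off family supplies, for every compact $K$, a \emph{nonempty} member disjoint from $K$; and that a discrete family of sets is locally finite, so every compact set meets only finitely many of its members.

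For direction (i), assume $X$ lacks $\DMOP$ and fix a moving off family $\F$ of finite subsets of $\dot X$ having no infinite discrete subfamily. I define a strategy for $\mathsf F$ which, confronted at the $n$-th inning with the compact set $K_n$, answers by a (fixed) nonempty $F_n\in\F$ with $F_n\cap K_n=\emptyset$; such an $F_n$ exists precisely because $\F$ is moving off. Against any play of $\mathsf K$ the responses $(F_n)_{n\in\IN}$ then lie in $\F$ and are nonempty. If infinitely many of them are pairwise distinct they form an infinite subfamily of $\F$, which by hypothesis is not discrete; if only finitely many distinct values occur, some nonempty $F\in\F$ is repeated infinitely often, and any point of $F$ witnesses that the indexed family $\{F_n\}_{n\in\IN}$ is not discrete. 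In either case $\mathsf F$ wins, so the strategy is winning.

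For direction (ii) I would argue by contradiction: assume $X$ has $\DMOP$ (hence, by Lemma~\ref{l:diag}, the property of discrete diagonalization) and that $\sigma$ is a winning strategy for $\mathsf F$; the aim is to produce one play of $\mathsf K$ along which $\sigma$'s responses form a discrete family, so that $\mathsf K$ wins and $\sigma$ is not winning. The engine is the following observation: for any finite partial play $p$ consistent with $\sigma$ whose response family is discrete, and any compact $C$, some finite continuation of $p$ has a nonempty $\sigma$-response avoiding $C$ --- for otherwise prolonging $p$ by the constant moves $C,C,C,\dots$ would yield an all-empty tail, leaving the whole response family discrete and $\sigma$ a loser. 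Consequently, at such a node $p$ the family $\F_p=\{\sigma(q):q\text{ a finite continuation of }p,\ \sigma(q)\neq\emptyset\}$ is a moving off family of finite subsets of $\dot X$, so $\DMOP$ extracts an infinite discrete subfamily of responses. Using this repeatedly I would build, by recursion on the inning, a tree of positions (all with discrete response families) in which each node has infinitely many children realizing such a discrete family of responses, and then thread the construction with the discrete diagonalization property so as to select, level by level, a single run whose one-response-per-level sequence stays discrete; a König-type extraction finally produces an infinite branch, i.e.\ a play of $\GKF(X)$, whose $\sigma$-responses form an infinite discrete family, contradicting that $\sigma$ wins.

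The hard part will be the history-dependence of $\sigma$: a response is determined by the \emph{entire} sequence of $\mathsf K$'s previous moves, so the discrete family of responses that defeats $\sigma$ cannot be harvested from unrelated positions but must be realized along one increasing chain of positions, i.e.\ along a single branch. Organizing the auxiliary continuations into the tree above is exactly what forces the finally chosen discrete family to sit along one run; the delicate points are to verify that the per-node families $\F_p$ are genuinely moving off (so that $\DMOP$ applies) and that the diagonalization keeps the selected responses discrete not merely level-by-level but globally along the whole branch. These are precisely the places where the finiteness of the responses and the local finiteness of discrete families enter, and where the argument parallels, mutatis mutandis, the fusion of \cite[Theorem 2.3]{GMa}.
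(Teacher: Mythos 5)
Your first implication is essentially the paper's (the paper simply has $\mathsf F$ pick pairwise \emph{distinct} members of $\F$, which is possible because the previously played finite sets can be absorbed into the compact set to which the moving off property is applied; this makes your two-case analysis, whose second case relies on the indexed reading of discreteness, unnecessary). In the second implication your skeleton --- a countable tree of positions, $\DMOP$ applied at each node, Lemma~\ref{l:diag} to select one response per node, then a single branch --- is exactly the paper's, and your ``engine'' is in fact more careful than the paper on one point (responses may be empty, so one must argue, as you do with the constant moves $C,C,C,\dots$, that nonempty responses avoiding $C$ occur somewhere). But there is a genuine gap: your tree edges are arbitrary finite continuations $p\to q$, while the winning condition for $\mathsf K$ concerns \emph{all} responses along the play. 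Applying $\DMOP$ to $\F_p$ controls only the terminal responses $\sigma(q_1),\sigma(q_2),\dots$ of the selected continuations: it makes them discrete among themselves, but says nothing about the responses $\sigma$ gives at positions strictly between $p$ and $q_i$, nor about how $\sigma(q_i)$ interacts with the responses already recorded along $p$. Consequently the invariant ``every node has a discrete response family,'' which your engine needs in order to run at the node $q_i$, is not maintained; worse, the response family along the final branch contains infinitely many uncontrolled intermediate responses, so its discreteness is never established --- $\sigma$ could, for example, answer every such intermediate position with one and the same nonempty finite set and thereby win every play your construction produces.

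The paper avoids this problem entirely by using only one-step extensions, so that every position on the constructed branch is a node of the tree and every response along the branch lies in the single discrete family produced by the diagonalization. Your scheme can be repaired in the same spirit: take as children of $p$ only continuations $(p,C,\dots,C)$ of \emph{minimal} length having a nonempty response (minimality forces all intermediate responses to be empty, so they cannot spoil discreteness), and let $C$ contain the union of the responses recorded along $p$ (so the new response is disjoint from them; a finite disjoint family of finite subsets of $\dot X$ is discrete, such sets being clopen). With this modification $\F_p$ is still moving off, the invariant persists, and the responses along the final branch are the diagonalized ones together with empty sets, hence discrete. Note finally that no K\"onig-type extraction is needed: one application of Lemma~\ref{l:diag} to the countable collection of all per-node families yields a selection function, and the branch is obtained by iterating that selection from the root, exactly as in the paper.
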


\begin{proof} If $X$ does not have $\DMOP$, then there exists a moving off family $\F$ of finite  sets in $\dot X$ containing no infinite discrete subfamily. Then the player $\mathsf F$ can win the game $\GKF(X)$ by always choosing distinct members of $\F$.

Now assume that $X$ has $\DMOP$ and let $S_{\mathsf F}:\K(X)^{<\w}\to \mathcal F(\dot X)$ be any strategy of the player $\mathsf F$ in the game $\GKF(X)$.  The strategy $S_{\mathsf F}$ is a function  assigning to each finite sequence $(K_1,\dots,K_{n})\in\K(X)^{<\w}$ a finite set $S_{\mathsf F}(K_1,\dots,K_{n})\subset \dot X$ that is disjoint with the compact set $K_{n}$. Since the player $\mathsf K$ starts the game, we can assume that $S_{\mathsf F}$ assigns to the unique sequence $\K(X)^0$ of length zero the empty subset of $\dot X$.

We shall inductively construct a countable subtree $T\subset\K(X)^{<\w}$ such that for any sequence $(K_1,\dots,K_n)\in T$ the family $\{S_{\mathsf F}(K_1,\dots,K_n,K):(K_1,\dots,K_n,K)\in T\}$ is infinite and discrete.  The tree $T$ will be constructed as the union $T=\bigcup_{n\in\w}T_n$ of trees $T_n$ of height $n$. To start the inductive construction, we put $T_0:=\K(X)^0=\{\emptyset\}$. Assume that for some $n\in\w$ the subtree $T_n\subset\bigcup_{k\le n}\K(X)^{k}$ has been defined. For any node $(K_1,\dots,K_n)\in T_n$, consider the family $\{S_{\mathsf F}(K_1,\dots,K_n,K):K\in\K(X)\}$ and observe that it is moving off. Since $X$ has $\DMOP$, this family contains an infinite discrete subfamily $\{S_{\mathsf F}(K_1,\dots,K,K):K\in T_{n+1}(K_1,\dots,K_n)\}$ for some countable infinite subset $T_{n+1}(K_1,\dots,K_n)\subset\K(X)$. Finally, put 
$$T_{n+1}=T_n\cup\bigcup_{(K_1,\dots,K_n)\in T_n}\{(K_1,\dots,K_n,K):K\in T_{n+1}(K_1,\dots,K_n)\}.$$

 It is clear that the countable tree $T=\bigcup_{n\in\w}T_n$ has the required property: for every sequence $(K_1,\dots,K_n)\in T$ the indexed family $\{S_{\mathsf F}(K_1,\dots,K_n,K):(K_1,\dots,K_n,K)\in T\}$ is infinite and discrete.

By Lemma~\ref{l:diag}, the space $X$ has the discrete diagonalization property. So to each sequence $(K_1,\dots,K_n)\in T$ we can assign a compact set $K_{n+1}=\kappa(K_1,\dots,K_n)$ such that $(K_1,\dots,K_{n+1})\in T$ and the indexed family $$\mathcal D=\{S_{\mathsf F}(K_1,\dots,K_n,\kappa(K_1,\dots,K_n)):(K_1,\dots,K_n)\in T\}$$ is discrete.
Now consider the sequence $(K_n)_{n\in\w}\in\K(X)^\w$ defined recursively as $K_1=\kappa(\emptyset)$ and $K_{n+1}=\kappa(K_1,\dots,K_n)$ for $n\in\IN$.  Also put $F_n=S_{\mathsf F}(K_1,\dots,K_n)$ for every $n\in\IN$. Observe that the indexed family $\{F_n\}_{n\in\IN}$ is discrete, being a subfamily of the discrete family $\mathcal D$. Now we see that the sequence $$K_1,F_1,K_2,F_2,K_3,F_3,\dots$$ is a sequence of moves of the players $\mathsf K,\mathsf F$ in which the player $\mathsf F$ plays according to the strategy $S_{\mathsf F}$. Since the family $\{F_n\}_{n\in\IN}$ is discrete, the player $\mathsf K$ wins, which means that the strategy $S_{\mathsf F}$ of the player $\mathsf F$ is not winning.
\end{proof}

\begin{definition} A topological space $X$ is defined to have the {\em winning discrete moving off property} (abbreviated $\WDMOP$) if the player $\mathsf K$ has a winning strategy $S_{\mathsf K}$ in the game $\GKF(X)$.

This winning strategy is a function $S_{\mathsf K}:\F(\dot X)^{<\w}\to\K(X)$ assigning to any finite sequence $(F_0,\dots,F_{n-1})$ of finite subsets of $\dot X$ a compact subset $S_{\mathsf K}(F_0,\dots,F_{n-1})$ of $X$ such that a family $(F_n)_{n\in\w}$ of finite subsets of $\dot X$ is discrete in $X$ if for every $n\in\w$ the set $F_n$ is disjoint with the compact set $S_{\mathsf K}(F_0,\dots,F_{n-1})$.
\end{definition}

It is clear that $\WDMOP$ implies $\DMOP$.

\begin{lemma}\label{l:sW} If a topological space $X$ has $\WDMOP$, then the player $\mathsf K$ has a strategy $S_{\mathsf K}:\mathcal F(\dot X)^{<\w}\to\K(X)$ in the game $\GKF(X)$ such that for any sequence $(F_n)_{n\in\w}$ of finite subsets of $\dot X$ the indexed family $\{F_n\setminus S_{\mathsf K}(F_0,\dots,F_{n-1})\}_{n\in\w}$ is discrete in $X$.
\end{lemma}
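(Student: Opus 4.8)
The plan is to recycle the winning strategy furnished by $\WDMOP$, but to run it on \emph{sanitized} versions of the moves of $\mathsf F$. The conceptual point is that even when $\mathsf F$ plays a finite set $F_n$ that overlaps the compact set just chosen by $\mathsf K$, the part of $F_n$ surviving the deletion of that compact set is exactly the sort of move a ``law-abiding'' $\mathsf F$ could have produced. Thus the winning property of the $\WDMOP$-strategy will force the family of surviving portions to be discrete, and these surviving portions are precisely the sets $F_n\setminus S_{\mathsf K}(F_0,\dots,F_{n-1})$ appearing in the conclusion.

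Concretely, let $W\colon\F(\dot X)^{<\w}\to\K(X)$ be the winning strategy of the player $\mathsf K$ in $\GKF(X)$ supplied by $\WDMOP$. I would define the desired strategy $S_{\mathsf K}$ by a recursion that, given a raw history $(F_0,\dots,F_{n-1})$, first computes cleaned sets $G_0,\dots,G_{n-1}$ and then applies $W$ to them. Explicitly, set
$$G_k := F_k \setminus W(G_0,\dots,G_{k-1})\qquad(0\le k\le n-1),$$
and declare $S_{\mathsf K}(F_0,\dots,F_{n-1}):=W(G_0,\dots,G_{n-1})$, which is a compact subset of $X$. Since each $G_k$ depends only on $F_0,\dots,F_k$ (through the already-defined $G_0,\dots,G_{k-1}$), this is a well-defined function of the sequence $(F_0,\dots,F_{n-1})$, so $S_{\mathsf K}$ is a legitimate strategy of $\mathsf K$.

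For the verification, fix an arbitrary sequence $(F_n)_{n\in\w}$ of finite subsets of $\dot X$ and let $(G_n)_{n\in\w}$ be the associated cleaned sequence produced by the recursion above. By construction $G_n\cap W(G_0,\dots,G_{n-1})=\emptyset$ for every $n\in\w$, so $(G_n)_{n\in\w}$ is a legal play of $\mathsf F$ against the winning strategy $W$; hence by $\WDMOP$ the indexed family $\{G_n\}_{n\in\w}$ is discrete in $X$. It remains to identify this family with the one in the statement: from the definitions,
$$F_n\setminus S_{\mathsf K}(F_0,\dots,F_{n-1})=F_n\setminus W(G_0,\dots,G_{n-1})=G_n,$$
so $\{F_n\setminus S_{\mathsf K}(F_0,\dots,F_{n-1})\}_{n\in\w}=\{G_n\}_{n\in\w}$ is discrete, as required.

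The argument is essentially bookkeeping, and I do not anticipate a genuine obstacle. The single point demanding care is that the recursion for $G_k$ must feed $W$ the \emph{earlier cleaned} sets $G_0,\dots,G_{k-1}$ rather than the raw sets $F_0,\dots,F_{k-1}$; only then is the sequence $(G_n)_{n\in\w}$ an honest run respecting the disjointness rule of $\GKF(X)$, which is exactly the hypothesis under which $\WDMOP$ guarantees discreteness.
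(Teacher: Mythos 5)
Your proof is correct and is essentially the paper's own argument: the paper defines the same strategy via the recursion $K_i=W(F_0\setminus K_0,\dots,F_{i-1}\setminus K_{i-1})$ and sets $S_{\mathsf K}(F_0,\dots,F_{n-1})=K_n$, which coincides with your cleaned sets $G_k=F_k\setminus W(G_0,\dots,G_{k-1})$ and $S_{\mathsf K}(F_0,\dots,F_{n-1})=W(G_0,\dots,G_{n-1})$. The verification step, feeding the sanitized sequence to the winning strategy and identifying $G_n$ with $F_n\setminus S_{\mathsf K}(F_0,\dots,F_{n-1})$, is likewise identical.
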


\begin{proof} Since the space $X$ has $\WDMOP$, the player $\mathsf K$ has a winning strategy in the game $\GKF(X)$. This winning strategy is a function  $W:\F(\dot X)^{<\w}\to\K(X)$ such that a sequence $(F_n)_{n\in\w}$ of finite subsets of $\dot X$ is discrete in $X$ if $F_n\cap W(F_0,\dots,F_{n-1})=\emptyset$ for every $n\in\w$. Define a strategy $S_{\mathsf K}:\mathcal F(\dot X)^{<\w}\to\mathcal K(X)$ of the player $\mathsf K$ in the game $\GKF(X)$ assigning to each finite sequence $(F_0,\dots,F_{n-1})\in\mathcal F(\dot X)^{<\w}$ the last element $K_n$ of the sequence $(K_0,\dots,K_n)$ of compact subsets of $X$, defined by the  recursive formula: $K_{i}=W(F_0\setminus K_0,\dots,F_{i-1}\setminus K_{i-1})$ for $i\le n$.

We claim that the strategy $S_{\mathsf K}$ has the required property. Indeed, given any sequence $(F_n)_{n\in\w}$ of finite subsets of $\dot X$, define the sequence $(K_n)_{n\in\w}$ of compact subsets of $X$ by the recursive formula $K_{n}=W(F_0\setminus K_0,\dots,F_{n-1}\setminus K_{n-1})$ for $n\in\w$. The definition of the strategy $S_{\mathsf K}$ ensures that $K_{n}=S_{\mathsf F}(F_0,\dots,F_{n-1})$ for every $n\in\w$. For every $n\in\w$ consider the finite subset $E_n:=F_n\setminus K_n$ of $F_n\subset\dot X$ and observe that $E_n$ is disjoint with the compact set $K_n=W(E_0,\dots,E_{n-1})$. Since the strategy $W$ is winning, the indexed family $$(E_n)_{n\in\w}=(F_n\setminus K_n)_{n\in\w}=\big(F_n\setminus W(F_0\setminus K_0,\dots,F_{n-1}\setminus K_{n-1})\big)_{n\in\w}=\big(F_n\setminus S_{\mathsf K}(F_0,\dots,F_{n-1})\big)_{n\in\w}$$ is discrete in $X$.
\end{proof}

\section{A convenient base for the function space $C_k'(X,Y)$}\label{s3}

For topological spaces $X,Y$, let $C_k(X,Y)$ be the space of continuous functions from $X$ to $Y$, endowed with the compact-open topology. This topology is generated by the subbase consisting of the sets $$[K;U]:=\{f\in C_k(X,Y):f(K)\subset U\}$$where $K$ is a compact subset of $X$ and $U$ is an open subset of $Y$.

By a {\em pointed topological space} we understand a topological space $Y$ with a distinguished point $*_Y\in Y$.

A pointed topological space $Y$ is defined to be
\begin{itemize}
\item {\em $*$-first-countable} if $Y$ is first-countable at its distinguished point $*_Y$;
\item {\em $*$-admissible} if  the distinguished point $*_Y$ of $Y$ has a neighborhood $U_*$ which is not dense in $Y$.
\end{itemize}
It is easy to see that a pointed topological space $Y$ is $*$-admissible if $Y$ is Hausdorff and contains more than one point.

For a topological space $X$ and a pointed topological space $Y$ we shall study the Baire category properties of the subspace
$$C_k'(X,Y):=\big\{f\in C_k(X,Y):f(X')\subset\{*_Y\}\big\}\subset C_k(X,Y),$$where  $X':=X\setminus \dot X$ is the set of non-isolated points of $X$.

First we describe a convenient base of the  topology of the function space $C_k'(X,Y)$. For two sets $K\subset X$ and $U\subset Y$ we keep the notation
$$[K;U]:=\{f\in C_k'(X,Y):f(K)\subset Y\}.$$

 Fix a base $\mathcal B_Y\ni Y$ of the topology of the space $Y$ and consider the family $\mathcal Q$ of all quadruples $(K,U,F,u)$, where
\begin{itemize}
\item $K$ is a compact subset of $X$;
\item $U\in\mathcal B_Y$ is a neighborhood of the distinguished point $*_Y$ of $Y$;
\item $F\subset \dot X\setminus K$ is a finite set of isolated point in $X$;
\item $u:\dot X\to\mathcal B_Y$ is a function assigning to each point $x\in \dot X$ a non-empty basic open set $u(x)\in\mathcal B_Y$ such that $u(x)=Y$ if $x\notin F$.
\end{itemize}
For any quadruple $(K,U,F,u)\in\mathcal Q$, consider the open set
$$[K;U|F;u]:=[K;U]\cap\bigcap_{x\in F}[\{x\};u(x)]$$ in the function space $C_k'(X,Y)$, and observe that this open set is not empty.

The following lemma shows that the family $\{[K;U|F;u]:(K,U,F,u)\in\mathcal Q\}$ is a base of the topology of the function space $C_k'(X,Y)$.

\begin{lemma}\label{l:base}  For any quadruple $(K,U,F,u)\in\mathcal Q$, function $f\in C'_k(X,Y)$ and neighborhood $O_f\subset C_k'(X,Y)$ of $f$ there exists a quadruple $(\tilde K,\tilde U,\tilde F,\tilde u)\in\mathcal Q$ such that
$$F\subset\tilde F,\quad K\cup F\subset\tilde K\cup\tilde F,\quad\tilde U\subset U\quad\mbox{and}\quad f\in[\tilde K;\tilde U|\tilde F;\tilde u]\subset O_f.$$
\end{lemma}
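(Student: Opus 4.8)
My plan is to show that every basic compact–open neighborhood of $f$ can be refined to a set of the form $[\tilde K;\tilde U|\tilde F;\tilde u]$ while simultaneously absorbing the data of the given quadruple $(K,U,F,u)$. First I would reduce the neighborhood $O_f$: since the sets $[K;U]$ form a subbase for the compact-open topology and $C_k'(X,Y)$ carries the subspace topology, I can find finitely many compact sets $L_1,\dots,L_n\subset X$ and open sets $V_1,\dots,V_n\subset Y$ with $f(L_i)\subset V_i$ for all $i$ and $f\in\bigcap_{i=1}^n[L_i;V_i]\subset O_f$. Put $L:=K\cup\bigcup_{i=1}^n L_i$, a compact subset of $X$; including $K$ here is what will guarantee the covering condition $K\cup F\subset\tilde K\cup\tilde F$ at the end.

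The crux of the argument is the elementary observation that every compact subset of $\dot X$ is finite (being simultaneously discrete and compact), and I would invoke it twice. Let $J:=\{i\le n:*_Y\in V_i\}$ and choose a basic neighborhood $\tilde U\in\mathcal B_Y$ of $*_Y$ with $\tilde U\subset U\cap\bigcap_{i\in J}V_i$. Since $f^{-1}(\tilde U)$ is open and contains $L\cap X'$ (because $f\equiv *_Y$ on $X'$), the set $A:=L\setminus f^{-1}(\tilde U)$ is a compact subset of $\dot X$, hence finite. Next, for each $i\notin J$ the condition $*_Y\notin V_i$ forces $L_i\cap X'=\emptyset$, so $L_i\subset\dot X$ is compact, hence finite; set $B:=\bigcup_{i\notin J}L_i$. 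Now define $\tilde F:=F\cup A\cup B$, a finite subset of $\dot X$, put $\tilde K:=L\setminus\tilde F$, which is compact since $\tilde F$ is a finite set of isolated, hence open, points, and for each $x\in\tilde F$ choose $\tilde u(x)\in\mathcal B_Y$ with $f(x)\in\tilde u(x)\subset\bigcap\{V_i:x\in L_i\}$, setting $\tilde u(x):=Y$ for $x\notin\tilde F$. One checks readily that $(\tilde K,\tilde U,\tilde F,\tilde u)\in\mathcal Q$.

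It remains to verify the four conclusions. The relations $F\subset\tilde F$ and $\tilde U\subset U$ hold by construction, and $\tilde K\cup\tilde F=L\cup\tilde F\supseteq K\cup F$. Membership $f\in[\tilde K;\tilde U|\tilde F;\tilde u]$ follows because $\tilde K\subset L\setminus A\subset f^{-1}(\tilde U)$ gives $f(\tilde K)\subset\tilde U$, while $f(x)\in\tilde u(x)$ for $x\in\tilde F$ by the choice of $\tilde u$. For the inclusion $[\tilde K;\tilde U|\tilde F;\tilde u]\subset O_f$ I would fix $g$ in the basic set and show $g(L_i)\subset V_i$ for each $i$: a point $x\in L_i\cap\tilde F$ yields $g(x)\in\tilde u(x)\subset V_i$, whereas a point $x\in L_i\setminus\tilde F$ lies in $\tilde K$, so $g(x)\in\tilde U$, and since $x\notin B$ forces $i\in J$, we have $\tilde U\subset V_i$. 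The only genuine obstacle is precisely this last case, namely an isolated point of some $L_i$ on which $f$ is close to but not equal to $*_Y$ while $*_Y\notin V_i$; it is exactly here that the finiteness of $B$ is used, allowing such points to be moved into $\tilde F$ and controlled individually through $\tilde u$ rather than collectively through $\tilde U$.
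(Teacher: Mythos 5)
Your proof is correct, and its overall route is the same as the paper's: replace $O_f$ by a finite intersection $\bigcap_{i=1}^n[L_i;V_i]$ of subbasic sets, pick a basic neighborhood $\tilde U\subset U\cap\bigcap\{V_i:*_Y\in V_i\}$ of $*_Y$, push a finite set of offending isolated points into $\tilde F$, let $\tilde K$ be the remaining part of a large compact set containing $K$, and control the points of $\tilde F$ individually through $\tilde u$. The one place where you deviate is your set $B=\bigcup_{i\notin J}L_i$, and this deviation is a genuine improvement rather than a cosmetic one. The paper puts into $\tilde F$ only the finite set $C\setminus V$, where $V=(f{\restriction}C)^{-1}(\tilde U)$, so a point $x\in K_i$ with $*_Y\notin U_i$ but $f(x)\in\tilde U$ stays in $\tilde K$; for such a point the asserted inclusion $[\tilde K;\tilde U|\tilde F;\tilde u]\subset[K_i;U_i]$ can fail, since a function $g$ in the left-hand side is only guaranteed to satisfy $g(x)\in\tilde U$, and $\tilde U\not\subset U_i$ because $*_Y\in\tilde U\setminus U_i$. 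Concretely, for the trivial quadruple $(\emptyset,Y,\emptyset,u)$ and $O_f=[\{x_0\};U_1]$ with $x_0\in\dot X$, $f(x_0)\in U_1$ and $*_Y\notin U_1$, the paper's recipe permits $\tilde U=Y$ (the constraining intersection is over an empty index set, and $Y\in\mathcal B_Y$), which yields $\tilde F=\emptyset$, $\tilde K=\{x_0\}$, and then the constant function $g\equiv *_Y$ lies in $[\tilde K;\tilde U|\tilde F;\tilde u]$ but not in $O_f$. Your argument avoids this: since $*_Y\notin V_i$ forces $L_i\subset\dot X$, hence $L_i$ finite, all of $L_i$ is absorbed into $\tilde F$, where $\tilde u(x)\subset\bigcap\{V_j:x\in L_j\}$ takes over — exactly the case your final sentence flags. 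Note also that the paper's gap cannot in general be closed by merely shrinking $\tilde U$: that would require $\tilde U$ to miss the finitely many values $f(x)$, $x\in B$, which is possible when $Y$ is $T_1$ but not for an arbitrary pointed space $Y$ (e.g.\ the Sierpi\'nski doubleton with $*_Y$ its closed point), while the lemma imposes no separation axioms on $Y$. So your version of the argument is the one that establishes the lemma in its stated generality.
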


\begin{proof} By  definition of the compact-open topology on $C_k'(X,Y)$, there are compact sets $K_1,\dots,K_n\subset X$ and open sets $U_1,\dots,U_n\subset X$ such that $f\in \bigcap_{i=1}^n[K_i;U_i]\subset O_f$.

Choose any basic open set $\tilde U\in\mathcal B_Y$ such that $$*_Y\in \tilde U\subset U\cap\bigcap\{U_i:1\le i\le n,\;*_Y\in U_i\}.$$
Consider the compact set $C=K\cup F\cup\bigcup_{i=1}^nK_i$ in $X$ and observe that the open set $V=(f{\restriction}C)^{-1}(\tilde U)$ contains the set $X'\cap C$.
The set $C\setminus V\subset\dot X$ is finite, being a closed discrete subspace of the compact space $C$. Then the union $\tilde F:=(C\setminus V)\cup F\subset \dot X$ is clopen in $X$ and its complement $\tilde K:=C\setminus\tilde F$ is a compact subset of $X$.

For every $x\in\dot X\setminus\tilde F$ put $\tilde u(x)=Y$ and for every $x\in \tilde F$ choose a basic neighborhood $\tilde u(x)\in\mathcal B_Y$ of the point $f(x)\in Y$ such that $\tilde u(x)\subset\bigcap\{U_i:1\le i\le n,\;f(x)\in U_i\}$. It is easy to see that $f\in[\tilde K;\tilde U|\tilde F;\tilde u]\subset \bigcap_{i=1}^n[K_i;U_i]\subset O_f$.
\end{proof}

As a first application of the above base, we find a conditions of the topological spaces $X,Y$ ensuring that the function space $C_k'(X,Y)$ has countable cellularity.

We recall that a topological space has {\em countable cellularity} if it contains no uncountable family of pairwise disjoint open sets.

\begin{proposition}\label{p:cellular} If the space $Y$ is second-countable and the set $\dot X$ of isolated points of $X$ is of type $F_\sigma$ in $X$, then the function space $C'_k(X,Y)$ has countable cellularity.
\end{proposition}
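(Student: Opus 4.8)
The plan is to verify the countable chain condition directly, by a two-stage $\Delta$-system (sunflower) argument built on the base $\{[K;U|F;u]:(K,U,F,u)\in\mathcal Q\}$ furnished by Lemma~\ref{l:base}. Since $Y$ is second-countable, I fix a countable base $\mathcal B_Y$, so that the neighbourhood $U$ of $*_Y$ and every value $u(x)$ range over the countable family $\mathcal B_Y$. Writing the $F_\sigma$-set $\dot X$ as $\bigcup_{n\in\w}Z_n$ with each $Z_n$ closed in $X$ and $Z_n\subseteq Z_{n+1}$, I note that each $Z_n$ is a closed discrete subspace of $X$ (its points are isolated in $X$, hence in $Z_n$); consequently $K\cap Z_n$ is compact and discrete, i.e.\ finite, for every compact $K\subseteq X$. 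This finiteness is the structural consequence of the $F_\sigma$-hypothesis that drives the whole proof.

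Assume, towards a contradiction, that $C_k'(X,Y)$ carries an uncountable cellular family; refining, I may take it to consist of basic sets $[K_\alpha;U|F_\alpha;u_\alpha]$, $\alpha<\w_1$, where after passing to an uncountable subfamily I have already fixed $U_\alpha=U$ (countably many choices). The witnesses I shall build are finite modifications of the constant function with value $*_Y$: such a modification remains in $C_k'(X,Y)$, because altering a function at finitely many isolated points does not disturb continuity at the non-isolated points. Applying the $\Delta$-system lemma to the finite sets $F_\alpha$ produces a common root $R$ with $F_\alpha\cap F_\beta=R$ for $\alpha\ne\beta$, after which I fix the finitely supported pattern $u_\alpha{\restriction}R=v$ (again countably many possibilities). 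Setting $N_\alpha:=\{x\in F_\alpha\setminus R:u_\alpha(x)\cap U=\emptyset\}$, a direct inspection shows that $[K_\alpha;U|F_\alpha;u_\alpha]$ and $[K_\beta;U|F_\beta;u_\beta]$ meet as soon as no point is simultaneously forced into $U$ by one set and out of $U$ by the other; the sole such obstruction is a point of $N_\alpha$ lying in $K_\beta$ (or symmetrically).

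It therefore remains to thin the family so that $N_\alpha\cap K_\beta=\emptyset=N_\beta\cap K_\alpha$ for all $\alpha\ne\beta$, and this is exactly where the $F_\sigma$-structure is used. As each $N_\alpha$ is finite it lies in some $Z_{L_\alpha}$, so by the pigeonhole principle I refine to a subfamily with $L_\alpha=L$ constant; then $N_\alpha\subseteq Z_L$ and, crucially, $G_\alpha:=K_\alpha\cap Z_L$ is \emph{finite}. A second application of the $\Delta$-system lemma to the finite sets $N_\alpha\cup G_\alpha$ yields a root $R'$ with $(N_\alpha\cup G_\alpha)\cap(N_\beta\cup G_\beta)=R'$. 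Since the sets $N_\alpha$ are pairwise disjoint, only finitely many of them can meet the fixed finite set $R'$; discarding these $\alpha$, I obtain $R'\cap N_\alpha=\emptyset$, whence $N_\alpha\cap K_\beta=N_\alpha\cap G_\beta\subseteq R'\cap N_\alpha=\emptyset$ (using $N_\alpha\subseteq Z_L$), and symmetrically $N_\beta\cap K_\alpha=\emptyset$. Now the required witness is explicit: put $f(x)=*_Y$ for $x\notin F_\alpha\cup F_\beta$, choose $f(x)\in v(x)$ for $x\in R$, and for $x\in F_\alpha\setminus R$ (respectively $x\in F_\beta\setminus R$) choose $f(x)\in u_\alpha(x)\cap U$ if $x\in K_\beta$ and $f(x)\in u_\alpha(x)$ otherwise; this is legitimate because such an $x$ either lies in $N_\alpha$, and then $x\notin K_\beta$, or else satisfies $u_\alpha(x)\cap U\ne\emptyset$. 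One checks $f\in[K_\alpha;U|F_\alpha;u_\alpha]\cap[K_\beta;U|F_\beta;u_\beta]$, contradicting disjointness.

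I expect the principal obstacle to be precisely the interaction between the finite supports $F_\beta$ and the compact sets $K_\alpha$. Unlike the classical theorem that a product of separable spaces is ccc, here the conditions $f(K\cap\dot X)\subseteq U$ defining membership involve infinitely many coordinates, so a single sunflower on the sets $F_\alpha$ does not control compatibility. The decisive move is to localize the genuinely dangerous points $N_\alpha$ inside one closed discrete set $Z_L$, reducing the relevant trace $K_\alpha\cap Z_L$ to a finite set and thereby making a second sunflower applicable; this is the only step at which the hypothesis that $\dot X$ is of type $F_\sigma$ is needed.
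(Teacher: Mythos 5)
Your proof is correct and takes essentially the same route as the paper's: reduce to basic open sets via Lemma~\ref{l:base}, apply the $\Delta$-system lemma twice together with pigeonhole refinements (using second countability of $Y$ and the $F_\sigma$-decomposition of $\dot X$, whose closed discrete pieces meet every compact set in a finite set), and finish by exhibiting an explicit finitely supported witness function lying in the intersection of two members of the supposed cellular family. The only minor difference is that the paper traps the entire supports $F_i$ inside one closed discrete piece $D_n$ and arranges full disjointness $F_i\cap K_j=\emptyset=F_j\cap K_i$, whereas you localize only the \emph{dangerous} points $N_\alpha$ and tolerate harmless overlaps by choosing witness values in $u_\alpha(x)\cap U$; both variants use the $F_\sigma$-hypothesis in exactly the same way.
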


\begin{proof} To derive a contradiction, assume that $C'_k(X)$ contains an uncountable family $\{W_i\}_{i\in\w_1}$ of non-empty pairwise disjoint open sets. Fix a countable base $\mathcal B_Y$ of the topology of the space $Y$. By Lemma~\ref{l:base},   we can assume that each set $W_i$ is of basic form
$W_i=[K_i;U_i|F_i;u_i]$ for some quadruple $(K_i,U_i,F_i,u_i)\in\mathcal Q$. By the $\Delta$-Lemma \cite[9.18]{Jech}, there exists an uncountable subset $\Omega\subset\w_1$ and a finite set $F$ such that $F_i\cap F_j=F$ for any distinct elements $i,j\in\Omega$. By Pigeonhole Principle, for some function $u:F\to\mathcal B_Y$ the set $\{i\in \Omega:u_i{\restriction}F=u\}$ is uncountable. Replacing $\Omega$ by this uncountable set, we can assume that $u_i{\restriction}F=u$ for all $i\in\Omega$.

By our assumption, the set $\dot X$ is an $F_\sigma$-set in $X$, so $\dot X=\bigcup_{n\in\w}D_n$ for some increasing sequence $(D_n)_{n\in\w}$ of closed (discrete) sets $D_n\subset X$. By the Pigeonhole Principle, for some $n\in\w$ the set $\Omega_n:=\{i\in\Omega: F_i\subset D_n\}$ is uncountable. Since the set $D_n$ is closed and discrete in $X$, for every $i\in\Omega_n$ the compact set $E_i:=K_i\cap D_n$ is finite. By the $\Delta$-Lemma \cite[9.18]{Jech}, there exists an uncountable subset $\Lambda\subset\Omega_n$ and a finite set $E$ such that  $E_i\cap E_j=E$ for any distinct elements $i,j\in\Lambda$. Now take any element $i\in\Lambda$ and observe that $$F\cap E\subset F_i\cap E_i\subset F_i\cap K_i=\emptyset.$$  Since each of the families $(F_j\setminus F)_{j\in\Lambda}$ and $(E_j\setminus E)_{j\in\Lambda}$ is disjoint, and the sets $F_i,E_i$ are finite, the set $$\lambda:=\{j\in\Lambda:(F_j\setminus F)\cap E_i\ne\emptyset\}\cup\{j\in\Lambda:F_i\cap (E_j\setminus E)\ne\emptyset\}$$is finite. Take any element $j\in\Lambda\setminus(\lambda\cup\{i\})$ and observe that $$F_i\cap K_j=F_i\cap D_n\cap K_j=F_i\cap E_j=(F_i\cap (E_j\setminus E))\cup (F_i\cap E)\subset \emptyset\cup(F_i\cap E_i)=\emptyset.$$ By analogy we can check that $F_j\cap K_i=\emptyset$. This allows us to choose a (necessarily continuous) continuous function $f:X\to Y$ such that
\begin{itemize}
\item $f(x)\in u(x)$ for all $x\in F$;
\item $f(x)\in u_i(x)$ for all $x\in F_i\setminus F$;
\item $f(x)\in u_j(x)$ for all $x\in F_j\setminus F$;
\item $f(x)=*_Y$ for all $x\in X\setminus (F_i\cup F_j)$.
\end{itemize}
It is clear that $$f\in [K_i;U_i|F_i,u_i]\cap[K_j;U_j;F_j;u_j]=W_i\cap W_j,$$which is a desired contradiction.
\end{proof}

\section{A function space characterization of $\DMOP$}\label{s4}

In this section we shall characterize $\DMOP$  in terms of Baire category properties of function spaces $C_k'(X,Y)$.


We shall use the classical Oxtoby's characterizations of Baire and meager spaces in terms of the Choquet games $\GNE(X)$ and $\GEN(X)$, which are played by two players $\mathsf E$ and $\mathsf {N}$ (abbreviated from $\mathsf{Empty}$ and $\mathsf{Non}\mbox{-}\mathsf{Empty}$) on a topological space $X$.

The game $\GNE(X)$ is started by the player $\mathsf N$ whose chooses a non-empty open set $U_1$ of $X$. Then player $\mathsf E$ responds selecting a non-empty open set $V_1$. In the $n$th inning the player $\mathsf N$ selects a non-empty open set $U_n\subset V_{n-1}$ and the player $\mathsf E$ responds selecting a non-empty open set $V_n\subset U_n$. At the end of the game the player $\mathsf E$ is declared the winner if the intersection $\bigcap_{n\in\IN}U_n=\bigcap_{n\in\IN}V_n$ is empty; otherwise the player $\mathsf N$ wins the game $\GNE(X)$.

The game $\GEN(X)$ is started by the player $\mathsf E$ whose chooses any non-empty open set $U_1$ of $X$. Then player $\mathsf N$ responds selecting a non-empty open set $V_1$. In the $n$th inning the player $\mathsf E$ selects a non-empty open set $U_n\subset V_{n-1}$ and the player $\mathsf N$ responds selecting a non-empty open set $V_n\subset U_n$. At the end of the game the player $\mathsf E$ is declared the winner if the intersection $\bigcap_{n\in\IN}U_n=\bigcap_{n\in\IN}V_n$ is empty; otherwise the player $\mathsf N$ wins the game $\GEN(X)$.

The following classical characterization can be found in  \cite{Oxtoby}.

\begin{theorem}[Oxtoby]\label{t:Oxtoby} A topological space $X$ is
\begin{itemize}
\item meager if and only if the player $\mathsf E$ has a winning strategy in the game $\GNE(X)$;
\item Baire if and only if the player $\mathsf E$ has no winning strategy in the game $\GEN(X)$.
\end{itemize}
\end{theorem}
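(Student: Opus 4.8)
The plan is to prove the two equivalences in turn, obtaining the $\GEN(X)$ statement from the $\GNE(X)$ statement by localizing to open subspaces.

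First I would dispose of the direction ``$X$ meager $\Ra$ $\mathsf E$ has a winning strategy in $\GNE(X)$''. Writing $X=\bigcup_{n\in\w}N_n$ as an increasing union of closed nowhere dense sets, the sets $G_n:=X\setminus N_n$ are dense open with $\bigcap_{n\in\w}G_n=\emptyset$, and the rule ``answer $\mathsf N$'s move $U_n$ by $V_n:=U_n\cap G_n$'' is a legal strategy for $\mathsf E$ (each $V_n$ is a nonempty open subset of $U_n$) which wins because $\bigcap_{n\in\IN}V_n\subset\bigcap_{n\in\w}G_n=\emptyset$.

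The converse ``$\mathsf E$ has a winning strategy in $\GNE(X)$ $\Ra$ $X$ meager'' is the technical core. Fixing a winning strategy $\sigma$ for $\mathsf E$, I would build, by recursion on the level $n$, a tree whose nodes are partial plays consistent with $\sigma$. Passing from level $n-1$ to level $n$: inside each level-$(n-1)$ response $V$ of $\sigma$, use Zorn's Lemma to pick a maximal family of legal $\mathsf N$-continuations whose $\sigma$-responses are pairwise disjoint. Maximality forces the union $G_n$ of all level-$n$ responses to be dense open in $X$: otherwise $\mathsf N$ could legally play into the nonempty open set $X\setminus\overline{G_n}$ and $\mathsf E$'s $\sigma$-response would be a new set disjoint from $G_n$, contradicting maximality. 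Any $x\in\bigcap_{n\in\w}G_n$ then lies in exactly one set at each level (by disjointness), and these sets are nested, so $x$ determines a full play in which $\mathsf E$ follows $\sigma$; since $\sigma$ is winning, the intersection along this branch is empty, contradicting $x\in\bigcap_{n}V_n$. Hence $\bigcap_{n\in\w}G_n=\emptyset$ and $X=\bigcup_{n\in\w}(X\setminus G_n)$ exhibits $X$ as meager.

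For the second equivalence I would reduce to the first. The key observation is that after $\mathsf E$'s opening move $U_1$ in $\GEN(X)$, the remaining play is exactly a play of $\GNE(U_1)$ on the open subspace $U_1$, with $\mathsf N$ now moving first; moreover the two winning conditions agree, since along any play $\bigcap_{n}U_n=\bigcap_{n}V_n$ and $U_n\subset U_1$ for $n\ge 2$. Consequently $\mathsf E$ has a winning strategy in $\GEN(X)$ if and only if $\mathsf E$ has a winning strategy in $\GNE(U_1)$ for some nonempty open $U_1\subset X$, which by the first equivalence happens if and only if some nonempty open $U_1$ is meager. Finally I would invoke the standard fact that $X$ fails to be Baire precisely when it has a nonempty open meager subset. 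Combining these equivalences yields ``$X$ not Baire $\iff$ $\mathsf E$ has a winning strategy in $\GEN(X)$'', the contrapositive of the claim. The main obstacle is the tree-and-maximality construction in the converse of the first equivalence; everything else is bookkeeping with the game rules.
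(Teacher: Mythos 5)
The paper does not prove this theorem at all: it is stated as a classical fact and attributed to Oxtoby \cite{Oxtoby}, so there is no internal argument to compare yours against. Judged on its own, your proof is correct, and it is essentially the classical Banach--Mazur--Oxtoby argument: the easy implication via complements of an increasing sequence of closed nowhere dense sets covering $X$; the hard implication via the tree of partial plays consistent with $\sigma$, with a maximal pairwise-disjoint family of $\sigma$-responses chosen at each node; and the Baire statement reduced to the meager statement by freezing $\mathsf E$'s opening move, so that the tail of $\GEN(X)$ after $U_1$ becomes a copy of $\GNE(U_1)$. Three steps are left implicit and should be spelled out in a written version, though none is a genuine gap. (i) At level $n\ge 2$, to contradict maximality you must check that $\mathsf N$ has a \emph{legal} move inside $X\setminus\overline{G_n}$, i.e.\ that this set meets some level-$(n-1)$ response; this is exactly where the inductive hypothesis that $G_{n-1}$ is dense in $X$ is used, so density must be carried along as part of the recursion. (ii) The claim that a point of $\bigcap_{n\in\w}G_n$ lies in \emph{exactly one} response at each level needs disjointness across distinct nodes of the same level, not just within one node's family; this follows by induction because children of distinct nodes sit inside disjoint parent responses. (iii) In the final reduction, the first equivalence applied to the space $U_1$ yields that $U_1$ is meager \emph{in itself}, whereas the characterization of non-Baireness of $X$ concerns open sets meager \emph{in $X$}; you need the standard (easy) fact that for an open subspace these two notions coincide. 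With those points made explicit, the proof is complete.
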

A topological space $X$ is defined to be {\em Choquet} if the player $\mathsf N$ has a winning strategy in the Choquet game $\GEN(X)$.

Oxtoby's Theorem~\ref{t:Oxtoby} implies that $$\mbox{Choquet $\Ra$ Baire $\Ra$ non-meager}.$$

By \cite[8.17]{Ke} (see also \cite[7.3]{BanP}), a metrizable topological space is Choquet if and only if it almost complete-metrizable;  moreover, any open continuous image of a Choquet space is Choquet and the Tychonoff product of any family of Choquet spaces is Choquet \cite{White}.
\smallskip

A topological space $X$ is defined to be
\begin{itemize}
\item {\em complete-metrizable} if $X$ is homeomorphic to a complete metric space;
\item {\em Polish} if $X$ is separable and complete-metrizable;
\item {\em almost Polish} if $X$ contains a dense Polish subspace;
\item {\em almost complete-metrizable} if $X$ contains a dense complete-metrizable subspace.
\end{itemize}

For every topological space we have the implications
$$\xymatrix{
\mbox{Polish}\ar@{=>}[d]\ar@{=>}[r]&\mbox{complete-metrizable}\ar@{=>}[d]&&\mbox{non-meager}\\
\mbox{almost Polish}\ar@{=>}[r]&\mbox{almost complete-metrizable}\ar@{=>}[r]&\mbox{Choquet}\ar@{=>}[r]&\mbox{Baire}\ar@{=>}[u]
}
$$

We recall that a pointed topological space $Y$ is $*$-admissible if  its distinguished point $*_Y$ has a neighborhood $U_*$ which is not dense in $Y$.

\begin{lemma}\label{l:B1} Let $X$ be a topological space and $Y$ be a $*$-admissible pointed topological space. If the function space $C_k'(X,Y)$ is non-meager, then the space $X$ has $\DMOP$.
\end{lemma}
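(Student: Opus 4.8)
The plan is to prove the contrapositive: if $X$ does not have $\DMOP$, then the function space $C_k'(X,Y)$ is meager. By Oxtoby's Theorem~\ref{t:Oxtoby}, meagerness is equivalent to the existence of a winning strategy for the player $\mathsf E$ in the game $\GNE(C_k'(X,Y))$, so it suffices to construct such a strategy. For the setup, I would use $*$-admissibility of $Y$ to fix a neighborhood $U_*$ of $*_Y$ that is not dense and then pick a non-empty basic set $W\in\mathcal B_Y$ with $W\cap\overline{U_*}=\emptyset$; in particular $*_Y\in U_*$ and $W\cap U_*=\emptyset$. Since $X$ lacks $\DMOP$, I would fix a moving off family $\F$ of finite subsets of $\dot X$ containing no infinite discrete subfamily; the key point exploited below is that \emph{every} infinite subfamily of $\F$ is then non-discrete.

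The strategy of $\mathsf E$ is defined inning by inning. Suppose at inning $n$ the player $\mathsf N$ has just produced a non-empty open set $U_n$. First I would apply Lemma~\ref{l:base} to refine $U_n$ to a basic set $[\tilde K_n;\tilde U_n\,|\,\tilde F_n;\tilde u_n]\subset U_n$. Then, using the moving off property of $\F$ applied to the compact set $C_n:=\tilde K_n\cup\tilde F_n\cup\bigcup_{i<n}F_i$, I would select a non-empty $F_n\in\F$ with $F_n\cap C_n=\emptyset$; this choice forces the sets $F_1,F_2,\dots$ to be pairwise disjoint, hence an infinite subfamily of $\F$. The player $\mathsf E$ then responds with $V_n:=[\tilde K_n;\tilde U_n\,|\,\tilde F_n;\tilde u_n]\cap\bigcap_{x\in F_n}[\{x\};W]$. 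Because $F_n\subset\dot X\setminus\tilde K_n$ consists of isolated points disjoint from $\tilde F_n$, the set $V_n$ is again indexed by a quadruple in $\mathcal Q$ (obtained by extending $\tilde u_n$ via $x\mapsto W$ on $F_n$), so $V_n$ is non-empty, and clearly $V_n\subset U_n$.

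To see that this strategy wins, observe that any $f\in\bigcap_{n\in\IN}V_n$ satisfies $f(F_n)\subset W$ for every $n$, since the sets are nested. As $\{F_n\}_{n\in\IN}$ is an infinite subfamily of $\F$, it is not discrete, so there is a point $x_0\in X$ every neighborhood of which meets at least two of the $F_n$; since the $F_n$ are pairwise disjoint subsets of $\dot X$, the point $x_0$ cannot be isolated, whence $x_0\in X'$ and $x_0\in\overline{\bigcup_n F_n}$. Then $f(x_0)=*_Y$, and continuity of $f$ at $x_0$ gives a neighborhood $O$ of $x_0$ with $f(O)\subset U_*$; but $O$ meets some $F_n$ at a point $y$, forcing $f(y)\in U_*\cap W=\emptyset$, a contradiction. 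Hence $\bigcap_{n\in\IN}V_n=\emptyset$, the player $\mathsf E$ wins $\GNE(C_k'(X,Y))$, and the space is meager.

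The step I expect to be the main obstacle is the bookkeeping guaranteeing that each response $V_n$ remains a legitimate non-empty basic open set. This hinges on absorbing both the refined compact part $\tilde K_n$ and all previously used finite sets into the single compact set $C_n$ before invoking the moving off property, so that $F_n$ lands among isolated points lying outside $\tilde K_n$ and disjoint from $\tilde F_n$; only then is the newly imposed constraint $f(F_n)\subset W$ compatible with the earlier constraints and with the quadruple format of Lemma~\ref{l:base}. The concluding continuity argument is then short, as it uses nothing beyond the separation $W\cap U_*=\emptyset$ supplied by $*$-admissibility.
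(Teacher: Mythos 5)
Your proof is correct, and it takes a genuinely different route from the paper's. You prove the contrapositive: from a fixed moving-off family $\F$ with no infinite discrete subfamily you build an explicit winning strategy for the player $\mathsf E$ in $\GNE(C_k'(X,Y))$ and conclude meagerness from Theorem~\ref{t:Oxtoby}; the game $\GKF(X)$ and Theorem~\ref{t:game} never appear. The paper keeps the implication in its direct form: it takes an \emph{arbitrary} strategy $S_{\mathsf F}$ of the player $\mathsf F$ in $\GKF(X)$, converts it (after a bookkeeping modification $\tilde S_{\mathsf F}$ that absorbs earlier finite sets into the compact moves, playing exactly the role of your sets $C_n$) into a strategy $S_{\mathsf E}$ of $\mathsf E$ in $\GNE(C_k'(X,Y))$, uses non-meagerness via Oxtoby to produce a play defeating $S_{\mathsf E}$, and observes that a function $f$ surviving in the intersection forces the family $\{F_n\}_{n\in\IN}$ to be discrete, so $S_{\mathsf F}$ is not winning; Theorem~\ref{t:game} then yields $\DMOP$. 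The two arguments share their analytic core: the basic neighborhoods of Lemma~\ref{l:base}, the constraints $[F_n;W]$, and the separation of $W$ from a neighborhood of $*_Y$, which converts the existence of a common point $f$ of all chosen open sets into a statement about (non)discreteness of $\{F_n\}_{n\in\IN}$ via continuity of $f$ at points of $X'$. What your route buys is economy: you avoid the strategy-conversion machinery entirely, use only the trivial half of the correspondence between $\DMOP$ and the game $\GKF(X)$ (a bad family defeats every opponent), and you directly obtain the ``meager'' formulation that the paper itself needs later (it is exactly how Lemma~\ref{l:B1} is invoked in Theorem~\ref{t:m=M}). What the paper's route buys is uniformity of method: the same strategy-conversion template reappears, mirrored, in Lemma~\ref{l:C1} for the Choquet/$\WDMOP$ case, where one cannot fix a single bad family and a genuine transformation of strategies is unavoidable.
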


\begin{proof} Assume that the function space $C_k'(X,Y)$ is non-meager. By Theorem~\ref{t:Oxtoby}, the player $\mathsf E$ has no winning strategy in the Choquet game $\GNE(C_k'(X,Y))$. By Theorem~\ref{t:game}, the $\DMOP$ for the space $X$ will follow as soon as we show that the player $\mathsf F$ has no winning strategy in the game $\GKF(X)$. 
Let $S_{\mathsf F}:\K(X)^{<\w}\to\mathcal F(\dot X)$ be any strategy of the player $\mathsf F$ in the game $\GKF(X)$. The strategy $S_{\mathsf F}$ assigns to each finite sequence of compact sets $(K_0,\dots,K_n)\in\K(X)^{<\w}$ a finite set $S_{\mathsf F}(K_0,\dots,K_n)\subset \dot X\setminus K_n$.

Define a new strategy $\tilde S_{\mathsf F}$ of the player $\mathsf F$ in the game $\GKF$ assigning to each sequence $(K_0,\dots,K_n)\in\K(X)^{<\w}$ the sequence $S_{\mathsf F}(\tilde K_0,\dots,\tilde K_n)$ where $\tilde K_0=K_0$ and $\tilde K_i=K_i\cup \bigcup_{j<i}\tilde S_{\mathsf F}(K_0,\dots,K_{j})$ for $1<i\le n$.

Now we use the strategy $\tilde S_{\mathsf F}$, to describe a strategy $S_{\mathsf E}$ of the player $\mathsf E$ in the Choquet game $\GNE(C_k'(X,Y))$.

Let $\tau$ denote the family of all non-empty open sets in $C_k'(X,Y)$. By our assumption, the pointed space $Y$ is $*$-admissible. So, there exists a non-empty open set $W\subset Y$ whose closure does not contain the distinguished point $*_Y$ of $Y$.

The definition of the compact-open topology ensures that for every $U\in \tau$ there exists a compact set $\kappa(U)\subset X$ such that for any finite set $F\subset \dot X\setminus \kappa(U)$ the open set $U\cap [F;W]$ is not empty.

For any sequence $(U_0,\dots,U_n)\in\tau^{<\w}$ let $S_{\mathsf E}(U_0,\dots,U_n)=U_n\cap [F_n;W]$ where $$F_n=\tilde S_{\mathsf F}(\kappa(U_0),\dots,\kappa(U_n))\subset \dot X\setminus \kappa(U_n)$$ is the answer of the player $\mathsf F$ to the moves $(\kappa(U_0),\dots,\kappa(U_n))$ of the player $\mathsf K$ in the game $\GKF(X)$, according to the strategy $\tilde S_{\mathsf F}$. Since $F_n\cap \kappa(U_n)=\emptyset$, the open set $U_n\cap[F_n;W]$ is non-empty and hence it is a legal move of the player $\mathsf E$ in the Choquet game $\GNE(C_k'(X,Y))$. Since the player $\mathsf E$ has no winning strategy in the game $\GNE(C_k'(X,Y))$, the strategy $S_{\mathsf E}$ is not winning. So there exists an infinite sequence $(U_n)_{n\in\w}\in\tau^\w$ such that  $U_{n+1}\subset S_{\mathsf E}(U_0,\dots,U_n)$ for all $n\in\IN$ and the intersection $\bigcap_{n=1}^\infty U_n$ is not empty and hence contains some function  $f\in C_k'(X,Y)$.

Let $F_0=\emptyset$ and for every $n\in\IN$ let $K_n=\kappa(U_n)\cup \bigcup_{i<n}F_{i}$ and $$F_n=\tilde S_{\mathsf F}(\kappa(U_1),\dots,\kappa(U_n))=S_{\mathsf F}(K_1,\dots,K_n)\subset \dot X\setminus  K_n\subset\dot X\setminus \bigcup_{i<n}F_i.$$

It follows that the family $(F_n)_{n\in\IN}$ is disjoint and $f\in \bigcap_{n\in\IN}[F_n;W]$. Since $f\big(\bigcup_{n\in\w}F_n\big)\subset W$ and $*_Y\notin \overline{W}$, the continuity of $f$ guarantees that the closure of the set $\bigcup_{n=1}^\infty F_n$ does not intersect the set $X'\subset f^{-1}(*_Y)$ and hence the disjoint family $\{F_n\}_{n\in\IN}$ is discrete in $X$.

Observe that  $$K_1,F_1,K_2,F_2,\dots$$is the sequence of the moves of the players $\mathsf K$ and $\mathsf F$ in the game $\GKF(X)$, where the player $\mathsf F$ plays according to the strategy $S_{\mathsf F}$ and eventually loses as the family $\{F_n\}_{n\in\IN}$ is discrete. So, the strategy $S_{\mathsf F}$ is not winning.
\end{proof}


\begin{lemma}\label{l:B2} Let $Y$ be a $*$-first-countable pointed Choquet space. If a topological  space $X$ has $\DMOP$, then the function space $C_k'(X,Y)$ is Baire.
\end{lemma}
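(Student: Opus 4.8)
The plan is to combine three game-theoretic facts. By Oxtoby's Theorem~\ref{t:Oxtoby} it suffices to show that the player $\mathsf E$ has no winning strategy in the Choquet game $\GEN(C_k'(X,Y))$; by Theorem~\ref{t:game} the hypothesis $\DMOP$ says that the player $\mathsf F$ has no winning strategy in $\GKF(X)$; and since $Y$ is Choquet, the player $\mathsf N$ has a winning strategy $S^Y_{\mathsf N}$ in $\GEN(Y)$. Fixing an arbitrary strategy $S_{\mathsf E}$ of $\mathsf E$ in $\GEN(C_k'(X,Y))$, I would encode a full $\mathsf N$-response into a strategy $S_{\mathsf F}$ of $\mathsf F$ in $\GKF(X)$, apply Theorem~\ref{t:game} to obtain a $\GKF$-play that $\mathsf F$ loses, and read off from that play a function in $\bigcap_nU_n$, witnessing that $S_{\mathsf E}$ is not winning.

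Concretely, I would fix a decreasing neighborhood base $(W_m)_{m\in\w}$ at $*_Y$ (this is where $*$-first-countability enters) and run the three games in lockstep. At inning $n$ I maintain: the adversary's open set $U_n=S_{\mathsf E}(V_1,\dots,V_{n-1})$; the compact move $K_n$ of $\mathsf K$, used cumulatively as $\tilde K_n=K_1\cup\dots\cup K_n$; a basic refinement $V_n=[K_n^{\mathrm{ref}};W_{m_n}|\,\bigcup_{i\le n}F_i;u_n]\subset U_n$ furnished by Lemma~\ref{l:base}, with $m_n$ strictly increasing and compact part $K_n^{\mathrm{ref}}=\tilde K_n\setminus\bigcup_{i\le n}F_i$; the finite set $F_n\subset\dot X$ of isolated points that $U_n$ freshly forces off $*_Y$ (those $x$ with $f_0(x)\notin W_{m_n}$ for the center $f_0$ of the refinement), which serves as $\mathsf F$'s reply $S_{\mathsf F}(K_1,\dots,K_n)$; and, for each $x$ that has entered some $F_i$, one further round of a game $\GEN(Y)$ whose $\mathsf E$-move is the constraint on $f(x)$ extracted from $U_n$ and whose $\mathsf N$-move is dictated by $S^Y_{\mathsf N}$. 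The rule $F_n\cap K_n=\emptyset$ is preserved by deleting the finitely many active special points from the compact parts (each isolated point is clopen, so the deletions keep the set compact), and the recursion makes $S_{\mathsf F}$ depend only on $K_1,\dots,K_n$.

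Since $X$ has $\DMOP$, Theorem~\ref{t:game} yields a $\mathsf K$-play $(K_n)_{n\in\w}$ along which $\{F_n\}_{n\in\w}$ is discrete. Along this distinguished play I define $f\colon X\to Y$ by letting $f(x)$ be a point of the (nonempty, as $S^Y_{\mathsf N}$ wins) intersection of the game $\GEN(Y)$ attached to $x$ whenever $x\in S:=\bigcup_nF_n$, and $f(x)=*_Y$ otherwise. Discreteness of $\{F_n\}$ makes $S$ a closed subset of the open set $\dot X$, so every subset of $S$ is closed in $X$ and $f$ is continuous with $f(X')\subset\{*_Y\}$; thus $f\in C_k'(X,Y)$. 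Membership $f\in V_n$ follows because the special constraints $f(x)\in u_n(x)$ hold by construction of the $Y$-games, while the compact constraint of $V_n$ is met since its compact part $K_n^{\mathrm{ref}}$ is built cumulatively with all special points deleted, so every point of it stays non-special and carries $f\equiv *_Y$; here the monotonicity $W_{m_n}\downarrow\{*_Y\}$ guarantees that any point lying in cofinally many $\tilde K_n$ is pinned to $*_Y$, which is exactly the location information that $*$-first-countability supplies. Hence $f\in\bigcap_nU_n$ and $\mathsf N$ wins, so $S_{\mathsf E}$ is not winning.

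I expect the main obstacle to be the simultaneous bookkeeping that couples the three games. One must define $S_{\mathsf F}$ so that the isolated points it emits are precisely those $S_{\mathsf E}$ has been forced to push off $*_Y$ (making them legal $\mathsf F$-moves disjoint from the compact parts and absorbable into $S$); one must check that once a point is declared special the adversary can never pull its constraint back across $*_Y$ (so that each per-point game $\GEN(Y)$ is a genuine nested play to which $S^Y_{\mathsf N}$ applies); and one must keep the compact parts compact after the deletions. Threading $*$-first-countability through this — using the countable base $(W_m)$ both to detect fresh off-$*_Y$ points inning by inning and to pin the remaining points to $*_Y$ — is the delicate heart of the argument, with the Choquet property of $Y$ entering only to assemble the pointwise values into the single function $f$.
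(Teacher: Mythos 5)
Your architecture — translate the given strategy $S_{\mathsf E}$ of $\mathsf E$ in $\GEN(C_k'(X,Y))$ into a strategy $S_{\mathsf F}$ of $\mathsf F$ in $\GKF(X)$, invoke Theorem~\ref{t:game}, and read a function off the resulting $\mathsf F$-losing play — is not the paper's, and it has a genuine gap at exactly the point you flag as delicate. Since $U_n=S_{\mathsf E}(V_1,\dots,V_{n-1})$ is determined by $K_1,\dots,K_{n-1}$ alone, the adversary $\mathsf K$ (whose input to $S_{\mathsf F}$ is arbitrary) may choose $K_n$ to contain precisely the isolated points that $U_n$ has freshly forced off $*_Y$; so your $F_n$ is not a legal move, and deleting special points from the compact parts $K_n^{\mathrm{ref}}$ of $V_n$ goes in the wrong direction — it does not restore $F_n\cap K_n=\emptyset$. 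The natural repair, emitting $F_n\setminus K_n$ (this is what the paper does), breaks your continuity and membership arguments: the non-emitted points of $F_n\cap K_n$ have been pushed far off $*_Y$ by $U_n$, and $\mathsf N$ cannot pin them back near $*_Y$ because $V_n$ must lie \emph{inside} $U_n$; relatedly, your claim that every point of $K_n^{\mathrm{ref}}$ ``stays non-special and carries $f\equiv *_Y$'' is false, since such points can be pushed off $W_{m_j}$ (while staying inside $W_{m_n}$) at later stages $j>n$.

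The failure is not a bookkeeping slip but structural, because Theorem~\ref{t:game} only asserts that \emph{some} play against $S_{\mathsf F}$ is lost by $\mathsf F$, with no control over which. Take $X=\w+1$, $Y=[0,1]$, $*_Y=0$, $B=(\tfrac12,1]$, and let $S_{\mathsf E}$ at each inning force the least hitherto unconstrained integer into $B$. If $\mathsf K$ plays at inning $n$ exactly the singleton of that integer, then (after the repair) $\mathsf F$ emits $\emptyset$ at every inning, the indexed family is trivially discrete — a lost play for $\mathsf F$ — yet $\bigcap_nU_n$ would consist of continuous functions sending every integer into $B$ and converging to $0$ at $\w$, hence is empty; so this lost play does not defeat $S_{\mathsf E}$, and your read-off has nothing to read off. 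The paper avoids playing against an adaptive opponent altogether: it assumes $C_k'(X,Y)$ is not Baire, fixes a meager open set $W\subset\bigcup_nM_n$ with $M_n$ closed nowhere dense, and builds a tree of basic sets indexed by $\K(X)^{<\w}$ subject to the pinning condition (e), namely $[\kappa_s;U_s|F_s;u_s]\subset[K_n\setminus F_t;U_t\cap O_{n+1}]\cap[\kappa_t;U_t|F_t;u_t]\setminus M_{n+1}$, taking $S_{\mathsf F}(K_0,\dots,K_n)=F_s\setminus K_n$. That pinning constraint is always satisfiable because avoiding a closed nowhere dense set is compatible with every nonempty open set; it is precisely this freedom — unavailable once you must stay inside $S_{\mathsf E}$'s moves — that renders the non-emitted special points harmless (their values land in $O_{n+1}$) and makes the limit function continuous, yielding a member of $\bigcap_n(W\setminus M_n)=\emptyset$ and the desired contradiction.
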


\begin{proof}
By the $*$-first-countability of the pointed space $Y$, the distinguished point $*_Y$ of $Y$ has a countable neighborhood base $\{O_n\}_{n\in\w}$ such that $O_{n+1}\subset O_n$ for all $n\in\w$. Let $\tau_Y$ be the family of all non-empty open sets in $Y$. Since the space $Y$ is Choquet, the player $\mathsf N$ has a winning strategy $S_{\mathsf N}:\tau_Y^{<\w}\to\tau_Y$ in the Choquet game $\GEN(Y)$. The winning strategy $S_{\mathsf N}$ is a function assigning to each finite sequence $(U_0,\dots,U_n)$ of non-empty open sets in $X$ a non-empty open set $S_{\mathsf N}(U_0,\dots,U_n)\subset U_n$ such that for any infinite sequence $(U_n)_{n\in\w}\in\tau_Y^\w$ the intersection $\bigcap_{n\in\w}S_{\mathsf N}(U_0,\dots,U_n)$ is not empty if $U_n\subset S_{\mathsf N}(U_0,\dots,U_{n-1})$ for any $n\in\IN$. Since the player $\mathsf E$ starts the game $\GEN(Y)$, it is convenient to assume that $S_{\mathsf N}(\emptyset)=Y$ for the empty sequence $\emptyset\in \tau_Y^0$. We can also assume that $S_{\mathsf N}(U_0,\dots,U_{n-1})=Y$ if $U_i=Y$ for all $i<n$.

To derive a contradiction, assume that the function space $C_k'(X,Y)$ is not Baire. Then $C_k'(X,Y)$ contains a non-empty open meager subset $W$, which is contained in the countable union $\bigcup_{n\in\w}M_n$ of an increasing sequence $(M_n)_{n\in\w}$ of closed nowhere dense sets in $C_k'(X,Y)$.

Replacing $W$ by a smaller set and applying Lemma~\ref{l:base}, we can assume that $W$ is of basic form $$W=[\kappa_\emptyset;U_\emptyset|F_\emptyset;u_\emptyset]$$for some quadruple $(\kappa_\emptyset,U_\emptyset,F_\emptyset,u_\emptyset)\in\mathcal Q$ such that $U_\emptyset\subset O_0$. Here we assume that the base $\mathcal B_Y$ of the topology of $Y$ coincides with the family $\tau_Y$ of all non-empty open sets in $Y$.

For any $n\in\w$ and any finite sequence $s=(K_0,\dots,K_n)\in\K(X)^{n+1}$  we shall define inductively a quadruple $(\kappa_{s},U_s,F_s,u_s)\in\mathcal Q$ such that for the sequence $t=(K_0,\dots,K_{n-1})$ the following conditions are satisfied:
\begin{itemize}
\item[(a)] $F_t\subset F_s$;
\item[(b)] $\kappa_t\cup K_n\subset \kappa_s\cup F_s$;
\item[(c)] $U_s\subset U_t\cap O_{n+1}$;
\item[(d)] for every $x\in F_s$ the set $u_s(x)$ is contained in the set $S_{\mathsf N}(u_{s{\restriction}0}(x),\dots,u_{s{\restriction}n}(x))$ where $s{\restriction}i:=(K_0,\dots,K_{i-1})$ for $i\le n$;
\item[(e)] $[\kappa_s;U_s|F_s;u_s]\subset [K_n\setminus F_t;U_t\cap O_{n+1}]\cap [\kappa_t;U_t|F_t;u_t]\setminus M_{n+1}$.
\end{itemize}
Assume that for some $n\in\w$ and all sequences $s\in\K(X)^n$ the function $f_s$ and quadruple $(\kappa_s,U_s,F_s,u_s)\in\mathcal Q$ satisfying the inductive conditions (a)--(e) have been defined. Fix a sequence $s=(K_0,\dots,K_{n})\in\K(X)^{n+1}$ and consider the sequence $t=s{\restriction}n=(K_0,\dots,K_{n-1})$. For every $x\in\dot X$ consider the non-empty open set $$w_s(x)=S_{\mathsf N}(u_{s{\restriction}0},\dots,u_{s{\restriction}n}(x))\subset u_{s{\restriction}n}(x)=u_t(x).$$ The definition of $\mathcal Q$ ensures that for any $x\in \dot X\setminus F_t$ we have $u_{s{\restriction}i}(x)=Y$ for all $i\le n$ and then $w_s(x)=Y$ by the choice of the strategy $S_{\mathsf N}$. Then the quadruple $(\kappa_t,U_t,F_t,w_s)$ belongs to the family $\mathcal Q$.

It is easy to see that the open set $[K_{n}\setminus F_t;U_t\cap O_{n+1}]\cap [\kappa_t;U_t|F_t;w_s]\subset C_k'(X,Y)$ is not empty and hence contains some function $f_s\in C_k'(X,Y)$. Since the closed set $M_{n+1}$ is nowhere dense in $C_k'(X,Y)$, we can assume that $f_s\notin M_{n+1}$. Using Lemma~\ref{l:base}, find a quadruple $(\kappa_s,U_s,F_s,u_s)\in\mathcal Q$ such that $$f_s\in [\kappa_s;U_s|F_s;u_s]\subset [K_{n}\setminus F_t;U_t\cap O_{n+1}]\cap [\kappa_t;U_t|F_t;w_s]\setminus M_{n+1}$$and the conditions (a)--(e) are satisfied. 

Now define a strategy $S_{\mathsf F}:\K(X)^{<\w}\to\mathcal F(\dot X)$ of the player $\mathsf F$ in the game $\GKF$ letting $$S_{\mathsf F}(K_0,\dots,K_n)=F_{(K_0,\dots,K_n)}\setminus K_n=F_s\setminus K_n\mbox{ \ \ for \ }s=(K_0,\dots,K_n)\in\K(X)^{<\w}.$$ By Theorem~\ref{t:game}, the strategy $S_{\mathsf F}$ of $\mathsf F$ is not winning. So there exists an infinite sequence $s=(K_n)_{n\in\w}\in\K(X)^\w$ such that the family $\big\{S_{\mathsf F}(s{\restriction}n)\big\}_{n\in\w}$ is discrete in $X$. Then the set $\bigcup_{n\in\w}S_{\mathsf F}(s{\restriction}n)\subset\dot X$ is closed in $X$.

Consider the countable set $D:=\bigcup_{n\in\w}F_{s{\restriction}n}\subset\dot X$.  The inductive condition (e) ensures that $$u_{s{\restriction}n}(x)\subset S_{\mathsf N}(u_{s{\restriction}0}(x),\dots,u_{s{\restriction}(n-1)}(x))$$ for every $n\in\IN$. Since the strategy $S_{\mathsf N}$ is winning, the intersection $ \bigcap_{n\in\w}u_{s{\restriction}n}(x)$ is not empty and hence it contains some point $f_\infty(x)\in Y$.

We claim that the function $f:X\to Y$ defined by
$$f(x)=\begin{cases}
f_\infty(x)&\mbox{if $x\in D$};\\
*_Y&\mbox{otherwise};
\end{cases}
$$is continuous.

It suffices to check that $f$ is continuous at each non-isolated point $x\in X'$. Since $(O_n)_{n\in\w}$ is a neighborhood base at $*_Y$, for any $k\in\IN$ it suffices to find a neighborhood $W_x\subset X$ of $x$ such that $f(W_x)\subset O_k$.  Choose a neighborhood $W_x$ of $x$ which is disjoint with the closed set $$\bigcup_{i<k}F_{s{\restriction}i}\cup\bigcup_{i=k}^\infty S_{\mathsf F}(s{\restriction}i).$$
We claim that $f(w)\in O_k$ for all $w\in W_x$. This is clear if $w\notin D$. So assume that $w\in D$ and find the smallest number $m\in\w$ such that $w\in F_{s{\restriction}m}$. Then $w\notin F_{s{\restriction}(m-1)}$. The choice of the neighborhood $W_x$ ensures that $m\ge k$ and $$w\in F_{s{\restriction}m}\setminus S_{\mathsf F}(s{\restriction}m)=F_{s{\restriction}m}\setminus(F_{s{\restriction}m}\setminus K_{m-1})=F_{s{\restriction}m}\cap K_{m-1}.$$ Since $w\notin F_{s{\restriction}(m-1)}$, the inductive conditions (e) ensures that $$f(w)=f_\infty(w)\in u_{s{\restriction}m}(x)\subset O_{m}\subset O_k.$$ So $f$ is continuous and belongs to $C_k'(X,Y)$.

Next, we show that $f\in [\kappa_{s{\restriction}n};U_{s{\restriction}n}|F_{s{\restriction}n};u_{s{\restriction}n}]$ for every $n\in\IN$. Given any $x\in X$, we should prove that $f(x)\in U_{s{\restriction}n}$ if $x\in \kappa_{s{\restriction}n}$ and $f\in u_{s{\restriction}n}(x)$ if $x\in F_{s{\restriction}n}$. In the latter case the inclusion 
follows from $f(x)=f_\infty(x)\in u_{s{\restriction}n}(x)$. So, we assume that $x\in \kappa_{s{\restriction}n}$. If $x\notin D$, then $f(x)=*_Y\in U_{s{\restriction}n}$ and we are done. So, we assume that $x\in D$ and hence $x\in F_{s{\restriction}(m+1)}\setminus F_{s{\restriction}m}$ for some $m\in\w$. 
It follows from $x\in \kappa_{s{\restriction}n}\subset X\setminus F_{s{\restriction}n}$ that $m\ge n$.

The inductive conditions (b), (e) and (c) ensure that 
$x\in\kappa_{s{\restriction}n}\setminus F_{s{\restriction}m}\subset \kappa_{s{\restriction}m}$ and
$$f(x)=f_\infty(x)\in u_{s{\restriction}(m+1)}(x)\subset U_{s{\restriction}m}\subset U_{s{\restriction}n}.$$
Therefore, 
$$f\in  \bigcap_{n\in\w}[\kappa_{s{\restriction}n};U_{s{\restriction}n}|F_{s{\restriction}n};u_{s{\restriction}n}]\subset \bigcap_{n\in\w}(W\setminus M_n)=\emptyset$$and this is a desired contradiction, showing that the function space $C_k'(X,Y)$ is Baire.
\end{proof}

Lemmas~\ref{l:B1} and \ref{l:B2} imply the main result of this section.

\begin{theorem}\label{t:B} Assume that $(Y,*_Y)$ is a $*$-admissible $*$-first-countable Choquet pointed space. For any topological space $X$ the following conditions are equivalent:
\begin{enumerate}
\item[\textup{(1)}]  $C_k'(X,Y)$ is Baire;
\item[\textup{(2)}]  $C_k'(X,Y)$ is not meager;
\item[\textup{(3)}]  $X$ has $\DMOP$.
\end{enumerate}
\end{theorem}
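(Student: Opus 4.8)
The plan is to prove the theorem by closing the cyclic chain of implications $(1)\Ra(2)\Ra(3)\Ra(1)$, exploiting the fact that Lemmas~\ref{l:B1} and~\ref{l:B2} already supply the two substantial links; what remains is purely formal. First I would record that the function space $C_k'(X,Y)$ is always non-empty, since the constant function sending all of $X$ to the distinguished point $*_Y$ lies in it. Consequently the implication $(1)\Ra(2)$ is the standard general-topology fact that a non-empty Baire space cannot be meager: if one had $C_k'(X,Y)=\bigcup_{n\in\w}N_n$ with each $N_n$ nowhere dense, then the sets $U_n:=C_k'(X,Y)\setminus\overline{N_n}$ would be dense and open with empty intersection, contradicting the density conclusion in the definition of the Baire property.

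For $(2)\Ra(3)$ I would apply Lemma~\ref{l:B1} verbatim: the standing hypothesis guarantees that $Y$ is $*$-admissible, and under the assumption that $C_k'(X,Y)$ is non-meager the lemma yields $\DMOP$ for $X$. For $(3)\Ra(1)$ I would likewise invoke Lemma~\ref{l:B2}: the hypothesis guarantees that $Y$ is $*$-first-countable and Choquet, so once $X$ is assumed to have $\DMOP$ the lemma delivers that $C_k'(X,Y)$ is Baire. Composing the three implications establishes the equivalence of (1)--(3).

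The only genuine point to monitor is the bookkeeping of hypotheses, namely that the three standing assumptions on the pointed space $(Y,*_Y)$ match exactly what the two lemmas consume: $*$-admissibility is what Lemma~\ref{l:B1} needs, while $*$-first-countability together with the Choquet property is what Lemma~\ref{l:B2} needs. I do not expect any real obstacle at the level of the theorem itself; all the difficulty has already been absorbed into the Choquet-game arguments of the two lemmas---the construction of the auxiliary strategy $\tilde S_{\mathsf F}$ in Lemma~\ref{l:B1} and the inductive tree of basic neighborhoods satisfying conditions (a)--(e) in Lemma~\ref{l:B2}---which in turn rest on the game characterization of $\DMOP$ from Theorem~\ref{t:game} and on Oxtoby's Theorem~\ref{t:Oxtoby}.
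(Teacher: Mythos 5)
Your proposal is correct and follows exactly the paper's route: the paper derives Theorem~\ref{t:B} directly from Lemma~\ref{l:B1} (non-meager $\Ra$ $\DMOP$, using $*$-admissibility) and Lemma~\ref{l:B2} ($\DMOP$ $\Ra$ Baire, using $*$-first-countability and the Choquet property), with the implication $(1)\Ra(2)$ being the standard fact that a non-empty Baire space is non-meager, where non-emptiness is witnessed by the constant function $x\mapsto *_Y$. Your bookkeeping of which hypotheses on $(Y,*_Y)$ feed into which lemma matches the paper precisely, so there is nothing to add.
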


For a topological space $X$ consider the function space
$$C_k'(X,2):=\{f\in C_k(X,2):f(X')\subset\{0\}\}\subset C_k(X,2)$$where the ordinal $2=\{0,1\}$ is endowed with the discrete topology.

\begin{definition} A topological space $X$ is defined to be
\begin{itemize}
\item {\em $C_k'$-meager} if the function space $C_k'(X,2)$ is meager;
\item {\em $C_k'$-Baire} if the function space $C_k'(X,2)$ is Baire;
\item {\em $C_k'$-Choquet} if the function space $C_k'(X,2)$ is Choquet.
\end{itemize}
\end{definition}

Theorem~\ref{t:B} implies the following function space characterization of $\DMOP$.

\begin{corollary}\label{c:B} For a topological space $X$ the following conditions are equivalent:
\begin{enumerate}
\item[\textup{(1)}] $X$ has $\DMOP$;
\item[\textup{(2)}] $X$ is $C_k'$-Baire;
\item[\textup{(3)}] $X$ is not $C_k'$-meager.
\end{enumerate}
\end{corollary}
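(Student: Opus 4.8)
The plan is to derive Corollary~\ref{c:B} as the special case of Theorem~\ref{t:B} obtained by taking $Y$ to be the two-point discrete space $2=\{0,1\}$ with distinguished point $*_Y=0$. To justify this specialization, I would first verify that the pointed space $(2,0)$ satisfies the three standing hypotheses of Theorem~\ref{t:B}. The space is $*$-admissible because $\{0\}$ is an open neighborhood of $0$ whose closure $\{0\}$ is a proper, hence non-dense, subset of $2$. It is $*$-first-countable since every discrete space is first-countable at each of its points, in particular at $0$. Finally, $2$ is Choquet: being a finite discrete space it is complete-metrizable, and by the chain of implications recorded after Theorem~\ref{t:Oxtoby}, every complete-metrizable space is Choquet. (Alternatively, in the game $\GEN(2)$ the player $\mathsf N$ wins by playing the singleton $\{0\}$ at every inning, forcing a non-empty intersection.)

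Having checked these hypotheses, I would invoke Theorem~\ref{t:B} with this choice of $Y$. It yields the equivalence of the three statements ``$C_k'(X,2)$ is Baire'', ``$C_k'(X,2)$ is not meager'', and ``$X$ has $\DMOP$''. Unwinding the definitions of $C_k'$-Baire and $C_k'$-meager, these three statements are precisely conditions (2), (3), and (1) of the corollary, respectively, so the equivalence of (1)--(3) follows at once.

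Since this is a direct specialization, there is no genuine obstacle: the entire content has already been established in Theorem~\ref{t:B} through Lemmas~\ref{l:B1} and \ref{l:B2}, and the only remaining task is the routine verification that the discrete pointed space $(2,0)$ meets the hypotheses. The one point worth emphasizing is that $*$-admissibility and $*$-first-countability are immediate for discrete spaces, while the Choquet property is supplied by the complete metrizability of finite spaces.
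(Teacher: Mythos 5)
Your proposal is correct and is exactly the paper's proof: the paper derives Corollary~\ref{c:B} by applying Theorem~\ref{t:B} to the pointed discrete doubleton $(2,0)$, and your verification of the hypotheses ($*$-admissibility, $*$-first-countability, and the Choquet property via complete metrizability) fills in precisely the routine details the paper leaves implicit. One small caution: your parenthetical alternative strategy for $\mathsf N$ in $\GEN(2)$ (always play $\{0\}$) is not legal when $\mathsf E$'s move is $\{1\}$ --- but this remark is superfluous, since your main justification that finite discrete spaces are complete-metrizable and hence Choquet is sound.
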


\section{A function space characterization of $\WDMOP$}\label{s:W}

In this section we shall prove that a topological space has $\WDMOP$ if and only if it is $C_k'$-Choquet.

First, we prove a counterpart of Lemma~\ref{l:B1}.

\begin{lemma}\label{l:C1} Let $X$ be a topological space and $Y$ be a $*$-admissible pointed topological space. If the function space $C_k'(X,Y)$ is Choquet, then the space $X$ has $\WDMOP$.
\end{lemma}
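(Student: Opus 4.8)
The plan is to upgrade the argument of Lemma~\ref{l:B1} from the non-meager level to the winning level, converting a winning strategy of $\mathsf N$ in the Choquet game $\GEN(C_k'(X,Y))$ into a winning strategy of $\mathsf K$ in the game $\GKF(X)$. Since $Y$ is $*$-admissible, I first fix a non-empty open set $W\subset Y$ with $*_Y\notin\overline W$ (take $W=Y\setminus\overline{U_*}$ for a non-dense open neighborhood $U_*$ of $*_Y$). Exactly as in Lemma~\ref{l:B1}, the definition of the compact-open topology supplies, for each non-empty open $U\subset C_k'(X,Y)$, a compact set $\kappa(U)\subset X$ such that $U\cap[F;W]\neq\emptyset$ for every finite $F\subset\dot X\setminus\kappa(U)$. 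Let $\tau$ denote the family of non-empty open subsets of $C_k'(X,Y)$ and let $S_{\mathsf N}:\tau^{<\w}\to\tau$ be a winning strategy of $\mathsf N$ in $\GEN(C_k'(X,Y))$, which exists because this space is Choquet.

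Next I define the strategy $S_{\mathsf K}$ of $\mathsf K$ by running, alongside the play of $\GKF(X)$, an auxiliary play of $\GEN(C_k'(X,Y))$ in which $\mathsf N$ follows $S_{\mathsf N}$ and the moves of $\mathsf E$ encode the moves of $\mathsf F$. Given a position $(F_0,\dots,F_{n-1})$ of $\mathsf F$, put $U_0=C_k'(X,Y)$, $V_0=S_{\mathsf N}(U_0)$ and recursively, for $1\le i\le n$, $U_i=V_{i-1}\cap[F_{i-1};W]$ and $V_i=S_{\mathsf N}(U_0,\dots,U_i)$; then set $S_{\mathsf K}(F_0,\dots,F_{n-1})=\kappa(V_n)\cup\bigcup_{i<n}F_i$, which is compact. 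On positions where some $F_i$ already meets $S_{\mathsf K}(F_0,\dots,F_{i-1})$ I may define $S_{\mathsf K}$ arbitrarily, since the winning condition for $\WDMOP$ concerns only legal plays. I will then check legality of the auxiliary $\GEN$-play: if $\mathsf F$ plays legally then $F_{i-1}$ is disjoint from $S_{\mathsf K}(F_0,\dots,F_{i-2})\supseteq\kappa(V_{i-1})$, so $U_i=V_{i-1}\cap[F_{i-1};W]$ is a non-empty open subset of $V_{i-1}$ by the choice of $\kappa(V_{i-1})$, and $V_i=S_{\mathsf N}(U_0,\dots,U_i)\subset U_i$ is a legal reply of $\mathsf N$. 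Moreover, since $S_{\mathsf K}(F_0,\dots,F_{n-1})\supseteq\bigcup_{i<n}F_i$ and $F_n$ is disjoint from it, the sets $(F_n)_{n\in\w}$ are pairwise disjoint.

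To finish I will verify that $S_{\mathsf K}$ is winning. Consider any infinite legal play of $\GKF(X)$ in which $\mathsf K$ follows $S_{\mathsf K}$; it produces an infinite legal $\GEN$-play $U_0,V_0,U_1,V_1,\dots$ won by $\mathsf N$, so $\bigcap_{i\in\w}V_i=\bigcap_{i\in\w}U_i$ contains some $f\in C_k'(X,Y)$. From $f\in U_{i+1}\subset[F_i;W]$ for every $i\in\w$ I get $f\big(\bigcup_{i\in\w}F_i\big)\subset W$, hence $\bigcup_{i\in\w}F_i\subset f^{-1}(W)$. As $f$ is continuous, $f(X')\subset\{*_Y\}$ and $*_Y\notin\overline W$, every $x\in X'$ lies in the open set $f^{-1}(Y\setminus\overline W)$, which is disjoint from $f^{-1}(W)\supseteq\bigcup_{i\in\w}F_i$; thus $\overline{\bigcup_{i\in\w}F_i}\cap X'=\emptyset$. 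Combined with pairwise disjointness, this yields discreteness of $\{F_n\}_{n\in\w}$: each non-isolated point has the neighborhood $f^{-1}(Y\setminus\overline W)$ meeting no $F_n$, and each isolated point $x$ has the neighborhood $\{x\}$ meeting at most one $F_n$. Hence $\mathsf K$ wins, so $X$ has $\WDMOP$.

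The main point requiring care is bookkeeping rather than a deep new idea: one must ensure that $S_{\mathsf K}(F_0,\dots,F_{n-1})$ is genuinely causal, i.e. that the reconstructed sets $U_0,V_0,\dots,U_n,V_n$ depend only on $(F_0,\dots,F_{n-1})$ (which holds because each $U_i$ is built from $V_{i-1}$ and $F_{i-1}$, and the $V_i$ come from $S_{\mathsf N}$), and that the embedding of $\kappa(V_n)$ together with the accumulated $\bigcup_{i<n}F_i$ into $\mathsf K$'s move simultaneously keeps the auxiliary $\GEN$-play legal (non-emptiness of the $U_i$) and forces the $F_n$ to be pairwise disjoint. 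The conceptual heart, identical to Lemma~\ref{l:B1}, is that $\mathsf N$'s winning guarantee of a non-empty intersection produces a single continuous $f$ separating $\bigcup_n F_n$ from $X'$, which is exactly what discreteness requires.
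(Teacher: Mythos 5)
Your proposal is correct and follows essentially the same route as the paper's own proof: fix $W$ with $*_Y\notin\overline W$ via $*$-admissibility, use the compact sets $\kappa(U)$ to keep the sets $U\cap[F;W]$ non-empty, convert $\mathsf N$'s winning strategy in $\GEN(C_k'(X,Y))$ into $\mathsf K$'s strategy $S_{\mathsf K}(F_0,\dots,F_{n-1})=\kappa(V_n)\cup\bigcup_{i<n}F_i$, and extract a single continuous $f\in\bigcap_n[F_n;W]$ whose continuity separates $\bigcup_n F_n$ from $X'$, yielding discreteness. The only differences from the paper are cosmetic bookkeeping (your auxiliary play's indexing, and declaring $S_{\mathsf K}$ arbitrary on illegal positions where the paper instead makes the recursion total with a default clause).
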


\begin{proof} Assume that the function space $C_k'(X,Y)$ is Choquet, which means that the player $\mathsf N$ has a winning strategy $S_{\mathsf N}$ in the Choquet  game $\GEN(C_k'(X,Y))$.  The strategy $S_{\mathsf N}$ assigns to each finite sequence of non-empty open sets $(U_0,\dots,U_n)$ in $C_k'(X,Y)$ a non-empty open set $S_{\mathsf N}(U_0,\dots, U_n)\subset U_n$ so that for any infinite sequence $(U_n)_{n\in\w}$ of non-empty open sets in $C_k'(X,Y)$ the intersection $\bigcap_{n\in\w}S_{\mathsf N}(U_0,\dots,U_n)$ is not empty if $U_n\subset S_{\mathsf N}(U_0,\dots,U_{n-1})$ for every $n\in\IN$. Since the player $\mathsf E$ starts the game $\GEN(C_k'(X,Y))$, it convenient to assume that $S_{\mathsf N}(\emptyset)=C_k'(X,Y)$ for the empty sequence of zero length.

By our assumption, the pointed space $Y$ is $*$-admissible. So, there exists a non-empty open set $W\subset Y$ whose closure does not contain the distinguished point $*_Y$ of $Y$. By the definition of the compact-open topology, for any non-empty open set $U\subset C_k'(X,Y)$ there exists a compact set $\kappa(U)\subset X$ such that for any finite set $F\subset \dot X\setminus \kappa(U)$ the intersection $U\cap [F;W]$ is not empty.

Now we define a strategy $S_{\mathsf K}$ of the player $\mathsf K$ in the game $\GKF(X)$. This strategy assigns to any sequence $(F_0,\dots,F_{n-1})$ of finite sets in $\dot X$ the compact set $$K_n:=\kappa(U_{n})\cup\bigcup_{i<n}F_i$$ where $(U_0,\dots,U_{n})$ is a decreasing sequence of non-empty open sets in $C_k'(X,Y)$, defined  recursively by $U_0=C_k'(X,Y)$ and $$U_{i+1}=\begin{cases}S_{\mathsf N}(U_0\cap[F_0;W],\dots,U_i\cap[F_i;W]),&\mbox{if $U_i\cap [F_i;W]\ne\emptyset$};\\
U_i,&\mbox{otherwise};
\end{cases}\mbox{ \ \ for $i<n$}.
$$

We claim that this strategy $S_{\mathsf K}$ of  player $\mathsf K$ is winning.
Let $(F_n)_{n\in\w}$ be any sequence of finite subsets of $\dot X$ such that for any
$n\in\w$ the set $F_n$ is disjoint with the compact set $S_{\mathsf K}(F_0,\dots,F_{n-1})$. Let $U_0=C_k'(X,Y)$ and $(U_n)_{n\in\w}$ be the sequence of non-empty open sets in $C_k'(X,Y)$ defined by the recursive formula
$$U_{n+1}=S_{\mathsf N}(U_0\cap[F_0;W],\dots,U_n\cap [F_n;W])\subset U_n$$for $n\in\w$.
Let us show that for every $n\in\w$ the intersection $U_n\cap [F_n;W]$ is not empty, which means that the set $U_{n+1}$ is well-defined. The intersection $U_0\cap [F_0;W]$ is not empty as $U_0=C_k'(X,Y)$. Assume that for some $n\in\IN$ we have proved that the set $U_{n-1}\cap [F_{n-1};W]$ is not empty. Then the set $U_n=S_{\mathsf N}(U_0\cap[F_0;W],\dots,U_{n-1}\cap[F_{n-1};W])$ is not empty and by the definition of the strategy $S_{\mathsf K}$, we get $\kappa(U_{n})\subset S_{\mathsf K}(F_0,\dots,F_{n-1})$. Since $$F_{n}\cap \kappa(U_{n})\subset F_{n}\cap S_{\mathsf K}(F_0,\dots,F_{n-1})=\emptyset,$$ the definition of the compact set $\kappa(U_{n})$ ensures that the open set $U_n\cap [F_{n};W]$ is not empty.

Now observe that for the sequence of non-empty open sets $(U_n\cap[F_n;W])_{n\in\w}$ we have the inclusion
$$U_{n+1}\cap[F_{n+1};W]\subset U_{n+1}=S_{\mathsf N}(U_0\cap[F_0;W],\dots,U_n\cap [F_n;W])$$for all $n\in\w$. The winning property of the strategy $S_{\mathsf N}$ ensures that the intersection $\bigcap_{n\in\w}U_n\cap[F_n;W]$ is not empty and hence contains some  function $f\in C_k'(X,Y)$.

It follows that the family $(F_n)_{n\in\IN}$ is disjoint and $f\in \bigcap_{n\in\IN}[F_n;W]$. Since $f\big(\bigcup_{n\in\w}F_n\big)\subset W$ and $*_Y\notin \overline{W}$, the continuity of $f$ guarantees that the closure of the set $\bigcup_{n=1}^\infty F_n$ does not intersect $X'$ and hence the disjoint family $\{F_n\}_{n\in\IN}$ is discrete in $X$.

For every $n\in\w$ let $K_n=S_{\mathsf K}(F_0,\dots,F_{n-1})$ and observe that  $$K_1,F_1,K_2,F_2,\dots$$is the sequence of the moves of the players $\mathsf K$ and $\mathsf F$ in the game $\GKF(X)$, where the player $\mathsf K$ plays according to the strategy $S_{\mathsf K}$ and eventually wins as the family $\{F_n\}_{n\in\IN}$ is discrete. So, the strategy $S_{\mathsf K}$ is  winning and the space $X$ has $\WDMOP$.
\end{proof}

Now we prove a ``Choquet'' version of Lemma~\ref{l:B2}.

\begin{lemma}\label{l:C2} Let $Y$ be a $*$-first-countable pointed Choquet space. If a topological  space $X$ has $\WDMOP$, then the function space $C_k'(X,Y)$ is Choquet.
\end{lemma}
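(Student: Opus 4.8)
The plan is to mimic the proof of Lemma~\ref{l:B2}, but to upgrade its contradiction argument into an explicit winning strategy for the player $\mathsf N$ in the Choquet game $\GEN(C_k'(X,Y))$, which by definition witnesses that $C_k'(X,Y)$ is Choquet. Fix a decreasing neighborhood base $(O_n)_{n\in\w}$ at $*_Y$ with $O_0=Y$ (using $*$-first-countability), a winning strategy $S_{\mathsf N}^Y$ of $\mathsf N$ in $\GEN(Y)$ with $S_{\mathsf N}^Y(Y,\dots,Y)=Y$ (using that $Y$ is Choquet), and, using $\WDMOP$ and Lemma~\ref{l:sW}, a strategy $S_{\mathsf K}\colon\mathcal F(\dot X)^{<\w}\to\K(X)$ of $\mathsf K$ in $\GKF(X)$ such that $\{F_n\setminus S_{\mathsf K}(F_0,\dots,F_{n-1})\}_{n\in\w}$ is discrete in $X$ for every sequence $(F_n)_{n\in\w}$ of finite subsets of $\dot X$. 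As in Lemma~\ref{l:B2} I take $\mathcal B_Y=\tau_Y$, so that the basic sets $[\kappa;U|F;u]$ of Lemma~\ref{l:base} are available. Along the play I will maintain, for the $Y$-coordinate at every isolated point, a run of $\GEN(Y)$ in which $\mathsf N$ follows $S_{\mathsf N}^Y$, and I will read off the finite sets $F_n$ of constrained isolated points as the $\mathsf F$-moves of an auxiliary run of $\GKF(X)$.

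The response of $\mathsf N$ to a move $U_n$ of $\mathsf E$ (where $U_n\subset V_{n-1}$, with $V_{-1}:=C_k'(X,Y)$ and $U_{-1}':=Y$ at the first inning) is defined as follows. First apply Lemma~\ref{l:base} to the quadruple describing $V_{n-1}$ and some $f\in U_n$ to shrink $U_n$ to a basic set $[\hat\kappa_n;\hat U_n|F_n;\hat u_n]\subset U_n$ with $F_{n-1}\subset F_n$, $\kappa_{n-1}\cup F_{n-1}\subset\hat\kappa_n\cup F_n$ and $\hat U_n\subset U_{n-1}'\cap O_{n+1}$. Put $U_n':=\hat U_n$, let $K_{n+1}:=S_{\mathsf K}(F_0,\dots,F_n)$ and set $\kappa_n:=(\hat\kappa_n\cup K_{n+1})\setminus F_n$, a compact set disjoint from $F_n$. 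For every $x\in\dot X$ let $u_n(x):=S_{\mathsf N}^Y(\hat u_0(x),\dots,\hat u_n(x))\subset\hat u_n(x)$; since $\hat u_i(x)=Y$ whenever $x\notin F_i$, we get $u_n(x)=Y$ for $x\notin F_n$, so $(\kappa_n,U_n',F_n,u_n)\in\mathcal Q$. The player $\mathsf N$ answers with $V_n:=[\kappa_n;U_n'|F_n;u_n]$. As $\hat\kappa_n\subset\kappa_n$ and $u_n(x)\subset\hat u_n(x)$, we get $V_n\subset[\hat\kappa_n;\hat U_n|F_n;\hat u_n]\subset U_n$, so this is a legal move; and the telescoping relation $\kappa_{n-1}\subset\kappa_n\cup F_n$, the nesting $V_{n+1}\subset V_n$, and the winning property of $S_{\mathsf N}^Y$ reproduce the inductive conditions (a)--(e) of Lemma~\ref{l:B2} along the (now $\mathsf E$-driven) play. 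The key new point is that $K_{n+1}$ is folded into $\kappa_n$ with target $U_n'\subset O_{n+1}$, so that any isolated point freshly added to $F_m$ that lands in $K_m$ lies in $\kappa_{m-1}$ and hence satisfies $u_m(x)\subset U_{m-1}'\subset O_m$.

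Given an infinite run $U_0,V_0,U_1,V_1,\dots$ consistent with this strategy, set $D:=\bigcup_{n}F_n$; for $x\in D$ the winning property of $S_{\mathsf N}^Y$ yields a point $f_\infty(x)\in\bigcap_n u_n(x)$, and I define $f(x)=f_\infty(x)$ for $x\in D$ and $f(x)=*_Y$ otherwise. It suffices to show $f\in C_k'(X,Y)$ and $f\in\bigcap_n V_n$, for then $\bigcap_n U_n\supset\{f\}\ne\emptyset$ and $\mathsf N$ wins. The membership $f\in V_n$ follows exactly as in Lemma~\ref{l:B2}: at $x\in F_n$ from $f_\infty(x)\in u_n(x)$, and at $x\in\kappa_n$ either from $f(x)=*_Y\in U_n'$, or, when $x\in F_{m+1}\setminus F_m$ with $m\ge n$, from $\kappa_n\setminus F_m\subset\kappa_m$ and $u_{m+1}(x)\subset U_m'\subset U_n'$.

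The real work, and the only place where $\WDMOP$ rather than mere $\DMOP$ is used, is continuity of $f$ at a non-isolated point $x\in X'$. Fixing $k$, I would choose a neighborhood $W_x$ of $x$ disjoint from the closed set $\bigcup_{i<k}F_i\cup\bigcup_{i\ge k}(F_i\setminus K_i)$; this set is closed precisely because $\{F_i\setminus K_i\}_{i\in\w}$ is discrete by Lemma~\ref{l:sW}, and this is where we need that $\mathsf K$ has a \emph{winning} strategy, so that discreteness holds along \emph{every} play produced by $\mathsf E$, not merely along one play as in the Baire case. For $w\in W_x\cap D$ the least $m$ with $w\in F_m$ then satisfies $m\ge k$ and $w\in F_m\cap K_m$, whence $f(w)=f_\infty(w)\in u_m(w)\subset O_m\subset O_k$ by the shrinking built into $\kappa_{m-1}$; and $f(w)=*_Y\in O_k$ for $w\notin D$. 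Thus $f(W_x)\subset O_k$, proving continuity. I expect the main obstacle to be exactly this bookkeeping: arranging the compact sets $\kappa_n$ to simultaneously carry the next move $K_{n+1}=S_{\mathsf K}(F_0,\dots,F_n)$ (to push freshly captured isolated points into $O_{n+1}$) while leaving the escaping parts $F_n\setminus K_n$ free, so that the two mechanisms — the winning $\mathsf K$-strategy forcing global discreteness and the coordinatewise $Y$-strategy forcing convergence of values — combine into the continuity of the limit function.
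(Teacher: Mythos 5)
Your proposal is correct and follows essentially the same route as the paper's proof: both use Lemma~\ref{l:sW} to fold the compact sets produced by the winning $\mathsf K$-strategy into the $\kappa$-part of the basic neighborhoods supplied by Lemma~\ref{l:base}, run the Choquet strategy of $Y$ coordinatewise at the isolated points, and verify that the resulting limit function is continuous (via discreteness of the family $\{F_n\setminus K_n\}_{n\in\w}$) and lies in every $V_n$, so that $\mathsf N$ wins. The only difference is presentational: the paper pre-constructs the strategy on the tree of all finite sequences of non-empty open sets, whereas you define the same strategy move-by-move along a play.
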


\begin{proof}
By our assumption, the distinguished point $*_Y$ of $Y$ has a countable neighborhood base $\{O_n\}_{n\in\w}$ such that $O_{n+1}\subset O_n$ for all $n\in\w$. Let $\tau_Y$ be the family of all non-empty open sets in $Y$. The family $\tau_Y$ plays the role of the base $\mathcal B_Y$ in the definition of the family $\mathcal Q$ of quadruples from Section~\ref{s3}.

Since the space $Y$ is Choquet, the player $\mathsf N$ has a winning strategy $S_{\mathsf N}:\tau_Y^{<\w}\to\tau_Y$ in the Choquet game $\GEN(Y)$. The strategy $S_{\mathsf N}$ is a function assigning to each finite sequence $(U_0,\dots,U_n)\in\tau_Y^{<\w}$ of non-empty open sets in $Y$ a non-empty open set $S_{\mathsf N}(U_0,\dots,U_n)\subset U_n$ such that for any infinite sequence $(U_n)_{n\in\w}\in\tau_Y^\w$ the intersection $\bigcap_{n\in\w}S_{\mathsf N}(U_0,\dots,U_n)$ is not empty if $U_n\subset S_{\mathsf N}(U_0,\dots,U_{n-1})$ for any $n\in\IN$. Since the player $\mathsf E$ starts the game $\GEN(Y)$, it is convenient to assume that $S_{\mathsf N}(\emptyset)=Y$ for the empty sequence $\emptyset\in \tau_Y^0$. We can also assume that $S_{\mathsf N}(U_0,\dots,U_{n-1})=Y$ if $U_i=Y$ for all $i<n$.

By Lemma~\ref{l:sW}, the player $\mathsf K$ has a strategy $S_{\mathsf K}$ in the game $\GKF$ such that for any sequence $(F_n)_{n\in\w}$ of finite subsets of $\dot X$ the indexed family $\{F_n\setminus S_{\mathsf K}(F_0,\dots,F_{n-1})\}_{n\in\w}$ is discrete in $X$.

Let $\tau$ be the family of all non-empty open sets in $C_k'(X,Y)$. Let $\kappa_\emptyset=F_\emptyset=\emptyset$, $U_\emptyset=Y$ and $u_\emptyset:\dot X\to\{Y\}\subset\tau_Y$ be the constant function.

For any $n\in\w$ and any finite sequence $s=(W_0,\dots,W_n)\in\tau^{n+1}$ we shall define inductively a compact set $K_s\subset X$ and two quadruples $(\tilde \kappa_s, \tilde U_s,\tilde F_s,\tilde u_s)$, $(\kappa_s, U_s,F_s,u_s)$ in $\mathcal Q$ such that  the following conditions are satisfied:
\begin{itemize}
\item[(a)] $[\tilde \kappa_s;\tilde U_s|\tilde F_s;\tilde u_s]\subset W_n$;
\item[(b)] $\tilde U_s\subset O_{n+1}\cap U_{s{\restriction}n}$, $F_{s{\restriction}n}\subset\tilde F_s$, and $\kappa_{s{\restriction}n}\subset\tilde\kappa_s\cup\tilde F_s$;
\item[(c)] $K_s=S_{\mathsf K}(\tilde F_{s{\restriction}0},\dots,\tilde F_{s{\restriction}(n+1)})$;
\item[(d)] $F_s=\tilde F_s$, $U_s=\tilde U_s$ and $\kappa_s=\tilde \kappa_s\cup K_s\setminus \tilde F_s$;
\item[(e)]  $u_s(x)=S_{\mathsf N}(\tilde u_{s{\restriction}0}(x),\dots,\tilde u_{s{\restriction}(n+1)}(x))\subset\tilde u_{s{\restriction}(n+1)}(x)=\tilde u_s(x)$ for every $x\in \dot X$;
\item[(f)] $[\kappa_s;U_s|F_s;u_s]\subset [\tilde \kappa_s;\tilde U_s|\tilde F_s;\tilde u_s]\subset W_n$.
\end{itemize}

Assume that for some $n\in\w$ and all sequences $s\in\tau^n$ a compact set $K_s$, and quadruples $(\tilde\kappa_s,\tilde U_s,\tilde F_s,\tilde u_s)$, $(\kappa_s,U_s,F_s,u_s)$  satisfying the inductive conditions (a)--(f) have been defined. Fix a sequence $s=(W_0,\dots,W_{n})\in\tau^{n+1}$.

Using Lemma~\ref{l:base}, find a quadruple $(\tilde\kappa_s,\tilde U_s,\tilde F_s,\tilde u_s)\in\mathcal Q$ satisfying the conditions (a), (b). Define the compact set $K_s$ by the formula (c). Finally define the quadruple $(\kappa_s,U_s,F_s,u_s)$ by the conditions (d) and (e). The conditions (a), (d), (e) imply the condition (f).
This completes the inductive step.

After completing the inductive construction, define a strategy $\$_{\mathsf N}:\tau^{<\w}\to\tau$ of the player $\mathsf N$ in the Choquet game $\GEN(C_k'(X,Y))$ letting $\$_{\mathsf N}(s)=[\kappa_s;U_s|F_s;u_s]$ for any sequence $s=(W_0,\dots,W_{n})\in\tau^{<\w}$. The inductive condition (f) guarantees that $\$_{\mathsf N}(W_0,\dots,W_{n})\subset W_n$.


We claim that the strategy $\$_{\mathsf N}$ is winning. Fix an infinite sequence $s=(W_n)_{n\in\w}\in\tau^\w$ such that $W_n\subset \$_{\mathsf N}(W_0,\dots,W_{n-1})=\$_{\mathsf N}(s{\restriction}n)$ for every $n\in\IN$. The condition (f)  of the inductive construction ensures that for every $n\in\IN$ we have the inclusions
$$
\begin{aligned}
&[\kappa_{s{\restriction}n};U_{s{\restriction}n}|F_{s{\restriction}n};u_{s{\restriction}n}]\subset [\tilde \kappa_{s{\restriction}n};\tilde U_{s{\restriction}n}|\tilde F_{s{\restriction}n};\tilde u_{s{\restriction}n}]\subset W_{n-1}\subset \\
&\subset\$_{\mathsf N}(s{\restriction}(n-1))=[\kappa_{s{\restriction}(n-1)};U_{s{\restriction}(n-1)}|F_{s{\restriction}(n-1)};u_{s{\restriction}(n-1)}],
\end{aligned}
$$
which imply the inclusions $$u_{s{\restriction}n}(x)\subset \tilde u_{s{\restriction}n}(x)\subset u_{s{\restriction}(n-1)}(x)=S_{\mathsf N}(\tilde u_{s{\restriction}0}(x),\dots,\tilde u_{s{\restriction}(n-1)}(x))$$holding for every $x\in\dot X$.

Since the strategy $S_{\mathsf N}$ is winning, for every $x\in\dot X$ the intersection
$$\bigcap_{n\in\w}\tilde u_{s{\restriction}n}(x)=  \bigcap_{n\in\w}u_{s{\restriction}n}(x)$$is not empty and hence contains some point $f_\infty(x)\in Y$.

The choice of the strategy $S_{\mathsf K}$ guarantees that the indexed family $$\{F_{s{\restriction}n}\setminus S_{\mathsf K}(F_{s{\restriction}0},\dots,F_{s{\restriction}(n-1)})\}_{n\in\w}$$ is discrete in $X$.

Consider the countable set $D=\bigcup_{n\in\w}F_{s{\restriction}n}\subset\dot X$.
We claim that the function $f:X\to Y$ defined by
$$f(x)=\begin{cases}
f_\infty(x)&\mbox{if $x\in D$}\\
0&\mbox{otherwise}
\end{cases}
$$is continuous.

It suffices to check that $f$ is continuous at each non-isolated point $x\in X'$. Since $(O_n)_{n\in\w}$ is a neighborhood base at $*_Y$, for any $k\in\IN$ it suffices to find a neighborhood $W_x\subset X$ of $x$ such that $f(W_x)\subset O_k$.  Choose a neighborhood $W_x$ of $x$ which is disjoint with the set $$\bigcup_{i<  k}F_{s{\restriction}(i+1)}\cup\bigcup_{i=k}^\infty (F_{s{\restriction}(i+1)}\setminus S_{\mathsf K}(F_{s{\restriction}0},\dots,F_{s{\restriction}i})).$$
We claim that $f(z)\in O_k$ for all $w\in W_x$. This is clear if $w\notin D$. So assume that $w\in D$ and find the smallest number $m\in\w$ such that $w\in F_{s{\restriction}(m+1)}$. Then $w\notin F_{s{\restriction}m}$. The choice of the neighborhood $W_x$ ensures that $m\ge k$ and $$w\in  S_{\mathsf K}(F_{s{\restriction}0},\dots,F_{s{\restriction}m})\setminus F_{s{\restriction}m}=K_{s{\restriction}m}\setminus F_{s{\restriction}m}\subset \kappa_{s{\restriction}m}.$$ The inductive conditions (b), (d) and the inclusion
$$
 [\kappa_{s{\restriction}(m+1)};U_{s{\restriction}(m+1)}|F_{s{\restriction}(m+1)};u_{s{\restriction}(m+1)}]\subset [\kappa_{s{\restriction}m};U_{s{\restriction}m}|F_{s{\restriction}m};u_{s{\restriction}m}]
$$
ensure that $f(w)=f_\infty(w)\in u_{s{\restriction}(m+1)}(w)\subset U_{s{\restriction}m}\subset O_{m}\subset O_k$. So $f$ is continuous and belongs to $C_k'(X,Y)$.

Repeating the argument from the proof of Lemma~\ref{l:B2}, we can show that 
$$f\in  \bigcap_{n\in\IN}[\kappa_{s{\restriction}n};U_{s{\restriction}n}|F_{s{\restriction}n};u_{s{\restriction}n}]=\bigcap_{n\in\IN}W_{n-1},$$so the intersection $\bigcap_{n\in\w}W_n$ is not empty and the strategy $\$_{\mathsf N}$ is winning, which means that the function space $C_k'(X,Y)$ is Choquet.
\end{proof}

Lemmas~\ref{l:C1} and \ref{l:C2} imply the main result of this section.

\begin{theorem}\label{t:C} Assume that $Y$ is a $*$-admissible $*$-first-countable pointed Choquet space. For any topological space $X$ the following conditions are equivalent:
\begin{enumerate}
\item[\textup{(1)}]  $C_k'(X,Y)$ is Choquet;
\item[\textup{(2)}]  $X$ has $\WDMOP$;
\item[\textup{(3)}]  $X$ is $C_k'$-Choquet.
\end{enumerate}
\end{theorem}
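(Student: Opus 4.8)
The statement is a three-way equivalence whose substance is already contained in the two preceding lemmas, so the plan is to assemble it from them rather than to re-enter the game-theoretic machinery. I would first observe that, under the standing hypotheses on $Y$, the equivalence $(1)\Leftrightarrow(2)$ is immediate: Lemma~\ref{l:C1} uses only that $Y$ is $*$-admissible to deduce $\WDMOP$ from the Choquetness of $C_k'(X,Y)$, giving $(1)\Rightarrow(2)$, while Lemma~\ref{l:C2} uses only that $Y$ is $*$-first-countable and Choquet to deduce the Choquetness of $C_k'(X,Y)$ from $\WDMOP$, giving $(2)\Rightarrow(1)$. Since the hypothesis of Theorem~\ref{t:C} bundles all three properties into $Y$, both lemmas apply and $(1)\Leftrightarrow(2)$ follows with no extra work.

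The remaining point is $(2)\Leftrightarrow(3)$, and here the idea is to specialize the equivalence $(1)\Leftrightarrow(2)$ to the concrete pointed space $Y=2=\{0,1\}$ with distinguished point $0$ and discrete topology, since by definition $X$ is $C_k'$-Choquet precisely when $C_k'(X,2)$ is Choquet. The only thing that requires checking is that $(2,0)$ meets the three hypotheses imposed on $Y$. This is routine: the clopen singleton $\{0\}$ is a neighborhood of $0$ that is not dense in $2$, so $2$ is $*$-admissible; being discrete, $2$ is first-countable at every point, in particular $*$-first-countable; and being a finite discrete space it is complete-metrizable, hence Choquet by the implications recorded after Theorem~\ref{t:Oxtoby}. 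Consequently the already-established equivalence $(1)\Leftrightarrow(2)$, read with $Y=2$, states exactly that $C_k'(X,2)$ is Choquet if and only if $X$ has $\WDMOP$, i.e. $(3)\Leftrightarrow(2)$.

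Combining $(1)\Leftrightarrow(2)$ and $(2)\Leftrightarrow(3)$ closes the cycle and proves the theorem. I do not anticipate any genuine obstacle in this assembly: all of the analytic difficulty — constructing the winning strategy of $\mathsf K$ in $\GKF(X)$ from a winning strategy of $\mathsf N$ in $\GEN(C_k'(X,Y))$, and conversely transporting a winning $\mathsf N$-strategy through the diagonal construction of Lemma~\ref{l:C2} — is discharged inside Lemmas~\ref{l:C1} and \ref{l:C2}. The sole place demanding attention is the verification that the two-point discrete space qualifies as an admissible target, which is immediate; the mild subtlety worth flagging is simply that $C_k'$-Choquetness is \emph{defined} via $Y=2$, so the equivalence $(2)\Leftrightarrow(3)$ is obtained not by a separate argument but by instantiating the general case at this particular $Y$.
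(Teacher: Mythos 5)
Your proposal is correct and follows essentially the same route as the paper, which derives Theorem~\ref{t:C} directly from Lemmas~\ref{l:C1} and \ref{l:C2}, with the equivalence $(2)\Leftrightarrow(3)$ obtained by instantiating those lemmas at the pointed discrete doubleton $Y=2$. Your explicit verification that $(2,0)$ is $*$-admissible, $*$-first-countable and Choquet is exactly the (routine) check the paper leaves implicit.
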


\section{Introducing properties of $X$, responsible for the metrizability of  $C'_k(X,Y)$}\label{s:kappa}


In this section we introduce some properties of a topological space $X$, which will be used in subsequent sections for characterizing pairs $X,Y$ for which the function spaces $C_k'(X,Y)$ are metrizable, (almost) complete-metrizable or (almost) Polish.

\begin{definition}\label{d:kappa} A topological space $X$ is called
\begin{itemize}
\item {\em $\dot\kappa$-space} if for any non-closed  subset $D\subset \dot X$ of $X$ there exists a compact set $K\subset X$ such that $K\cap D$ is infinite;
\item {\em $\w$-$\dot\kappa$-space} if for any non-closed countable subset $D\subset \dot X$ of $X$ there exists a compact set $K\subset X$ such that $K\cap D$ is infinite;
\item {\em $\dot \kappa_\w$-space} if there exists a countable family $\{K_n\}_{n\in\w}$ of compact subsets of $X$ such that $\dot X\subset \bigcup_{n\in\w}K_n$ and for any non-closed set $D\subset \dot X$ of $X$ there exists $n\in\w$ such that $D\cap K_n$ is infinite;
\item {\em hemi-$\dot\kappa_\w$-space} if there exists a countable family $\{K_n\}_{n\in\w}$ of compact subsets of $X$ such that for any compact set $K\subset X$ there exists $n\in\w$ such that $K\cap \dot X\subset K_n$.
\end{itemize}
\end{definition}

These properties relate as follows
$$\xymatrix{
&\mbox{hemi-$\dot\kappa_\w$-space}\atop{\mbox{and $\w$-$\dot\kappa$-space}}\ar@{<=>}[d]\ar@{=>}[r]&\mbox{$\dot\kappa$-space}\ar@{=>}[d]\\
\mbox{hemi-$\dot\kappa_\w$-space}&\mbox{$\dot\kappa_\w$-space}\ar@{=>}[l]\ar@{<=>}[d]\ar@{=>}[r]&\w\mbox{-$\dot\kappa$-space}\\
&\mbox{hemi-$\dot\kappa_\w$-space}\atop\mbox{with $\WDMOP$}\ar@{=>}[r]&\WDMOP\ar@{=>}[u]
}
$$

Non-trivial implications in this diagram are proved in the following proposition.

\begin{proposition}\label{p:kappa}
\begin{enumerate}
\item[\textup{(1)}] Each $\dot\kappa_\w$-space is both a $\dot \kappa$-space and a hemi-$\dot\kappa_\w$-space.
\item[\textup{(2)}] A topological space is a $\dot\kappa_\w$-space if and only if it is  a hemi-$\dot\kappa_\w$-space and an $\w$-$\dot\kappa$-space.
\item[\textup{(3)}] Each $\dot\kappa_\w$-space has $\WDMOP$.
\item[\textup{(4)}] Each space with $\WDMOP$ is an $\w$-$\dot\kappa$-space.
\item[\textup{(5)}] A topological space is a $\dot\kappa_\w$-space if and only if it is a hemi-$\dot\kappa_\w$-space with $\WDMOP$.
\end{enumerate}
\end{proposition}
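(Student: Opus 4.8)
The plan is to prove (1)--(3) directly and then derive (5) from (1)--(4), so that the only substantial new content lies in (4). Throughout, whenever $X$ is a $\dot\kappa_\w$-space or a hemi-$\dot\kappa_\w$-space I fix a witnessing family $\{K_n\}_{n\in\w}$ and replace it by $\{K_0\cup\dots\cup K_n\}_{n\in\w}$; this destroys neither property, so I may and will assume $K_n\subseteq K_{n+1}$ for all $n$.

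For (1), an increasing family witnessing $\dot\kappa_\w$ obviously witnesses the $\dot\kappa$-property. To see that it also witnesses hemi-$\dot\kappa_\w$, I suppose some compact $K$ satisfies $K\cap\dot X\not\subseteq K_n$ for all $n$ and choose $x_n\in(K\cap\dot X)\setminus K_n$. Since $x_n\notin K_n$ while $\dot X\subseteq\bigcup_m K_m$, infinitely many $x_n$ are distinct, so $D:=\{x_n:n\in\w\}$ is an infinite subset of $K\cap\dot X$; as a closed subset of $K$ lying in $\dot X$ would be compact and discrete, hence finite, the set $D$ is not closed. By $\dot\kappa_\w$ some $D\cap K_{n_0}$ is infinite; but $x_j\in K_{n_0}$ with $j\ge n_0$ would give $x_j\in K_{n_0}\subseteq K_j$, contradicting $x_j\notin K_j$, so $D\cap K_{n_0}\subseteq\{x_0,\dots,x_{n_0-1}\}$ is finite --- a contradiction. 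For (2), the forward implication is (1) together with the obvious implication from $\dot\kappa_\w$ to the $\w$-$\dot\kappa$-property (a witnessing $K_n$ meets any countable non-closed $D$ infinitely for suitable $n$). For the converse, let $\{K_n\}$ witness hemi-$\dot\kappa_\w$; as each $\{x\}$ with $x\in\dot X$ is compact we get $\dot X\subseteq\bigcup_n K_n$. If a non-closed $D\subseteq\dot X$ had $D\cap K_n$ finite for every $n$, then $D=\bigcup_n(D\cap K_n)$ would be countable, so $\w$-$\dot\kappa$ gives a compact $K$ with $K\cap D$ infinite, and hemi-$\dot\kappa_\w$ gives $m$ with $K\cap\dot X\subseteq K_m$; then $K\cap D\subseteq K_m\cap D$ is infinite, a contradiction. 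Hence some $D\cap K_n$ is infinite and $\{K_n\}$ witnesses $\dot\kappa_\w$.

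For (3), with $\{K_n\}$ increasing and witnessing $\dot\kappa_\w$, I let $\mathsf K$ play $S_{\mathsf K}(F_0,\dots,F_{n-1})=K_n\cup\bigcup_{i<n}F_i$, which is compact. If $\mathsf F$ answers legally, the sets $F_n$ are pairwise disjoint and satisfy $F_j\cap K_n=\emptyset$ for $j\ge n$, so $D:=\bigcup_n F_n$ has $D\cap K_n\subseteq F_0\cup\dots\cup F_{n-1}$ finite for every $n$; by $\dot\kappa_\w$ the set $D$ is closed. Then $\{F_n\}$ is discrete: a point outside $D$ has the open neighbourhood $X\setminus D$ meeting no $F_n$, while a point of $D$ is isolated and, by disjointness, lies in exactly one $F_n$, so its singleton neighbourhood meets only that set. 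Thus $S_{\mathsf K}$ is winning and $X$ has $\WDMOP$. Statement (5) is then immediate: its forward implication is (1) plus (3), and its converse follows from (4) (which turns $\WDMOP$ into the $\w$-$\dot\kappa$-property) combined with (2).

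It remains to prove (4), the only hard part. I argue contrapositively: assuming $X$ is not an $\w$-$\dot\kappa$-space, I fix a countable non-closed $D\subseteq\dot X$ with $K\cap D$ finite for every compact $K$, together with a point $x^*\in\overline D\setminus D$, and I must show $\mathsf K$ has no winning strategy in $\GKF(X)$. The guiding idea is that it suffices for $\mathsf F$ to play, as singletons, an \emph{infinite non-closed} subset of $D$: its closure then contains a point admitting no neighbourhood that isolates a single $F_n$, so the family is not discrete and $\mathsf F$ wins. Each of $\mathsf F$'s moves is unobstructed, since the compact set $S_{\mathsf K}(F_0,\dots,F_{n-1})$ meets $D$ in a finite set and hence leaves infinitely many admissible points of $D$. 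A helpful structural fact is that, for a fixed strategy $S_{\mathsf K}$, every point of $D$ that is \emph{permanently hidden} (i.e.\ lies in $S_{\mathsf K}(\sigma)$ for all legal partial plays $\sigma$ avoiding it) already lies in $S_{\mathsf K}(\emptyset)\cap D$, so there are only finitely many of them; the same bound holds relative to any node of the tree of legal plays. The main obstacle is to upgrade this into one concrete play of $\mathsf F$ that actually realises a non-closed (say cofinite) subset of $D$: a winning strategy could, along the chosen branch, keep shuffling which finitely many points it conceals, and in the absence of first-countability at $x^*$ one cannot simply exhaust a countable neighbourhood base there. I expect to resolve this by a priority/back-and-forth construction on the tree of legal plays (or, equivalently, via the strengthened strategy of Lemma~\ref{l:sW}), using the finiteness of $S_{\mathsf K}(\sigma)\cap D$ at every node to keep all but finitely many points of $D$ reachable, thereby forcing $\mathsf F$ to cover a cofinite --- hence non-closed --- subset of $D$ and defeat $S_{\mathsf K}$.
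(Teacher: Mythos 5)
Your treatment of (1), (2), (3) and the deduction of (5) from (1)--(4) is correct and essentially coincides with the paper's own proof (your verification in (3) that closedness of $\bigcup_n F_n$ yields discreteness is exactly what the paper leaves as ``easy to see''). The proposal therefore stands or falls with (4), where you offer only a plan that you yourself flag as unfinished; unfortunately that plan cannot be completed, because its target --- a legal play of $\mathsf F$ covering a cofinite subset of $D$ --- need not exist. Concretely, enumerate $D=\{d_0,d_1,\dots\}$ and let $\mathsf K$ play ``cumulative protection'': given $(F_0,\dots,F_{n-1})$, define $C_0=\emptyset$ and $C_{i+1}=C_i\cup\{d\}$, where $d$ is the least point of $D\setminus(C_i\cup F_0\cup\dots\cup F_i)$, and put $S_{\mathsf K}(F_0,\dots,F_{n-1})=C_n$ (finite, hence compact, so this is a legal strategy). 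In any legal play against it, a point committed into $C_{i+1}$ was not played before stage $i+1$ and lies in all later $C_m$, so it is never played at all; since $|C_n|=n$, the set $D\setminus\bigcup_n F_n$ is infinite in every play. Worse, if a winning strategy $S$ existed, then $S'(h):=S(h)\cup C(h)$ would still be winning (enlarging $\mathsf K$'s moves only shrinks the set of legal $\mathsf F$-sequences, and the winning condition depends only on the $F_n$), and $S'$ is immune to your method. So your ``helpful structural fact'' (each individual point is reachable in \emph{some} play) genuinely does not upgrade to one play covering cofinitely many points, and no priority construction can make it do so.

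What $\mathsf F$ must actually produce is a non-closed but co-infinite subset of $D$, and which subsets are non-closed depends on the space in an essentially arbitrary way. For instance, take $X=\w\cup\{p\}\subseteq\beta\w$ for a free ultrafilter $p$ and $D=\w$: every compact subset of $X$ meets $\w$ finitely, and $E\subseteq\w$ is non-closed exactly when $E\in p$, so defeating a strategy means covering a member of the ultrafilter $p$ --- a moving target that cannot be chosen in advance and in particular is never cofinal with your cofinite scheme. Lemma~\ref{l:sW} is of no help here, since it upgrades a winning strategy of $\mathsf K$ rather than defeating candidate strategies. This is precisely why the paper proves (4) by entirely different means: $\WDMOP$ gives, via Theorem~\ref{t:C}, that the topological group $C_k'(X,2)$ is Choquet; by the Banakh--Hryniv theorem \cite{BH}, a Choquet topological group is $G_\delta$-dense in its Raikov completion, hence $C_k'(X,2)$ is $G_\delta$-dense in its closure inside $F_k'(X,2)$; intersecting with the $G_\delta$-set of functions equal to $1$ at every point of the countable set $A$ yields a continuous $f\in C_k'(X,2)$ with $A\subseteq f^{-1}(1)$, which contradicts the non-closedness of $A$. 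So the gap in your (4) is not a missing routine verification but the entire content of the statement, and the known resolution uses Baire-category and topological-group machinery rather than combinatorics on the game tree.
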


\begin{proof} 1. The definitions imply that each $\dot\kappa_\w$-space $X$ is a $\dot\kappa$-space. To show that $X$ is a hemi-$\dot\kappa_\w$-space, take any sequence $(K_n)_{n\in\w}$ of compact subsets of $X$, witnessing that $X$ is a $\dot\kappa_\w$-space. The hemi-$\dot\kappa_\w$-space property of $X$ will follow as soon as we check that for any compact set $K\subset X$ there exists $n\in\w$ such that $K\cap\dot X\subset \bigcup_{i\le n}K_i$.
Assuming no such $n$ exists, for every $n\in\w$ we can choose a point $x_n\in K\cap\dot X\setminus \bigcup_{i\le n}K_i$. Since $\dot X\subset\bigcup_{n\in\w}K_n$, the set  $D=\{x_n\}_{n\in\w}\subset K$ is infinite and hence has an accumulation point $x'\in K\cap X'\subset K\setminus D$ in the compact space $K$. Therefore, $D$ is not closed in $X$ and the choice of the sequence $(K_n)_{n\in\w}$ yields a number $n\in\w$ such that  $D\cap K_n$ is infinite. On the other hand, $D\cap K_n\subset\{x_0,\dots,x_{n-1}\}$ by the choice of the sequence $(x_n)_{n\in\w}$. This contradiction shows that $X$ is a hemi-$\dot\kappa_\w$-space.
\smallskip

2. Assume that $X$ is a hemi-$\dot\kappa_\w$-space and an $\w$-$\dot\kappa$-space. Let $(K_n)_{n\in\w}$ be a sequence witnessing that $X$ is a hemi-$\dot\kappa_\w$-space. We claim that this sequence witness that $X$ is $\dot\kappa_\w$-space. Since each singleton $\{x\}\subset\dot X$ is compact, there exists $n\in\w$ such that $\{x\}=\{x\}\cap\dot X\subset K_n$, which implies that $\dot X\subset\bigcup_{n\in\w}K_n$. Given a non-closed subset $D\subset \dot X$, it remains to find $n\in\w$ such that $K_n\cap D$ is infinite. To derive a contradiction, assume that $D\cap K_n$ is finite for every $n\in\w$. Then the set $D=D\cap\dot X=\bigcup_{n\in\w}D\cap K_n$ is countable. Since $X$ is an $\w$-$\dot\kappa$-space, there exists a compact set $K\subset X$ such that $K\cap D$ is infinite. The choice of the sequence $(K_n)_{n\in\w}$ ensures that $\dot X\cap K\subset K_n$ for some $n\in\w$. Since $D\cap K=D\cap\dot X\cap K\subset D\cap K_n$, the set $D\cap K_n$ is infinite, which contradicts our assumption. This contradiction completes the proof.
\smallskip


3. Assume that $X$ is a $\dot\kappa_\w$-space and let $(K_n)_{n\in\w}$ be a sequence of compact sets witnessing this fact. It is easy to see that the function $S_{\mathsf K}$ assigning to any finite sequence $(F_0,\dots,F_n)\in\mathcal F(\dot X)^{<\w}$ the compact set $$S_{\mathsf K}(F_0,\dots,F_n)=\bigcup_{i\le n}(F_i\cup K_i)$$ is a winning strategy of the player $\mathsf K$ in the game $\GKF(X)$.
\smallskip

4. Assume that $X$ has $\WDMOP$ and let $A\subset \dot X$ be a non-closed countable set in $X$. To derive a contradiction, assume that every compact set $K\subset X$ the intersection $A\cap K$ is finite.

By Theorem~\ref{t:C}, the function space $C_k'(X,2)$ is Choquet. The space $C_k'(X,2)$ is a subspace of the space $F'_k(X,2)=\{f\in 2^X:f(X')\subset\{0\}\}$, endowed with the compact-open topology. A neighborhood base of this topology at a function $f\in F_k'(X,2)$ consists of the sets $O_K(f)=\{g\in F_k'(X,2):g{\restriction}K=f{\restriction}K\}$ where $K$ runs over compact subsets of $X$.

Endow the doubleton $2=\{0,1\}$ with a group operation $\oplus$ in which $0$ is a neutral element. This group operation induces a continuous group operation on the space $F_k(X,2)$.

Let $\chi\in F_k'(X,2)$ be the function defined by $\chi^{-1}(1)=A$. Since the set $A$ is not closed in $X$, the function $\chi$ is discontinuous. On the other hand, this function belongs to the closure $\overline{C_k'(X,2)}$ of the subgroup $C_k'(X,2)$ in $F_k'(X,2)$ as for any compact set $K\subset X$ the intersection $K\cap A\subset\dot X$ is finite and hence the restriction $\chi{\restriction}K$ is continuous.

By \cite[Theorem 3]{BH}, each Choquet topological group $H$ is $G_\delta$-dense in its Raikov completion $\bar H$, which means that $H$ has non-empty intersection with any non-empty $G_\delta$-subset of $\bar H$.

This fact implies that the Choquet subgroup $C_k'(X,2)$ is $G_\delta$-dense in its closure $\overline{C_k'(X,2)}$. Then the $G_\delta$-set $G=\{f\in \overline{C_k'(X,2)}:f{\restriction}A=\chi{\restriction}A\}$ has common point $f$ with the subgroup $C_k'(X,2)$.  Now get a desired contradiction:
$$\emptyset\ne X'\cap \bar A\subset f^{-1}(0)\cap \overline{f^{-1}(1)}=f^{-1}(0)\cap f^{-1}(1)=\emptyset.$$
\vskip5pt

5. If $X$ is a $\dot\kappa_\w$-space, then it is a hemi-$\dot\kappa_\w$-spaces with $\WDMOP$ by the statements (1) and (3), proved above.
If $X$ is a hemi-$\dot\kappa_\w$-space with $\WDMOP$, then $X$ is a $\dot\kappa_\w$-space by the statements (2) and (4), proved above.
\end{proof}

For topological spaces with countable set of isolated points we can prove a bit more.

\begin{theorem}\label{t:W=k} Let $X$ be a topological space with countable set $\dot X$ of isolated points. Then
\begin{enumerate}
\item[\textup{(1)}] $X$ is a $\dot \kappa$-space if and only if it is a $\w$-$\dot\kappa$-space;
\item[\textup{(2)}] $X$ is a $\dot \kappa_\omega$-space if and only if $X$ has $\WDMOP$.
\end{enumerate}
\end{theorem}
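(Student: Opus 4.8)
The plan is to dispatch the two parts separately, leaning on Proposition~\ref{p:kappa} for everything except one genuinely new construction. Part~(1) should be essentially tautological: since $\dot X$ is countable, every subset $D\subseteq\dot X$ is countable, so the defining condition of a $\dot\kappa$-space (a compact set meeting $D$ in an infinite set, for \emph{every} non-closed $D\subseteq\dot X$) and that of an $\w$-$\dot\kappa$-space (the same, but only for \emph{countable} non-closed $D$) quantify over exactly the same family of sets $D$. Hence the two notions coincide verbatim, and nothing has to be proved beyond this observation. For part~(2), the forward implication $\dot\kappa_\w$-space $\Rightarrow\WDMOP$ is already contained in Proposition~\ref{p:kappa}(3) and needs no countability. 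For the converse I would combine Proposition~\ref{p:kappa}(5) --- a space is a $\dot\kappa_\w$-space iff it is simultaneously a hemi-$\dot\kappa_\w$-space and has $\WDMOP$ --- with a new argument showing that a space $X$ with $\WDMOP$ and countable $\dot X$ is automatically a hemi-$\dot\kappa_\w$-space. Thus the whole weight of the theorem falls on this single implication.

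To build the required countable family, I would fix a winning strategy $S_{\mathsf K}:\F(\dot X)^{<\w}\to\K(X)$ of player $\mathsf K$ in $\GKF(X)$. Since $\dot X$ is countable, $\F(\dot X)$ is countable and hence so is the set $\F(\dot X)^{<\w}$ of finite sequences of finite subsets of $\dot X$. Therefore the family
\[
\mathcal C=\Big\{\,S_{\mathsf K}(F_0,\dots,F_{n-1})\cup(F_0\cup\dots\cup F_{n-1}):(F_0,\dots,F_{n-1})\in\F(\dot X)^{<\w}\Big\}
\]
is a countable family of compact subsets of $X$ (each member is a finite union of a compact set and a finite set). I claim $\mathcal C$ witnesses the hemi-$\dot\kappa_\w$-property. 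Assuming the contrary, there is a compact $K\subseteq X$ with $K\cap\dot X\not\subseteq C$ for every $C\in\mathcal C$. I would then run a play of $\GKF(X)$ in which $\mathsf K$ follows $S_{\mathsf K}$ and $\mathsf F$ repeatedly exploits this failure: at stage $n$, writing $C_n=S_{\mathsf K}(F_0,\dots,F_{n-1})$, the assumption yields a point
\[
p_n\in(K\cap\dot X)\setminus\big(C_n\cup F_0\cup\dots\cup F_{n-1}\big),
\]
and $\mathsf F$ answers with the legal singleton $F_n=\{p_n\}$ (legal since $p_n\notin C_n$, and genuinely new since $p_n\notin\bigcup_{i<n}F_i$). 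The resulting union $\bigcup_n F_n=\{p_n:n\in\w\}$ is then an infinite subset of the compact set $K$.

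The last step --- and the only place where real care is needed --- is to check that this play makes $\mathsf K$ lose, contradicting that $S_{\mathsf K}$ is winning. Being an infinite subset of the compact set $K$, the set $S=\{p_n:n\in\w\}$ has an accumulation point $z\in K$ (every infinite subset of a compact space has one, with no separation axiom needed); since every $p_n$ is isolated, no isolated point can be an accumulation point of $S$, so $z\in X'$ and in particular $z\notin S$. Thus $S$ is not closed in $X$. On the other hand, if the indexed family $\{F_n\}_{n\in\w}$ were discrete, its union would be closed: each $F_n$ is closed, because isolated points are clopen (hence closed) and $F_n$ is finite, and the union of a discrete family of closed sets is closed by the standard local argument. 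This contradiction shows that $\{F_n\}_{n\in\w}$ is not discrete, so $\mathsf K$ does not win the described play --- contradicting $\WDMOP$. Hence $\mathcal C$ works, $X$ is a hemi-$\dot\kappa_\w$-space, and Proposition~\ref{p:kappa}(5) upgrades $\WDMOP$ to the $\dot\kappa_\w$-property, completing part~(2). The main obstacle is therefore not any single hard estimate but rather the correct bookkeeping ensuring the $p_n$ are simultaneously legal moves and pairwise distinct --- which is precisely what adjoining $F_0\cup\dots\cup F_{n-1}$ to the strategy's output in the definition of $\mathcal C$ secures.
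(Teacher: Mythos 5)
Your proposal is correct and takes essentially the same route as the paper: both proofs extract a countable family of compact sets from the winning strategy $S_{\mathsf K}$, padded by the finite histories so that $\mathsf F$'s counter-moves stay legal and pairwise distinct (the paper adjoins $\F(\dot X)$ and takes cumulative unions $K_{\le n}$, you adjoin $F_0\cup\dots\cup F_{n-1}$ to the strategy's output), and then defeat $S_{\mathsf K}$ by letting $\mathsf F$ play fresh points of a compact set $K$ whose trace $K\cap\dot X$ is not covered, the resulting infinite subset of $K$ having an accumulation point and hence failing to be discrete. The only organizational difference is that you verify the hemi-$\dot\kappa_\w$ property and then cite Proposition~\ref{p:kappa}(5), whereas the paper verifies the $\dot\kappa_\w$ property directly and invokes Proposition~\ref{p:kappa}(4) for non-closed $D\subset\dot X$; since part (5) of that proposition is itself deduced from parts (2) and (4), the two arguments coincide in substance, and your rendering of the game step is in fact more careful than the paper's one-line version of it.
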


\begin{proof} 1. The first equivalent follows directly from the definitions.
\smallskip

2. If $X$ is a $\dot\kappa_\w$-space, then it has $\WDMOP$ by Proposition~\ref{p:kappa}(3). Now assume that $X$ has $\WDMOP$ and fix a winning strategy $S_{\mathsf N}:\mathcal F(\dot X)^{<\w}\to\K(X)$ of the player $\mathsf K$ in the game $\GKF(X)$. Here $\F(\dot X)$ is the family of finite subsets of $\dot X$. Since the set $\dot X$ is countable, the family $\mathcal K=\F(\dot X)\cup \{S_{\mathsf K}(s):s\in\F(\dot X)^{<\w}\}$ is countable, too.

Let $\K=\{K_n\}_{n\in\w}$ be an enumeration of the family $\K$. We claim that the sequence $(K_{\le n})_{n\in\w}$ of the compact sets $K_{\le n}=\bigcup_{i\le n}K_i$ witnesses that $X$ is a $\dot\kappa_\w$-space.

Given a non-closed subset $D\subset\dot X$, we should show that the intersection $D\cap K_{\le n}$ is infinite for some $n\in\w$. By Proposition~\ref{p:kappa}(4), there is a compact set $K\subset X$ such that the intersection $K\cap D$ is infinite.
We claim that there exists $n\in\w$ such that $K\cap\dot X\subset K_{\le n}$. To derive a contradiction, assume that for every $n\in\w$ the complement $K\cap\dot X\setminus K_{\le n}$ contains some point $x_n\in\dot X$. Since each subsequence of the sequence $(x_n)_{n\in\w}$ has an accumulation point in the compact space $K\subset X$, the player $\mathsf F$ has a winning strategy by choosing his moves in the set $\{x_n\}_{n\in\w}$. But this is not possible as player $\mathsf K$ has a winning strategy in the game $\GKF(X)$. This contradiction shows that $K\cap D\subset K\cap \dot X\subset K_{\le n}$ for some $n\in\w$. Then the intersection $D\cap K_{\le n}\supset D\cap K$ is infinite.
\end{proof}

Theorem~\ref{t:W=k} implies that for any topological space $X$ with countable set $\dot X$ we have the following equivalences and implications:
$${\mbox{hemi-$\dot\kappa_\w$-space}\atop{\mbox{and $\w$-$\dot\kappa$-space}}}\;\Leftrightarrow\; \mbox{$\dot\kappa_\w$-space} \;\Leftrightarrow\; \WDMOP  \;\Ra \mbox{$\dot\kappa$-space}\;\Leftrightarrow\;\mbox{$\w$-$\dot\kappa$-space}.$$

\section{The metrizability of the function spaces $C_k'(X,Y)$}\label{s:M}

\begin{lemma}\label{l:M1} Let $Y$ be a pointed topological space whose distinguished point $*_Y$ has a neighborhood  $U_*\ne Y$. A topological space $X$ is a hemi-$\dot\kappa_\w$-space if the function space $C_k'(X,Y)$ is first-countable at the constant function $c:X\to\{*_Y\}$.
\end{lemma}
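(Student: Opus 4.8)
The plan is to extract from a countable neighborhood base at the constant function $c$ a countable family of compact sets witnessing the hemi-$\dot\kappa_\w$-property. First I would fix a countable neighborhood base $\{V_n\}_{n\in\w}$ of $c$ in $C_k'(X,Y)$. Applying Lemma~\ref{l:base} at the function $c$, for each $n$ I can shrink $V_n$ to a basic neighborhood of the form $[K_n;U_n|F_n;u_n]$ for some quadruple $(K_n,U_n,F_n,u_n)\in\mathcal Q$ containing $c$; since replacing each member of a base by a smaller neighborhood of $c$ keeps it a base, I may assume each $V_n$ already has this form. Because $c\in V_n$, I record that $*_Y\in U_n$ and $*_Y\in u_n(x)$ for every $x\in F_n$. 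The candidate witnessing family will be $\widehat K_n:=K_n\cup F_n$, which is compact, being the union of a compact set and a finite set.

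Next, using the hypothesis $U_*\ne Y$, I fix a point $y_1\in Y\setminus U_*$. Given an arbitrary compact set $K\subset X$, the set $[K;U_*]$ is a neighborhood of $c$, so the base property yields an $n$ with $V_n\subset[K;U_*]$. The whole statement then reduces to proving the inclusion $K\cap\dot X\subset\widehat K_n$.

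I would establish this by contradiction. Suppose some isolated point $x\in K\cap\dot X$ lies outside $\widehat K_n=K_n\cup F_n$. Define $f\colon X\to Y$ by $f(x)=y_1$ and $f(z)=*_Y$ for all $z\ne x$; since $x$ is isolated, $f$ is continuous, and since $x\in\dot X$ we have $f(X')\subset\{*_Y\}$, so $f\in C_k'(X,Y)$. Because $x\notin K_n\cup F_n$, the function $f$ agrees with $c$ on $K_n\cup F_n$, whence $f(K_n)\subset U_n$ and $f(z)\in u_n(z)$ for each $z\in F_n$ (using $*_Y\in U_n$ and $*_Y\in u_n(z)$); therefore $f\in V_n\subset[K;U_*]$. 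But $x\in K$ together with $f(x)=y_1\notin U_*$ contradicts $f(K)\subset U_*$. This contradiction gives $K\cap\dot X\subset\widehat K_n$, so $\{\widehat K_n\}_{n\in\w}$ witnesses that $X$ is a hemi-$\dot\kappa_\w$-space.

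The only genuinely delicate point is the bookkeeping in the verification that $f\in V_n$: one must ensure that the ``test'' isolated point $x$ avoids not only the compact part $K_n$ but also the finite part $F_n$ of the basic neighborhood, which is exactly why the witnessing compact sets must be taken to be $K_n\cup F_n$ rather than $K_n$ alone. Everything else is routine, resting on the fact that $*_Y$ lies in $U_n$ and in each $u_n(z)$, which is forced by $c\in V_n$, and on the continuity of the single-point perturbation $f$, which is guaranteed by the isolatedness of $x$.
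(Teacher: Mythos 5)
Your proof is correct and follows essentially the same route as the paper's: extract compact sets from a countable neighborhood base at $c$, then for an arbitrary compact set $K$ find $n$ with $V_n\subset[K;U_*]$ and rule out a point $x\in K\cap\dot X$ outside the candidate compact set via the single-point perturbation $f$ taking the value $y_1\notin U_*$ at $x$. The only cosmetic difference is that the paper notes that at the constant function the subbasic sets $[K_n;V_n]$ (with $V_n$ a neighborhood of $*_Y$) already form a neighborhood base, so its witnessing sets are the $K_n$ alone, whereas your use of the base from Lemma~\ref{l:base} forces the harmless enlargement $K_n\cup F_n$.
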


\begin{proof} Being first-countable at $c$, the function space $C_k'(X,Y)$ has a countable neighborhood base $\{O_n\}_{n\in\w}$ at $c$. By the definition of the compact-open topology on $C_k'(X,Y)$, for every $n\in\w$ we can find a compact subset $K_n\subset X$ and a neighborhood $V_n\subset Y$ of $*_Y$ such that $c\in [K_n;V_n]\subset O_n$. We claim that the sequence $(K_n)_{n\in\w}$ witnesses that $X$ is a hemi-$\dot\kappa_\w$-space.

Given a compact set $K\subset X$, we should find $n\in\w$ such that $K\cap\dot X\subset K_n$. By our assumption, the distinguished point $*_Y$ has an open neighborhood $U_*\ne Y$. Consider the open neighborhood $[K,U_*]\subset C_k'(X,Y)$ of the constant function $c$ and find $n\in\w$ with $O_n\subset [K;U_*]$. Then the inclusion $[K_n;V_n]\subset O_n\subset[K;U_*]$ implies $K\cap \dot X\subset K_n$. Indeed, assuming that $K\cap\dot X\not\subset K_n$, we can find an isolated point $x\in K\cap\dot X\setminus K_n$.  Fix any point $y\in Y\setminus U_*$ and consider the map $\chi_x:X\to\{*_Y,y\}\subset Y$ defined by $\chi_x^{-1}(y)=x$. Since $x$ is isolated in $X$, the map $\chi_x$ is continuous. Taking into account that $\chi_x(K_n)\subset\{*_Y\}$ and $\chi_x(x)=y\notin U_*$, we conclude that   $\chi_x\in [K_n;V_n]\setminus [K;U_*]$, which contradicts the choice of $n$ (with $[K_n;V_n]\subset O_n\subset[K;U_*]$).
\end{proof}

For the proof of Corollary~\ref{c:AM}, we shall need the following refined version of Lemma~\ref{l:M1}.

\begin{lemma}\label{l:M1a} Let $Y$ be a pointed topological space such that each point $y\in Y$ has a neighborhood $O_y$ which is not dense in $Y$. A topological space $X$ is a hemi-$\dot\kappa_\w$-space if the function space $C_k'(X,Y)$ contains a dense subspace $D$, which is first-countable at some point $\delta\in D$.
\end{lemma}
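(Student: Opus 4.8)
The plan is to imitate the proof of Lemma~\ref{l:M1}, but to overcome two new features: the test function is no longer the constant function, and the countable base lives only in the dense subspace $D$ rather than in the whole space $C_k'(X,Y)$.

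First I would fix a \emph{decreasing} countable neighbourhood base $(O_n)_{n\in\w}$ of $\delta$ in $D$ (replacing a given base by the intersections of its initial segments) and write $O_n=D\cap\tilde O_n$ for open sets $\tilde O_n$ in $C_k'(X,Y)$ with $\delta\in\tilde O_n$, which can likewise be taken decreasing. Using the base from Lemma~\ref{l:base}, for each $n$ I choose a quadruple $(\kappa_n,U_n,F_n,u_n)\in\mathcal Q$ with $\delta\in[\kappa_n;U_n|F_n;u_n]\subset\tilde O_n$ and set $K_n:=\kappa_n\cup F_n$, a compact subset of $X$. I claim that $\{K_n\}_{n\in\w}$ witnesses that $X$ is a hemi-$\dot\kappa_\w$-space, i.e. every compact $K\subset X$ satisfies $K\cap\dot X\subset K_n$ for some $n$.

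Assume not, and pick $x_n\in (K\cap\dot X)\setminus K_n$ for every $n$. The decisive step is to produce from these points a single non-dense open set $V\subset Y$ together with a single compact $C\subset K$ such that $\delta(C)\subset V$ and $x_n\in C$ for all $n$ in some infinite $S\subset\w$. Here I would split cases on $B=\{x_n:n\in\w\}$. If $B$ is finite, some value $x^*$ repeats along an infinite $S$, and I take $C=\{x^*\}$ and $V$ a non-dense neighbourhood of $\delta(x^*)$, available by the hypothesis on $Y$. If $B$ is infinite, compactness of $K$ supplies an $\w$-accumulation point $x^*$ of $B$; since an isolated point cannot be an $\w$-accumulation point, $x^*\in X'$ and hence $\delta(x^*)=*_Y$, so the non-dense neighbourhood $V=O_{*_Y}$ of $*_Y$ pulls back to a neighbourhood of $x^*$ containing $x_n$ for all $n$ in an infinite set $S$, and I take $C=\mathrm{cl}_K\{x_n:n\in S\}$. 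In both cases $\delta(C)\subset V$: by construction the points $x_n$ ($n\in S$) land in $V$, and in the infinite case the only further points of $C$ are limit points lying in $X'$, which are sent to $*_Y\in V=O_{*_Y}$.

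Finally I would derive the contradiction from the neighbourhood $N:=[C;V]$ of $\delta$. Fixing $y^*\in Y\setminus\overline V$, for each $n\in S$ the function obtained from $\delta$ by resetting its value at the isolated point $x_n$ to $y^*$ stays continuous, belongs to $C_k'(X,Y)$, and — because $x_n\notin K_n\supset\kappa_n\cup F_n$ — still lies in $[\kappa_n;U_n|F_n;u_n]\subset\tilde O_n$. Hence the open set $\tilde O_n\cap[\{x_n\};Y\setminus\overline V]$ is non-empty, so by density of $D$ it contains a function $g_n\in D\cap\tilde O_n=O_n$ with $g_n(x_n)\notin\overline V\supset V$; since $x_n\in C$ this yields $g_n\in O_n\setminus N$. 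Thus $O_n\not\subset N$ for every $n\in S$, and as $S$ is infinite and $(O_n)$ is decreasing, no $O_n$ is contained in $N$, contradicting that $(O_n)$ is a base at $\delta$. I expect the main obstacle to be precisely the middle step: manufacturing a single non-dense $V$ covering $\delta(C)$, because finite unions of non-dense open sets can be dense, which is exactly what forces the passage to an infinite subsequence concentrated near one value of $\delta$ and explains why the hypothesis on $Y$ must hold at every point.
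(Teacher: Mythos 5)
Your proof is correct, and it reaches the conclusion by a genuinely different mechanism than the paper, even though both arguments share the same skeleton: the witnessing sequence $(\kappa_n\cup F_n)_{n\in\w}$ extracted via Lemma~\ref{l:base} from a countable base at $\delta$, the density of $D$ to manufacture functions in $O_n$ with a prescribed ``bad'' value at one isolated point, and the non-dense neighborhoods in $Y$ to have somewhere to send that value. The difference lies in how the compact set $K$ is brought into contact with the neighborhood filter at $\delta$. The paper argues directly: for a given $K$ it invokes a refined form of Lemma~\ref{l:base} to produce one auxiliary basic neighborhood $[\kappa;V|E;v]$ of $\delta$ with $K\subset\kappa\cup E$, $V\subset O_{*_Y}$ and $v(x)\subset O_{\delta(x)}$ for $x\in E$ (note these extra conditions are not in the statement of Lemma~\ref{l:base}, only in its proof), then uses the base property once to find an $n$ with $O_n\subset[\kappa;V|E;v]$, and kills a single hypothetical bad point $x$ by density. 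You instead run an outer contradiction with one bad point $x_n$ per index, and replace the paper's second, refined application of Lemma~\ref{l:base} by countable compactness of $K$: the finite/infinite dichotomy on $\{x_n\}_{n\in\w}$, together with the observation that an accumulation point of a set of isolated points must lie in $X'$ and hence is sent by $\delta$ to $*_Y$, concentrates infinitely many bad points in a single subbasic neighborhood $[C;V]$ of $\delta$ with $V$ non-dense; density of $D$ then shows no $O_n$ fits inside $[C;V]$, contradicting that $(O_n)$ is a base at $\delta$. What your route buys is self-containedness: you only use Lemma~\ref{l:base} as stated, at the cost of the case analysis and the contradiction structure; what the paper's route buys is a direct identification of the index $n$ that works for a given $K$, at the cost of leaning on the internal construction of Lemma~\ref{l:base}. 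Your closing remark is also on target: the impossibility of uniting finitely many non-dense sets is exactly why the bad points must be funneled toward a single value of $\delta$, and this is the same phenomenon that the paper's conditions $V\subset O_{*_Y}$, $v(x)\subset O_{\delta(x)}$ are designed to handle.
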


\begin{proof} Let $\{O_n\}_{n\in\w}$ be a countable neighborhood base at the point $\delta$ of the space $D$. By Lemma~\ref{l:base}, for every $n\in\w$ we can find a quadruple $(K_n,U_n,F_n,u_n)\in\mathcal Q$ such that $\delta\in D\cap [K_n;U_n|F_n;u_n]\subset O_n$. We claim that the sequence $(K_n\cup F_n)_{n\in\w}$ witnesses that $X$ is a hemi-$\dot\kappa_\w$-space.

Given a compact set $K\subset X$, we should find $n\in\w$ such that $K\cap\dot X\subset K_n\cup F_n$. By our assumption, each point $y\in Y$ has a neighborhood $O_y$, which is not dense in $Y$. By Lemma~\ref{l:base}, there exists a quadruple $(\kappa,V,E,v)\in\mathcal Q$ such that $\delta\in[\kappa;V|E;v]$, $K\subset \kappa\cup E$, $V\subset O_{*_Y}$ and $v(x)\subset O_{\delta(x)}$ for all $x\in E$.
Since $\{O_n\}_{n\in\w}$ is a neighborhood base of the space $D$ at $\delta$, there exists $n\in\w$ such that $O_n\subset [\kappa;V|E;v]$.

We claim that $K\cap\dot X\subset K_n\cup F_n$. Assuming that $K\cap\dot X\not\subset K_n\cup F_n$, we can find a point $x\in \dot X\cap K\setminus (K_n\cup F_n)$.
Choose a non-empty open set $W_x\subset Y$ such that $W_x\cap O_{\delta(x)}=\emptyset$ if $x\in E$ and $W_x\cap V=\emptyset$ if $x\notin E$.
By the density of the set $D$ in $C_k'(X,Y)$, the intersection $D\cap [K_n;U_n|F_n;u_n]\cap[\{x\};W_x]$ contains some function $f$. Then $f\in D\cap [K_n;U_n|F_n;u_n]\subset O_n\subset [\kappa;V|E;v]$. Since $x\in K\subset \kappa\cup E$, the inclusion $f\in [\kappa;V|E;v]$ implies that $f(x)\in V$ if $x\in\kappa$ and $f(x)\in v(x)\subset O_{\delta(x)}$ if $x\in E$. In both cases, we get $f(x)\notin W_x$, which contradicts the inclusion $f\in[\{x\};W_x]$. This contradiction shows that $K\cap\dot X\subset K_n\cup F_n$. Therefore, the sequence of compact sets $(K_n\cup F_n)_{n\in\w}$   witnesses that $X$ is a hemi-$\dot\kappa_\w$-space.
\end{proof}

\begin{lemma}\label{l:M2} For any hemi-$\dot\kappa_\w$-space $X$ and any metrizable space $Y$, the function space $C_k'(X,Y)$ is metrizable.
\end{lemma}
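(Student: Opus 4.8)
The plan is to exhibit an explicit compatible metric. Fix a bounded metric $d\le 1$ generating the topology of $Y$, and let $(K_n)_{n\in\w}$ be a sequence of compact subsets of $X$ witnessing that $X$ is a hemi-$\dot\kappa_\w$-space. Applying the hemi-$\dot\kappa_\w$ property to the singletons $\{x\}$, $x\in\dot X$, one first records that $\dot X\subset\bigcup_{n\in\w}K_n$. Then I would set
$$\rho(f,g)=\sum_{n\in\w}2^{-n}\sup_{x\in K_n}d\big(f(x),g(x)\big)\qquad\text{for }f,g\in C_k'(X,Y),$$
which is finite (each term is at most $1$), symmetric, and satisfies the triangle inequality as a countable weighted sum of pseudometrics.

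The first step is to verify that $\rho$ separates points, hence is a genuine metric. If $f\ne g$, they differ at some point $x$; since every $h\in C_k'(X,Y)$ sends $X'$ into $\{*_Y\}$, this $x$ must be isolated, so $x\in K_n$ for some $n$ by the inclusion $\dot X\subset\bigcup_n K_n$, whence $\rho(f,g)\ge 2^{-n}d(f(x),g(x))>0$. It then remains to show that the metric topology $\tau_\rho$ coincides with the compact-open topology $\tau$ on $C_k'(X,Y)$, which I would do in two inclusions.

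For $\tau_\rho\subseteq\tau$: given $f$ and $\e>0$, choose $N$ with $\sum_{n>N}2^{-n}<\e/2$ and put $K_{\le N}=\bigcup_{n\le N}K_n$. The set $\{g:\sup_{x\in K_{\le N}}d(f(x),g(x))<\e/4\}$ is a $\tau$-neighborhood of $f$ --- here I invoke the classical fact that for metrizable $Y$ the compact-open topology coincides with the topology of uniform convergence on compacta --- and a short estimate shows it is contained in the $\rho$-ball $B_\rho(f,\e)$, using that the head sum is at most $2\cdot(\e/4)$ and the tail sum is at most $\sum_{n>N}2^{-n}<\e/2$.

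The reverse inclusion $\tau\subseteq\tau_\rho$ is the main step, and it is exactly here that the hemi-$\dot\kappa_\w$ hypothesis does the real work. It suffices to show that each subbasic set $[K;U]$ is $\tau_\rho$-open. Fix $f\in[K;U]$ with $U\ne Y$ (the case $U=Y$ being trivial) and set $\eta=d\big(f(K),Y\setminus U\big)$, which is positive (and $\le 1$) because $f(K)$ is compact and contained in the open set $U$. Using the hemi-$\dot\kappa_\w$ property, choose $m$ with $K\cap\dot X\subset K_m$, and put $\delta=2^{-m}\eta$. The decisive point is the decomposition $K=(K\cap\dot X)\cup(K\cap X')$: on $K\cap X'$ every $g\in C_k'(X,Y)$ equals $*_Y$, which lies in $U$ whenever $K\cap X'\ne\emptyset$ since then $*_Y\in f(K)\subset U$; and on $K\cap\dot X\subset K_m$, the inequality $\rho(f,g)<\delta$ forces $\sup_{x\in K_m}d(f(x),g(x))<\eta$, so $d(f(x),g(x))<\eta$ for each $x\in K\cap\dot X$, which by the definition of $\eta$ keeps $g(x)$ inside $U$. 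Hence $B_\rho(f,\delta)\subset[K;U]$, so $[K;U]$ is $\tau_\rho$-open. Combining the two inclusions gives $\tau=\tau_\rho$, and therefore $C_k'(X,Y)$ is metrizable. I expect the only genuine subtlety to be the handling of $K\cap X'$ in this last paragraph: the argument succeeds precisely because hemi-$\dot\kappa_\w$ controls the isolated part of an arbitrary compact set while the membership $C_k'(X,Y)$ renders the non-isolated part inert.
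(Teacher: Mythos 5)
Your proof is correct and follows essentially the same route as the paper: both build an explicit compatible metric from the witnessing sequence $(K_n)_{n\in\w}$, invoke the coincidence of the compact-open topology with the topology of uniform convergence on compacta for one inclusion, and prove the other (the decisive) inclusion by splitting a compact set $K$ into $K\cap\dot X\subset K_m$, controlled by the hemi-$\dot\kappa_\w$ property, and $K\cap X'$, where all functions in $C_k'(X,Y)$ are constantly $*_Y$. The only differences are cosmetic: the paper uses the truncated metric $\rho(f,g)=\max_{n\in\w}\min\{2^{-n},\max_{x\in K_n}d(f(x),g(x))\}$ rather than your weighted-sum metric with bounded $d$, and it does not explicitly verify separation of points, which your argument does.
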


\begin{proof} Let $(K_n)_{n\in\w}$ be a sequence of compact sets witnessing that $X$ is a hemi-$\dot\kappa_\w$-space. Replacing each compact set $K_n$ by the union $\bigcup_{i\le n}K_i$, we can assume that $K_n\subset K_{n+1}$ for all $n\in\w$.

Let $d$ be any metric generating the topology of the space $Y$. For a point $y\in Y$ and $\e>0$ denote by $B(y;\e):=\{x\in Y:d(y,x)<\e\}$ the open $\e$-ball centered at $y$.

We claim that the metric $\rho$ on $C_k'(X,Y)$ defined by the formula
$$\rho(f,g)=\max_{n\in\w}\min\{\tfrac1{2^n},\max_{x\in K_n}d(f(x),g(x))\}\quad\mbox{for $f,g\in C_k'(X,Y)$}$$
generates the compact-open topology of the function space $C_k'(X,Y)$.

By \cite[8.2.7]{Eng}, the compact-open topology on $C_k'(X,Y)$ coincides with the topology of uniform convergence on compacta, which implies that the metric $\rho$ is continuous. It remains to check that for any compact set $K\subset X$ and any open set $U\subset Y$ the subbasic open set $[K;U]:=\{f\in C_k'(X,Y):f(K)\subset U\}$ is open in the metric space $(C_k'(X,Y),\rho)$. Fix any function $f\in[K;U]$.
By the choice of the sequence $(K_n)_{n\in\w}$, there exists a number $n\in\w$ such that $K\cap\dot X\subset K_n$.
By the compactness of the set $f(K)\subset U$, there exists a positive real number $\e<\frac1{2^n}$ such that $B(f(K);\e)\subset U$, where $B(f(K);\e)=\bigcup_{y\in f(K)}B(y;\e)$.

 We claim that any function $g\in C_k'(X,Y)$ with $\rho(f,g)<\e$ belongs to $[K;U]$. Given any $z\in K$, we should check that $g(z)\in U$. If $z\in X'$, then $g(z)=*_Y=f(z)\in f(K)\subset U$. If $z\notin X'$, then $z\in K\cap\dot X\subset K_n$ and then $\e<\frac1{2^n}$ and
 $$\min\big\{\tfrac1{2^n},\max_{x\in K_n}d(f(x),g(y))\big\}\le\rho(f,g)<\e$$ imply that $d(g(z),f(z))\le \max_{x\in K_n}d(f(x),g(x))<\e$ and finally, $g(z)\in B(f(z);\e)\subset B(f(K),\e)\subset U$.
\end{proof}

\begin{theorem}\label{t:M} For a pointed topological space $Y$ containing more than one point, and a topological space $X$ containing an  isolated point, the following conditions are equivalent:
\begin{enumerate}
\item[\textup{(1)}] the function space $C_k'(X,Y)$ is metrizable;
\item[\textup{(2)}] the space $Y$ is metrizable and $X$ is a hemi-$\dot\kappa_\w$-space.
\end{enumerate}
\end{theorem}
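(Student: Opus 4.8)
The plan is to prove the two implications separately, taking one of them essentially for free from the preliminary lemmas and concentrating the work on the other. The implication $(2)\Rightarrow(1)$ is immediate, since it is exactly the content of Lemma~\ref{l:M2}: a hemi-$\dot\kappa_\w$-space paired with a metrizable $Y$ yields a metrizable $C_k'(X,Y)$.

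For $(1)\Rightarrow(2)$ I would first extract the metrizability of $Y$ by realizing $Y$ as a subspace of $C_k'(X,Y)$. Since $X$ contains an isolated point, fix $x_0\in\dot X$. For each $y\in Y$ let $f_y\in C_k'(X,Y)$ be the function with $f_y(x_0)=y$ and $f_y(x)=*_Y$ for all $x\neq x_0$; this is continuous because $x_0$ is isolated, and it lies in $C_k'(X,Y)$ because $x_0\notin X'$. The assignment $\varphi\colon y\mapsto f_y$ is continuous: checking the preimages of the subbasic sets $[K;U]$ is routine, the only relevant distinction being whether $x_0\in K$ or $x_0\notin K$, and in each case the preimage turns out to be one of $Y$, $U$, or $\emptyset$. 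Moreover the evaluation map $\mathrm{ev}_{x_0}\colon C_k'(X,Y)\to Y$, $f\mapsto f(x_0)$, is continuous (its value on $[\{x_0\};U]$) and satisfies $\mathrm{ev}_{x_0}\circ\varphi=\mathrm{id}_Y$. Hence $\varphi$ is a continuous injection with continuous left inverse, i.e.\ a topological embedding, so $Y$ is homeomorphic to a subspace of the metrizable space $C_k'(X,Y)$ and is therefore metrizable.

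It then remains to deduce that $X$ is a hemi-$\dot\kappa_\w$-space, and here the preliminary lemmas do the work. Being metrizable and containing more than one point, $Y$ is Hausdorff, so the distinguished point $*_Y$ has a neighborhood $U_*\neq Y$ separating it from any other point. On the other hand, the metrizability of $C_k'(X,Y)$ makes it first-countable at every point, in particular at the constant function $c\colon X\to\{*_Y\}$. Thus both hypotheses of Lemma~\ref{l:M1} are satisfied, and that lemma yields that $X$ is a hemi-$\dot\kappa_\w$-space, completing the proof.

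I expect no serious obstacle in this argument: the only point requiring genuine care is verifying that $\varphi$ is an embedding and confirming that the auxiliary hypotheses needed to invoke Lemma~\ref{l:M1} (namely that $x_0$ is isolated and that $*_Y$ admits a proper neighborhood) are indeed guaranteed by the standing assumptions that $X$ has an isolated point and $Y$ has at least two points. Everything beyond that is a direct application of Lemmas~\ref{l:M1} and~\ref{l:M2}.
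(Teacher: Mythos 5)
Your proof is correct, and in substance it follows the same route as the paper: $(2)\Rightarrow(1)$ is quoted from Lemma~\ref{l:M2}, and the hemi-$\dot\kappa_\w$ property in $(1)\Rightarrow(2)$ comes from Lemma~\ref{l:M1}. The two write-ups differ in how metrizability of $Y$ is extracted and, more importantly, in the order of the steps. The paper shows that $H\colon C_k'(X,Y)\to Y\times C_k'(X\setminus\{x\},Y)$, $f\mapsto\big(f(x),f{\restriction}X\setminus\{x\}\big)$, is a homeomorphism and then reads off metrizability of $Y$ from the product; your section/retraction pair $\varphi$, $\mathrm{ev}_{x_0}$ accomplishes the same more economically, since an embedding of $Y$ into $C_k'(X,Y)$ is all that is needed and no inverse map has to be analyzed. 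More substantively, your ordering repairs a small gap in the paper's own argument: Lemma~\ref{l:M1} presupposes that $*_Y$ has a neighborhood $U_*\ne Y$, and this does not follow from the bare hypothesis that $Y$ has more than one point (an indiscrete doubleton fails it). The paper invokes Lemma~\ref{l:M1} \emph{before} proving $Y$ metrizable, so that hypothesis is unverified at the moment of use; you first prove $Y$ metrizable (hence Hausdorff, hence $*_Y$ has a proper neighborhood) and only then apply the lemma, which is the logically safe order. So your proof is the same in substance but slightly cleaner in execution.
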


\begin{proof} The implication $(2)\Ra(1)$ was proved in Lemma~\ref{l:M2}.

To prove that $(1)\Ra(2)$, assume that the function space $C_k'(X,Y)$ is metrizable. By Lemma~\ref{l:M1}, $X$ is a hemi-$\dot\kappa_\w$-space. By our assumption, the space $X$ contains an isolated point $x\in X$. It is easy to see that the function
$$H:C_k'(X,Y)\to Y\times C_k'(X\setminus\{x\},Y),\;\;f\mapsto (f(x),f{\restriction}X\setminus\{x\}),$$
is a homeomorphism. Then the space $Y$ is metrizable, being homeomorphic to a subspace the metrizable space $Y\times C_k'(X\setminus\{x\},Y)=H(C_k'(X,Y))$.
\end{proof}

Theorem~\ref{t:M} and Lemma~\ref{l:M1a} imply the following corollary.

\begin{corollary}\label{c:AM} For a pointed metrizable space $Y$ containing more than one point, and a topological space $X$ containing an  isolated point, the following conditions are equivalent:
\begin{enumerate}
\item[\textup{(1)}] the function space $C_k'(X,Y)$ is metrizable;
\item[\textup{(2)}] the function space $C'_k(X,Y)$ contains a dense first-countable subspace;
\item[\textup{(3)}] $X$ is a hemi-$\dot\kappa_\w$-space.
\end{enumerate}
\end{corollary}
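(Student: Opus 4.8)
The plan is to establish the cycle of implications $(1)\Ra(2)\Ra(3)\Ra(1)$, drawing on the two results the corollary cites. Two of the three arrows are essentially immediate, and the content sits in the arrow $(2)\Ra(3)$, where Lemma~\ref{l:M1a} does the work once its hypothesis is checked for metrizable $Y$.

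For $(1)\Ra(2)$ I would simply observe that a metrizable space is first-countable, so when $C_k'(X,Y)$ is metrizable it is a dense first-countable subspace of itself; hence (2) holds trivially. For $(3)\Ra(1)$ I would invoke Theorem~\ref{t:M}: the standing hypotheses there (that $Y$ is a pointed space with more than one point and that $X$ contains an isolated point) are exactly those of the corollary, and since here $Y$ is moreover assumed metrizable and $X$ is a hemi-$\dot\kappa_\w$-space, condition (2) of that theorem is satisfied, yielding the metrizability of $C_k'(X,Y)$.

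The only substantive step is $(2)\Ra(3)$. Here I want to apply Lemma~\ref{l:M1a}, which concludes that $X$ is a hemi-$\dot\kappa_\w$-space provided $C_k'(X,Y)$ has a dense subspace that is first-countable at one of its points --- precisely hypothesis (2) --- but under the side condition that every point $y\in Y$ has a neighborhood that is not dense in $Y$. So the main (and genuinely the only) thing to verify is that a metrizable $Y$ with more than one point meets this condition. Fixing $y\in Y$, I would choose a point $y'\ne y$, put $r=d(y,y')>0$ for a compatible metric $d$, and take $O_y=B(y,r/2)$; since $\overline{O_y}\subset\{z\in Y:d(y,z)\le r/2\}$ misses $y'$, the neighborhood $O_y$ is not dense in $Y$. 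With this verified, Lemma~\ref{l:M1a} applies and delivers (3).

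I do not anticipate a serious obstacle: the architecture is a short cycle of implications, and the sole non-formal point is the elementary metric-space observation above, which uses only that $Y$ is Hausdorff and has at least two points. The care required is bookkeeping --- matching the standing hypotheses of Theorem~\ref{t:M} and of Lemma~\ref{l:M1a} to those of the corollary --- rather than any new idea.
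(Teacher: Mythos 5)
Your proof is correct and follows essentially the same route as the paper, which derives the corollary directly from Theorem~\ref{t:M} and Lemma~\ref{l:M1a}: the implications $(1)\Rightarrow(2)$ and $(3)\Rightarrow(1)$ are immediate, and $(2)\Rightarrow(3)$ is exactly Lemma~\ref{l:M1a}, whose hypothesis you correctly verify via the elementary observation that in a metrizable space with at least two points every point has a non-dense neighborhood.
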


A topological space $X$ is defined to have {\em countable spread} if each discrete subspace in $X$ is at most countable.

\begin{lemma}\label{l:spread} Let $Y$ be a pointed space such that the singleton $\{*_Y\}$ is not dense in $Y$. A topological space $X$ has countable set $\dot X$ of isolated points if the function space $C_k'(X,Y)$ has countable spread.
\end{lemma}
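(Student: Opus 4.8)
The plan is to argue by contraposition in effect: I will show that if $\dot X$ is uncountable, then $C_k'(X,Y)$ contains an uncountable discrete subspace, contradicting countable spread. The idea is to manufacture, for each isolated point $x\in\dot X$, a single function $\chi_x\in C_k'(X,Y)$ that is separated from all the others by an open set of a fixed, easily-controlled shape.

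First I would use the hypothesis that $\{*_Y\}$ is not dense in $Y$: this produces a non-empty open set $W\subset Y$ with $*_Y\notin W$, and hence a point $y\in W$ with $y\ne *_Y$. Then for each $x\in\dot X$ I define $\chi_x\colon X\to Y$ by $\chi_x(x)=y$ and $\chi_x(z)=*_Y$ for $z\ne x$. Since $\{x\}$ is clopen (because $x$ is isolated), the map $\chi_x$ is continuous, and clearly $\chi_x(X')\subset\{*_Y\}$, so $\chi_x\in C_k'(X,Y)$. Distinct points give distinct functions, since $\chi_x(x)=y\ne *_Y=\chi_{x'}(x)$ whenever $x'\ne x$; thus $x\mapsto\chi_x$ is injective and the family $\{\chi_x:x\in\dot X\}$ has the same cardinality as $\dot X$.

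Next I would verify the discreteness of the family $\{\chi_x:x\in\dot X\}$ using the subbasic open sets $[\{x\};W]$, which are legitimate open subsets of $C_k'(X,Y)$ because the singleton $\{x\}$ is compact and $W$ is open in $Y$. By construction $\chi_x\in[\{x\};W]$, as $\chi_x(x)=y\in W$, whereas for every $x'\ne x$ we have $\chi_{x'}(x)=*_Y\notin W$, so $\chi_{x'}\notin[\{x\};W]$. Hence $[\{x\};W]$ is a neighborhood of $\chi_x$ that meets the family only in $\chi_x$, which is precisely the statement that the subspace $\{\chi_x:x\in\dot X\}$ is discrete.

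Finally, countable spread of $C_k'(X,Y)$ forces every discrete subspace to be at most countable, so $\{\chi_x:x\in\dot X\}$, and therefore $\dot X$, is countable. The argument is entirely constructive and I do not expect a genuine obstacle; the only point requiring mild care is the discreteness verification, namely confirming that the \emph{single} open set $[\{x\};W]$ isolates $\chi_x$ from the whole remaining family at once, which is exactly what the choice of $W$ avoiding the value $*_Y$ guarantees, since each $\chi_{x'}$ takes the value $*_Y$ at every point other than $x'$.
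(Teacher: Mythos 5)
Your proof is correct and follows essentially the same route as the paper: for each $x\in\dot X$ you build the function equal to a point $y\ne *_Y$ at $x$ and to $*_Y$ elsewhere (the paper's $\delta_x$), isolate it in the family by the subbasic open set $[\{x\};W]$ where $W$ is open and misses $*_Y$ (the paper takes $W=Y\setminus\overline{\{*_Y\}}$), and invoke countable spread. The two arguments differ only in this cosmetic choice of $W$.
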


\begin{proof} By our assumption, the space $Y$ contains a point $y\in Y\setminus\overline{\{*_Y\}}$. For every $x\in\dot X$ let $\delta_x:X\to\{*_Y,y\}$ be the unique function with $\delta_x^{-1}(y)=\{x\}$. Observe that the set $U_x:=\{f\in C_k'(X,Y):f(x)\notin \overline{\{*_Y\}}$ is an open neighborhood of $\delta_x$ in $C_k'(X,Y)$. Consider the subspace $D=\{\delta_x:x\in\dot X\}$ and observe that it is discrete, because $U_x\cap D=\{\delta_x\}$ for all $x\in \dot X$. Since the space $C_k'(X,Y)$ has countable spread, the discrete subspace $D$ has at most countable cardinality $\w\ge|D|=|\dot X|$.
\end{proof}

\begin{lemma}\label{l:MS2} Let $Y$ be a $*$-admissible pointed space and $X$ be a hemi-$\dot\kappa_\w$-space. If the function space $C_k'(X,Y)$ has countable cellularity, then $\dot X$ is at most countable.
\end{lemma}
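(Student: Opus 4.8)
The plan is to argue by contradiction: assuming that $\dot X$ is uncountable, I would construct an uncountable family of pairwise disjoint non-empty open subsets of $C_k'(X,Y)$, contradicting the assumed countable cellularity. The whole point of the hemi-$\dot\kappa_\w$ hypothesis is to confine uncountably many isolated points inside a single compact set, which is what makes a clean disjointness construction possible.

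First I would set up the combinatorial skeleton. Let $(K_n)_{n\in\w}$ be a sequence of compact subsets of $X$ witnessing that $X$ is a hemi-$\dot\kappa_\w$-space. Since every singleton $\{x\}$ with $x\in\dot X$ is compact, the defining property of a hemi-$\dot\kappa_\w$-space gives $\{x\}=\{x\}\cap\dot X\subset K_n$ for some $n$, so $\dot X\subset\bigcup_{n\in\w}K_n$. If $\dot X$ were uncountable, then some $A:=K_n\cap\dot X$ would be uncountable; I fix the corresponding compact set $K:=K_n$, noting $A\subset K$.

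Next I would extract the required pair of disjoint open sets in $Y$ from $*$-admissibility. By definition the distinguished point $*_Y$ has an open neighborhood $U_*$ with $\overline{U_*}\ne Y$; then $W:=Y\setminus\overline{U_*}$ is a non-empty open set with $W\cap U_*=\emptyset$, while $*_Y\in U_*$. For each $x\in A$ the singleton $\{x\}$ is clopen in $X$ (this is the definition of an isolated point), so $K\setminus\{x\}$ is compact, and I define the open set $V_x:=[\{x\};W]\cap[K\setminus\{x\};U_*]$ in $C_k'(X,Y)$. Each $V_x$ is non-empty: the function sending $x$ to some fixed $w_0\in W$ and everything else to $*_Y$ is continuous (since $\{x\}$ is clopen), maps $X'$ into $\{*_Y\}$, hence lies in $C_k'(X,Y)$, and belongs to $V_x$ because $*_Y\in U_*$.

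Finally I would check disjointness, which is immediate from the construction: if $f\in V_x\cap V_{x'}$ for distinct $x,x'\in A$, then $f(x)\in W$ comes from membership in $V_x$, whereas $x\in K\setminus\{x'\}$ forces $f(x)\in U_*$ from membership in $V_{x'}$, contradicting $W\cap U_*=\emptyset$. Thus $\{V_x\}_{x\in A}$ is an uncountable family of pairwise disjoint non-empty open sets, contradicting countable cellularity, and so $\dot X$ is at most countable. The only delicate step — the \emph{main obstacle} — is the passage from the bare uncountability of $\dot X$ to an uncountable set of isolated points lying in one common compact set $K$; this is exactly what hemi-$\dot\kappa_\w$ supplies, and it is indispensable, since the disjointness trick relies on all chosen points sharing the single compact set $K$ whose removed-point complements $K\setminus\{x\}$ are compact.
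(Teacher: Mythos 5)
Your proof is correct and is essentially the paper's own argument: both use the hemi-$\dot\kappa_\w$ witnessing sequence (plus the pigeonhole on singletons) to trap an uncountable subset of $\dot X$ in one compact set $K$, and then form the pairwise disjoint open family $[\{x\};W]\cap[K\setminus\{x\};U_*]$ from a non-dense neighborhood $U_*$ of $*_Y$. Your write-up merely fills in details the paper leaves implicit (the explicit witness function showing each set is non-empty, and the compactness of $K\setminus\{x\}$).
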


\begin{proof} Assuming that $\dot X$ is uncountable and taking into account that $X$ is a hemi-$\dot\kappa_\w$-space, we can find a compact set $K\subset X$ such that $\dot X\cap K$ is uncountable.

Since the pointed space $Y$ is $*$-admissible, the point $*_Y$ has a neighborhood $U_*$ which is disjoint with some non-empty open set $V\subset Y$. For every $x\in K\cap\dot X$ consider the open set
$$W_x:=[\{x\};V]\cap[K\setminus\{x\};U_*]$$in $C_k'(X,Y)$ and observe that $(W_x)_{x\in \dot X\cap K}$ is an uncountable family of pairwise disjoint open sets in $C_k'(X,Y)$, which means that $C_k'(X,Y)$ has uncountable cellularity.
\end{proof}

\begin{lemma}\label{l:separable} For any separable pointed space $Y$ and any topological space $X$ with countable set $\dot X$ of isolated point, the function space $C_k'(X,Y)$ is separable.
\end{lemma}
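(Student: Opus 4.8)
The plan is to exhibit an explicit countable dense subset of $C_k'(X,Y)$ consisting of \emph{finitely supported} functions. Fix a countable dense set $Q=\{y_k:k\in\w\}$ in the separable space $Y$ (we may assume $*_Y\in Q$), and enumerate the countable set of isolated points as $\dot X=\{x_n:n\in\w\}$. Let $\mathcal D$ be the collection of all functions $g:X\to Y$ for which there is a finite set $F\subset\dot X$ with $g(x)\in Q$ for $x\in F$ and $g(x)=*_Y$ for $x\in X\setminus F$. Since there are only countably many finite subsets $F\subset\dot X$, and for each of them only countably many maps $F\to Q$, the family $\mathcal D$ is countable.

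First I would check that every $g\in\mathcal D$ really lies in $C_k'(X,Y)$. This is where the paper's definition of an isolated point (with \emph{clopen} singleton) is essential: a finite set $F\subset\dot X$ is a finite union of clopen singletons, hence clopen, so its complement $X\setminus F$ is an open neighbourhood of every non-isolated point on which $g$ is the constant $*_Y$. Thus $g$ is continuous at each $x\in X'$, and it is automatically continuous at the isolated points; moreover $g(X')\subset\{*_Y\}$, so indeed $g\in C_k'(X,Y)$.

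The density of $\mathcal D$ I would verify against the convenient base furnished by Lemma~\ref{l:base}. Let $[K;U|F;u]$ be a non-empty basic open set, with $(K,U,F,u)\in\mathcal Q$; recall that then $U$ is a neighbourhood of $*_Y$, the set $F\subset\dot X\setminus K$ is finite, and each $u(x)$ is a non-empty open set equal to $Y$ off $F$. Define $g$ by choosing, for each $x\in F$, a point $g(x)\in Q\cap u(x)$ (non-empty because $Q$ is dense and $u(x)$ is non-empty open), and setting $g(x)=*_Y$ for $x\notin F$. Then $g\in\mathcal D$, and $g(x)\in u(x)$ for all $x\in F$. Since $F\cap K=\emptyset$, we have $g(K)=\{*_Y\}\subset U$, so $g\in[K;U|F;u]$. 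Hence $\mathcal D$ meets every basic open set, is therefore dense, and $C_k'(X,Y)$ is separable.

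The only genuinely delicate point is the continuity verification in the second step; everything else is bookkeeping. If isolated points were merely open rather than clopen, a finitely supported function could fail to be continuous at a non-isolated point lying in the closure of its support, and the candidate set $\mathcal D$ would not consist of continuous functions at all — so the clopen-singleton convention is exactly what makes this elementary construction go through.
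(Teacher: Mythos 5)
Your proposal is correct and is essentially the paper's own proof: the paper also takes the countable set of finitely supported functions with values in a countable dense subset of $Y$ (containing $*_Y$) and declares its density ``easy to see.'' Your write-up merely fills in the details the paper omits, namely the continuity of finitely supported functions (via clopenness of finite subsets of $\dot X$) and the density check against the base $[K;U|F;u]$ from Lemma~\ref{l:base}.
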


\begin{proof} Fix a countable dense set $D\subset Y$, containing the distinguished point $*_Y$ of $Y$. It is easy to see that the countable set $$\{f\in C_k'(X,Y):f(X)\subset D,\;\;|X\setminus f^{-1}(*_Y)|<\w\}$$is dense in $C_k'(X,Y)$, which means that the function space $C_k'(X,Y)$ is separable.
\end{proof}

\begin{theorem}\label{t:MS} For a pointed topological space $Y$ containing more than one point, and a topological space $X$ containing an isolated point, the following conditions are equivalent:
\begin{enumerate}
\item[\textup{(1)}] the function space $C_k'(X,Y)$ is metrizable and separable;
\item[\textup{(2)}]  $Y$ is a metrizable separable space, and $X$ is a hemi-$\dot\kappa_\w$-space with countable set $\dot X$ of isolated points.
\end{enumerate}
\end{theorem}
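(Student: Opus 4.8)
The plan is to read off both implications from the metrizability criterion already established in Theorem~\ref{t:M}, supplemented by the separability lemma (Lemma~\ref{l:separable}) and the spread lemma (Lemma~\ref{l:spread}); no new construction seems necessary, since each of the three ingredients ``$Y$ metrizable'', ``$X$ hemi-$\dot\kappa_\w$'', and ``$\dot X$ countable'' is governed by one of the available results.

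For the implication $(2)\Ra(1)$ I would argue as follows. Assuming $Y$ is metrizable and separable and $X$ is a hemi-$\dot\kappa_\w$-space with countable set $\dot X$, Theorem~\ref{t:M} applies directly (the standing hypotheses that $Y$ has more than one point and $X$ has an isolated point are exactly those of that theorem) and yields that $C_k'(X,Y)$ is metrizable. Separability is then immediate from Lemma~\ref{l:separable}, whose hypotheses ``$Y$ separable'' and ``$\dot X$ countable'' are precisely what we have. Thus $C_k'(X,Y)$ is metrizable and separable, giving $(1)$.

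For $(1)\Ra(2)$ the work splits into three checks. First, metrizability of $C_k'(X,Y)$ together with Theorem~\ref{t:M} gives at once that $Y$ is metrizable and that $X$ is a hemi-$\dot\kappa_\w$-space. Second, to obtain separability of $Y$ I would invoke the factorization homeomorphism used in the proof of Theorem~\ref{t:M}: fixing an isolated point $x\in X$, the evaluation-and-restriction map $H\colon C_k'(X,Y)\to Y\times C_k'(X\setminus\{x\},Y)$, $f\mapsto(f(x),f{\restriction}X\setminus\{x\})$, is a homeomorphism, so $Y$ is a (continuous, open) factor of the separable space $C_k'(X,Y)$ and hence separable. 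Third, to get that $\dot X$ is countable I would note that a separable metrizable space is second-countable, hence hereditarily Lindel\"of, and therefore has countable spread; applying Lemma~\ref{l:spread} then yields $|\dot X|\le\w$ once we know that $\{*_Y\}$ is not dense in $Y$.

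The only genuinely delicate point is the verification of the hypothesis of Lemma~\ref{l:spread}, namely that the singleton $\{*_Y\}$ is not dense in $Y$; but this is forced by what we already know, since $Y$ is metrizable (hence $T_1$) and contains more than one point, so $\{*_Y\}$ is a proper closed, and thus non-dense, subset. Assembling the three checks gives exactly condition $(2)$, completing the proof. I expect no substantial obstacle here: the theorem is essentially a bookkeeping combination of Theorem~\ref{t:M}, Lemma~\ref{l:separable}, and Lemma~\ref{l:spread}, and the main care needed is in matching the standing $T_1$/``more than one point'' hypotheses to the precise requirements of the lemmas.
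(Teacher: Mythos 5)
Your proposal is correct and follows essentially the same route as the paper: both directions rest on Theorem~\ref{t:M} combined with Lemma~\ref{l:separable} (for $(2)\Rightarrow(1)$) and Lemma~\ref{l:spread} (for $(1)\Rightarrow(2)$), and your use of the homeomorphism $H$ to get separability of $Y$ is just the paper's evaluation map $\delta_x\colon f\mapsto f(x)$ composed with the projection. Your explicit verification that $\{*_Y\}$ is not dense (needed for Lemma~\ref{l:spread}) is a point the paper leaves implicit, but it is the same argument in substance.
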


\begin{proof} To prove that $(1)\Ra(2)$, assume that the function space $C_k'(X,Y)$ is separable and metrizable.  By Theorem~\ref{t:M}, the space $Y$ is metrizable and $X$ is a hemi-$\dot\kappa_\w$-space. By Lemma~\ref{l:spread}, the set $\dot X$ is countable. Since the space $X$ has an isolated point $x$, the space $Y$ is separable, being the image of the separable space $C_k'(X,Y)$ under the continuous map $\delta_x:C_k'(X,Y)\to Y$, $\delta_x:f\mapsto f(x)$.
\smallskip

$(2)\Ra(1)$ Assume that $Y$ is a separable metrizable space, and $X$ is a hemi-$\dot\kappa_\w$-space with countable set $\dot X$ of isolated points. By Theorem~\ref{t:M}, the function space $C_k'(X,Y)$ is metrizable and by Lemma~\ref{l:separable}, it is separable.
\end{proof}

Theorem~\ref{t:MS} and Lemmas~\ref{l:M1a}, \ref{l:MS2} imply the following characterization.

\begin{corollary}\label{c:AMS} For a pointed metrizable space $Y\ne\{*_Y\}$ and a topological space $X$ containing an isolated point, the following conditions are equivalent:
\begin{enumerate}
\item[\textup{(1)}] the function space $C_k'(X,Y)$ is metrizable and separable;
\item[\textup{(2)}] $C_k'(X,Y)$ contains a dense separable first-countable subspace;
\item[\textup{(3)}]  $Y$ is separable and $X$ is a hemi-$\dot\kappa_\w$-space with countable set $\dot X$ of isolated points.
\end{enumerate}
\end{corollary}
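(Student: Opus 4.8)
The plan is to prove the cyclic chain of implications $(1)\Rightarrow(2)\Rightarrow(3)\Rightarrow(1)$, built entirely on the already established Theorem~\ref{t:MS} and Lemmas~\ref{l:M1a} and~\ref{l:MS2}. Two of the three implications are essentially free. For $(1)\Rightarrow(2)$ I would simply take the whole space $C_k'(X,Y)$ as the required dense subspace: a metrizable separable space is first-countable and separable and is trivially dense in itself. For $(3)\Rightarrow(1)$ I would invoke the implication $(2)\Rightarrow(1)$ of Theorem~\ref{t:MS}; note that the standing hypothesis of the corollary already supplies that $Y$ is metrizable, so together with the separability of $Y$ and the hemi-$\dot\kappa_\w$-space property with countable $\dot X$ granted by~(3), Theorem~\ref{t:MS} delivers that $C_k'(X,Y)$ is metrizable and separable.

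Before attacking the remaining implication I would record two elementary properties of $Y$ that convert the corollary's hypotheses into the hypotheses of the two lemmas. Since $Y$ is metrizable and has more than one point, it is Hausdorff with at least two points and hence $*$-admissible; moreover each point $y\in Y$ admits a non-dense neighborhood, obtained as a ball about $y$ of radius less than half the distance to some second point. These two observations are exactly what Lemmas~\ref{l:MS2} and~\ref{l:M1a} require of $Y$.

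The substance of the argument is $(2)\Rightarrow(3)$. Starting from a dense separable first-countable subspace $D\subset C_k'(X,Y)$, I would first apply Lemma~\ref{l:M1a}: $D$ is dense and first-countable at each of its points, so (using that every point of $Y$ has a non-dense neighborhood) $X$ is a hemi-$\dot\kappa_\w$-space. Next, because $D$ is separable and dense, the full space $C_k'(X,Y)$ is separable and therefore has countable cellularity; feeding this together with the hemi-$\dot\kappa_\w$-space property just obtained into Lemma~\ref{l:MS2} yields that $\dot X$ is at most countable. Finally, to see that $Y$ is separable I would use the isolated point $x\in X$ furnished by the hypothesis: the evaluation map $\delta_x\colon C_k'(X,Y)\to Y$, $f\mapsto f(x)$, is continuous and surjective (for each $y\in Y$ the function sending $x$ to $y$ and all other points to $*_Y$ is continuous, as $x$ is isolated, and lies in $C_k'(X,Y)$), so $Y$ is the continuous image of the separable space $C_k'(X,Y)$ and is separable.

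I do not expect a genuine obstacle here, since the analytic weight is carried by the cited results. The only points needing care are bookkeeping ones: Lemma~\ref{l:MS2} presupposes that $X$ is already known to be a hemi-$\dot\kappa_\w$-space, so it must be applied strictly after Lemma~\ref{l:M1a}; and one must verify the surjectivity of $\delta_x$, i.e. that the two-valued function singling out $y$ at the isolated point $x$ really belongs to $C_k'(X,Y)$, which is precisely where the assumption that $X$ contains an isolated point is used.
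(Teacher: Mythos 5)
Your proposal is correct and follows essentially the same route as the paper, which derives this corollary directly from Theorem~\ref{t:MS} (for the equivalence with condition (3)) together with Lemma~\ref{l:M1a} (hemi-$\dot\kappa_\w$ property from the dense first-countable subspace) and Lemma~\ref{l:MS2} (countability of $\dot X$ from countable cellularity). Your bookkeeping observations — that a metrizable space with more than one point satisfies the hypotheses of both lemmas, that Lemma~\ref{l:MS2} must be applied after Lemma~\ref{l:M1a}, and that separability of $Y$ comes from the surjective evaluation map $\delta_x$ at an isolated point — are exactly the steps the paper leaves implicit.
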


\section{The (almost) complete-metrizability of the function spaces $C_k'(X,Y)$}\label{s:CM}

In this section we characterize function spaces $C_k'(X,Y)$ which are (almost) complete-metrizable or (almost) Polish.

\begin{theorem}\label{t:CM} For a pointed topological space $Y$ containing more than one point, and a topological space $X$ containing an isolated point, the following conditions are equivalent:
\begin{enumerate}
\item[\textup{(1)}] the function space $C_k'(X,Y)$ is complete-metrizable;
\item[\textup{(2)}]  the spaces $Y$ and $C_k'(X,2)$ are complete-metrizable;
\item[\textup{(3)}] the space $Y$ is complete-metrizable and $X$ is a $\dot\kappa_\w$-space.
\end{enumerate}
\end{theorem}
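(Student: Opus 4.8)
The plan is to prove the equivalence $(1)\Leftrightarrow(3)$ for an arbitrary pointed target space, and then to obtain the equivalence with $(2)$ by specializing to the two-point discrete space $2$. The heart of the matter is the implication $(3)\Rightarrow(1)$, so I would treat it first. Assume $Y$ is complete-metrizable and $X$ is a $\dot\kappa_\w$-space. Since each $\dot\kappa_\w$-space is a hemi-$\dot\kappa_\w$-space by Proposition~\ref{p:kappa}(1), Lemma~\ref{l:M2} already supplies a metric $\rho$ generating the topology of $C_k'(X,Y)$, built from an increasing sequence $(K_n)_{n\in\w}$ of compact sets with $\dot X\subset\bigcup_{n\in\w}K_n$ witnessing the $\dot\kappa_\w$-property and from a metric $d$ on $Y$; here I would choose $d$ to be a \emph{complete} metric. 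The task reduces to showing that $\rho$ is complete. Given a $\rho$-Cauchy sequence $(f_j)_{j\in\w}$, each restricted sequence $(f_j{\restriction}K_n)_{j\in\w}$ is uniformly Cauchy, hence, by completeness of $d$, converges uniformly to a continuous map $K_n\to Y$. These limits are compatible and define a function $g:X\to Y$ with $g(X')\subset\{*_Y\}$ and $g(x)=\lim_j f_j(x)$ for $x\in\dot X$. Since $f_j\to g$ in $\rho$ as soon as $g$ is known to lie in $C_k'(X,Y)$, the only genuine issue is the \textbf{continuity of $g$}.

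Continuity of $g$ at isolated points is automatic and $g(X')\subset\{*_Y\}$ holds by construction, so I only need to verify continuity at a non-isolated point $x'\in X'$. Suppose it fails: then for some $\e>0$ the set $A:=\{w\in\dot X: d(g(w),*_Y)\ge\e\}$ accumulates at $x'$, so that $x'\in\overline A\setminus A$ and $A$ is a non-closed subset of $\dot X$. This is exactly where the $\dot\kappa_\w$-property enters: it yields an $n\in\w$ with $A\cap K_n$ infinite. Since $K_n$ is compact and no isolated point can be an accumulation point of a set of distinct points, the infinite set $A\cap K_n$ accumulates at some $z\in K_n\cap X'$. But $g{\restriction}K_n$ is continuous, being a uniform limit of continuous maps, and $g(z)=*_Y$, while every neighborhood of $z$ contains points $w\in A\cap K_n$ with $d(g(w),*_Y)\ge\e$ -- a contradiction. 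Hence $g$ is continuous, $g\in C_k'(X,Y)$, and $\rho$ is a complete metric. I expect this continuity argument -- converting the discontinuity into a non-closed subset of $\dot X$ and then invoking the $\dot\kappa_\w$-property to force an accumulation inside some $K_n$ -- to be the main obstacle.

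For $(1)\Rightarrow(3)$, assume $C_k'(X,Y)$ is complete-metrizable, hence metrizable, so Theorem~\ref{t:M} gives that $Y$ is metrizable and $X$ is a hemi-$\dot\kappa_\w$-space. Using the isolated point $x\in X$ and the homeomorphism $C_k'(X,Y)\cong Y\times C_k'(X\setminus\{x\},Y)$ from the proof of Theorem~\ref{t:M}, the factor $Y$ embeds as a closed subspace of the complete-metrizable space $C_k'(X,Y)$ and is therefore complete-metrizable. Moreover, complete-metrizable spaces are Choquet; since the now complete-metrizable $Y$ is $*$-admissible (being Hausdorff with more than one point) and $*$-first-countable (being metrizable) and Choquet, Theorem~\ref{t:C} shows that $X$ has $\WDMOP$. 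Combined with the hemi-$\dot\kappa_\w$-property, Proposition~\ref{p:kappa}(5) gives that $X$ is a $\dot\kappa_\w$-space, which is $(3)$.

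Finally, to bring in $(2)$, I would apply the already-established equivalence $(1)\Leftrightarrow(3)$ to the pointed two-point discrete space $2$, which is itself complete-metrizable: this shows that $C_k'(X,2)$ is complete-metrizable if and only if $X$ is a $\dot\kappa_\w$-space. Consequently $(2)$, asserting that both $Y$ and $C_k'(X,2)$ are complete-metrizable, is equivalent to ``$Y$ is complete-metrizable and $X$ is a $\dot\kappa_\w$-space'', that is, to $(3)$, completing the proof.
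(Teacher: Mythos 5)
Your proof is correct, and every result you invoke (Theorem~\ref{t:M}, Theorem~\ref{t:C}, Proposition~\ref{p:kappa}, Lemma~\ref{l:M2}) is established before Theorem~\ref{t:CM} in the paper, so there is no circularity; but your route is organized differently from the paper's, which proves the cycle $(1)\Ra(2)\Ra(3)\Ra(1)$. Your $(3)\Ra(1)$ is essentially the paper's argument in different clothing: instead of verifying completeness of the metric $\rho$ of Lemma~\ref{l:M2}, the paper embeds $C_k'(X,Y)$ via $f\mapsto(f{\restriction}K_n)_{n\in\w}$ as a closed subspace of the complete-metrizable product $\prod_{n\in\w}C_k(K_n,Y)$, and the closedness verification is exactly your continuity argument (a discontinuity at $x'\in X'$ yields a non-closed set $D\subset\dot X$, the $\dot\kappa_\w$-property gives $D\cap K_n$ infinite, contradicting that this set is closed and discrete in the compact space $K_n$, hence finite). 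The genuine divergence is in passing from complete-metrizability of the function space to the $\dot\kappa_\w$-property of $X$. The paper does this through condition $(2)$: it endows $C_k'(X,2)$ with a topological group structure, uses that a complete-metrizable group is complete in its uniformity of uniform convergence on compacta, extracts $(K_n)_{n\in\w}$ from a neighborhood base at the neutral element, and shows that any $D\subset\dot X$ with all traces $D\cap K_n$ finite is closed, because the characteristic functions of the sets $D\cap K_n$ form a Cauchy sequence whose continuous limit must be the characteristic function of $D$. You instead chain Theorem~\ref{t:M} (metrizability gives the hemi-$\dot\kappa_\w$-property), Theorem~\ref{t:C} (the complete-metrizable space $C_k'(X,Y)$ is Choquet and the complete-metrizable $Y$ is $*$-admissible, $*$-first-countable and Choquet, so $X$ has $\WDMOP$), and Proposition~\ref{p:kappa}(5). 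This reuses heavier machinery -- Proposition~\ref{p:kappa}(4) rests on the theorem of \cite{BH} that Choquet topological groups are $G_\delta$-dense in their Raikov completions -- whereas the paper's $(2)\Ra(3)$ is self-contained modulo elementary completeness of metrizable groups. Finally, your derivation of $(2)\Leftrightarrow(3)$ by specializing the equivalence $(1)\Leftrightarrow(3)$ to the pointed discrete space $2$ is a clean substitute for the paper's explicit closed embedding $e^*:C_k'(X,2)\to C_k'(X,Y)$ used in its step $(1)\Ra(2)$; both work.
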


\begin{proof} $(1)\Ra(2)$. Assume that the function space $C_k(X,Y)$ is complete-metrizable. Take any isolated point $x\in\dot X$ and observe that the map $$H:C_k'(X,Y)\to Y\times C_k'(X\setminus\{x\},Y),\;\;H:f\mapsto(f(x),f{\restriction}X\setminus\{x\}),$$
is a homeomorphism. Then the space $Y$ is complete-metrizable, being homeomorphic to a closed subspace of the complete-metrizable space $C_k'(X,Y)$.

Next, choose any point $y\in Y\setminus \{*_Y\}$ and consider the bijective map $e:\{0,1\}\to\{*_Y,y\}$ such that $e(0)=*_Y$ and $e(1)=y$. The map $e$ induces a closed topological embedding $e^*:C_k'(X,2)\to C_k'(X,Y)$, $e^*:f\mapsto e\circ f$. Then the space $C_k'(X,2)$ is complete-metrizable, as a closed subspace of the complete-metrizable  space $C'_k(X,Y)$.
\smallskip

$(2)\Ra(3)$ Assume that the space $C_k'(X,2)$ is complete-metrizable. We can endow the doubleton $2=\{0,1\}$ with the group operation having $0$ as its neutral element and consider $C_k'(X,2)$ as an abelian metrizable topological group. Being complete-metrizable, this topological group is  complete in its uniformity (of uniform convergence on compacta).

Since the complete-metrizable space $C_k'(X,2)$ is first-countable, there exist an increasing sequence $(K_n)_{n\in\w}$ of compact subsets of $X$ such that the sets $$[K_n;\{0\}]:=\big\{f\in C_k'(X,Y):f(K_n)\subset \{0\}\big\},\quad n\in\w,$$
form a neighborhood base at the constant function $e:X\to\{0\}$, which is the neutral element of  the group $C_k'(X,2)$.

We claim that the sequence $(K_n)_{n\in\w}$ witnesses that $X$ is a $\dot\kappa_\w$-space. First observe that for any compact set $K\subset X$ we can find $n\in\w$ such that $[K_n,\{0\}]\subset [K,\{0\}]$. The latter inclusion implies that $K\cap \dot X\subset K_n$. So, $\dot X\subset\bigcup_{n\in\w}K_n$.

Next, we shall prove that a subset $D\subset\dot X$ is closed in $X$ if  $D\cap K_n$ is finite for every $n$.  For every $n\in\w$ consider the continuous function $f_n\in C'_k(X,2)$ defined by $f_n^{-1}(1)=D\cap K_n$. We claim that the sequence $(f_n)_{n\in\w}$ is Cauchy in the uniformity of uniform convergence on compacta. Given a compact set $K\subset X$, we should find $n\in\w$ such that $f_m{\restriction}K=f_n{\restriction}K$ for all $m\ge n$. Consider the open neighborhood $[K;\{0\}]\subset C_k'(X,2)$ of the constant function $e$ and find $n\in\w$ such that $[K_n;\{0\}]\subset [K;\{0\}]$. The latter inclusion implies $K\cap \dot X\subset  K_n$. Then for any $m\ge n$ we get $D\cap K_n\cap K=D\cap K=D\cap K_m\cap K$, which implies $f_n{\restriction}K=f_m{\restriction}K$. By the completeness of the group $C_k'(X,2)$, the Cauchy sequence $(f_n)_{n\in\w}$ converges to a continuous function $f_\infty:X\to 2$. The only choice for this limit is the characteristic function of the set $D$, which implies that the set $D=f_\infty^{-1}(1)$ is closed in $X$ by the continuity of $f_\infty$.
\smallskip

$(3)\Ra(1)$ Assume that the space $Y$ is complete-metrizable and $X$ is a $\dot\kappa_\w$-space. Let $(K_n)_{n\in\w}$ be an increasing sequence of compact sets, witnessing that $X$ is a $\dot\kappa_\w$-space. For every $n\in\w$, the compactness of $K_n$ and the complete-metrizability of $Y$ imply the complete-metrizability of the function space $C_k(K_n,Y)$. Then the product $\prod_{n\in\w}C_k(K_n,Y)$ is complete-metrizable as well. The proof of Lemma~\ref{l:M2} implies that the map $$\delta:C_k'(X,Y)\to\prod_{n\in\w}C_k(K_n,Y),\;\;\delta:f\mapsto (f{\restriction}K_n)_{n\in\w},$$
is a topological embedding.

We claim that the image $\delta(C_k'(X,Y))$ is a closed subset of $\prod_{k\in\w}C_k(X_n,Y)$. Take any element $(f_n)_{n\in\w}\in\overline{\delta(C'_k(X,Y))}\subset \prod_{n\in\w}C_k(K_n,Y)$. It follows from  $(f_n)_{n\in\w}\in\overline{\delta(C'_k(X,Y))}$ that for every $n\le m$ the restriction $f_m{\restriction}K_n$ coincides with the function $f_n$ and moreover $f_n(K_n\cap X')\subset\{*_Y\}$. So, we can define a function $f:X\to Y$ by $f(X')\subset\{*_Y\}$ and $f{\restriction}K_n=f_n$ for every $n\in\w$. We claim that the function $f$ is continuous. It suffices to prove the continuity of $f$ at each non-isolated point $x'\in X'$. Assuming that $f$ is discontinuous at $x'$, we can find an open neighborhood $U_*\subset Y$ of the point $*_Y=f(x')$ whose
preimage $f^{-1}(U_*)$ is not a neighborhood of $x'$, which means that the set $D=X\setminus f^{-1}(U_*)\subset\dot X$ contains the point $x'$ in its closure.

Since the sequence $(K_n)_{n\in\w}$ witnesses that $X$ is a $\dot\kappa_\w$-space, for some $n\in\w$ the intersection $D\cap K_n$ is infinite. On the other  hand, the set $$D\cap K_n=K_n\setminus f^{-1}(U_*)=K_n\setminus (f{\restriction}K_n)^{-1}(U_*)=K_n\setminus f_n^{-1}(U_*)\subset K_n\cap\dot X$$is closed in $K_n$ by the continuity of $f_n$. Being a closed discrete subset of the compact space $K_n$, the set $D\cap K_n$ is finite, which contradicts the choice of $n$. This contradiction shows that the function $f$ is continuous and hence $(f_n)_{n\in\w}=(f{\restriction}K_n)_{n\in\w}=\delta(f)\in\delta(C'_k(X,Y))$. So, the set $\overline{\delta(C_k'(X,Y))}$ is closed in $\prod_{n\in\w}C_k(K_n,Y)$ and the space $C_k'(X,Y)$ is complete-metrizable, being homeomorphic to the closed subspace $\delta(C_k'(X,Y))$ of the  complete-metrizable space $\prod_{n\in\w}C_k(K_n,Y)$.
\end{proof}

Theorems~\ref{t:CM} and \ref{t:MS} imply the following characterization of Polish function spaces $C_k'(X,Y)$.

\begin{theorem}\label{t:P} For a pointed topological space $Y$ that contains more than one point, and a topological space $X$ containing an isolated point, the following conditions are equivalent:
\begin{enumerate}
\item the function space $C_k'(X,Y)$ is Polish;
\item  the spaces $Y$ and $C_k'(X,2)$ are Polish;
\item the space $Y$ is Polish and $X$ is a $\dot\kappa_\w$-space with countable set $\dot X$ of isolated points.
\end{enumerate}
\end{theorem}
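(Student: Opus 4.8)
The plan is to prove the cycle $(1)\Ra(2)\Ra(3)\Ra(1)$, exploiting the elementary fact that a space is Polish precisely when it is simultaneously separable and complete-metrizable. This lets me feed the complete-metrizability half of ``Polish'' into Theorem~\ref{t:CM} and the separable-metrizability half into Theorem~\ref{t:MS}, so that essentially no new work beyond bookkeeping is required.

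For $(1)\Ra(2)$ I would assume $C_k'(X,Y)$ is Polish, hence complete-metrizable, and invoke Theorem~\ref{t:CM} to conclude that both $Y$ and $C_k'(X,2)$ are complete-metrizable; it then remains only to verify that each is separable. Since $X$ contains an isolated point $x$, the evaluation map $\delta_x\colon f\mapsto f(x)$ is a continuous surjection of the separable space $C_k'(X,Y)$ onto $Y$, so $Y$ is separable and therefore Polish. For $C_k'(X,2)$ I would reuse the closed topological embedding $e^*\colon C_k'(X,2)\to C_k'(X,Y)$ induced by a bijection $2\to\{*_Y,y\}$ (for any $y\ne *_Y$), already constructed in the proof of Theorem~\ref{t:CM}; this exhibits $C_k'(X,2)$ as a subspace of the separable metrizable space $C_k'(X,Y)$, whence it too is separable and Polish.

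Next, for $(2)\Ra(3)$ I would assume $Y$ and $C_k'(X,2)$ are Polish. As each is in particular complete-metrizable, Theorem~\ref{t:CM} $(2)\Ra(3)$ yields that $X$ is a $\dot\kappa_\w$-space, while $Y$ is Polish by hypothesis. To pin down the countability of $\dot X$, I would apply Theorem~\ref{t:MS} with the two-point space $2$ in the role of $Y$: since $C_k'(X,2)$ is separable and metrizable and $X$ contains an isolated point, Theorem~\ref{t:MS} forces $\dot X$ to be countable, establishing $(3)$. Finally, for $(3)\Ra(1)$ I would assume $Y$ is Polish and $X$ is a $\dot\kappa_\w$-space with countable $\dot X$; then $Y$ is complete-metrizable, so Theorem~\ref{t:CM} $(3)\Ra(1)$ gives that $C_k'(X,Y)$ is complete-metrizable, while $Y$ is separable metrizable, a $\dot\kappa_\w$-space is a hemi-$\dot\kappa_\w$-space by Proposition~\ref{p:kappa}(1), and $\dot X$ is countable, so Theorem~\ref{t:MS} $(2)\Ra(1)$ gives that $C_k'(X,Y)$ is separable and metrizable; being both separable and complete-metrizable, it is Polish.

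I do not expect a genuine obstacle, since all the substance sits in the two cited theorems; the one place demanding care is the step that extracts the countability of $\dot X$. Complete-metrizability alone, via Theorem~\ref{t:CM}, says nothing about the cardinality of $\dot X$, so that information must come from the separability side through Theorem~\ref{t:MS}, and one must check that its hypotheses ($Y$ with more than one point, here taken to be $2$, and $X$ with an isolated point) are indeed met before applying it.
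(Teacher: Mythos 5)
Your proposal is correct and follows essentially the same route as the paper, which states Theorem~\ref{t:P} as a direct consequence of Theorems~\ref{t:CM} and \ref{t:MS}; you have simply filled in the routine bookkeeping (splitting Polish into complete-metrizable plus separable metrizable, the evaluation map $\delta_x$ for separability of $Y$, the embedding $e^*$ for separability of $C_k'(X,2)$, and the application of Theorem~\ref{t:MS} with the pointed space $2$ to extract countability of $\dot X$). All hypothesis checks are in order, so no further work is needed.
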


\begin{theorem} For  a topological space $X$ and a pointed Polish space  $Y$ the function space $C_k'(X,Y)$ is Polish if and only if $C_k'(X,Y)$ is a Choquet space with countable spread.
\end{theorem}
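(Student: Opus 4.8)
The plan is to reduce both implications to the characterization of Polish function spaces obtained in Theorem~\ref{t:P}. The forward implication is immediate: if $C_k'(X,Y)$ is Polish, then it is separable and complete-metrizable, hence Choquet (Polish $\Ra$ complete-metrizable $\Ra$ Choquet, as displayed in Section~\ref{s4}); and being second-countable it has countable spread, since every discrete subspace $D$ of a second-countable space is countable (each point of $D$ carries a basic open set meeting $D$ only in that point, and these basic sets are pairwise distinct).

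For the converse I would first dispose of the degenerate cases. If $Y=\{*_Y\}$, or if $X$ has no isolated point (so $\dot X=\emptyset$ and $X'=X$), then every $f\in C_k'(X,Y)$ equals the constant function into $\{*_Y\}$, whence $C_k'(X,Y)$ is a singleton and trivially Polish; so the conclusion holds in these cases. I may therefore assume that $Y$ contains more than one point and that $X$ contains an isolated point, which is exactly the standing hypothesis of Theorems~\ref{t:P}, \ref{t:C} and of Lemma~\ref{l:spread}.

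Now assume $C_k'(X,Y)$ is Choquet with countable spread. Since $Y$ is Polish with more than one point, it is metrizable (hence $*$-first-countable), it is Choquet, and it is $*$-admissible: choosing $y\ne *_Y$ together with disjoint open sets $U\ni *_Y$ and $V\ni y$ gives $\overline U\cap V=\emptyset$, so the neighborhood $U$ of $*_Y$ is not dense. In particular $\{*_Y\}$ is not dense in $Y$, so Lemma~\ref{l:spread} applies and the countable spread of $C_k'(X,Y)$ forces $\dot X$ to be countable. By Theorem~\ref{t:C} (applicable precisely because $Y$ is $*$-admissible, $*$-first-countable and Choquet), the Choquet property of $C_k'(X,Y)$ yields $\WDMOP$ for $X$. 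Then Theorem~\ref{t:W=k}(2), which applies exactly because $\dot X$ is countable, upgrades $\WDMOP$ to the statement that $X$ is a $\dot\kappa_\w$-space. Thus $Y$ is Polish and $X$ is a $\dot\kappa_\w$-space with countable set $\dot X$, so Theorem~\ref{t:P} delivers that $C_k'(X,Y)$ is Polish.

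The argument is a short assembly of earlier results, so there is no single hard computation; the only points requiring care are the bookkeeping of hypotheses (verifying that a Polish pointed space with more than one point is $*$-admissible, $*$-first-countable and Choquet, so that Theorem~\ref{t:C} applies, and that $\{*_Y\}$ is non-dense, so that Lemma~\ref{l:spread} applies) and the separation of the degenerate cases where $C_k'(X,Y)$ collapses to a point. The conceptual crux is the implication ``Choquet $+$ countable $\dot X$ $\Ra$ $\dot\kappa_\w$-space'', which is exactly where Theorem~\ref{t:C} and Theorem~\ref{t:W=k}(2) combine.
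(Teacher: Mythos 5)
Your proof is correct and follows essentially the same route as the paper: the forward direction is immediate, and the converse chains Theorem~\ref{t:C} (Choquet $\Rightarrow$ $\WDMOP$), Lemma~\ref{l:spread} (countable spread $\Rightarrow$ $\dot X$ countable), Theorem~\ref{t:W=k}(2) ($\WDMOP$ $+$ countable $\dot X$ $\Rightarrow$ $\dot\kappa_\w$-space), and Theorem~\ref{t:P}. Your only additions are careful bookkeeping the paper leaves implicit, namely verifying the $*$-admissibility and $*$-first-countability hypotheses and treating the degenerate case $\dot X=\emptyset$ (where $C_k'(X,Y)$ collapses to a singleton), which the paper's appeal to Theorem~\ref{t:P} technically requires.
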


\begin{proof} The ``only if'' part is trivial. To prove the ``if'' part, assume that the function space $C_k'(X,Y)$ is Choquet and has countable spread. If $Y=\{*_Y\}$, then the space $C_k'(X,Y)$ is Polish, being a singleton. So, we assume that $Y\ne\{*_Y\}$. By Theorem~\ref{t:C}, the space $X$ has $\WDMOP$ and by Lemma~\ref{l:spread}, the set $\dot X$ is at most countable. By Theorem~\ref{t:W=k}, $X$ is a $\dot \kappa_\w$-space and by Theorem~\ref{t:P}, the function space $C_k'(X,Y)$ is Polish.
\end{proof}

Finally, we characterize pairs $X,Y$ for which the function space $C_k'(X,Y)$ is almost complete-metrizable or almost Polish.

\begin{theorem}\label{t:ACM} For a pointed metrizable space $Y\ne\{*_Y\}$ and a topological space $X$ with $\dot X\ne \emptyset$, the following conditions are equivalent:
\begin{enumerate}
\item[\textup{(1)}] the function space $C_k'(X,Y)$ is almost complete-metrizable;
\item[\textup{(2)}] the space $Y$ is almost complete-metrizable and $X$ is a $\dot\kappa_\w$-space.
\end{enumerate}
\end{theorem}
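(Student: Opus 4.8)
The plan is to reduce both implications to the metrizability criterion of Theorem~\ref{t:M} together with the classical fact \cite[8.17]{Ke}, recorded in Section~\ref{s4}, that within the class of \emph{metrizable} spaces the Choquet property coincides with almost complete-metrizability. The idea is that once $C_k'(X,Y)$ is known to be metrizable, ``almost complete-metrizable'' means exactly ``Choquet'', and the Choquet property of $C_k'(X,Y)$ has already been characterized in Section~\ref{s:W}. I will use throughout that a metrizable $Y\ne\{*_Y\}$ is automatically $*$-admissible and $*$-first-countable, so the hypotheses of Lemmas~\ref{l:C1} and \ref{l:C2} hold.

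For $(2)\Rightarrow(1)$ I would argue as follows. Assuming $Y$ almost complete-metrizable and $X$ a $\dot\kappa_\w$-space, \cite[8.17]{Ke} makes $Y$ Choquet, and Proposition~\ref{p:kappa}(1) and (3) make $X$ simultaneously a hemi-$\dot\kappa_\w$-space and a space with $\WDMOP$. Then Theorem~\ref{t:M} gives metrizability of $C_k'(X,Y)$ and Lemma~\ref{l:C2} gives its Choquet property; a second application of \cite[8.17]{Ke}, now to the metrizable Choquet space $C_k'(X,Y)$, yields almost complete-metrizability.

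For $(1)\Rightarrow(2)$ I would first note that an almost complete-metrizable space is Choquet and contains a dense complete-metrizable, hence dense first-countable, subspace. The dense first-countable subspace feeds Corollary~\ref{c:AM} to give that $X$ is a hemi-$\dot\kappa_\w$-space, and the Choquet property together with $*$-admissibility of $Y$ feeds Lemma~\ref{l:C1} to give that $X$ has $\WDMOP$; Proposition~\ref{p:kappa}(5) then upgrades these to ``$X$ is a $\dot\kappa_\w$-space''. To recover $Y$, I would fix an isolated point $x\in\dot X$ and use the homeomorphism $C_k'(X,Y)\cong Y\times C_k'(X\setminus\{x\},Y)$, $f\mapsto(f(x),f{\restriction}(X\setminus\{x\}))$, already exploited in Theorems~\ref{t:CM} and \ref{t:M}; since the projection onto $Y$ is an open continuous surjection and open continuous images of Choquet spaces are Choquet \cite{White}, the factor $Y$ is Choquet, hence almost complete-metrizable by \cite[8.17]{Ke}.

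I expect the proof to be essentially an assembly of already-proven pieces rather than a new construction, so the main thing to watch is that all the side hypotheses line up: that $*$-admissibility and $*$-first-countability of $Y$ are genuinely available, and that the passage between the hemi-$\dot\kappa_\w$-space property (which Corollary~\ref{c:AM} delivers) and the $\dot\kappa_\w$-space property (which the theorem asserts) is exactly bridged by $\WDMOP$ through Proposition~\ref{p:kappa}(5). The one conceptual step, rather than pure bookkeeping, is transferring the Choquet property from $C_k'(X,Y)$ to its factor $Y$ via the product splitting, which is what forces $Y$ to be almost complete-metrizable.
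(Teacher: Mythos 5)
Your proof is correct, and your $(1)\Rightarrow(2)$ direction is essentially the paper's own argument: Lemma~\ref{l:C1} yields $\WDMOP$, a dense first-countable subspace plus Lemma~\ref{l:M1a} (which you package as Corollary~\ref{c:AM}) yields the hemi-$\dot\kappa_\w$ property, Proposition~\ref{p:kappa}(5) combines the two, and the evaluation map $f\mapsto f(x)$ at an isolated point $x$ (your product splitting composed with the projection is exactly this open surjection) transfers the Choquet property to $Y$, which is then almost complete-metrizable by \cite[8.17]{Ke}. Where you genuinely diverge is $(2)\Rightarrow(1)$. The paper never passes through the Choquet property of the function space: it takes a dense complete-metrizable subspace $M\subset Y$, arranges $*_Y\in M$ by a $G_\delta$-argument in a completion of $Y$ (using \cite[3.11]{Ke} twice, to see that $M\cup\{*_Y\}$ is complete-metrizable), applies Theorem~\ref{t:CM} to conclude that $C_k'(X,M)$ is complete-metrizable, and finishes by observing that $C_k'(X,M)$ is dense in $C_k'(X,Y)$. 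You instead establish that $C_k'(X,Y)$ itself is metrizable (Theorem~\ref{t:M}, really Lemma~\ref{l:M2}) and Choquet (Lemma~\ref{l:C2}, legitimate since metrizable $Y\ne\{*_Y\}$ is $*$-admissible and $*$-first-countable, and almost complete-metrizable implies Choquet), and then invoke \cite[8.17]{Ke} for the function space. Both routes are sound, and all your side hypotheses do line up. The trade-off: the paper's route produces an explicit dense complete-metrizable subspace and only needs the embedding machinery behind Theorem~\ref{t:CM}, while yours is a cleaner assembly of the two characterization theorems (metrizable plus Choquet equals almost complete-metrizable) but leans on the game-theoretic Lemma~\ref{l:C2}, the heaviest construction in Section~\ref{s:W}, and on the external fact \cite[8.17]{Ke} applied to the function space rather than only to $Y$.
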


\begin{proof} $(1)\Ra(2)$ If $C'_k(X,Y)$ is almost complete-metrizable, then it is Choquet and by Lemma~\ref{l:C1}, $X$ has $\WDMOP$.
By our assumption, the space $X$ contains an isolated point $x$. It is easy to see that the map $\delta_x:C_k'(X,Y)\to Y$, $\delta_x:f\mapsto f(x)$, is an open surjection, so the space $Y$ is Choquet.
Being metrizable, the Choquet space $Y$ is almost complete-metrizable, see \cite[8.17]{Ke} or \cite[7.3]{BanP}. Being almost complete-metrizable, the space $C_k'(X,Y)$ contains a dense first-countable subspace. By Lemma~\ref{l:M1a}, $X$ is a hemi-$\dot\kappa_\w$-space. By Proposition~\ref{p:kappa}(5), $X$ is a $\dot\kappa_\w$-space.
\smallskip

$(2)\Ra(1)$ Assume that the space $Y$ is almost complete-metrizable and $X$ is a $\dot\kappa_\w$-space. Then $Y$ contains a dense complete-metrizable subspace $M\subset Y$. Let $\tilde Y$ be any complete-metrizable space containing $Y$ as a dense subspace. By \cite[3.11]{Ke}, the complete-metrizable space $M$ is a $G_\delta$-set in $\tilde Y$. Since the singleton $\{*_Y\}$ is a $G_\delta$-subset of $\tilde Y$, the union $M\cup\{*_Y\}$ is a $G_\delta$-set in $\tilde Y$. By \cite[3.11]{Ke}, the space $M\cup\{*_Y\}$ is complete-metrizable. Replacing $M$ by $M\cup\{*_Y\}$, we can assume that $*_Y\in M$. By Theorem~\ref{t:CM}, the function space $C_k'(X,M)$ is complete-metrizable. Since $C_k'(X,M)$ is a dense subspace in $C_k'(X,Y)$, the space $C_k'(X,Y)$ is almost complete-metrizable.
\end{proof}

\begin{theorem}\label{t:AP} For a pointed metrizable space $Y\ne\{*_Y\}$ and a topological space $X$ with $\dot X\ne \emptyset$, the following conditions are equivalent:
\begin{enumerate}
\item[\textup{(1)}] the function space $C_k'(X,Y)$ is almost Polish;
\item[\textup{(2)}] the space $Y$ is almost Polish and $X$ is a $\dot\kappa_\w$-space with countable set of isolated points.
\end{enumerate}
\end{theorem}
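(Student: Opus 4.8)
The plan is to obtain Theorem~\ref{t:AP} as a \emph{separable refinement} of the almost-complete-metrizability characterization in Theorem~\ref{t:ACM}, running essentially the same argument while additionally tracking the countability of $\dot X$ and the separability of $Y$ by means of the spread/cellularity machinery of Section~\ref{s:M} and the $G_\delta$-calculus already employed in the proof of Theorem~\ref{t:ACM}. Both implications reduce to theorems proved earlier, so I expect no genuinely new obstacle.

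To prove $(1)\Ra(2)$, I would first note that an almost Polish space is almost complete-metrizable, since a dense Polish subspace is in particular a dense complete-metrizable subspace. Hence Theorem~\ref{t:ACM} immediately gives that $Y$ is almost complete-metrizable and $X$ is a $\dot\kappa_\w$-space. By Proposition~\ref{p:kappa}(1) the space $X$ is then a hemi-$\dot\kappa_\w$-space, so Lemma~\ref{l:M2} shows that $C_k'(X,Y)$ is metrizable; containing a dense Polish (hence separable) subspace, it is separable, thus second-countable and of countable spread. Since $Y\ne\{*_Y\}$ is metrizable, the singleton $\{*_Y\}$ is not dense in $Y$, so Lemma~\ref{l:spread} forces $\dot X$ to be countable. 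Finally, fixing an isolated point $x\in\dot X$, the evaluation $\delta_x\colon C_k'(X,Y)\to Y$, $\delta_x\colon f\mapsto f(x)$, is a continuous surjection, whence $Y$ is separable; being a separable subspace of the metrizable space $Y$, the dense complete-metrizable subspace supplied by almost complete-metrizability is itself separable, i.e. Polish, so $Y$ is almost Polish.

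For $(2)\Ra(1)$, I would mimic the $(2)\Ra(1)$ step of Theorem~\ref{t:ACM}. Let $P\subset Y$ be a dense Polish subspace and embed $Y$ densely into a complete-metrizable space $\tilde Y$. By \cite[3.11]{Ke} the Polish subspace $P$ is a $G_\delta$-set in $\tilde Y$; adjoining the $G_\delta$-singleton $\{*_Y\}$ keeps the set $G_\delta$, so $P\cup\{*_Y\}$ is complete-metrizable by \cite[3.11]{Ke}, and it is separable as a subspace of the separable metrizable space $Y$, hence Polish. Replacing $P$ by $P\cup\{*_Y\}$, we may assume $*_Y\in P$. Since $X$ is a $\dot\kappa_\w$-space with countable set $\dot X$ of isolated points and $P$ is Polish, Theorem~\ref{t:P} yields that $C_k'(X,P)$ is Polish, and the density of $P$ in $Y$ together with $*_Y\in P$ makes $C_k'(X,P)$ a dense subspace of $C_k'(X,Y)$, exactly as in Theorem~\ref{t:ACM}. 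Thus $C_k'(X,Y)$ contains a dense Polish subspace and is almost Polish.

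The only slightly delicate point is the simultaneous bookkeeping of separability, and I do not expect it to be a real obstacle: in $(1)\Ra(2)$ one must confirm that the continuous surjection $\delta_x$ transfers separability from $C_k'(X,Y)$ to $Y$, while in $(2)\Ra(1)$ one must confirm that $C_k'(X,P)$ stays dense in $C_k'(X,Y)$, which follows because every basic open set $[K;U\,|\,F;u]$ of Lemma~\ref{l:base} is witnessed by a function assuming only finitely many values distinct from $*_Y$, each of which may be chosen freely in the dense set $P$. The structural weight is entirely carried by Theorems~\ref{t:ACM} and \ref{t:P}, with Lemma~\ref{l:spread} supplying the countability of $\dot X$ and the standard $G_\delta$-calculus supplying the reduction to the case $*_Y\in P$.
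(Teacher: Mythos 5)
Your proposal is correct, and its global skeleton coincides with the paper's: both directions are reduced to Theorem~\ref{t:ACM} and Theorem~\ref{t:P}, with the same $P\cup\{*_Y\}$ adjustment (justified via the $G_\delta$-calculus of \cite[3.11]{Ke}) and the same density observation for $C_k'(X,P)\subset C_k'(X,Y)$ in the implication $(2)\Rightarrow(1)$. The difference lies in how you handle the two separability bookkeeping steps inside $(1)\Rightarrow(2)$, and your route is in fact more elementary. For the countability of $\dot X$, the paper passes through countable cellularity and Lemma~\ref{l:MS2}, whereas you first establish metrizability of $C_k'(X,Y)$ via Proposition~\ref{p:kappa}(1) and Lemma~\ref{l:M2}, deduce countable spread from separable metrizability, and then apply Lemma~\ref{l:spread}; both are sound. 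More substantially, to show that $Y$ is almost Polish the paper runs a Baire-category argument: it uses that $\delta_x$ is an \emph{open} surjection, pulls the dense complete-metrizable set $M\subset Y$ back to a dense $G_\delta$-set $\delta_x^{-1}(M)$, intersects it with the dense Polish subspace $P\subset C_k'(X,Y)$, and pushes the resulting dense $G_\delta$-subset of $P$ forward to conclude that $M$ is separable, hence Polish. You avoid all of this: since $C_k'(X,Y)$ is separable and metrizable, its continuous surjective image $Y=\delta_x(C_k'(X,Y))$ is separable, and then $M$ is separable by hereditary separability of separable metrizable spaces, hence Polish. Your argument needs only continuity and surjectivity of $\delta_x$ (not openness) and no category-theoretic input, at the cost of invoking the metrizability of $C_k'(X,Y)$, which is anyway available from the machinery of Section~\ref{s:M}. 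Both proofs are complete and correct; yours trades the paper's Baire-category step for a cleaner hereditary-separability step.
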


\begin{proof} $(1)\Ra(2)$ If $C_k'(X,Y)$ is almost Polish, then $C_k'(X,Y)$ is  almost complete-metrizable. Moreover, $C_k'(X,Y)$ is separable and hence has countable cellularity. By Theorem~\ref{t:ACM}, the space $Y$ is almost complete-metrizable and $X$ is a $\dot\kappa_\w$-space. By Lemma~\ref{l:MS2}, the $\dot\kappa_\w$-space has countable set $\dot X$ of isolated points.

It remains to prove that the space $Y$ is almost Polish. We already know that $Y$ is almost complete-metrizable and hence $Y$ contains a dense complete-metrizable subset $M\subset Y$. By \cite[3.11]{Ke}, $M$ is a $G_\delta$-set in $Y$.
By our assumption, the space $X$ contains an isolated point $x$. It is easy to see that the map $\delta_x:C_k'(X,Y)\to Y$, $\delta_x:f\mapsto f(x)$, is an open surjection. This implies that the preimage $\delta_x^{-1}(M)$ is a dense $G_\delta$-set in $C_k'(X,Y)$. By our assumption, the space $C_k'(X,Y)$ is almost Polish and hence contains a dense Polish subspace $P$. Since the complement $C_k'(X,Y)\setminus \delta_x^{-1}(M)$ is a meager $F_\sigma$-set in $C'_k(X,Y)$, the set $P\setminus \delta_x^{-1}(M)$ is a meager $F_\sigma$-set in the Polish space $P$ and its complement $P\cap\delta^{-1}_x(M)$ is a dense $G_\delta$-set in $P$, by the classical Baire Theorem. Then $\delta_x(P\cap\delta^{-1}_x(M))$ is a dense separable set in $M$, which implies that the complete-metrizable space $M$ is separable and hence Polish. Consequently, the space $Y$ is almost Polish.
\smallskip

$(2)\Ra(1)$ Assume that the space $Y$ is almost Polish and $X$ is a $\dot\kappa_\w$-space with countable set of isolated points. Let $P$ be a dense Polish subspace in $Y$. Replacing $P$ by $P\cup\{*_Y\}$, we can assume that $*_Y\in P$. By Theorem~\ref{t:P}, the function space $C_k'(X,P)$ is Polish. Since the subspace $C_k'(X,P)$ is dense in $C_k'(X,Y)$, the space $C_k'(X,Y)$ is almost Polish.
\end{proof}

\section{Countable networks in function spaces $C_k'(X,Y)$}\label{s:N}

A family $\mathcal N$ of subsets of a topological space $X$ is called
\begin{itemize}
\item a {\em network} if for any open set $U\subset X$ and point $x\in U$ there exists a set $N\in\mathcal N$ such that $x\in N\subset U$;
\item a {\em $cs^*$-network} if for any open set $U\subset X$ and sequence $\{x_n\}_{n\in\w}\subset X$ that converges to a point $x_\infty\in U$ there is a set $N\in\mathcal N$ such that $x_\infty\in N\subset U$ and $N$ contains infinitely many points $x_n$, $n\in\w$;
\item a {\em $k$-network} if for any open set $U\subset X$ and compact subset $K\subset U$  there exists a finite subfamily $\F\subset\mathcal N$ such that $K\subset\bigcup\mathcal F\subset U$;
\item a {\em $\dot\kappa$-network} if for any compact set $K\subset X$ and any closed subset $D\subset \dot X\setminus K$ of $X$ there exists a finite subfamily  $\mathcal F\subset\mathcal N$ such that $K\cap\dot X\subset \bigcup\F\subset X\setminus D$.
\end{itemize}

It is clear that for any family $\mathcal N$ we have the implications
$$\mbox{$k$-network $\Ra$ $cs^*$-network $\Ra$ network}.$$

In case of countable networks, we have the following equivalence, which was proved for Hausdorff spaces in \cite{Guthrie}.

\begin{lemma}\label{l:cs=>k} Each countable $cs^*$-network is a $k$-network.
\end{lemma}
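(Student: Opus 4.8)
The plan is to prove the nontrivial implication \emph{countable $cs^*$-network $\Ra$ $k$-network} by contradiction; the converse is already contained in the chain of implications displayed before the lemma, so there is no further content there. Suppose then that $\mathcal N$ is a countable $cs^*$-network that fails to be a $k$-network. Then there exist an open set $U\subset X$ and a compact set $K\subset U$ admitting no finite subfamily $\mathcal F\subset\mathcal N$ with $K\subset\bigcup\mathcal F\subset U$. I would first isolate the relevant countable subfamily $\mathcal N_U:=\{N\in\mathcal N:N\subset U\}$, enumerate it as $\{M_n\}_{n\in\w}$, and set $L_n:=\bigcup_{i\le n}M_i\subset U$. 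Since every member of $\mathcal N_U$ is contained in $U$, the failure of the $k$-network property is exactly the assertion that $K\not\subset L_n$ for every $n\in\w$.

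Next I would observe that $\mathcal N_U$ \emph{covers} $K$: a $cs^*$-network is in particular a network, and $K\subset U$, so each $x\in K$ lies in some $N\in\mathcal N$ with $N\subset U$, i.e.\ in some member of $\mathcal N_U$. Combined with the previous paragraph, this makes $K\setminus L_n$ non-empty for every $n$, so I may choose $x_n\in K\setminus L_n$; a short argument using that $\mathcal N_U$ covers $K$ shows that $\{x_n:n\in\w\}$ is infinite. The endgame is then purely formal \emph{provided} one can pass to a subsequence $(x_{n_k})_{k\in\w}$ converging to some $x_\infty\in K\subset U$. Applying the $cs^*$-network property to this convergent sequence and the open set $U$ yields $N\in\mathcal N$ with $x_\infty\in N\subset U$ and with $N$ containing infinitely many $x_{n_k}$. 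As $N\subset U$ we have $N\in\mathcal N_U$, say $N=M_m$, whence $N\subset L_m$. But $x_{n_k}\notin L_{n_k}\supset L_m$ once $n_k\ge m$, so $N$ can contain only the finitely many terms with $n_k<m$ — contradicting the previous sentence and finishing the proof.

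The one genuinely nontrivial ingredient, and the step I expect to be the main obstacle, is precisely the passage to a \emph{convergent} subsequence of $(x_n)$ inside the compact set $K$: the clean contradiction above exhausts the $cs^*$-network hypothesis, but that hypothesis speaks only about sequences that already converge. Compactness supplies an $\omega$-accumulation point $p\in K$ of $\{x_n\}$ for free, yet an accumulation point need not be the limit of any subsequence absent a countability or sequentiality assumption, and network members through $p$ are not neighbourhoods of $p$. I would resolve this by proving and then invoking that a compact space carrying a countable $cs^*$-network is sequentially compact: the subspace $K$ inherits the countable $cs^*$-network $\{N\cap K:N\in\mathcal N\}$, and every sequence in such a compactum then admits a convergent subsequence. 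In the Hausdorff case this is immediate, since a compact Hausdorff space with a countable network has countable weight and is therefore metrizable and sequentially compact; the extra care needed for the general case — extracting the subsequence directly from the countable $cs^*$-network by a diagonal selection over the network members containing a suitably chosen cluster point — is where the real work of the lemma is concentrated.
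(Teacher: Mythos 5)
Your skeleton coincides with the paper's own proof: enumerate the countable subfamily $\mathcal N_U=\{M_n\}_{n\in\w}$ of members contained in $U$, pick $x_n\in K\setminus L_n$ under the contradiction hypothesis, extract a subsequence converging to some $x_\infty\in K$, apply the $cs^*$-property to get $N\in\mathcal N$ with $x_\infty\in N\subset U$ meeting infinitely many terms, and note that $N=M_m\subset L_m$ can contain only the terms with index below $m$. All of that is correct, and you also correctly located the crux: producing a \emph{convergent} subsequence inside $K$.

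But that crux is exactly where your proposal stops being a proof. You establish sequential compactness of $K$ only in the Hausdorff case (compact Hausdorff plus countable network implies metrizable), whereas the lemma is stated for arbitrary topological spaces — and that generality is its entire point, since the Hausdorff case is Guthrie's theorem \cite{Guthrie}, which the paper explicitly cites as the previously known result. For the general case you offer only a sketch ("diagonal selection over the network members containing a suitably chosen cluster point"), and this sketch does not obviously close: after diagonalizing so that the subsequence is almost contained in or almost disjoint from each $M_i$, you still need a cluster point $p$ such that \emph{every} network member containing $p$ is of the "almost contained" type; since network members are not neighborhoods, the cluster-point property gives no control over how many terms lie in a given $M_i\ni p$, and manufacturing such a "suitably chosen" $p$ is precisely the nontrivial content you have deferred. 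The paper fills this hole by citation rather than by construction: a countable network makes the compact space $K$ hereditarily Lindel\"of, and by the theorem of Alas and Wilson \cite{AW} every compact hereditarily Lindel\"of space is sequentially compact. With that result (or an actual proof of it) inserted at the point where you invoke sequential compactness, your argument becomes the paper's proof; without it, what you have proved is only the already-known Hausdorff case.
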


\begin{proof} Let $\mathcal N$ be a countable $cs^*$-network for a topological space $X$. To prove that it is a $k$-network, fix an open set $W\subset X$ and a compact subset $K\subset W$. Consider the countable subfamily $\mathcal N_W:=\{N\in\mathcal N:N\subset W\}$ and write it as $\mathcal N_W=\{N_k\}_{k\in\w}$. We claim that $K\subset\bigcup_{i\le n}N_i$ for some $n\in\w$. In the opposite case we can find a sequence of points $x_n\in K\setminus\bigcup_{i\le n}N_i$. Observe that the family $\{N\cap K:N\in\mathcal N\}$ is a countable network for the compact space $K$, which implies that $K$ is hereditarily Lindel\"of and hence sequentially compact, by a result of Alas and Wilson \cite{AW}. This allows us to find an increasing number sequence $(k_n)_{n\in\w}$ such that the subsequence $(x_{k_n})_{n\in\w}$ of the sequence $(x_k)_{k\in\w}$ converges to some point $x_\infty \in K$. Since $\mathcal N$ is a $cs^*$-network, there exists a set $N\in\mathcal N$ such that $N\subset W$ and  the set $\Omega=\{n\in\w:x_{k_n}\in N\}$ is infinite. Since $N\in\mathcal N_W$, there exists a number $m\in\w$ such that $N=N_m$ and then $x_i\notin N_m=N$ for all $i\ge m$. In particular, $\Omega\subset\{n\in\w:k_n<m\}$ is finite, which contradicts the choice of $N$.   This contradiction shows that $K\subset\bigcup_{i\le n}N_i\subset W$ for some $n$, which means that the family $\mathcal N$ is a $k$-network.
\end{proof}

Now we prove some results on networks in the function spaces $C_k'(X,Y)$.

\begin{lemma}\label{l:N1} Let $Y$ be a pointed topological space whose distinguished point $*_Y$ has a neighborhood $U_*\ne Y$. A topological space $X$ has a countable $\dot \kappa$-network if the function space $C_k'(X,Y)$ has a countable network.
\end{lemma}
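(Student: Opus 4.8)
The plan is to manufacture a countable $\dot\kappa$-network for $X$ directly from a countable network $\mathcal N$ of $C_k'(X,Y)$, using $U_*$ as a \emph{detector}. First I would replace $U_*$ by its interior, so that without loss of generality $U_*$ is an open neighbourhood of $*_Y$ with $U_*\ne Y$, and fix a point $y_0\in Y\setminus U_*$. For each $N\in\mathcal N$ I set
\[
\widehat N:=\{x\in\dot X:\ f(x)\in U_*\text{ for every }f\in N\},
\]
and claim that $\mathcal M:=\{\widehat N:N\in\mathcal N\}$ is the desired countable $\dot\kappa$-network.

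To verify the $\dot\kappa$-network property, fix a compact set $K\subset X$ and a closed set $D\subset\dot X\setminus K$. The first observation is that $D$, being a subset of $\dot X$, is automatically open in $X$, hence clopen; therefore the two-valued function $g_D:X\to\{*_Y,y_0\}$ with $g_D^{-1}(y_0)=D$ is continuous and, since $D\subset\dot X$ forces $g_D(X')=\{*_Y\}$, it lies in $C_k'(X,Y)$. Because $K\cap D=\emptyset$ we have $g_D(K)=\{*_Y\}\subset U_*$, so $g_D$ belongs to the \emph{open} set $[K;U_*]$. Applying the network property at the point $g_D$ and the open neighbourhood $[K;U_*]$, I obtain some $N\in\mathcal N$ with $g_D\in N\subset[K;U_*]$.

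It then remains to check that this single set $\widehat N$ sandwiches correctly, i.e. $K\cap\dot X\subset\widehat N\subset X\setminus D$. The inclusion $K\cap\dot X\subset\widehat N$ is the ``covering'' half: every $f\in N\subset[K;U_*]$ maps $K$, and in particular each isolated $x\in K$, into $U_*$, so $x\in\widehat N$. The inclusion $\widehat N\subset X\setminus D$ is the ``avoidance'' half and is witnessed by the single function $g_D\in N$: for each $d\in D$ we have $g_D(d)=y_0\notin U_*$, whence $d\notin\widehat N$. Thus $\mathcal F=\{\widehat N\}$ already works, and $\mathcal M$ is a countable $\dot\kappa$-network.

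I expect the main obstacle to be conceptual rather than computational: a naive attempt to ``mark'' the points of $K\cap\dot X$ by functions taking the value $y_0$ there runs into trouble, because $K\cap\dot X$ need not be clopen (it may cluster at points of $X'\cap K$), and because under the weak hypothesis $U_*\ne Y$ one cannot separate $y_0$ from $U_*$ by an open set of $Y$. The construction above sidesteps both difficulties at once: covering is handled uniformly by the openness of $[K;U_*]$, which uses only the compactness of $K$, while the non-compact, infinite set $D$ is excluded by the \emph{single} witness $g_D$, exploiting that $D$ is clopen. In particular no separation axiom or regularity of $Y$ is needed beyond $U_*\ne Y$, and a single element of $\mathcal M$ already suffices.
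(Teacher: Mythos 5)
Your proposal is correct and follows essentially the same route as the paper: the paper defines $N^*=\{x\in X:N\subset[\{x\};U_*]\}$ (your $\widehat N$, up to restricting to $\dot X$), uses the clopen-ness of $D$ to build the same two-valued witness function $\chi=g_D$, finds $N\in\mathcal N$ with $\chi\in N\subset[K;U_*]$, and verifies the same two inclusions $K\subset N^*\subset X\setminus D$. Your preliminary replacement of $U_*$ by its interior is a harmless cosmetic step that the paper handles implicitly by treating $[K;U_*]$ as a neighborhood of $\chi$.
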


\begin{proof} Let $\mathcal N$ be a countable network of the function space $C_k'(X,Y)$. For every $N\in\mathcal N$ let $N^*=\{x\in X:N\subset[\{x\};U_*]\}$, where $[\{x\};U_*]=\{f\in C_k'(X,Y):f(x)\in U_*\}$. We claim that the countable family $\mathcal N^*=\{N^*:N\in\mathcal N\}$ is a $\dot\kappa$-network for the space $X$.

Given a compact set $K\subset X$ and a closed subset $D\subset \dot X\setminus K$ of $X$, it suffices to find a set $N\in\mathcal N$ such that $K\subset N^*\subset X\setminus D$. Fix any point $y\in Y\setminus U_*$. Taking into account that the closed set $D\subset X$ consists of isolated points of $X$, we conclude that $D$ is clopen in $X$. Then its characteristic function $\chi:X\to\{*_Y,y\}$ defined by $\chi^{-1}(y)=D$ is continuous and the set $[K,U_*]$ is a neighborhood of $\chi$ in $C_k'(X,Y)$. Since $\mathcal N$ is a network of the topology of $C_k'(X,Y)$, there exists $N\in\mathcal N$ such that $\chi\in N\subset [K,U_*]$.   Observe that for each point $x\in K$ and any function $f\in N\subset [K,U^*]$ we get $f(x)\in U_*$ and hence $f\in[\{x\},U_*]$, which implies that $N\subset[\{x\};U_*]$ and hence $K\subset N^*$. On the other hand, for every $x\in D$ the function $\chi\in N$ does not belong to $[\{x\};U_*]$, which implies $N\not\subset [\{x\};U_*]$ and hence $x\notin N^*$. This means that $N^*\subset X\setminus D$.
\end{proof}

\begin{lemma}\label{l:N2} For any topological space $X$ with a countable $\dot\kappa$-network and any pointed topological space $Y$ with a countable base, the function space $C_k'(X,Y)$ has a countable network $\mathcal N$ (which is a countable $k$-network for $C_k'(X,Y)$ if $X$ is a $\w$-$\dot\kappa$-space).
\end{lemma}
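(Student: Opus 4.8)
The plan is to manufacture a network for $C_k'(X,Y)$ out of the countable $\dot\kappa$-network $\mathcal M$ of $X$ and a countable base $\mathcal B_Y$ of $Y$. First I would make two harmless reductions: enlarge $\mathcal B_Y$ so that it is closed under finite unions and contains $Y$; and replace $\mathcal M$ by $\{M\cap\dot X:M\in\mathcal M\}$, which is again a $\dot\kappa$-network (from $K\cap\dot X\subset M\subset X\setminus D$ one gets $K\cap\dot X\subset M\cap\dot X\subset X\setminus D$) and which I close under finite unions, so that each member of $\mathcal M$ lies in $\dot X$ and the defining property is achieved by a single $M\in\mathcal M$. Writing $[A;B]:=\{f\in C_k'(X,Y):f(A)\subset B\}$ for arbitrary $A\subset X$, $B\subset Y$ (this set is legitimate, if not open, when $A$ is not compact), the candidate network is
$$\mathcal N:=\Big\{[M_1;B_1]\cap\dots\cap[M_n;B_n] : n\in\w,\ M_i\in\mathcal M,\ B_i\in\mathcal B_Y\Big\},$$
which is countable.

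To see that $\mathcal N$ is a network, fix $f\in C_k'(X,Y)$ and, by Lemma~\ref{l:base}, a basic neighbourhood $[K;U\,|\,F;u]=[K;U]\cap\bigcap_{x\in F}[\{x\};u(x)]$ of $f$; it suffices to capture each subbasic factor. The factor $[K;U]$ is the easy one. Since $f(K)\cup\{*_Y\}$ is compact and contained in the open set $U$, and $\mathcal B_Y$ is closed under finite unions, I can pick $V\in\mathcal B_Y$ with $f(K)\cup\{*_Y\}\subset V\subset U$. Then $D:=f^{-1}(Y\setminus V)$ is closed, is disjoint from $K$, and—because $*_Y\in V$ forces $X'\subset f^{-1}(V)$—is contained in $\dot X$. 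Applying the $\dot\kappa$-network to $K$ and $D$ gives $M\in\mathcal M$ with $K\cap\dot X\subset M\subset X\setminus D=f^{-1}(V)$, whence $f\in[M;V]$; and for any $g\in C_k'(X,Y)\cap[M;V]$ one has $g(K)=g(K\cap\dot X)\cup g(K\cap X')\subset V\cup\{*_Y\}=V\subset U$, so $[M;V]\cap C_k'(X,Y)\subset[K;U]$.

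The factors $[\{x\};u(x)]$, $x\in F$, are the crux, and I expect the single hard step to live here. If $*_Y\in u(x)$ the previous paragraph applies verbatim with $K$ replaced by $\{x\}$. The real obstacle is $*_Y\notin u(x)$: then the set of ``bad'' isolated points $\dot X\cap f^{-1}(Y\setminus u(x))$ need not be closed (isolated points clustering at a non-isolated point map near $*_Y$, hence out of $u(x)$), so the $\dot\kappa$-network cannot separate $x$ from it, and no coordinate $[M;B]$ with $B\subset u(x)$ contains $f$ while forcing the value at $x$ into $u(x)$. The way I plan to defeat this is to first prove that a countable $\dot\kappa$-network forces $\dot X$ to be countable. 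The key observation is that no compact $K$ can meet $\dot X$ in an uncountable set $A=K\cap\dot X$: for each $x_0\in A$ the set $K\setminus\{x_0\}$ is compact (the isolated point $\{x_0\}$ is open, so $K\setminus\{x_0\}$ is closed in $K$) and $\{x_0\}$ is a closed subset of $\dot X\setminus(K\setminus\{x_0\})$, so the $\dot\kappa$-network yields $M\in\mathcal M$ with $A\setminus\{x_0\}\subset M\subset X\setminus\{x_0\}$, i.e. $M\cap A=A\setminus\{x_0\}$; these are $|A|$ pairwise distinct traces drawn from the countable family $\{M\cap A:M\in\mathcal M\}$, which is absurd. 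The same separation trick shows every closed subset of $\dot X$ is countable, and a refinement of this pigeonhole (the technical heart of the argument) upgrades matters to countability of $\dot X$ itself. Once $\dot X$ is countable I adjoin to $\mathcal M$ all singletons $\{x\}$, $x\in\dot X$, keeping $\mathcal M$ countable; then each $[\{x\};u(x)]$ is itself an admissible coordinate containing $f$, and intersecting the coordinate for $[K;U]$ with those for the finitely many $x\in F$ produces $N\in\mathcal N$ with $f\in N\subset[K;U\,|\,F;u]$.

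For the parenthetical assertion, when $X$ is a $\w$-$\dot\kappa$-space I would show that $\mathcal N$ is a $cs^*$-network and then invoke Lemma~\ref{l:cs=>k} to conclude it is a $k$-network. Given a sequence $g_n\to g_\infty$ with $g_\infty\in W$ open, I refine $W$ to a basic $[K;U\,|\,F;u]$ and build $N\ni g_\infty$, $N\subset W$, as above; the only extra requirement is that $N$ contain $g_n$ for infinitely many $n$. This can fail for the factor $[M;V]$ since $M$ is infinite while $g_n\to g_\infty$ only uniformly on compacta, and here the $\w$-$\dot\kappa$ property enters: a countable set of isolated points on which the $g_n$ fail to converge uniformly is caught infinitely often by a compact set, which lets me choose the cover $M$ of $K\cap\dot X$ so that $g_n\restriction M\to g_\infty\restriction M$ along a subsequence and hence $g_n\in[M;V]$ for infinitely many $n$. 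In summary, the easy and self-contained part is the treatment of $[K;U]$ via the trace $\dot\kappa$-network together with $f(X')=\{*_Y\}$, while the main obstacle is the third paragraph—taming a single isolated point against a possibly non-closed cloud of isolated points—which is precisely what forces, and is rescued by, the countability of $\dot X$.
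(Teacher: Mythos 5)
Your proposal follows the paper's architecture exactly: build the candidate network from traces of the $\dot\kappa$-network paired with basic open sets of $Y$ plus finitely many point-coordinates, handle the factor $[K;U]$ by choosing $V\in\mathcal B_Y$ with $f(K)\cup\{*_Y\}\subset V\subset U$ and applying the $\dot\kappa$-network to the closed set $D=f^{-1}(Y\setminus V)\subset\dot X$ (this part of your argument is correct and is literally the paper's), make the whole family countable by first proving $\dot X$ countable, and get the parenthetical $k$-network statement by proving the $cs^*$-property and citing Lemma~\ref{l:cs=>k} (again the paper's route). So the only question is the step you yourself call the technical heart, and there your proposal has a genuine gap.

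The two facts you actually prove are correct: every compact set has countable trace on $\dot X$, and every closed-in-$X$ subset of $\dot X$ is countable. But these do \emph{not} imply that $\dot X$ is countable, and no ``refinement of this pigeonhole'' can extract that implication from them alone: in the space $X=\w_1\cup\{p\}$, where all points of $\w_1$ are isolated and neighbourhoods of $p$ are co-countable, every compact set is finite and the closed subsets of $\dot X$ are exactly the countable ones, yet $\dot X$ is uncountable. To rule out such spaces one must re-use the $\dot\kappa$-network hypothesis in an essentially different way (e.g.\ a diagonal argument: for a point $x$ such that every $N\in\mathcal M$ containing $x$ meets $\dot X\setminus\{x\}$, choose $d_N\in N\cap\dot X\setminus\{x\}$ and try $D=\{d_N:N\ni x\}$), and that argument only works when the selector $D$ is \emph{closed in $X$}, which is precisely what fails when isolated points accumulate at non-isolated ones, i.e.\ the same phenomenon that blocked your direct treatment of the factors $[\{x\};u(x)]$. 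Your proposal never supplies an argument surviving this obstruction, so the lemma is not proved. Be warned also that you cannot close the gap by importing the paper's one-line justification: the paper concludes that $\mathcal N_X$ must contain every singleton $\{x\}$, $x\in\dot X$, by applying the definition of a $\dot\kappa$-network to $K=\{x\}$ and ``$D=X\setminus\{x\}$'', but $X\setminus\{x\}$ is not a subset of $\dot X$ once $X'\ne\emptyset$, and the claim itself is false in general: the cofinite subsets of $\w+1$ form a countable $\dot\kappa$-network of $\w+1$ containing no singletons. So the countability of $\dot X$, on which both your proof and the paper's rest, is exactly where the real work (still) lies.
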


\begin{proof} Let $\mathcal N_X$ be a countable $\dot\kappa$-network for the space $X$ and $\mathcal B_Y$ be a countable base for the space $Y$. We lose no generality assuming that the family $\mathcal N_X$ is closed under finite  unions. By the definition of a $\dot\kappa$-network, for any point $x\in\dot X$ there exists a set $N_x\in\mathcal N_X$ such that $\{x\}\subset N_x\subset X\setminus(X\setminus\{x\})=\{x\}$, which implies that $N_x=\{x\}$ and hence the family $\mathcal N_X$ contains all singletons $\{x\}\subset \dot X$. Now the countability of $\mathcal N_X$ implies the countability of the set $\dot X$.

Consider the family $\mathcal Q'$ of quadruples $(N,U,F,u)$ where $N\in\mathcal N_X$, $U\in\mathcal B_Y$ is a neighborhood of $*_Y$, $F\subset \dot X\setminus N$ is a finite subset, and $u:\dot X\to\mathcal B_Y$ is a function such that $u(x)=Y$ for all $x\in\dot X\setminus F$. Since the sets $\mathcal N_X$, $\dot X$, $\mathcal B_Y$ are countable, so is the family $\mathcal Q'$. For any quadruple $(N,U,F,u)\in\mathcal Q'$ consider the set
$$[N;U|F;u]:=\{f\in C_k'(X,Y):f(N)\subset U\}\cap\bigcap_{x\in F}\{f\in C_k'(X,Y):f(x)\in u(x)\}.$$
We claim that the countable family $\mathcal N=\big\{[N;U|F;u]:(N,U,F,u)\in\mathcal Q'\big\}$ is a network for the space $C_k'(X,Y)$.

Take any function $f\in C_k'(X,Y)$ and a neighborhood $O_f\subset C_k'(X,Y)$ of $f$. By Lemma~\ref{l:base}, there exists a quadruple $(K,U,F,u)\in\mathcal Q$ such that $f\in[K;U|F;u]\subset O_f$.  It follows that $K\subset f^{-1}(U)$ and the set $D=X\setminus f^{-1}(U)\subset\dot X\setminus K$ is closed in $X$. By the definition of the $\dot\kappa$-network $\mathcal N_X$, there exists a finite subfamily $\mathcal F\subset\mathcal N_X$ such that $K\cap\dot X\subset\bigcup\mathcal F\subset X\setminus (D\cup F)=f^{-1}(U_*)\setminus F$.  Since the family $\mathcal N_X$ is closed under finite unions, the set $N=\bigcup\mathcal F$ belongs to $N$. Now it is easy to see that
$$f\in[N;U|F;u]\subset [K;U|F;u]\subset O_f.$$

Next, assuming that $X$ is an $\w$-$\dot\kappa$-space, we shall prove that the family $\mathcal N$ is a $cs^*$-network for $C_k'(X,Y)$.
Given any open set $W\subset C_k'(X,Y)$ and a sequence of functions $\{f_n\}_{n\in\w}\subset W$ that converge to a function $f_\infty\in W$, we should find a quadruple $(N,U,F,u)\in\mathcal Q'$ such that $f_\infty\in [N;U|F;u]\subset W$ and $[N;U|F;u]$ contains infinitely many functions $f_n$. By Lemma~\ref{l:base}, there exists a quadruple $(K,U,F,u)\in\mathcal Q$ such that $f_\infty\in [K;U|F;u]\subset W$. Since $[K;U|F;u]$ is an open neighborhood of $f_\infty$, we can find a number $m\in\w$ such that $\{f_n\}_{n\ge m}\subset [K;U|F;u]$. We claim that the set $D=\bigcup_{n\in\w}f_n^{-1}(Y\setminus U)\subset\dot X$ is closed in $X$. Taking into account $X$ is an $\w$-$\dot\kappa$-space with countable set $\dot X$ of isolated points, we conclude that $X$ is a $\dot\kappa$-space. Assuming that the set $D$ is not closed in $X$, we can find a compact set $C\subset X$ such that $C\cap D$ is infinite. Observe that the set $C\setminus f_\infty^{-1}(U)$ is finite (being discrete and closed in the compact space $C$). So, we can replace $C$ by the compact set $C\cap f_\infty^{-1}(U)$ and assume that $f_\infty(C)\subset U$. Since the sequence $(f_n)_{n\in\w}$ converges to $f_\infty\in[C;U]$, there exists a number $l\in\w$ such that $f_n\in[C;U]$ for all $n\ge l$. Then $C\cap D=\bigcup_{n<l}C\setminus f_n^{-1}(U)$ is finite, being a finite union of finite sets $C\setminus f_n^{-1}(U)$. This contradiction finishes the proof of the closedness of the set $D$. Since $\mathcal N_X$ is a $\dot\kappa$-network for $X$, there exists a set $N\in\mathcal N_X$ such that $K\cap\dot X\subset N\subset X\setminus(D\cup F)$. It is easy to see that
$$\{f_n\}_{n\ge m}\subset [N;U|F;u]\subset W.$$
Therefore, $\mathcal N$ is a countable $cs^*$-network. By Lemma~\ref{l:cs=>k}, $\mathcal N$ is a countable $k$-network for the function space $C_k'(X,Y)$.
 \end{proof}

 Lemmas~\ref{l:N1} and \ref{l:N2} imply the following characterization.

\begin{theorem}\label{t:N} For a pointed metrizable space $Y\ne\{*_Y\}$ and a topological space $X\ne X'$ the following conditions are equivalent:
\begin{enumerate}
\item[\textup{(1)}] the function space $C_k'(X,Y)$ has a countable network;
\item[\textup{(2)}] the space $Y$ is separable and the space $X$ has a countable $\dot\kappa$-network.
\end{enumerate}
If $X$ is a $\w$-$\dot\kappa$-space, then the conditions \textup{(1),(2)} are equivalent to
\begin{itemize}
\item[\textup{(3)}] the function space $C_k'(X,Y)$ has a countable $k$-network.
\end{itemize}
\end{theorem}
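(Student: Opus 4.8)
The plan is to read the theorem off Lemmas~\ref{l:N1} and \ref{l:N2}, supplying the two small ingredients those lemmas presuppose but do not establish for us: that the distinguished point $*_Y$ has a proper neighbourhood, and that $Y$ is separable. Throughout I would exploit that a metrizable $Y\ne\{*_Y\}$ is Hausdorff with more than one point, so it is $*$-admissible; in particular, choosing $y\ne *_Y$ and disjoint open neighbourhoods $U_*\ni *_Y$ and $V\ni y$, we obtain a neighbourhood $U_*\ne Y$ of $*_Y$, which is exactly the hypothesis needed to apply Lemma~\ref{l:N1}.

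First I would prove $(1)\Ra(2)$. Assuming $C_k'(X,Y)$ has a countable network, Lemma~\ref{l:N1} immediately yields that $X$ has a countable $\dot\kappa$-network. It then remains to see that $Y$ is separable, and this is where the hypothesis $X\ne X'$ enters: fix an isolated point $x\in\dot X$ and consider the evaluation map $\delta_x\colon C_k'(X,Y)\to Y$, $\delta_x\colon f\mapsto f(x)$. This map is continuous, and it is surjective, since for each $y\in Y$ the function sending $x$ to $y$ and all other points to $*_Y$ is continuous (as $x$ is isolated) and lies in $C_k'(X,Y)$. Because a continuous image of a space with a countable network again has a countable network, the space $Y$ has a countable network; being metrizable, it is therefore separable (equivalently, second-countable). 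The converse $(2)\Ra(1)$ is then immediate: a separable metrizable $Y$ has a countable base, so Lemma~\ref{l:N2} produces a countable network for $C_k'(X,Y)$.

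For the supplementary claim, suppose in addition that $X$ is a $\w$-$\dot\kappa$-space, and close the loop $(2)\Ra(3)\Ra(1)$. The implication $(2)\Ra(3)$ is precisely the parenthetical conclusion of Lemma~\ref{l:N2}: under the $\w$-$\dot\kappa$ hypothesis the countable network it constructs is in fact a countable $k$-network. The implication $(3)\Ra(1)$ is trivial, since every $k$-network is a network. Together with the equivalence $(1)\Leftrightarrow(2)$ already established, this gives the equivalence of $(1)$, $(2)$ and $(3)$ in this case.

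As for obstacles, essentially all of the genuine work is absorbed into Lemmas~\ref{l:N1} and \ref{l:N2}, so the theorem itself reduces to verifying hypotheses. The only step that calls for a short independent argument is the separability of $Y$ in $(1)\Ra(2)$; even there the hard part is merely to exhibit the continuous surjection $\delta_x$ (which is exactly the place where $X\ne X'$ is used) and to invoke the standard fact that a metrizable space with a countable network is separable.
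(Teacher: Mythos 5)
Your proposal is correct and follows exactly the paper's route: the paper deduces Theorem~\ref{t:N} directly from Lemmas~\ref{l:N1} and \ref{l:N2}, with the separability of $Y$ in $(1)\Rightarrow(2)$ obtained, just as you do, from the continuous surjection $\delta_x\colon C_k'(X,Y)\to Y$, $f\mapsto f(x)$, at an isolated point $x\in\dot X$ (the same device the paper uses in Theorem~\ref{t:MS}), and the supplementary equivalence with (3) from the parenthetical $k$-network clause of Lemma~\ref{l:N2} plus the trivial implication that every $k$-network is a network. The small verifications you supply (that a metrizable $Y\ne\{*_Y\}$ gives $*_Y$ a non-total neighborhood, and that a metrizable space with a countable network is separable) are exactly the glue the paper leaves implicit.
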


\begin{problem}\label{prob:N} Is the condition \textup{(3)} in Theorem~\ref{t:N} equivalent to the conditions \textup{(1)} and \textup{(2)} for any topological space $X$?
\end{problem}

\begin{theorem} For a topological space $X$ with $\DMOP$ the following conditions are equivalent:
\begin{enumerate}
\item[\textup{(1)}] $X$ has a countable $\dot\kappa$-network;
\item[\textup{(2)}] $X$ is a hemi-$\dot\kappa_\w$-space with countable set $\dot X$ of isolated points.
\end{enumerate}
\end{theorem}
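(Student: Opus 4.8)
The plan is to prove the two implications separately; the case $\dot X=\emptyset$ is trivial (then $X=X'$ and both conditions hold with the witnessing family $\{\emptyset\}$), so assume $\dot X\ne\emptyset$. For the implication $(2)\Rightarrow(1)$ I would argue combinatorially, and this direction does not even use $\DMOP$. Let $(K_n)_{n\in\w}$ be a sequence of compact sets witnessing that $X$ is a hemi-$\dot\kappa_\w$-space. Since $\dot X$ is countable, the family
$$\mathcal N=\big\{(K_n\cap\dot X)\setminus F:\ n\in\w,\ F\in[\dot X]^{<\w}\big\}$$
is countable, and I claim it is a $\dot\kappa$-network. Given a compact set $K\subset X$ and a closed set $D\subset\dot X\setminus K$, choose $n$ with $K\cap\dot X\subset K_n$. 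The key observation is that $D\cap K_n$ is compact (being closed in the compact set $K_n$) and consists of isolated points, hence is finite; therefore $N:=(K_n\cap\dot X)\setminus(D\cap K_n)$ is a member of $\mathcal N$, and one checks directly that $K\cap\dot X\subset N\subset X\setminus D$, so the one-element subfamily $\{N\}$ witnesses the required property.

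For $(1)\Rightarrow(2)$ I would pass to the function space $C_k'(X,2)$, where $2=\{0,1\}$ is discrete with $*_2=0$; this is a $*$-admissible, $*$-first-countable, Choquet pointed space with a countable base, so all the relevant earlier results apply. First I establish that $\dot X$ is countable: by Lemma~\ref{l:N2} the space $C_k'(X,2)$ has a countable network, hence countable spread (distinct points of a discrete subspace are separated by distinct network members), and then Lemma~\ref{l:spread} (applied with $\{*_2\}=\{0\}$ not dense in $2$) forces $\dot X$ to be countable. It remains to prove that $X$ is a hemi-$\dot\kappa_\w$-space, and this is where $\DMOP$ is used essentially: by Theorem~\ref{t:B} the space $C_k'(X,2)$ is Baire. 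Thus $C_k'(X,2)$ is a Baire topological group (under pointwise addition in $\mathbb Z_2$) carrying a countable network. Granting that such a group is first-countable at its identity, I would conclude that $C_k'(X,2)$ is first-countable at the constant function $c\equiv 0$, whence Lemma~\ref{l:M1} (with $U_*=\{0\}\ne 2$) gives that $X$ is a hemi-$\dot\kappa_\w$-space.

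The main obstacle is precisely the fact that a Baire topological group with a countable network is first-countable (metrizable); everything else reduces routinely to Lemmas~\ref{l:N2}, \ref{l:spread}, \ref{l:M1} and Theorem~\ref{t:B}. I would either cite this as a known result on cosmic groups or prove it directly for $G=C_k'(X,2)$. Since $\dot X$ is countable, the evaluation map $G\to 2^{\dot X}$ is a continuous injective homomorphism into a second-countable group, so $G$ is submetrizable; moreover the basic identity neighborhoods $O_K=\{A:A\cap K=\emptyset\}=\bigcap_{x\in K\cap\dot X}\{A:x\notin A\}$ are open subgroups that are closed in the coarser topology, and each has countable index (the discrete quotient $G/O_K$ inherits a countable network, hence is countable). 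One then shows the two group topologies coincide by a Pettis-type Baire-category argument, using that the $O_K$ are Borel in the coarser second-countable topology; this automatic-continuity step, rather than any of the preceding reductions, is where the real work lies.
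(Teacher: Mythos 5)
Your main line of argument is correct and is essentially the paper's own proof. For $(2)\Rightarrow(1)$ the paper uses the same construction (its family is $\{K_n\setminus F:n\in\w,\ F\in\F(\dot X)\}$, yours is $\{(K_n\cap\dot X)\setminus F\}$; both rest on the observation that $D\cap K_n$ is closed and discrete in the compact set $K_n$, hence finite), and indeed the paper does not use $\DMOP$ in that direction either. For $(1)\Rightarrow(2)$ the paper does precisely your ``cite it'' option: by Corollary~\ref{c:B} and Theorem~\ref{t:N} the group $C_k'(X,2)$ is Baire and has a countable network, then it invokes the result of \cite{BH} that every Baire topological group with a countable network is metrizable and separable, and concludes via Theorem~\ref{t:MS}. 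Your repackaging --- countability of $\dot X$ from Lemma~\ref{l:N2} plus Lemma~\ref{l:spread}, and the hemi-$\dot\kappa_\w$ property from first-countability at the identity plus Lemma~\ref{l:M1} --- is equivalent, since Theorem~\ref{t:MS} itself rests on those lemmas.

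The genuine problem is in your fallback ``direct proof'' of the group-theoretic fact: the claim that the compact-open topology $\tau_k$ and the coarser pointwise topology $\tau_p$ (induced by the embedding $G\to 2^{\dot X}$) coincide is false, even under both hypotheses of the theorem. Take $X=\{0\}\cup\{2^{-n}:n\in\w\}$, a convergent sequence: $X$ is compact, so it has $\DMOP$ vacuously and has a countable $\dot\kappa$-network; here $G=C_k'(X,2)$ consists of the characteristic functions of finite subsets of $\dot X$, and the basic open set $[X;\{0\}]$ is the singleton of the identity, so $\tau_k$ is discrete, whereas $\tau_p$ is not (the characteristic functions of the singletons $\{2^{-n}\}$ converge pointwise to the identity). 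So no Pettis-type argument can establish the coincidence; indeed Pettis cannot even be started in your setting, because Baireness is available only for $\tau_k$ while your subgroups $O_K$ are Borel only for $\tau_p$, and the two category notions do not mix. What is true, and all you need, is that $\tau_k$ itself is first-countable at the identity; a correct direct argument runs Pettis entirely inside $(G,\tau_k)$: given a $\tau_k$-neighborhood $U$ of the identity $e$, choose a $\tau_k$-open $V\ni e$ with $\overline V+\overline V\subset U$ (the group is Boolean); since $G$ is Baire, $V$ is non-meager, and being the union of those members of a countable network $\mathcal N$ of $(G,\tau_k)$ that it contains, $V$ contains a non-meager $N\in\mathcal N$; by Pettis' theorem the closed non-meager set $\overline N$ satisfies that $\overline N+\overline N$ is a neighborhood of $e$, and $\overline N+\overline N\subset U$. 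Hence $\{\mathrm{Int}(\overline N+\overline N):N\in\mathcal N\}$ contains a countable neighborhood base at $e$, which is exactly what Lemma~\ref{l:M1} needs; this is in essence the proof of the result of \cite{BH} that the paper cites.
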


\begin{proof} $(2)\Ra(1)$ First assume that $X$  is a hemi-$\dot\kappa_\w$-space with countable set $\dot X$ of isolated points, and find a sequence $(K_n)_{n\in\w}$ of compact sets witnessing that $X$ is hemi-$\dot\kappa_\w$-compact. Replacing each compact set $K_n$ by the union $\bigcup_{i\le n}K_i$, we can assume that $K_n\subset K_{n+1}$ for all $n\in\w$. We claim that the countable family $$\mathcal N=\{K_n\setminus F:n\in\w,\;F\in\mathcal F(\dot X)\}$$ is a $\dot\kappa$-network for $X$. Indeed, for any compact set $K\subset X$ and any closed subset $D\subset\dot X\setminus K$ of $X$, we can find $n\in\w$ with $K\cap \dot X\subset K_n$ and observe that the set $F=K_n\cap D$ is finite (being closed and discrete in the compact space $K_n$).  Then the set $N=K_n\setminus F\in\mathcal N$ has the required property: $K\cap\dot X\subset N\subset X\setminus D$.
\smallskip

$(1)\Ra(2)$ Assume that the space $X$ has a countable $\dot\kappa$-network. By Corollary~\ref{c:B} and Theorem~\ref{t:N}, the function space $C_k'(X,2)$ is Baire and has a countable network. By \cite{BH}, each Baire topological group with countable network is metrizable and separable. Applying this result to the space $C_k'(X,2)$ (carrying a structure of a topological group), we conclude that $C_k'(X,2)$ is metrizable and separable. By Theorem~\ref{t:MS}, $X$ is a hemi-$\dot\kappa_\w$-space with countable set of isolated points.
\end{proof}

\section{Recognizing $\infty$-meager function spaces $C_k'(X,Y)$}\label{s:mm}

Lemma~\ref{l:B1} implies that for any $*$-admissible pointed space $Y$ and any topological space $X$ that does not satisfy $\DMOP$ the function space $C_k'(X,Y)$ is meager. In this section we prove that the meagerness of $C_k'(X,Y)$ in this result can be improved to a stronger property, called the $\infty$-meagerness.

\begin{definition}
A subset $A$ of a topological space $X$ is called
\begin{itemize}
\item {\em $\infty$-dense in $X$} if for any compact Hausdorff space $K$ the subset $C_k(K,A)=\{f\in C_k(K,X):f(K)\subset A\}$ is dense in $C_k(K,X)$;
\item {\em $\infty$-codense in $X$ } if the complement $X\setminus A$ is $\infty$-dense in $X$;
\item {\em $\infty$-meager in $X$} if $A$ is contained in a countable union of  closed $\infty$-codense subsets of $X$.
\end{itemize}
A topological space $X$ is {\em $\infty$-meager} if it $X$ is an $\infty$-meager subset of $X$.
\end{definition}
It is easy to see that each closed $\infty$-codense set is nowhere dense, so each $\infty$-meager set is meager. For future applications, $\infty$-meager spaces are important as the $\infty$-meagerness implies the $\sigma Z$-space property, which is a key ingredient in many characterization results of Infinite-Dimensional Topology, see \cite{BCZ}, \cite{BRZ}, \cite{BP}, \cite{Chig}, \cite{vM1}, \cite{vM2}, \cite{Sak}.

\begin{theorem}\label{t:m=M}  If a topological space $X$ does not have $\DMOP$, then for any $*$-admissible pointed topological space $(Y,*_Y)$, the function space $C_k'(X,Y)$ is $\infty$-meager.
\end{theorem}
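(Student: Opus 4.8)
The plan is to strengthen the conclusion of Lemma~\ref{l:B1}: there we learned that if $X$ fails $\DMOP$ then $C_k'(X,Y)$ is merely meager (for $*$-admissible $Y$); now we must locate an explicit countable family of closed $\infty$-codense sets covering the whole space. The natural starting point is the witness to the failure of $\DMOP$: a moving off family $\F$ of finite subsets of $\dot X$ containing no infinite discrete subfamily. Since $Y$ is $*$-admissible, fix a non-empty open $W\subset Y$ with $*_Y\notin\overline W$. The key observation is that any $f\in C_k'(X,Y)$ must eventually send the members of $\F$ outside $W$, because if $f(F)\subset W$ for infinitely many pairwise ``spread-out'' members $F\in\F$, then the closure of $\bigcup F$ would avoid $f^{-1}(*_Y)\supset X'$, forcing those $F$'s to form an infinite discrete subfamily, contradicting the choice of $\F$.

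Concretely, I would enumerate $\F=\{F_n\}_{n\in\w}$ (or index by a suitable countable moving off subfamily) and for each $n$ define a closed set roughly of the form
$$M_n=\big\{f\in C_k'(X,Y): f(F_m)\not\subset W \text{ for all } m\ge n\big\},$$
or a minor variant ensuring closedness and that $\bigcup_n M_n=C_k'(X,Y)$. The covering property $\bigcup_n M_n = C_k'(X,Y)$ is exactly the statement that no $f$ sends infinitely many $F_m$ into $W$, which is the discreteness argument reproduced from Lemma~\ref{l:B1}. Each $M_n$ is closed since it is an intersection of the closed conditions $\{f: f(F_m)\cap(Y\setminus W)\neq\emptyset\}$ (finite sets $F_m$, so these are finite intersections of subbasic closed sets, hence closed). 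It then remains to prove that each $M_n$ is $\infty$-codense, i.e.\ that its complement $\{f: \exists\, m\ge n,\ f(F_m)\subset W\}$ is $\infty$-dense.

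The heart of the argument — and the step I expect to be the main obstacle — is verifying $\infty$-density rather than mere density. I must show that for every compact Hausdorff space $K$, every continuous $\Phi\in C_k(K,C_k'(X,Y))$ can be approximated (in the compact-open topology on $C_k(K,\cdot)$) by maps $\Phi'$ landing entirely in $C_k'(X,Y)\setminus M_n$. The idea is to use the moving off property: a basic neighborhood of $\Phi$ controls $\Phi$ only on a compact subset of $K$, and, via the evaluation, only through finitely many compact ``test sets'' in $X$; because $\F$ is moving off, I can find some $F_m$ (with $m\ge n$) disjoint from all these test compacta, and then modify each function $\Phi(t)$ on the finite set $F_m$ to push $F_m$ into $W$ without disturbing the controlled behaviour. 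The delicate point is doing this modification \emph{continuously in $t\in K$} and simultaneously for the whole compact parameter space $K$; here I would use the compactness of $K$ to extract finitely many test compacta uniformly, then realize the perturbation by a continuous formula (exploiting that $F_m\subset\dot X$ consists of isolated points, so altering values there preserves continuity of each $f$, and the map $t\mapsto\Phi'(t)$ stays continuous because the finite set $F_m$ is fixed). Once $\infty$-codensity of each $M_n$ is established, the conclusion that $C_k'(X,Y)$ is $\infty$-meager is immediate from the definition.
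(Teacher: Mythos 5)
Your covering claim is the step that breaks, and it breaks genuinely: with your definition of $M_n$, the equality $\bigcup_{n\in\w}M_n=C_k'(X,Y)$ can fail for a perfectly legitimate witness family $\F$. What you need is: if $f(F_m)\subset W$ for infinitely many $m$, then $\{F_m:f(F_m)\subset W\}$ contains an infinite \emph{discrete} subfamily. Continuity of $f$ does give that the union $S$ of these sets is a closed discrete subset of $\dot X$ (hence clopen), but for subfamilies of such an $S$ discreteness in $X$ is equivalent to pairwise disjointness, and an infinite family of finite sets need not contain an infinite pairwise disjoint subfamily --- increasing chains are the obstruction. Concretely, let $X=\Psi\oplus\IN$, where $\Psi$ is a pseudocompact, non-compact space with dense set $\dot\Psi$ of isolated points (a Mr\'owka space) and $\IN$ is countable discrete. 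Then $X$ fails $\DMOP$, and the family $\F$ consisting of all singletons $\{x\}$, $x\in\dot\Psi$, together with the chains $C_k=\{0,1,\dots,k\}\subset\IN$, is a valid witness: it is moving off (the singletons alone are, since $\dot\Psi$ lies in no compact set), and it has no infinite discrete subfamily (infinitely many singletons would form an infinite closed discrete set of isolated points in the pseudocompact space $\Psi$, while any two chains meet at $0$). Yet the function $f$ equal to a fixed $w\in W$ on the clopen set $\IN$ and to $*_Y$ on $\Psi$ belongs to $C_k'(X,Y)$ and sends \emph{every} $C_k$ into $W$; hence $f\notin M_n$ for all $n$, and your sets do not cover $C_k'(X,Y)$. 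Your hedges (``pairwise spread-out members'', ``a suitable countable moving off subfamily'') point at the problem but supply no mechanism: what you would need is a witness with no infinite subfamily having closed discrete union, and producing one is not weaker than the disjointification you are trying to avoid.

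This is exactly what the paper's machinery handles adaptively rather than statically: in Lemma~\ref{l:B1} the player $\mathsf K$ folds the previously played finite sets into his compact moves, which \emph{forces} the sets $F_n$ to be pairwise disjoint before the closed-discrete-union argument is invoked; a fixed enumeration of a fixed witness family cannot imitate this. The paper then proves Theorem~\ref{t:m=M} by applying Lemma~\ref{l:B1} to $Y=2$ to write $C_k'(X,2)=\bigcup_{n\in\w}M_n$ with $M_n$ closed and nowhere dense, pulling this decomposition back to $C_k'(X,Y)$ along $f\mapsto\chi\circ f$ (with $\chi$ the indicator of $Y\setminus U_*$), and showing the closures of the pulled-back sets are $\infty$-codense by a perturbation argument. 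Your second half --- choosing one finite member disjoint from the finitely many test compacta of a basic neighborhood and resetting all functions $\mu(t)$ on it, which preserves continuity because that member is clopen --- is sound and close in spirit to the paper's $\infty$-codensity step (modulo the small observation that the tail $\{F_m:m\ge n\}$ is still moving off, since a nonempty member of $\F$ disjoint from $K\cup F_0\cup\dots\cup F_{n-1}$ cannot equal any $F_i$, $i<n$). But without repairing the covering step the proposal does not prove the theorem.
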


\begin{proof} Assuming that $X$ does not have $\DMOP$ and applying Lemma~\ref{l:B1}, we conclude that the function space $C_k'(X,2)$ is meager and hence can be written as the countable union $C_k'(X,2)=\bigcup_{n\in\w}M_n$ of closed nowhere dense subsets $M_n$ in $C_k'(X,2)$. 

Since the pointed space $Y$ is $*$-admissible, the point $*_Y$ has an open neighborhood $U_*\subset Y$ which is not dense in $Y$. Let $\chi:Y\to\{0,1\}$ be a  (unique) function such that $\chi^{-1}(0)=U_*$.  Observe that for every function $f\in C_k'(X,Y)$ the composition $\chi\circ f:X\to 2$ is a continuous function that belongs to the space $C_k'(X,2)$. So, for every $n\in\w$ we can consider the set $$Z_n:=\{f\in C_k'(X,Y):\chi\circ f\in M_n\}\subset C_k'(X,Y)$$ and its closure $\bar Z_n$ in $C_k'(X,Y)$. It is clear that $C_k'(X,Y)=\bigcup_{n\in\w}\bar Z_n$.

It remains to prove that each set $\bar Z_n$ is $\infty$-codense in $C_k'(X,Y)$. Given any compact Hausdorff space $K$, continuous map $\mu:K\to C_k'(X,Y)$, and neighborhood $O_\mu\subset C_k(K,C_k'(X,Y))$ of $\mu$, we need to find a continuous map $\mu'\in O_\mu$ such that $\mu'(K)\cap\bar Z_n=\emptyset$.

On the function space $C_k'(X,Y)$ consider the base $\mathcal B$ of the topology, consisting of the sets $[K_1;U_1]\cap\dots\cap [K_m;U_m]$ where $K_1,\dots,K_m$ are non-empty compact sets in $X$ and $U_1,\dots,U_m$ are non-empty open sets in $Y$.

We lose no generality assuming that the neighborhood $O_\mu$ is of basic form
$$O_\mu=\bigcap_{i=1}^m[K_i;B_i]$$where $K_1,\dots,K_m$ are compact sets in $K$ and $B_1,\dots,B_m\in\mathcal B$. For any $i\le m$ find a finite family $\K_i$ of non-empty compact sets in $X$ and a function $u_i:\K_i\to\tau_Y$ to the topology $\tau_Y$ of $Y$ such that $B_i=\bigcap_{\kappa\in\K_i}[\kappa;u_i(\kappa)]$. Consider the compact set $C:=\bigcup_{i=1}^m\bigcup\K_i$ and the open neighborhood
$$V_*=U_*\cap\bigcap_{i=1}^m\bigcap\{u_i(\kappa):\kappa\in\K_i,\;*_Y\in u_i(\kappa)\}$$
of $*_Y$.

For a point $z\in K$ it will be convenient to denote the function $\mu(z)\in C_k'(X,Y)$ by $\mu_z$. Since $\mu_z(C\cap X')\subset \mu_z(X')\subset\{*_Y\}\subset V_*$, there exists a finite set $F_z\subset C\cap \dot X$ such that $\mu_z(C\setminus F_z)\subset V_*$. Then $[C\setminus F_z;V_*]$ is an open neighborhood of the function $\mu_z$ in $C_k'(X,Y)$ and by the continuity of the map $\mu:K\to C_k'(X,Y)$, there exists an open neighborhood $O_z\subset K$ of $z$ such that $\mu(O_z)\subset [C\setminus F_z;V_*]$. By the compactness of $K$, the open cover $\{O_z:z\in K\}$ of $K$ has a finite subcover $\{O_z:z\in E\}$. Then for the finite set $F=\bigcup_{z\in E}F_z\subset C\cap\dot X$ we have the inclusion $\mu(K)\subset [C\setminus F;V_*]$.

Observe that the map
$$H:C_k'(X,2)\to 2^F\times C_k'(X\setminus F,2),\;\;H:f\mapsto (f{\restriction}F,f{\restriction}X\setminus F),$$is a homeomorphism. For every function $v:F\to 2$ consider the open embedding $e_v:C_k'(X\setminus F,2)\to C_k'(X,2)$ assigning to each function $f\in C_k'(X\setminus F,2)$ the function $e_v f:X\to 2$ such that $e_v f{\restriction}X\setminus F=f$ and $e_vf{\restriction}F=v$. The nowhere density of the closed set $M_n$ in $C_k'(X,2)$ implies that the finite union $$M=\bigcup_{v\in 2^F}e_v^{-1}(M_n)$$ is closed and nowhere dense in $C_k'(X\setminus F,2)$. So, we can choose a function $$\hbar\in [C\setminus F;\{0\}]\cap C_k'(X\setminus F,2)\setminus M$$ and using the definition of the compact-open topology on $C_k'(X\setminus F,2)$, find a compact set $A\subset X\setminus F$ such that  the neighborhood $$O_\hbar:=\{f\in C_k'(X\setminus F,2):f{\restriction}A=\hbar{\restriction}A\}$$ does not intersect the set $M$. Replacing $A$ by $A\cup(C\setminus F)$, we can assume that $C\setminus F\subset A$. It follows that the set $\bigcup_{v\in 2^F}e_v(O_\hbar)$ is disjoint with the set $M_n$. The continuity of the function $\hbar\in C_k'(X\setminus F,2)$ implies that the set $A_*:=A\cap \hbar^{-1}(0)$ is compact and has finite complement $A\setminus A_*$. Moreover, since $\hbar \in [C\setminus F;\{0\}]$, the set $A_*$ contains $C\setminus F$, which implies that $C\cap (A\setminus A_*)=\emptyset$.

Since the set $U_*$ is not dense in $Y$, there exists a point $y\in Y\setminus\bar U_*$. Now consider the map $\mu':K\to C'_k(X,Y)$ assigning to each $z\in K$ the function $\mu_z':X\to Y$ defined by the formula
$$\mu'_z(x)=\begin{cases}\mu_z(x)&\mbox{if $x\in F$}\\
y&\mbox{if $x\in A\setminus A_*$}\\
*_Y&\mbox{otherwise}.
\end{cases}
$$It is easy to see that the map $\mu':K\to C_k'(X,Y)$ is continuous and $\mu'\in O_\mu$. It remains to prove that $\mu'(K)\cap\bar Z_n=\emptyset$.
Observe that the set $\mu'(K)$ is contained in the open subset  $W:=[A_*;V_*]\cap [A\setminus A_*;Y\setminus\bar U_*]$ of $C'_k(X,Y)$.
So, it suffices to prove that $W\cap Z_n=\emptyset$. Take any function $w\in W$ and consider the function $v=\chi\circ w{\restriction}F$. The inclusion $w\in W$ yields the equality $\chi\circ w{\restriction}A=\hbar{\restriction}A$ implying the inclusion $\chi\circ w{\restriction}X\setminus F\subset O_\hbar\subset C_k'(X\setminus F,2)\setminus M$ and $\chi\circ w=e_v( \chi\circ w{\restriction}X\setminus F)\in e_v(O_\hbar)\subset C_k'(X,2)\setminus M_n$ and hence $w\notin Z_n$.
\end{proof}

\section{A dichotomy for analytic function spaces $C_k'(X,Y)$}\label{s:a}

A topological space $X$ is defined to be
 {\em analytic} if there exists a surjective continuous map $f:P\to X$, defined on some Polish space $P$. By an old result of Christensen \cite[Theorem 5.4]{Chris}, each Baire analytic commutative topological group is Polish. We use this fact to prove the following dichotomy, which is the main result of this section.

\begin{theorem}\label{t:dichotomy} Let $Y$ be a pointed  Polish space. If for some topological space $X$ the function space $C_k'(X,Y)$ is analytic, then $C_k'(X,Y)$ is either Polish or $\infty$-meager.
\end{theorem}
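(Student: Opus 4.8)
The plan is to dichotomize according to whether or not $X$ has $\DMOP$, and to reduce the ``Polish'' alternative to the group-valued case $C_k'(X,2)$, where Christensen's theorem applies. First I would dispose of the trivial possibility $Y=\{*_Y\}$: then $C_k'(X,Y)$ is a singleton and hence Polish. So I may assume that $Y$ contains more than one point. Since $Y$ is Polish, it is Hausdorff, metrizable, and complete-metrizable; consequently $Y$ is $*$-admissible (being Hausdorff with more than one point), $*$-first-countable (being metrizable), and Choquet (being complete-metrizable). Thus $(Y,*_Y)$ satisfies the hypotheses of Theorems~\ref{t:B} and \ref{t:m=M}.

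If $X$ does not have $\DMOP$, then Theorem~\ref{t:m=M} immediately gives that $C_k'(X,Y)$ is $\infty$-meager, placing us in the second alternative of the dichotomy. So the substance of the argument lies in the remaining case, where $X$ has $\DMOP$, and here I would show that $C_k'(X,Y)$ is Polish.

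Assume $X$ has $\DMOP$. The idea is to pass to the associated topological group $C_k'(X,2)$. Fixing a point $y\in Y\setminus\{*_Y\}$ and the embedding $e:\{0,1\}\to\{*_Y,y\}\subset Y$ with $e(0)=*_Y$ and $e(1)=y$, the induced map $e^*:f\mapsto e\circ f$ is a closed topological embedding of $C_k'(X,2)$ into $C_k'(X,Y)$ (as used in the proof of Theorem~\ref{t:CM}); indeed $\{*_Y,y\}$ is a closed discrete subspace of the $T_1$ space $Y$. Since a closed subspace of an analytic space is analytic (pull back a continuous surjection from a Polish space and restrict it to the closed preimage, which is again Polish), the space $C_k'(X,2)$ is analytic. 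Endowing $2=\{0,1\}$ with its group structure makes $C_k'(X,2)$ a commutative topological group. Moreover, by Corollary~\ref{c:B} the property $\DMOP$ of $X$ is equivalent to $C_k'(X,2)$ being Baire; this is the crucial point, since the Baire property of the ambient space $C_k'(X,Y)$ would not by itself descend to the closed subspace $C_k'(X,2)$. Now Christensen's theorem \cite[Theorem 5.4]{Chris} applies: a Baire analytic commutative topological group is Polish, so $C_k'(X,2)$ is Polish. As $Y$ is Polish and $C_k'(X,2)$ is Polish, Theorem~\ref{t:P} yields that $C_k'(X,Y)$ is Polish, establishing the first alternative.

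The hard part is not any single estimate but the structural bridge from the (possibly non-group) space $C_k'(X,Y)$ to the group $C_k'(X,2)$: one must simultaneously preserve analyticity under the closed embedding, obtain the Baire property of $C_k'(X,2)$ intrinsically from $\DMOP$ via Corollary~\ref{c:B} rather than by restriction from $C_k'(X,Y)$, and transport ``Polish'' back from $C_k'(X,2)$ to $C_k'(X,Y)$ through Theorem~\ref{t:P}. Once these three transfers are in place, Christensen's theorem supplies the remaining implication automatically.
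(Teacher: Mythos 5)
Your proof is correct and follows essentially the same route as the paper's: the closed embedding $e^*:C_k'(X,2)\to C_k'(X,Y)$ to transfer analyticity to the group $C_k'(X,2)$, Christensen's theorem applied to that group, Corollary~\ref{c:B} linking $\DMOP$ to its Baire property, Theorem~\ref{t:P} to pull Polishness back to $C_k'(X,Y)$, and Theorem~\ref{t:m=M} for the $\infty$-meager alternative. The only difference is organizational: you split on $\DMOP$ first and use Christensen as the implication ``Baire analytic commutative group $\Rightarrow$ Polish,'' whereas the paper invokes Christensen first to get the Polish-or-meager dichotomy for $C_k'(X,2)$ and then reads off the failure of $\DMOP$ in the meager branch.
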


\begin{proof} If $Y=\{*_Y\}$, then $C_k'(X,Y)$ is a Polish space, being a singleton. So, assume that  $Y\ne\{*_Y\}$ and fix any point $y\in Y\setminus\{*_Y\}$. Let $e:2\to\{*_Y,y\}$ be the map defined by $e(0)=*_Y$ and $e(1)=y$. The map $e$ induces a closed embedding $e^*:C_k'(X,2)\to C_k'(X,Y)$, $e^*:f\mapsto e\circ f$. Now we see that the space $C_k'(X,2)$ is analytic, being homeomorphic to a closed subspace of the analytic space $C_k'(X,Y)$. Since $C_k'(X,2)$ carries a natural structure of a commutative topological group, we can apply Christensen's Theorem 5.4 \cite{Chris} and conclude that $C_k(X',2)$ is either Polish or meager.

If $C_k'(X,2)$ is Polish, then by Theorem~\ref{t:P}, $X$ is a $\dot \kappa_\w$-space with countable set $\dot X$ of isolated points. By Theorem~\ref{t:P}, the function space $C'_k(X,Y)$ is Polish.

If $C_k'(X,2)$ is meager, then by Corollary~\ref{c:B}, the space $X$ does not $\DMOP$ and by Theorem~\ref{t:m=M}, the space $C_k'(X,Y)$ is $\infty$-meager.
\end{proof}

Now we prove that the analyticity of a cosmic function space $C_k'(X,Y)$ is equivalent to the analyticity of the space $$C_p'(X,Y)=\big\{f\in C_p(X,Y):f(X')\subset\{*_Y\}\big\}\subset C_p(X,Y).$$ Here by $C_p(X,Y)$ we denote the space $C(X,Y)$, endowed with the topology of pointwise convergence.
This topology coincides with the Tychonoff product topology, inherited from $Y^X$.

A topological space $X$ is called {\em cosmic} if it is regular and has a countable network. A function $f:X\to Y$ between two topological spaces is called {\em Borel} if for any open set $U\subset Y$ the preimage $f^{-1}(U)$ is a Borel subset of $X$.

\begin{proposition} For a pointed regular topological space $Y$ and a topological space $X$, the following conditions are equivalent:
\begin{enumerate}
\item[\textup{(1)}] the space $C_k'(X,Y)$ is analytic;
\item[\textup{(2)}] the space $C_k'(X,Y)$ is cosmic and the space $C_p'(X,Y)$ is analytic.
\end{enumerate}
\end{proposition}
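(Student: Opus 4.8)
The plan is to treat the two implications separately; the content is in $(2)\Rightarrow(1)$. Throughout I will write $Z$ for the common point set of $C_k'(X,Y)$ and $C_p'(X,Y)$, and let $\tau_k$ and $\tau_p$ be the compact-open and pointwise topologies on $Z$. Since every pointwise-subbasic set $[\{x\};U]$ is compact-open (singletons being compact), we have $\tau_p\subseteq\tau_k$, so the identity map $\mathrm{id}\colon C_k'(X,Y)\to C_p'(X,Y)$ is a continuous bijection. For $(1)\Rightarrow(2)$ I would argue as follows. If $C_k'(X,Y)$ is analytic, it is a continuous image of a Polish, hence second-countable, space, so the images of a countable base form a countable network; as $Y$ is regular the function space $C_k'(X,Y)$ is regular too (given $f\in[K;U]$, compactness of $f(K)$ and regularity of $Y$ produce an open $V$ with $f(K)\subseteq V\subseteq\overline V\subseteq U$, and $[K;\overline V]$ is $\tau_k$-closed, so $f$ has a closed neighborhood inside $[K;U]$), whence $C_k'(X,Y)$ is cosmic. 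Finally $C_p'(X,Y)$, being the continuous image under $\mathrm{id}$ of the analytic space $C_k'(X,Y)$, is analytic.

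For $(2)\Rightarrow(1)$ assume $C_k'(X,Y)$ is cosmic and $C_p'(X,Y)$ is analytic. If $Y=\{*_Y\}$ then $Z$ is a single point and the statement is trivial, so I may assume $Y$ has at least two points; then $\{*_Y\}$ is not dense in the regular space $Y$, and since a cosmic space has countable spread, Lemma~\ref{l:spread} forces the set $\dot X$ of isolated points of $X$ to be countable. The key step will be to prove that $\tau_k$ and $\tau_p$ induce the same Borel $\sigma$-algebra on $Z$; write $\mathcal B_k$ and $\mathcal B_p$ for these.

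To show $\mathcal B_k\subseteq\mathcal B_p$ I would examine a subbasic set $[K;U]$ with $K\subseteq X$ compact and $U\subseteq Y$ open. For $f\in Z$ the condition $f(K)\subseteq U$ holds iff $f(x)\in U$ for every $x\in K\cap\dot X$ and, in addition, $*_Y\in U$ whenever $K\cap X'\neq\emptyset$; the latter clause does not depend on $f$. Hence $[K;U]$ is either empty or equal to $\bigcap_{x\in K\cap\dot X}[\{x\};U]$. Because $\dot X$ is countable the set $K\cap\dot X$ is countable, so this is a countable intersection of $\tau_p$-open sets and therefore lies in $\mathcal B_p$. Since a cosmic space is hereditarily Lindel\"of, every $\tau_k$-open set is a countable union of finite intersections of such sets $[K;U]$, and so belongs to $\mathcal B_p$; as $\tau_p\subseteq\tau_k$ gives $\mathcal B_p\subseteq\mathcal B_k$ at once, I obtain $\mathcal B_k=\mathcal B_p$.

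It then remains to produce a continuous surjection onto $C_k'(X,Y)$ from a Polish space. Using regularity I would fix a countable network $\mathcal N$ for $(Z,\tau_k)$ consisting of $\tau_k$-closed sets, so that each $N\in\mathcal N$ lies in $\mathcal B_k=\mathcal B_p$. Since $C_p'(X,Y)$ is analytic, choose a continuous surjection $g\colon P\to(Z,\tau_p)$ with $P$ Polish; then each $g^{-1}(N)$ is Borel in $P$. By the standard refinement of a Polish topology by countably many Borel sets (see \cite[\S13]{Ke}) there is a finer Polish topology $\sigma$ on $P$, with the same Borel sets, in which every $g^{-1}(N)$ is clopen. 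For this topology $g\colon(P,\sigma)\to(Z,\tau_k)$ is continuous, since any $\tau_k$-open $W$ equals $\bigcup\{N\in\mathcal N:N\subseteq W\}$ and hence $g^{-1}(W)$ is a countable union of $\sigma$-clopen sets; as $g$ is still onto, $C_k'(X,Y)=(Z,\tau_k)$ is a continuous image of the Polish space $(P,\sigma)$, i.e.\ analytic. I expect the Borel-coincidence step to be the main obstacle: a priori $[K;U]$ is an \emph{uncountable} intersection of pointwise-open sets and need not be pointwise-Borel, and the decisive input is that cosmic-ness collapses it to a countable intersection through the countability of $\dot X$. Once the two Borel structures agree, the transfer of analyticity from the coarser to the finer topology is a routine change-of-topology argument.
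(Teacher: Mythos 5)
Your proposal is correct, and its core coincides with the paper's own argument: the implication $(1)\Rightarrow(2)$ is handled identically (analytic $\Rightarrow$ countable network, regularity inherited from $Y$, and $C_p'(X,Y)$ as a continuous image of $C_k'(X,Y)$), and your key step --- using Lemma~\ref{l:spread} to get $|\dot X|\le\w$, writing each nonempty $[K;U]$ as the countable intersection $\bigcap_{x\in K\cap\dot X}[\{x\};U]$, and invoking hereditary Lindel\"ofness of the cosmic topology --- is exactly the paper's proof that the identity map $C_p'(X,Y)\to C_k'(X,Y)$ is Borel. Where you diverge is the final step: the paper at this point simply cites the Banakh--Ravsky result that a cosmic space is analytic if and only if it is a Borel image of a Polish space, and concludes that $C_k'(X,Y)$, being a Borel image of the analytic space $C_p'(X,Y)$, is analytic. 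You instead unpack that citation: taking a countable network of $\tau_k$-closed sets (which exists by regularity, and whose members are Borel in both topologies by your Borel-coincidence step), you refine the Polish topology on the parametrizing space via the standard Kechris change-of-topology theorem so that the continuous surjection onto $(Z,\tau_p)$ becomes continuous onto $(Z,\tau_k)$. This is a correct, self-contained reproof of the one direction of the cited characterization that is actually needed; the trade-off is that the paper's version is shorter and delegates the descriptive-set-theoretic work to an external reference, while yours makes the only genuinely nontrivial external input the classical fact that countably many Borel sets can be made clopen in a finer Polish topology with the same Borel structure.
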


\begin{proof} If $Y=\{*_Y\}$, then both spaces $C_k'(X,Y)$ and $C_p'(X,Y)$ are singletons and the conditions $(1),(2)$ are satisfied and hence are equivalent.
So, we assume that $Y\ne\{*_Y\}$. The regularity of the space $Y$ implies the regularity of the spaces $C_p(X,Y)\subset Y^X$ and $C_k(X,Y)$, see \cite[3.4.13]{Eng}.
\smallskip

$(1)\Ra(2)$ If the space $C_k'(X,Y)$ is analytic, then it is cosmic, being a continuous image of some Polish space, see \cite[4.9]{Gru}. The space $C_p'(X,Y)$ is analytic, being a continuous image of the analytic space $C_k'(X,Y)$.
\smallskip

$(2)\Ra(1)$ Now assume that the space $C_k'(X,Y)$ is cosmic and the space $C_p'(X,Y)$ is analytic. Observe that the identity map $C_k'(X,Y)\to C_p'(X,Y)$ is continuous. We claim that the  identity map $C_p'(X,Y)\to C_k'(X,Y)$ is Borel. Since the space $C_k'(X,Y)$ is hereditarily Lindel\"of (being cosmic), it suffices to check that for any compact set $K\subset X$ and any open set the subbasic open set $[K;U]\subset C_k(X,Y)$ is Borel in $C_p'(X,Y)$. This is trivially true if $[K;U]$ is empty. So we assume that $[K;U]\ne\emptyset$.

The space $C_k'(X,Y)$, being cosmic, has countable spread. Applying Lemma~\ref{l:spread}, we conclude that the set $\dot X$ is at most countable.

If $K\cap X'=\emptyset$, then $K$ is finite and the set $[K;U]$ is open in $C_p'(X,Y)$. If $K\cap X'\ne\emptyset$, then $[K;U]\ne\emptyset$ implies $*_Y\in U$ and $$[K;U]=\bigcap_{x\in \dot X\cap K}[\{x\};U]$$ is a $G_\delta$-set in $C_p'(X,Y)$.

Therefore, the identity map $C_p'(X,Y)\to C'_k(X,Y)$ is Borel. By \cite{Banal}, a cosmic space is analytic if and only if it is a Borel image of a Polish space. This characterization implies that the cosmic space $C_k'(X,Y)$ is analytic, being a Borel image of the analytic space $C_p'(X,Y)$.
\end{proof}

\section{The interplay between the function spaces $C_k'(X,Y)$ and $C_k'(X/X',Y)$}\label{s:ccm}

Given a non-discrete topological space $X$, by $X/X'$ we denote the quotient topological space (with the set $X'$, collapsed to the point $\{X'\}\in X/X':=(X\setminus X')\cup\{X'\}$) and observe that $X/X'$ is a Hausdorff space with a unique non-isolated point $\{X'\}$. So, $(X/X')'$ is a singleton.

For every pointed topological space $(Y,*_Y)$ the quotient map $q:X\to X/X'$ induces a continuous bijective map
$$q^*:C_k'(X/X',Y)\to C_k'(X',Y),\;\;q^*:f\mapsto f\circ q.$$
This map is a homeomorphism if the quotient map $q$ is {\em compact-covering} in the sense that any compact set $K\subset X/X'$ is contained in the image $q(C)$ of some compact set $C\subset X$. In this case the investigation of the function space $C_k'(X,Y)$ can be reduced to studying the function space $C_k'(X/X',Y)$ over the space $X/X'$ with a unique non-isolated point.

However in many natural situations the quotient map $q:X\to X/X'$ is not compact-covering. In particular, it is not compact-covering for the space $X=\w_1$ of all countable ordinals endowed with the order topology. The reason is that the space $\w_1$ is pseudocompact but not compact. We recall that a Tychonoff space $X$ is {\em pseudocompact} if each continuous real-valued function on $X$ is bounded.

\begin{example} If $X$ is a non-compact pseudocompact space $X$ with dense set $\dot X$ of isolated points, then
\begin{enumerate}
\item the space $X$ does not has $\DMOP$;
\item the space $X/X'$ is compact;
\item the function space $C_k'(X,2)$ is meager but $C_k'(X/X',2)$ is discrete;
\item the quotient map $q:X\to X/X'$ is not compact-covering.
\end{enumerate}
\end{example}

\begin{proof} 1. To see that $X$ does not have $\DMOP$, observe that the family of singletons $\big\{\{x\}:x\in\dot X\big\}$ is moving off, but contains no infinite discrete subfamily (otherwise $X$ would admit an unbounded real-valued continuous function).
\smallskip

2. Since the pseudocompactness is preserved by continuous images, the quotient space $Q=X/X'$ is pseudocompact. Since $Q$ has only one non-isolated point, it is compact. Indeed, given any open cover $\U$ of $Q$, we can find an open set $U\in\U$ containing the unique non-isolated point $q$ of the space $X$. We claim that the complement $Q\setminus U$ is finite. Otherwise, we could find a sequence $(x_n)_{n\in\w}$ of pairwise distinct points in $Q\setminus U$ and define a continuous unbounded function $f:Z\to\IR$ by the formula
$$
f(x)=\begin{cases}n,&\mbox{if $x=x_n$ for some $n\in\IN$};\\
0,&\mbox{otherwise}.
\end{cases}
$$But the existence of such unbounded continuous function $f$ contradicts the pseudocompactness of the space $Q=q(X)$.
\smallskip

3. Since the space $X$ does not have $\DMOP$, the function space $C_k'(X,2)$ is meager by Theorem~\ref{t:B}. Since the space $X/X'$ is compact, the function space $C_k'(X/X',2)$ is discrete.
\smallskip

4. Assuming that the quotient map $q:X\to X/X'=Q$ is compact-covering, we could find a compact set $K\subset X$ such that $q(K)=Q$. Then $K$ contains all isolated points  of the space. Since the set $\dot X$ is dense in $X$, we conclude that $X\subset\overline{K}=K$ and the space $X$ is compact, which contradicts our assumptions.
\end{proof}

Now we shall characterize topological spaces $X$ for which the quotient map $q:X\to X/X'$ is compact-covering.

A topological space $X$ is called a {\em $\mu$-space} if each closed bounded subset of $X$ is compact. A subset $B$ of a topological space $X$ is {\em bounded} if for any  continuous  function $f:X\to\IR$
the image $f(B)$ is a bounded subset of the real line.  It is known \cite[6.9.7]{AT} that the class of $\mu$-spaces includes all Diedounn\'e-complete spaces, and consequently, all paracompact spaces \cite[8.5.13]{Eng} and all submetrizable spaces \cite[6.10.8]{AT}.

\begin{definition} A topological space $X$ is called a {\em $\dot\mu$-space} if for any subset $A\subset\dot X$ the following conditions are equivalent:
\begin{itemize}
\item $A$ is contained in a compact subset of $X$;
\item for any closed subset $D\subset\dot X$ of $X$ the intersection $A\cap D$ is finite.
\end{itemize}
\end{definition}

\begin{theorem}\label{t:q} Let $Y$ be a pointed topological space whose distinguished point $*_Y$ has an open neighborhood $U_*\ne Y$. For a  non-discrete topological space $X$ the following conditions are equivalent:
\begin{enumerate}
\item[\textup{(1)}] $X$ is a $\dot\mu$-space;
\item[\textup{(2)}]  the quotient map $q:X\to X/X'$ is compact-covering;
\item[\textup{(3)}] the map $q^*:C_k'(X/X',Y)\to C_k'(X,Y)$, $q^*:f\mapsto f\circ q$, is a homeomorphism.
\end{enumerate}
The conditions \textup{(1)--(3)} follow from the condition
\begin{itemize}
\item[\textup{(4)}] $X$ is a $\mu$-space.
\end{itemize}
\end{theorem}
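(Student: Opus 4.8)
The plan is to route everything through a single dictionary translating the (rather rigid) topology of the quotient $X/X'$ into combinatorics of $\dot X$. Since $\{X'\}$ is the only non-isolated point of $X/X'$, a subset of $X/X'$ containing $\{X'\}$ is compact if and only if, for every open neighbourhood $V$ of $\{X'\}$, all but finitely many of its isolated points lie in $V$; and the open neighbourhoods $V$ of $\{X'\}$ correspond bijectively to the closed sets $D=\dot X\setminus V\subseteq\dot X$ of $X$. This yields the key equivalence I will use repeatedly: for $A\subseteq\dot X$, the set $A\cup\{X'\}$ is compact in $X/X'$ if and only if $A\cap D$ is finite for every closed $D\subseteq\dot X$ of $X$. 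I will also record that every compact subset of $X/X'$ is either a finite subset of $\dot X$ or of the form $A\cup\{X'\}$ with $A=K'\cap\dot X$, and that $q(C)\cap\dot X=C\cap\dot X$ for every $C\subseteq X$ (because $q$ is the identity on $\dot X$ and sends $X'$ to $\{X'\}$).

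With this dictionary, $(1)\Leftrightarrow(2)$ is almost formal. The implication ``$A$ lies in a compact set $\Rightarrow$ $A\cap D$ is finite for every closed $D\subseteq\dot X$'' holds in every space, since $C\cap D$ is a closed discrete subset of a compact set and hence finite. For the converse I take a compact $K'\subseteq X/X'$; if $\{X'\}\notin K'$ it is a finite subset of $\dot X$ and is trivially covered, while if $\{X'\}\in K'$ then $A:=K'\cap\dot X$ satisfies the finite-intersection hypothesis, so the $\dot\mu$-property produces a compact $C\subseteq X$ with $A\subseteq C$; enlarging $C$ by one point of $X'$ (available since $X$ is non-discrete) yields $q(C)\supseteq K'$. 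The same bookkeeping run backwards proves $(2)\Rightarrow(1)$, using $A\subseteq q(C)\cap\dot X=C\cap\dot X\subseteq C$.

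For $(2)\Leftrightarrow(3)$ I start from the fact (noted before the theorem) that $q^*$ is a continuous bijection with $(q^*)^{-1}([K;U])=[q(K);U]$, so the only issue is the openness of $q^*$. I will compute, for compact $K'\subseteq X/X'$ and open $U\subseteq Y$, that $q^*([K';U])=\{g\in C_k'(X,Y):g(q^{-1}(K'))\subseteq U\}$; when $\{X'\}\in K'$ and $*_Y\in U$ the constraint on $X'$ is automatic (every $g$ sends $X'$ to $*_Y\in U$), so this set equals $\{g:g(A)\subseteq U\}$ with $A=K'\cap\dot X$. Here $(2)$ enters: choosing a compact $C\subseteq X$ with $q(C)\supseteq K'$ (so $A\subseteq C$), the closure $\overline{A}^{\,C}$ of $A$ in $C$ is a compact subset of $X$ meeting $\dot X$ exactly in $A$ and lying in $X'$ otherwise; hence $g(\overline{A}^{\,C})\subseteq U\Leftrightarrow g(A)\subseteq U$ and $q^*([K';U])=[\overline{A}^{\,C};U]\cap C_k'(X,Y)$ is genuinely open. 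The remaining cases ($\{X'\}\notin K'$, or $*_Y\notin U$ which forces $[K';U]=\emptyset$) are immediate. Conversely, assuming $(2)$ fails, the dictionary produces an infinite $A\subseteq\dot X$ with $A\cup\{X'\}$ compact but $A$ contained in no compact subset of $X$, and I show that $q^*([A\cup\{X'\};U_*])$ fails to be a neighbourhood of the constant function $c\equiv*_Y$: given any basic neighbourhood $\bigcap_i[K_i;V_i]$ of $c$ I pick $a\in A\setminus\bigcup_iK_i$ and, exploiting that $a$ is isolated and that some $y\in Y\setminus U_*$ exists, bump $c$ to the value $y$ at $a$; this perturbation stays in $\bigcap_i[K_i;V_i]$ but leaves $q^*([A\cup\{X'\};U_*])$. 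Thus $q^*$ is not open, hence not a homeomorphism.

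Finally, for $(4)\Rightarrow(1)$ it suffices to show that any $A\subseteq\dot X$ satisfying the finite-intersection hypothesis is bounded in $X$: then its closure $\overline{A}$ is closed and bounded, hence compact because $X$ is a $\mu$-space, and $A\subseteq\overline{A}$ is contained in a compact set. To prove boundedness I argue by contradiction: if some continuous $f:X\to\IR$ has $f(A)$ unbounded, I extract points $a_n\in A$ with $f(a_{n+1})>f(a_n)+1$ and $f(a_n)\to+\infty$, and show $D=\{a_n:n\in\w\}$ is closed in $X$. The verification uses that each $a_n$ is isolated (so each singleton is clopen) together with the spacing of the values: any $x\notin D$ has a neighbourhood $f^{-1}\big((f(x)-\tfrac12,f(x)+\tfrac12)\big)$ meeting $D$ in at most one point $a_m$, which can then be excised using the clopen set $\{a_m\}$. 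Then $A\cap D=D$ is infinite, contradicting the hypothesis. I expect the main obstacle to lie in the $(2)\Rightarrow(3)$ step, where $q^{-1}(K')$ is typically non-compact and must be replaced by the honest compact witness $\overline{A}^{\,C}$; making this replacement reproduce the subbasic set $[K';U]$ exactly, while correctly using $*_Y\in U$, is the delicate point.
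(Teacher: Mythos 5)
Your proposal is correct and follows essentially the same route as the paper: the same combinatorial dictionary between compact subsets of $X/X'$ and subsets of $\dot X$ meeting every closed discrete subset of $X$ finitely (giving $(1)\Leftrightarrow(2)$), the same test-function device (bumping the constant function to a value $y\in Y\setminus U_*$ at a single isolated point) driving the equivalence with $(3)$, and the same unbounded-function argument for $(4)\Rightarrow(1)$. The only deviations are cosmetic: you prove $(3)\Rightarrow(2)$ contrapositively where the paper argues directly, and you supply the details of $(2)\Rightarrow(3)$ — replacing the non-compact $q^{-1}(K')$ by the compact witness $\overline{A}^{\,C}$ — which the paper dismisses as easy to see.
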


\begin{proof} $(1)\Ra(2)$ Assume that $X$ is a $\dot\mu$-space.
Given any compact subset $K\subset X/X'$, observe that the set $A=q^{-1}(K)\setminus X'$ has finite intersection $A\cap D$ with any closed subset $D\subset \dot X$ of $X$ (because $K$ is compact and $q(X\setminus D)$ is an open neighborhood of the unique non-isolated point of $X/X'$). Since $X$ is a $\dot\mu$-space, the set $A$ is contained in some compact subset $\tilde K$ of $X$. Replacing $\tilde K$ by a larger compact set, we can assume that $\tilde K\cap X'\ne\emptyset$. Then $q(\tilde K)$ is a compact subset of $X/X'$, containing $K$, which means that $q$ is compact-covering.
\smallskip

$(2)\Ra(1)$ Assume that the quotient map $q$ is compact-covering. To prove that $X$ is a $\dot\mu$-space, take any set $A\subset\dot X$ that has finite intersection $A\cap D$ with any closed subset $D\subset\dot X$ of $X$. The definition of the quotient topology on $X/X'$ ensures that the subset $q(A\cup X')\subset X/X'$ is compact. Since $q$ is compact-covering, there exists a compact set $K\subset X$ such that $q(A\cup X')\subset q(K)$ and hence $A\subset K$.
\smallskip

$(2)\Ra(3)$ It is easy to see (from the definition of the compact-open topology) that  the compact-covering property of the quotient map $q:X\to X/X'$ implies that the continuous bijective map $q^*:C_k'(X/X',Y)\to C_k'(X,Y)$ is a homeomorphism.
\smallskip

$(3)\Ra(2)$ Assuming that $q^*:C_k'(X/X',Y)\to C_k(X,Y)$ is a homeomorphism, we shall prove that the map $q:X\to X/X'$ is compact-covering. By our assumption, the distinguished point $*_Y$ of $Y$ has an open neighborhood $U_*$ that does not contain some point $y\in Y$. Given a compact subset $K\subset X/X'$, consider the open neighborhood $[K;U_*]$ of the constant function $X/X'\to\{*_Y\}\subset Y$. Since the map $q^*$ is open, the image $q^*([K;U_*])$ is an open neighborhood of the constant function $X\to\{*_Y\}\subset Y$. Consequently, there exists a compact set $\tilde K\subset X$ and a neighborhood $U\subset Y$ of $*_Y$ such that $[\tilde K;U]\subset q^*([K;U^*])$.
Replacing $\tilde K$ by a larger compact set, we can assume that $\tilde K\cap X'\ne\emptyset$.

We claim that $K\subset q(\tilde K)$. Given any point  $x\in K$, we should prove that $x\in q(\tilde K)$. If $x$ is not isolated in $X/X'$, then $x\in q(X')=q(X'\cap\tilde K)$. So we assume that $x$ is isolated in $X/X'$ and hence $x=q(\tilde x)$ for a unique isolated point $\tilde x\in \dot X$. We claim that $\tilde x\in\tilde K$. In the opposite case, we can consider the function $\chi:X\to \{*_Y,y\}$ defined by $\chi^{-1}(y)=\{\tilde x\}$, and observe that $\chi\in [\tilde K;U]\subset q^{*}([K;U_*])$, which implies that $y\in U_*$. But this contradicts the choice of the point $y$. This contradiction shows that $\tilde x\in\tilde K$ and hence $x=q(\tilde x)\in q(\tilde K)$ and the map $q$ is compact-covering.
\smallskip

$(4)\Ra(1)$ Finally, assuming that $X$ is a $\mu$-space, we prove that it is a $\dot\mu$-space. Fix any subset $A\subset\dot X$ that has finite intersection $A\cap D$ with any closed subset $D\subset\dot X$ of $X$. Assuming that $\bar A$ is not compact in the $\mu$-space $X$, we conclude that the set $A$ is not bounded. So  there exists a continuous function $f:X\to\IR$ such that the set $f(A)$ is unbounded in the real line. Then we can choose a sequence $\{a_n\}_{n\in\w}\subset A$ such that $|f(a_n)|>n$ for every $n\in\w$. The continuity of the function $f$ ensures that the sequence $(a_n)_{n\in\w}$ has no accumulation points in $X$, which means that the set $D=\{a_n\}_{n\in\w}$ is closed in $X$ and hence has a finite intersection $D\cap A=D$ with $A$, which is not true. This contradiction shows that $X$ is a $\dot\mu$-space.
\end{proof}

Now we recall the definitions of some familiar properties of topological spaces that correspond to the properties introduced in Definition~\ref{d:kappa}.

\begin{definition}\label{d:k} A topological space $X$ is called
\begin{itemize}
\item a {\em $k$-space} if a set $F\subset X$ is closed if for any compact set $K\subset X$ the intersection $F\cap K$ is closed in $K$;
\item a {\em $k_\w$-space} if there exists a countable family $\K$ of compact sets in $X$ such that a set $F\subset X$ is closed if for any $K\in\K$ the intersection $F\cap K$ is closed in $K$;
\item {\em hemicompact} if there exists a countable family $\K$ of compact sets such that each compact subset of $X$ is contained in some set $K\in\K$.
\end{itemize}
\end{definition}

Comparing Definitions~\ref{d:kappa} and \ref{d:k}, we can see that a topological space with a unique non-isolated point $X$ is a $\dot\kappa$-space (resp. a $\dot\kappa_\w$-space, a hemi-$\dot\kappa_\w$-space) if and only if $X$ is a $k$-space (resp. a $k_\w$-space, a hemicompact space). Using these equivalences, we can prove the following characterization.

\begin{proposition}\label{p:eq} A $\dot\mu$-space $X$
\begin{enumerate}
\item[\textup{(1)}] has $\DMOP$ iff $X/X'$ has $\DMOP$
     iff $X/X'$ has $\MOP$;
\item[\textup{(2)}] has $\WDMOP$ iff $X/X'$ has $\WDMOP$;
\item[\textup{(3)}] is a $\dot\kappa$-space iff $X/X'$ is a $k$-space;
\item[\textup{(4)}] is a hemi-$\dot\kappa_\w$-space iff $X/X'$ is hemicompact;
\item[\textup{(5)}] is a $\dot\kappa_\w$-space iff $X/X'$ is a $k_\w$-space;
\item[\textup{(6)}] has a countable $\dot\kappa$-network iff $X$ has a countable $k$-network.
\end{enumerate}
\end{proposition}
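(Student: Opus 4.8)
The plan is to exploit the single fact that, $X$ being a $\dot\mu$-space, Theorem~\ref{t:q} makes the quotient map $q\colon X\to Q:=X/X'$ compact-covering, while $q$ restricts to a bijection of $\dot X$ onto the set $\dot Q=Q\setminus\{q_0\}$ of isolated points of $Q$ (here $q_0=\{X'\}$ is the unique non-isolated point of $Q$, and $X'\neq\varnothing$ as $X$ is non-discrete). All six properties speak only about isolated points, compact sets, and closed subsets of the isolated part, so I would first record two transfer facts and then feed them into each clause. \emph{Fact (i):} for $A\subset\dot X$ one has $q^{-1}(q(A))=A$, whence $A$ is closed in $X$ iff $q(A)$ is closed in $Q$. \emph{Fact (ii):} since $q$ is injective on $\dot X$ and compact-covering, $A\subset\dot X$ lies in a compact subset of $X$ iff $q(A)$ lies in a compact subset of $Q$ (push forward by $q$ one way; cover $q(A)$ by $q(C)$ with $C$ compact and use that an isolated point of $Q$ has a unique $q$-preimage the other way). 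Two elementary remarks complete the toolkit: a discrete family of subsets of $\dot X$ is automatically pairwise disjoint, and a disjoint family $\{F_n\}$ of finite subsets of $\dot X$ is discrete in $X$ iff $\overline{\bigcup_nF_n}\cap X'=\varnothing$, i.e. (by Fact (i)) iff $\{q(F_n)\}$ is discrete in $Q$; moreover such a discrete family is strongly discrete, since each $F_n$ is clopen. Finally, since $Q$ has a unique non-isolated point, the paragraph preceding the proposition identifies the $\dot\kappa$-, $\dot\kappa_\w$- and hemi-$\dot\kappa_\w$-properties of $Q$ with $Q$ being a $k$-space, a $k_\w$-space, and hemicompact, respectively.

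With this in hand, clauses (3), (4), (5) reduce to showing that each ``dotted'' property is invariant under $q$. For (3), given a non-closed $D\subset\dot X$ I transport it to the non-closed set $q(D)\subset\dot Q$ (Fact (i)) and pull a witnessing compact set back through compact-coveredness (Fact (ii)); the converse is symmetric, so $X$ is a $\dot\kappa$-space iff $Q$ is, i.e. iff $Q$ is a $k$-space. For (4) I transport a witnessing sequence $(K_n)$ of compacta to $(q(K_n))$ (arranging $K_n\cap X'\neq\varnothing$ so that $q_0\in q(K_n)$) and back via compact-coverings $L_n\subset q(C_n)$, using Fact (ii) to match ``$K\cap\dot X\subset K_n$'' with ``$q(K)\subset q(K_n)$''; this gives hemi-$\dot\kappa_\w$ for $X$ iff hemicompactness for $Q$. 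Clause (5) is the conjunction of the mechanisms of (3) and (4): I transport a $\dot\kappa_\w$-witness together with the covering condition $\dot X\subset\bigcup_nK_n$, using Facts (i) and (ii), to obtain the equivalence with $Q$ being a $k_\w$-space.

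Clauses (1) and (2) I would run through moving-off families and the game $\GKF$. Using that $q$ is a bijection on isolated points and compact-covering, a moving-off family of finite subsets of $\dot X$ corresponds to one of $\dot Q$ (apply the moving-off condition against $q(K)$, resp. against a compact cover of $L$), and by the discreteness transfer above an infinite discrete subfamily on one side yields one on the other; hence $X$ has $\DMOP$ iff $Q$ has $\DMOP$. The same correspondence lets me push a winning strategy of player $\mathsf K$ in $\GKF(X)$ to one in $\GKF(Q)$ and conversely (translate $\mathsf F$'s finite moves by $q$, translate $\mathsf K$'s compact answers through compact-coverings, and read off the discreteness of the outcome via Fact (i)), giving $\WDMOP(X)\Leftrightarrow\WDMOP(Q)$ straight from the definition. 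It remains to prove $\DMOP(Q)\Leftrightarrow\MOP(Q)$ for the one-non-isolated-point space $Q$: the implication $\MOP\Rightarrow\DMOP$ is general, and for the converse, given a moving-off family $\F$ of compact sets I pass to the subfamily $\F_0=\{F\in\F: q_0\notin F\}$ of finite subsets of $\dot Q$ (still moving off, since it can be tested against $K\cup\{q_0\}$), apply $\DMOP(Q)$ to extract an infinite discrete subfamily, and note that it is automatically strongly discrete.

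For (6), the implication ``$k$-network $\Rightarrow$ $\dot\kappa$-network'' is immediate: given compact $K$ and closed $D\subset\dot X\setminus K$, apply the $k$-network to $K$ and the open set $U=X\setminus D\supset K$ and discard the non-isolated part. The reverse implication is the main obstacle of the whole proposition. Here I would again pass to $Q$ and try to upgrade a countable $\dot\kappa$-network to a countable $k$-network: a $\dot\kappa$-network already handles the isolated part of every compact set (it forces all singletons of $\dot Q$ into the network and makes $\dot Q$ countable), so the only missing data is a countable local network at the single collapsed point $q_0$. Producing such a local network from the $\dot\mu$-structure is exactly the delicate point, and I expect this to be where the genuine work (and any hypotheses beyond bare $\dot\mu$-ness) must be invested; I would isolate it as a separate lemma about spaces with a unique non-isolated point and only then assemble the equivalence with the help of Facts (i) and (ii).
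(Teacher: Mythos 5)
Your treatment of clauses (1)--(5) is sound, but clause (6) is a genuine gap, and you say so yourself: the passage from a countable $\dot\kappa$-network to a countable $k$-network of $X/X'$ is deferred to an unproved lemma, with the suggestion that hypotheses beyond $\dot\mu$-ness might be needed. No extra hypotheses are needed; what is missing is the paper's function-space mechanism. The paper derives (6) exactly as it derives (2), (4), (5): by Theorem~\ref{t:N}, $X$ has a countable $\dot\kappa$-network iff $C_k'(X,2)$ has a countable network; by Theorem~\ref{t:q} (this is where $\dot\mu$-ness enters) the space $C_k'(X,2)$ is homeomorphic to $C_k'(X/X',2)$; and applying Theorem~\ref{t:N} to $Q:=X/X'$ converts this back into a countable $\dot\kappa$-network for $Q$. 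Moreover, a countable network for $C_k'(X,2)$ gives countable spread, so $\dot X$ (equivalently $\dot Q$) is countable by Lemma~\ref{l:spread}. Note also that your division of labour inside (6) is inverted. The ``countable local network at $q_0$'' that you single out as the delicate point is in fact immediate once $\dot Q$ is known to be countable: the open neighbourhoods of the unique non-isolated point $q_0$ of $Q$ are exactly the complements of the closed subsets of $\dot Q$, which is precisely the data a $\dot\kappa$-network controls, so the finite subsets of $\dot Q$ together with the sets $(N\cap\dot Q)\cup\{q_0\}$, $N\in\mathcal N$, already form a countable $k$-network. The genuinely nontrivial point is the claim you dispose of in a parenthesis, namely that a countable $\dot\kappa$-network forces $\dot Q$ to be countable (equivalently, forces all singletons of $\dot Q$ into the network). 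This does not follow formally from the definition: for $x\in\dot Q$ the set $\dot Q\setminus\{x\}$ need not be closed in $Q$, so the definition never hands you $\{x\}$ as a network element; this is exactly the step the paper obtains through the function spaces (Theorem~\ref{t:N} combined with Lemma~\ref{l:spread}, cf.\ also Lemma~\ref{l:N2}) rather than by direct combinatorics on $X$.

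For the rest, your route is different from, and more elementary than, the paper's. Except in clause (3), where the paper argues directly with the compact-covering map $q$ much as you do, the paper deduces everything from the homeomorphism $C_k'(X,Y)\cong C_k'(X/X',Y)$ of Theorem~\ref{t:q} combined with the characterizations of $\DMOP$, $\WDMOP$, hemi-$\dot\kappa_\w$- and $\dot\kappa_\w$-spaces via Baireness, Choquetness, metrizability and complete metrizability of $C_k'(X,2)$ (Theorems~\ref{t:B}, \ref{t:C}, \ref{t:M}, \ref{t:CM}), together with the remark that for a space with a unique non-isolated point the dotted properties become $\MOP$, $k$-space, hemicompactness and $k_\w$. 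Your direct transfers of moving-off families and of strategies in $\GKF$ through $q$ (Facts (i) and (ii)) prove the same equivalences with no function-space input, and your reading of (6) as a statement about $X/X'$ rather than the literal ``$X$'' in the text agrees with how the paper itself uses the clause in the corollary following it. So (1)--(5) would stand as an alternative proof; (6) would not.
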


\begin{proof}
Notice that a topological space with unique non-isolated point has $\DMOP$ if and only if it has $\MOP$, then (1) is follows from Theorems~\ref{t:B}. The implications (2),(4),(5) and (6) follow directly from Theorems~\ref{t:C}, \ref{t:M}, \ref{t:CM} and \ref{t:N}, respectively.

(3) Suppose $X$ is a $\dot\kappa$-space and $D\subset\dot X$ such that for any compact subset $K\subset X/X'$, the intersection $D\cap K$ is finite. Then for any compact subset $\tilde K\subset X$,  the intersection $D\cap q(\tilde K)$ is finite, which implies that $D\cap \tilde K$ is finite, so $D$ is closed in $X$ and hence closed in $X/X'$. Consequently, $X/X'$ is $\dot\kappa$-space.
On the other hand, suppose $D\subset \dot X$ such that for any compact subset $\tilde K\subset X$, the intersection $D\cap \tilde K$ is finite. Since $q$ is compact-covering, for every compact subset $K\subset X/X'$ there exists a compact subset
$\tilde K\subset X$ such that $K\subset q(\tilde K)$. So $D\cap K\subset D\cap q(\tilde K)$ is finite, and $D$
is closed in $X/X'$. It follows that $D$ is closed in $X$ by the definition of quotient map, and hence $X$ is $\dot\kappa$-space.
\end{proof}

Proposition~\ref{p:eq}(6) allows us to give a partial answer to Problem~\ref{prob:N}.

\begin{corollary} For any $\dot\mu$-space $X$ with a countable $\dot\kappa$-network $X$ and any regular space $Y$ with a countable $k$-network, the function space $C_k'(X,Y)$ has a countable $k$-network.
\end{corollary}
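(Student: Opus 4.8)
The plan is to pass to a space with a single non-isolated point and then to manufacture a countable $cs^*$-network, which will be a $k$-network by Lemma~\ref{l:cs=>k}. If $Y=\{*_Y\}$ then $C_k'(X,Y)$ is a single point and there is nothing to prove, so assume $Y\neq\{*_Y\}$. As $Y$ is regular with more than one point, its distinguished point has an open neighborhood $U_*\neq Y$, so Theorem~\ref{t:q} is available; since $X$ is a $\dot\mu$-space, it gives that $q^*\colon C_k'(X/X',Y)\to C_k'(X,Y)$ is a homeomorphism. Thus it suffices to prove the statement for $Q:=X/X'$, which has a unique non-isolated point. The compact-covering quotient map $q$ transports the countable $\dot\kappa$-network of $X$ to one of $Q$ (equivalently, by Proposition~\ref{p:eq}(6), to a countable $k$-network $\mathcal N_Q$ of $Q$), and in particular $\dot Q$ is countable.

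Fix a countable $k$-network $\mathcal N_Y$ of $Y$, closed under finite unions, and the base of Lemma~\ref{l:base}. Following the template of Lemma~\ref{l:N2}, I would take the countable family of sets of the form $[N;P\mid F;u']$ with $N\in\mathcal N_Q$, $F\subseteq\dot Q$ finite, and $P,u'(x)\in\mathcal N_Y$ (with $*_Y\in P$), now built from the $k$-network $\mathcal N_Y$ rather than from a base, and try to show that it is a countable $cs^*$-network for $C_k'(Q,Y)$. Given a sequence $f_j\to f_\infty$ inside an open set $W\ni f_\infty$, I would choose by Lemma~\ref{l:base} a quadruple with $f_\infty\in[K;U\mid F;u]\subseteq W$ that eventually contains the $f_j$. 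The decisive compactness input is that, $K$ being compact, evaluation $C_k(K,Y)\times K\to Y$ is jointly continuous, so $\overline{\bigcup_j f_j(K)\cup f_\infty(K)}$ is compact in $Y$; moreover $f_\infty(K\cap\dot Q)\cup\{*_Y\}$ is compact and clusters only at $*_Y$ because $Q$ has a single non-isolated point. Such a relatively compact image can be enclosed in a single $P\in\mathcal N_Y$ with $*_Y\in P\subseteq U$, while the exceptional finitely many points where $f_\infty$ lies far from $*_Y$ are absorbed into $F$ and the functions $u'$.

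The hard part is precisely the passage from a countable base (as in Lemma~\ref{l:N2}) to a mere countable $k$-network of the target: because $Y$ has no countable base, the open set $U$ cannot be used verbatim in a compact-open constraint, and one is forced to replace it by a member $P$ of $\mathcal N_Y$, which only captures relatively compact images. The controlling set $N\supseteq K\cap\dot Q$ supplied by $\mathcal N_Q$ may be strictly larger than $K\cap\dot Q$ and need not be relatively compact — indeed $Q$ need not be hemicompact, so no relatively compact $\dot\kappa$-network is available — and a priori $f_\infty(N)$ then escapes every single member of $\mathcal N_Y$. I expect this to be the main obstacle, and I would attack it by enlarging the closed-discrete set that $N$ is required to avoid: since $\{x\in\dot Q:f_\infty(x)\notin V\}$ is closed-discrete for every neighborhood $V$ of $*_Y$, the $\dot\mu$-property together with continuity of $f_\infty$ at the unique non-isolated point should let me select $N\supseteq K\cap\dot Q$ on which $f_\infty$, and cofinitely many $f_j$, take values clustering only at $*_Y$, rendering $f_\infty(N)$ relatively compact and hence enclosable in one $P\in\mathcal N_Y$.

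Once such an $N$ is secured, the inclusions $f_\infty\in[N;P\mid F;u']\subseteq[K;U\mid F;u]\subseteq W$ and the membership of infinitely many $f_j$ in $[N;P\mid F;u']$ follow exactly as in Lemma~\ref{l:N2}, so the displayed family is a countable $cs^*$-network for $C_k'(Q,Y)$; Lemma~\ref{l:cs=>k} then upgrades it to the required countable $k$-network, which by the homeomorphism $q^*$ transfers back to $C_k'(X,Y)$. Reconciling the combinatorics of the $k$-network of $Q$ with the non-metrizable target $Y$ over a possibly non-hemicompact domain — so that all controlled images stay relatively compact — is the step I anticipate to be the most delicate, and the one deserving the most care in a full write-up.
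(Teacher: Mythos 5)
Your reduction is exactly the paper's: since $Y$ is regular and has more than one point, Theorem~\ref{t:q} applies to the $\dot\mu$-space $X$ and identifies $C_k'(X,Y)$ with $C_k'(Q,Y)$ for $Q=X/X'$, and Proposition~\ref{p:eq}(6) gives $Q$ a countable $k$-network; $\dot Q$ is countable, as you say. The divergence is in the second half: the paper does not rebuild a network by hand, it invokes Michael's theorem \cite{Michael} (see \cite[11.5]{Gru}) that $C_k(Q,Y)$ has a countable $k$-network whenever $Q$ and $Y$ are regular with countable $k$-networks ($Q$ is regular, being Hausdorff with a unique non-isolated point), and then restricts that network to the subspace $C_k'(Q,Y)$. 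You instead try to reprove this case of Michael's theorem by running the construction of Lemma~\ref{l:N2} with the countable base of $Y$ replaced by a countable $k$-network $\mathcal N_Y$, and precisely at the step you yourself flag as ``the hard part'' the argument does not close.

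The gap, concretely: you must produce, from countable families fixed in advance, a set $N\supseteq K\cap\dot Q$ and a single $P\in\mathcal N_Y$ with $f_\infty(N)\subset P\subset U$, and with $f_j(N)\subset P$ for infinitely many $j$. Since members of a $k$-network only capture \emph{compact} sets sitting inside open sets, this forces you to make $f_\infty(N)$ relatively compact inside $U$; your proposed mechanism --- avoid one enlarged closed-discrete set so that $f_\infty(N)$ ``clusters only at $*_Y$'' --- achieves neither this nor enough. First, clustering only at $*_Y$ is strictly weaker than relative compactness: in any non-compact space with a unique non-isolated point (the metric fan, say) the set of isolated points clusters only at that point yet is not relatively compact, and a non-relatively-compact subset of $U$ need not be contained in any member of $\mathcal N_Y$ at all (if every subset of every open set were caught this way, $Y$ would be second countable and you would be back in Lemma~\ref{l:N2}). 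Second, to get $f_\infty(N)\cup\{*_Y\}$ compact you need $f_\infty(N)\setminus V$ finite for \emph{every} neighborhood $V$ of $*_Y$; each $V$ contributes one closed discrete set $D_V=\{x\in\dot Q:f_\infty(x)\notin V\}$, but a single application of the $\dot\kappa$- or $k$-network property of $Q$ lets $N$ avoid only finitely many closed sets, and since $Y$ is merely regular with a countable $k$-network it need not be first countable at $*_Y$ (e.g.\ the sequential fan $S_\omega$), so the family $\{D_V\}$ cannot be reduced to countably many constraints, let alone finitely many; and the same uniform control would then be needed simultaneously for infinitely many $f_j$. So the construction as sketched fails, and no diagonalization over neighborhoods of $*_Y$ is available to repair it. The honest fix is to do what the paper does: quote Michael's theorem for $C_k(Q,Y)$ and use that countable $k$-networks are inherited by subspaces, then transfer back along $q^*$.
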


\begin{proof} By Theorem~\ref{t:q}, the quotient map $q:X\to X/X'$ is compact-covering and function spaces $C_k'(X,Y)$ and $C_k'(X/X',Y)$ are homeomorphic.
By Proposition~\ref{p:eq}(6), the space $X/X'$ has a countable $k$-network. Being a Hausdorff space with a unique non-isolated point, the space $X/X'$ is regular. By a result of Michael \cite{Michael} (see also \cite[11.5]{Gru}), the function space $C_k(X/X',Y)$ has a countable $k$-network. Then its subspace $C_k'(X,Y)$ has a countable $k$-network, too.
\end{proof}

\section{Characterizing stratifiable scattered spaces of finite scattered height}\label{s:ss}

In this section we characterize stratifiable spaces among scattered space of finite scattered height.

A subset $A$ of a topological space $X$ is called
\begin{itemize}
\item a {\em retract} of $X$ if there exists a continuous map $r:X\to A$ such that $r(a)=a$ for all $x\in A$ (this map $r$ is called a {\em retraction} of $X$ onto $A$);
\item a {\em $G_\delta$-retract} of $X$, if $A$ is a retract in $X$ and $A$ is a $G_\delta$-subset of $X$.
\end{itemize}

\begin{theorem}\label{t:s'} A non-discrete topological space $X$ is stratifiable if and only if the set $X'$ of non-isolated points of $X$ is a stratifiable space and $X'$ is a $G_\delta$-retract of $X$.
\end{theorem}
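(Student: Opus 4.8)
The plan is to prove the two implications separately, in each case reading off everything from the neighborhood-assignment form of stratifiability used in the statement. For \emph{necessity}, assume $X$ is stratifiable with assignment $(U_n(x))_{n\in\w}$. Since $\dot X$ is open, $X'$ is closed, and because stratifiability is hereditary (see \cite[\S5]{Grue}) the subspace $X'$ is stratifiable. The identity $X'=\bigcap_{n\in\w}U_n[X']$ holds: one inclusion is $p\in U_n(p)\subseteq U_n[X']$, and the other is $\bigcap_n U_n[X']\subseteq\bigcap_n\overline{U_n[X']}=X'$; since each $U_n[X']$ is open, this exhibits $X'$ as a $G_\delta$-set. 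The substantial step is to construct a retraction $r\colon X\to X'$. For an isolated point $x\in U_0[X']$ set $N(x)=\max\{n\in\w:x\in U_n[X']\}$, which is finite because $x$ is isolated (so $x\in\overline A\iff x\in A$) and $x\notin X'=\bigcap_n\overline{U_n[X']}$; choose $\rho(x)\in X'$ with $x\in U_{N(x)}(\rho(x))$. Define $r$ to equal $\rho$ on these points, a fixed point $*\in X'$ on the remaining isolated points, and the identity on $X'$.

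The continuity of $r$ is automatic at each isolated point, so I would fix $p\in X'$ and an open $V_0\ni p$ in $X$, and produce a neighborhood $W$ of $p$ with $r(W)\subseteq V_0\cap X'$. Since $p\notin X\setminus V_0=\bigcap_n\overline{U_n[X\setminus V_0]}$, I pick $m$ with $p\notin\overline{U_m[X\setminus V_0]}$ and put
$$W=\big(X\setminus\overline{U_m[X\setminus V_0]}\big)\cap U_m[X']\cap V_0,$$
an open neighborhood of $p$. For an isolated $x\in W$ one has $x\in U_m[X']$, hence $N(x)\ge m$; and if $\rho(x)\notin V_0$ then $x\in U_{N(x)}(\rho(x))\subseteq U_{N(x)}[X\setminus V_0]\subseteq U_m[X\setminus V_0]$, contradicting $x\in W$. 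Thus $r(W)\subseteq V_0\cap X'$ and $r$ is a continuous retraction, so $X'$ is a $G_\delta$-retract.

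For \emph{sufficiency}, I would start from a retraction $r\colon X\to X'$, a representation $X'=\bigcap_{n\in\w}G_n$ with $G_n$ open and decreasing, and a stratification $(V_n(p))$ of $X'$ with $V_n(p)$ open and decreasing. Define $U_n(x)=\{x\}$ for isolated $x$ and $U_n(p)=r^{-1}(V_n(p))\cap G_n$ for $p\in X'$; these are decreasing neighborhoods, and regularity (with $T_1$) of $X$ follows from this very assignment, since $p\notin C$ closed gives $p\notin\overline{U_n[C]}$ for some $n$, separating $p$ from $C$ by $X\setminus\overline{U_n[C]}$ and $U_n[C]$. To verify $\bigcap_n\overline{U_n[F]}=F$ for closed $F$, the inclusion $\supseteq$ is clear, and for $\subseteq$ I use the decomposition $U_n[F]=(F\cap\dot X)\cup\big(G_n\cap r^{-1}(V_n[F\cap X'])\big)$ together with $\overline{A\cup B}=\overline A\cup\overline B$. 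Given $z\in\bigcap_n\overline{U_n[F]}$: if $z$ is isolated then $z\in U_n[F]$ for every $n$, and a witness $U_n(y)\ni z$ with $y$ isolated forces $z=y\in F$, while $y\in X'$ for all $n$ forces $z\in\bigcap_n G_n=X'$, impossible—so $z\in F$. If $z=p\in X'$, then either $p\in\overline{F\cap\dot X}\subseteq F$, or $p\in\overline{G_n\cap r^{-1}(V_n[F\cap X'])}$ for all $n$; applying the continuous $r$ and $r(p)=p$, and using $r\big(r^{-1}(V_n[F\cap X'])\big)\subseteq V_n[F\cap X']$, gives $p\in\overline{V_n[F\cap X']}^{\,X'}$ for all $n$, hence $p\in\bigcap_n\overline{V_n[F\cap X']}^{\,X'}=F\cap X'\subseteq F$ by stratifiability of $X'$.

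The main obstacle I anticipate is in the sufficiency direction: the isolated points of $F$ contribute the set $r(F\cap\dot X)$ to the closures $\overline{U_n[F]}$, and these images are entirely uncontrolled, so one cannot push everything through $r$ naively. The decomposition above is exactly what defuses this—$\overline{F\cap\dot X}$ already lies in $F$, so it can be split off, and only the remaining part (which sits inside $r^{-1}(X')$) needs to be transported into $X'$ by $r$. On the necessity side the delicate point is the simultaneous balancing in the choice of $m$: one needs $x$ close enough to $X'$ that $N(x)\ge m$ (guaranteed by $x\in U_m[X']$) while $X\setminus V_0$ is far enough from $x$ that $U_m[X\setminus V_0]$ misses it, and the two requirements are reconciled precisely by intersecting the two neighborhoods $X\setminus\overline{U_m[X\setminus V_0]}$ and $U_m[X']$.
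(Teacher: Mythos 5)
Your proof is correct and takes essentially the same route as the paper: for necessity, the identical retraction built from $N(x)=\max\{n: x\in U_n[X']\}$ and a point $\rho(x)\in X'$ with $x\in U_{N(x)}(\rho(x))$, with the same continuity argument via $X\setminus\overline{U_m[X\setminus V_0]}$; for sufficiency, the identical assignment $U_n(p)=G_n\cap r^{-1}(V_n(p))$ for $p\in X'$ and $U_n(x)=\{x\}$ for isolated $x$. The only differences are cosmetic: the paper normalizes $U_0(x)=X$ rather than introducing a basepoint $*\in X'$ for stray isolated points, and it verifies the stratification condition contrapositively (given $x\notin F$, find $n$ with $x\notin\overline{U_n[F]}$) where you argue directly from $z\in\bigcap_n\overline{U_n[F]}$ using $r(\overline A)\subset\overline{r(A)}$.
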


\begin{proof} First, assume that $X'$ is a stratifiable $G_\delta$-retract in $X$. Fix a retraction $r:X\to X'$ and write $X'$ as the intersection $X'=\bigcap_{n\in\w}W_n$ of a decreasing sequence $(W_n)_{n\in\w}$ of open sets in $X$.

By definition, each point $x\in X'$ of the stratifiable space $X'$ has a countable family $(U_n(x))_{n\in\w}$ of open neighborhoods such that each closed set $F\subset X'$ is equal to the intersection $\bigcap_{n\in\w}\overline{U_n[F]}$.

For every $x\in X\setminus X'$ put $W_n(x)=\{x\}$ for all $n\in\w$. Also for every $x\in X'$ and $n\in\w$ put $W_n(x)=W_n\cap r^{-1}(U_n(x))$. We claim that the system of neighborhoods $\big((W_n(x))_{n\in\w}\big)_{x\in X}$ witnesses that the space $X$ is stratifiable. Given any closed subset $F\subset X$, we should prove that  $F=\bigcap_{n\in\w}\overline{W_n[F]}$ where $W_n[F]=\bigcup_{x\in F}W_n(x)$.
Given any point $x\notin F$, we should find $n\in\w$ such that $x\notin\overline{W_n[F]}$.

If $x\notin X'$, then there exists $n\in\w$ such that $x\notin W_n$. Then the neighborhood $\{x\}$ of $x$ is disjoint with the set $W_n[F]\subset F\cup W_n$ and we are done.

If $x\in X'$, then $x\notin F\cap X'=\bigcap_{n\in\w}\overline{U_n[F\cap X']}$. So, there exists a number $n\in\w$ with $x\notin\overline{U_n[F\cap X']}$ and then $x\notin F\cup r^{-1}(\overline{U_n[F\cap X']})$. It remains to observe that $$W_n[F]=F\cup W_n[F\cap X']\subset F\cup r^{-1}(U_n[F\cap X'])\subset F\cup r^{-1}(\overline{U_n[F\cap X']})$$ and
$\overline{W_n[F]}\subset F\cup r^{-1}(\overline{U_n[F\cap X']})\subset X\setminus\{x\}$.
This completes the proof of the ``if'' part.
\smallskip

To prove the ``only if'' part, assume that the space  $X$ is stratifiable. Since the stratifiability is inherited by subspaces, $X'$ is stratifiable. By definition, each closed subset of a stratifiable space is a $G_\delta$-set in $X$, which implies that the closed subset $X'$ of $X$ is a $G_\delta$-set in $X$. It remains to prove that $X'$ is a retract of $X$. By definition of stratifiability, each point $x$ has a countable family neighborhoods $\{U_n(x)\}_{n\in\w}$ such that $F=\bigcap_{n\in\w}\overline{U_n[F]}$ for every closed subset $F\subset X$. We lose no generality assuming that $U_0(x)=X$ for any $x\in X$. Replacing each neighborhood $U_n(x)$ by the intersection $\bigcap_{i\le n}U_i(x)$, we can assume that $U_{n+1}(x)\subset U_n(x)$ for all $n\in\w$ and $x\in X$.

Since $X'=\bigcap_{n\in\w}\overline{U_n[X']}=\bigcap_{n\in\w}U_n[X']$, for every $x\in \dot X$ the set $N_x=\{n\in\w:x\in{U_n[X']}\}$ is finite and hence has the largest element $n_x$. Choose any point $r[x]\in X'$ with $x\in U_{n_x}(r[x])$. We claim that the function $r:X\to X'$ defined by $$r(x)=\begin{cases}r[x]&\mbox{if $x\in\dot X$};\\
x&\mbox{otherwise};
\end{cases}
$$
is a continuous retraction of $X$ onto $X'$.

It suffices to prove the continuity of $r$ at any point $x'\in X'$. Take any open neighborhood $V\subset X$ of $x'$. Then $X\setminus V=\bigcap_{n\in\w}\overline{U_n[X\setminus V]}$ and there exists a number $n\in\w$ such that $x'\notin \overline{U_n[X\setminus V]}$. We claim that $O_{x'}:=U_n(x')\setminus\overline{U_n[X\setminus V]}$ is an open neighborhood of $x'$ such that for any $x\in O_x'$ we get $r(x)\in V$.

If $x\in X'\cap O_{x'}$, then $r(x)=x\in O_{x'}\subset X\setminus U_n[X\setminus V]\subset X\setminus(X\setminus V)=V$.

If $x\in \dot X$, then the inclusion $x\in O_{x'}\subset U_n(x')\subset U_n[X']$ implies that $n_x\ge n$ and $x\in U_{n_x}(r(x))$. Assuming that $r(x)\in X\setminus V$, we conclude that $x\in U_{n_x}(r(x))\subset U_{n_x}[X\setminus V]\subset U_n[X\setminus V]$, which contradicts the choice of $x\in O_{x'}\subset X\setminus U_n[X\setminus V]$.
\end{proof}

\begin{theorem}\label{t:ss} A scattered space $X$ of finite scattered height is stratifiable if and only if for every $n<\hbar[X]$ the set $X^{[n]}$ is a $G_\delta$-retract in $X$.
\end{theorem}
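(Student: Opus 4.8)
The plan is to induct on the finite scattered height $h=\hbar[X]$, using Theorem~\ref{t:s'} to strip off the top layer $\dot X$ of isolated points at each stage. The cases $h\le 1$ form the base. If $h=0$, then $X=\emptyset$, which is vacuously stratifiable while the condition ``$X^{[n]}$ is a $G_\delta$-retract of $X$ for every $n<0$'' is empty; if $h=1$, then $X$ is discrete, hence metrizable and so stratifiable, and the only relevant set $X^{[0]}=X$ is a $G_\delta$-retract of itself via the identity. Thus both sides of the claimed equivalence hold simultaneously in the base cases.

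For the inductive step I would fix $h\ge 2$, so that $X'=X^{[1]}\ne\emptyset$ and $X$ is non-discrete. A routine check shows that $X'$ is scattered with $(X')^{[n]}=X^{[n+1]}$ for all $n$, whence $\hbar[X']=h-1$ and the induction hypothesis applies to $X'$. By Theorem~\ref{t:s'}, $X$ is stratifiable if and only if $X'$ is stratifiable and $X'$ is a $G_\delta$-retract of $X$. Applying the induction hypothesis to the first conjunct rewrites the stratifiability of $X'$ as the statement that $X^{[m]}$ is a $G_\delta$-retract of $X'$ for every $m$ with $1\le m\le h-1$. Hence it remains to prove that the conjunction of ``$X'$ is a $G_\delta$-retract of $X$'' with ``each $X^{[m]}$, $1\le m\le h-1$, is a $G_\delta$-retract of $X'$'' is equivalent to the target condition that each $X^{[n]}$, $0\le n\le h-1$, is a $G_\delta$-retract of $X$ (the case $n=0$ being automatic, as $X$ retracts onto itself).

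To reconcile the two ambient spaces I would record two elementary permanence properties of $G_\delta$-retracts. \emph{Transitivity:} if $Z$ is a $G_\delta$-retract of $Y$ and $Y$ is a $G_\delta$-retract of $X$, then $Z$ is a $G_\delta$-retract of $X$; indeed the composite of the two retractions retracts $X$ onto $Z$, and intersecting a countable family of open subsets of $X$ that cuts out $Y$ with one that cuts out $Z$ inside $Y$ realizes $Z$ as a $G_\delta$ in $X$. \emph{Restriction:} if $Z\subseteq Y\subseteq X$ and $Z$ is a $G_\delta$-retract of $X$, then $Z$ is a $G_\delta$-retract of $Y$; here one simply restricts the retraction $X\to Z$ to $Y$ (its image still lies in $Z$) and intersects the defining open sets with $Y$. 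With these two facts, the forward implication follows from restriction (each $X^{[m]}\subseteq X'\subseteq X$ inherits its $G_\delta$-retract property from $X$ down to $X'$, while $X'=X^{[1]}$ is directly a $G_\delta$-retract of $X$), and the backward implication follows from transitivity (composing a retraction $X\to X'$ with one $X'\to X^{[m]}$ for $m\ge 2$, the case $m=1$ being immediate).

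The only real subtlety I anticipate is the bookkeeping of the two possible ambient spaces: the induction hypothesis naturally produces $G_\delta$-retracts computed inside $X'$, whereas the conclusion demands them inside $X$, and it is precisely the transitivity and restriction lemmas that move retracts between the two. Everything else---confirming $\hbar[X']=h-1$, the triviality of the base cases, and the stability of composites and restrictions of retractions---is routine.
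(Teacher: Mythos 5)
Your proof is correct and follows essentially the same route as the paper's: induction on the scattered height, using Theorem~\ref{t:s'} to strip off the set of isolated points, with the transitivity and restriction properties of $G_\delta$-retracts (which the paper uses implicitly, via composing retractions and restricting a retraction to $X'$) moving retracts between the ambient spaces $X$ and $X'$. The only blemish is that your labels ``forward'' and ``backward'' are swapped---transitivity is what proves stratifiable $\Rightarrow$ retracts, and restriction proves the converse---but the parenthetical arguments you attach to each label pair the right lemma with the right implication, so both directions are in fact established.
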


\begin{proof} This characterization will be proved by induction on the scattered height $\hbar[X]$. If $\hbar[X]=0$, then the space $X$ is discrete and hence stratifiable.

Assume that for some $n\in\IN$ we have prove that a scattered space $X$ of scattered height $\hbar[X]<n$ is stratifiable if and only if the sets $X^{[k]}$, $k<\hbar[X]$, are $G_\delta$-retracts of $X$.

Let $X$ be a scattered space of scattered height $\hbar[X]=n$. If $X$ is stratifiable, then  $X'$ is a stratifiable $G_\delta$-retract of $X$, by Theorem~\ref{t:s'}. Since  the space $X'$ has scattered height $\hbar[X']<n$ we can apply the inductive assumption and conclude that for every $k<\hbar[X']$ the set $X^{[k]}$ is a $G_\delta$-retract in $X'$. Since $X'$ is a $G_\delta$-retract in $X$, the set $X^{[k]}$ is a $G_\delta$-retract in $X$.

Now assume that for every $k<\hbar[X]$ the set $X^{[k]}$ is a $G_\delta$-retract in $X$. So, there exists a retraction $r_k:X\to X^{[k]}$ and $X^{[k]}$ is a $G_\delta$-set in $X$. Then $X^{[k]}$ is a $G_\delta$-set in $X'$ and the restriction $r_k{\restriction}X':X'\to X^{[k]}$ is a retraction of $X'$ onto its closed subset $X^{[k]}$. So $X^{[k]}$ is a $G_\delta$-retract of $X'$. By the inductive assumption, the space $X'$ is stratifiable. Since $X'$ is a $G_\delta$-retract of $X$, we can apply Theorem~\ref{t:s'} and conclude that the space $X$ is stratifiable.
\end{proof}

\section{On function spaces with values in rectifiable pointed spaces}\label{s11}

A pointed topological space $Y$ is called a {\em rectifiable space} if there exists a homeomorphism $H:Y\times Y\to Y\times Y$ such that $H(\{y\}\times Y)=\{y\}\times Y$ and $H(y,y)=(y,*_Y)$ for every $y\in Y$. Each topological group $Y$ is rectifiable because of the homeomorphism $H:Y\times Y\to Y\times Y$, $H:(y,z)\mapsto (y,y^{-1}z)$. The unit sphere $S^7$ in the space of Cayley octonions is an example of a rectifiable pointed space, which is not homeomorphic to a topological group.
By \cite{Gul}, a rectifiable space is regular (and metrizable) if and only if it satisfies the separation axiom $T_0$ (and is first-countable). 
More information on rectifiable spaces can be found in \cite{Ar02}, \cite{BR}, \cite{Gul}.

\begin{proposition}\label{p:rectifiable} Let $Y$ be a rectifiable space and $X$ be a topological space. If the set $X'$  is a retract of $X$, then the function space $C_k(X,Y)$ is homeomorphic to $C_k(X',Y)\times C_k'(X,Y)$.
\end{proposition}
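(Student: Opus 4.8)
The plan is to use the rectifiable structure of $Y$ to split any continuous $f\colon X\to Y$ into its restriction to $X'$ and a ``remainder'' that takes the value $*_Y$ on $X'$. First I would unpack rectifiability. Since the homeomorphism $H\colon Y\times Y\to Y\times Y$ satisfies $H(\{y\}\times Y)=\{y\}\times Y$, it preserves the first coordinate, so $H(y,z)=(y,p(y,z))$ and $H^{-1}(y,w)=(y,q(y,w))$ for continuous maps $p,q\colon Y\times Y\to Y$. The defining conditions translate into the identities $p(y,y)=*_Y$, $q(y,*_Y)=y$, $q\big(y,p(y,z)\big)=z$ and $p\big(y,q(y,w)\big)=w$ for all $y,z,w\in Y$ (the last two because $H,H^{-1}$ are mutually inverse; the second from $H(y,y)=(y,*_Y)$). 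In the group case $p(y,z)=y^{-1}z$ and $q(y,w)=yw$, so $p$ and $q$ play the roles of ``left division'' and ``multiplication''.

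Fix a retraction $r\colon X\to X'$. I would then define two maps:
$$\Phi\colon C_k(X,Y)\to C_k(X',Y)\times C_k'(X,Y),\quad \Phi(f)=\big(f{\restriction}X',\ [x\mapsto p(f(r(x)),f(x))]\big),$$
$$\Psi\colon C_k(X',Y)\times C_k'(X,Y)\to C_k(X,Y),\quad \Psi(g,h)=[x\mapsto q(g(r(x)),h(x))],$$
and check they are well defined. For the second coordinate $\phi_f$ of $\Phi(f)$, if $x\in X'$ then $r(x)=x$ gives $\phi_f(x)=p(f(x),f(x))=*_Y$, so indeed $\phi_f\in C_k'(X,Y)$. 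For $\Psi(g,h)$, if $x\in X'$ then $h(x)=*_Y$ and $r(x)=x$ give $\Psi(g,h)(x)=q(g(x),*_Y)=g(x)$, so $\Psi(g,h){\restriction}X'=g$. The four identities then make $\Phi$ and $\Psi$ mutually inverse: $\Phi(\Psi(g,h))=(g,h)$ follows from $\Psi(g,h){\restriction}X'=g$ together with $p\big(g(r(x)),q(g(r(x)),h(x))\big)=h(x)$, while $\Psi(\Phi(f))=f$ follows from $q\big(f(r(x)),p(f(r(x)),f(x))\big)=f(x)$.

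It remains to verify that both $\Phi$ and $\Psi$ are continuous for the compact-open topologies, which is the main point. I would reduce this to three standard facts: pre-composition with a continuous map and post-composition with a continuous map are both continuous operations between function spaces, and $C_k(X,Y\times Y)$ is naturally homeomorphic to $C_k(X,Y)\times C_k(X,Y)$. Combining the last two, any continuous binary operation $\theta\colon Y\times Y\to Y$ induces a continuous map $C_k(X,Y)\times C_k(X,Y)\to C_k(X,Y)$, $(f_1,f_2)\mapsto[x\mapsto\theta(f_1(x),f_2(x))]$. Writing $r^*\colon C_k(X',Y)\to C_k(X,Y)$, $g\mapsto g\circ r$, for the (continuous) pre-composition by $r$, the map $\Psi$ is the composite of the continuous $(g,h)\mapsto(r^*g,h)$ with the operation induced by $q$; and $\Phi=(\Phi_1,\Phi_2)$ has continuous coordinates, $\Phi_1\colon f\mapsto f{\restriction}X'$ being pre-composition by the inclusion $X'\hookrightarrow X$ and $\Phi_2\colon f\mapsto[x\mapsto p(r^*f(x),f(x))]$ being the composite of the continuous $f\mapsto(r^*f,f)$ with the operation induced by $p$. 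Since a map into a product is continuous iff its coordinates are, and $C_k'(X,Y)$ carries the subspace topology from $C_k(X,Y)$, landing in $C_k'(X,Y)$ costs nothing. Thus $\Phi$ and $\Psi$ are mutually inverse homeomorphisms. The only genuine subtlety is the bookkeeping that the second coordinate of $\Phi$ really lands in $C_k'(X,Y)$ and that $\Psi(g,h)$ restricts to $g$ on $X'$, both of which hinge on $r{\restriction}X'=\mathrm{id}$ and the normalizations $p(y,y)=*_Y$, $q(y,*_Y)=y$.
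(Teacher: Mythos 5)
Your proposal is correct and follows essentially the same route as the paper: the paper's map $H(f)=\big(f{\restriction}X',\;\pr_2\circ h(f\circ r,f)\big)$ and its inverse $(g,f)\mapsto \pr_2\circ h^{-1}(g\circ r,f)$ are exactly your $\Phi$ and $\Psi$ once one writes $p=\pr_2\circ h$ and $q=\pr_2\circ h^{-1}$, and the algebraic verification via the identities $p(y,y)=*_Y$, $q(y,*_Y)=y$, $q(y,p(y,z))=z$, $p(y,q(y,w))=w$ matches the paper's computation. If anything, your treatment is slightly more complete, since the paper asserts continuity of $H$ and $H^{-1}$ without comment, whereas you justify it via pre-/post-composition and the identification $C_k(X,Y\times Y)\cong C_k(X,Y)\times C_k(X,Y)$.
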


\begin{proof}  By the rectifiability of the pointed  space $Y$, there exists a homeomorphism $h:Y\times Y\to Y\times Y$ such that $h(\{y\}\times Y)=\{y\}\times Y$ and $h(y,y)=(y,*_Y)$ for every $y\in Y$. Let $\pr_2:Y\times Y\to Y$, $\pr_2:(x,y)\mapsto y$, be the projection onto the second coordinate.
Assume that $r:X\to X'$ is a retraction of $X$ onto $X'$.

We claim that the map $$H:C_k(X,Y)\to C_k(X')\times C_k'(X,Y),\;\;H:f\mapsto \big(f{\restriction}X',\pr_2\circ h (f\circ r,f)\big),$$
is a homeomorphism. Here $\pr_2\circ h(f\circ r,f):X\to Y$ is the map assigning to each $x\in X$ the element $\pr_2\circ h(f{\circ}r(x),f(x))\in Y$.
Observe that for any $x\in X'$ we get $$\pr_2\circ h(f\circ r(x),f(x))=\pr_2\circ h(f(x),f(x))=\pr_2(f(x),*_Y)=*_Y,$$ which means that $\pr_2\circ h (f\circ r,f)\in C_k'(X,Y)$.

The map $H$ has a continuous inverse $$H^{-1}:C_k(X',Y)\times C_k'(X,Y)\to C_k(X,Y),\;\;H^{-1}:(g,f)\mapsto \pr_2\circ h^{-1}(g\circ r,f),$$
where the map $\pr_2\circ h^{-1} (g\circ r,f):X\to Y$ assigns to each $x\in X$ the element $\pr_2\circ h^{-1}(g\circ r(x),f(x))\in Y$.

To show that $H^{-1}\circ H$ is the identity map of $C_k(X,Y)$, take any function $f\in C_k(X,Y)$ and observe that
$$
\begin{aligned}
H^{-1}\circ H(f)&=H^{-1}\big(f{\restriction}X',\pr_2\circ h (f\circ r,f)\big)=\pr_2\circ h^{-1} ((f{\restriction}X')\circ r,\pr_2\circ h(f\circ r,f))=\\
&=\pr_2\circ h^{-1}(f\circ r,\pr_2\circ h(f\circ r,f))=\pr_2\circ h^{-1}\circ h(f\circ r,f)=\pr_2(f\circ r,f)=f.
\end{aligned}
$$
Next, we prove that $H\circ H^{-1}$ is the identity map of $C_k(X',Y)\times C_k'(X,Y)$.  Given any pair $(\varphi,\psi)\in C_k(X',Y)\times C_k'(X,Y)$, consider the function $f=H^{-1}(\varphi,\psi)=\pr_2\circ h^{-1} (\varphi\circ r,\psi)$. We claim that $H(f)=(\varphi,\psi)$. Recall that $H(f)=\big(f{\restriction}X',\pr_2\circ h(f\circ r,f)\big)$. Then $$f{\restriction}X'=\pr_2\circ h^{-1}(\varphi\circ r,\psi){\restriction}X'=\pr_2\circ h^{-1}(\varphi,*_Y)=\pr_2(\varphi,\varphi)=\varphi.$$
On the other hand,
$$
\begin{aligned}
&(f\circ r,\pr_2\circ h(f\circ r,f))=h(f\circ r,f)=h\big(\pr_2\circ h^{-1}(\varphi\circ r,\psi)\circ r,\pr_2\circ h^{-1}(\varphi\circ r,\psi)\big)=\\
&=h\big(\pr_2\circ h^{-1}(\varphi\circ r,*_Y),\pr_2\circ h^{-1}(\varphi\circ r,\psi)\big)=h\big(\pr_2(\varphi\circ r,\varphi\circ r),\pr_2\circ h^{-1}(\varphi\circ r,\psi)\big)=\\
&=h\big(\varphi\circ r,\pr_2\circ h^{-1}(\varphi\circ r,\psi)\big)=h\circ h^{-1}(\varphi\circ r,\psi)=(\varphi\circ r,\psi)
\end{aligned}
$$and hence $\psi=\pr_2\circ h(f\circ r,f)$.
\end{proof}

\section{Function spaces $C_k(X,Y)$ for scattered $X$ and rectifiable $Y$}\label{s5}

In this section we shall prove (general versions of) the results announced in the introduction.

\begin{theorem}\label{t:sB} Let $X$ be a topological space such that $X'$ is a $G_\delta$-retract in $X$ and $Y\ne\{*_Y\}$ be a second-countable Choquet rectifiable $T_0$-space. Then the following conditions are equivalent:
\begin{enumerate}
\item[\textup{(1)}] the function space $C_k(X,Y)$ is Baire.
\item[\textup{(2)}] the space $X$ has $\DMOP$ and the function space $C_k(X',Y)$ is Baire.
\end{enumerate}
\end{theorem}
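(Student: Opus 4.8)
The plan is to reduce the statement to two already-established pillars: the product decomposition of Proposition~\ref{p:rectifiable} and the Baire/$\DMOP$ equivalence of Theorem~\ref{t:B}. First I would record that $Y$ meets the hypotheses of Theorem~\ref{t:B}: being a second-countable $T_0$ rectifiable space, $Y$ is regular by \cite{Gul}, hence (second-countable and regular) metrizable and separable; since $Y\ne\{*_Y\}$ is Hausdorff with more than one point it is $*$-admissible, it is $*$-first-countable, and it is Choquet by assumption. Next, since $X'$ is a $G_\delta$-retract it is in particular a retract of $X$, so Proposition~\ref{p:rectifiable} yields a homeomorphism
$$C_k(X,Y)\cong C_k(X',Y)\times C_k'(X,Y).$$
Throughout I write $A:=C_k(X',Y)$ and $B:=C_k'(X,Y)$.

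For $(1)\Rightarrow(2)$: if $C_k(X,Y)\cong A\times B$ is Baire, then each coordinate projection is an open continuous surjection, and the open continuous image of a Baire space is Baire; hence both $A$ and $B$ are Baire. In particular $B=C_k'(X,Y)$ is non-meager, so Theorem~\ref{t:B} gives that $X$ has $\DMOP$, while $A=C_k(X',Y)$ is Baire. This is exactly (2).

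For $(2)\Rightarrow(1)$: assuming $X$ has $\DMOP$, Theorem~\ref{t:B} makes $B=C_k'(X,Y)$ Baire, and $A=C_k(X',Y)$ is Baire by hypothesis, so it remains to prove that $A\times B$ is Baire. This is the crux and the main obstacle, because a product of two Baire spaces need not be Baire, so Baire-ness of $A$ cannot be used as a black box. I would therefore not argue abstractly but reprove a product version of Lemma~\ref{l:B2} directly. Supposing $A\times B$ is covered, inside a nonempty basic open box $W_A\times[\kappa_\emptyset;U_\emptyset|F_\emptyset;u_\emptyset]$, by an increasing sequence of closed nowhere dense sets $M_n$, I would run the tree construction of Lemma~\ref{l:B2} indexed by $\K(X)^{<\w}$: at each node the values of the function on $\dot X$ are governed, as there, by the Choquet strategy $S_{\mathsf N}$ of $Y$ applied coordinatewise together with the game $\GKF(X)$, while simultaneously a decreasing chain $O^A_{s\restriction n}$ of open sets in the $A$-coordinate is maintained so as to miss $M_n$. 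Selecting a branch by the failure of player $\mathsf F$'s strategy in $\GKF(X)$ (Theorem~\ref{t:game}, available from $\DMOP$) produces a discrete family $\{F_{s\restriction n}\}$ and hence, as in Lemma~\ref{l:B2}, a continuous limit $\phi_\infty\in\bigcap_n[\kappa_{s\restriction n};U_{s\restriction n}|F_{s\restriction n};u_{s\restriction n}]$ on the isolated points.

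The genuinely delicate point is to produce, simultaneously, a limit value $g_\infty\in\bigcap_n O^A_{s\restriction n}$ in the $A$-coordinate, so that the glued function $(g_\infty,\phi_\infty)$ lands in $\bigcap_n(W_A\times[\cdots])\subseteq\bigcap_n(W\setminus M_n)=\emptyset$, the desired contradiction. Here I would exploit the hypothesis hidden in ``$X'$ is a $G_\delta$-retract'': it forces $\dot X=X\setminus X'$ to be an $F_\sigma$-set, so by Proposition~\ref{p:cellular} (with $Y$ second countable) the factor $B=C_k'(X,Y)$ has countable cellularity. I expect this countable cellularity to be the device that disentangles the two games: it should let the Banach--Mazur game $\GEN(A)$ attached to the $A$-coordinate be tested against only countably many open sets, so that the branch selected for $\GKF(X)$ can at the same time be arranged to be a winning play for player $\mathsf N$ in $\GEN(A)$, whence Baire-ness of $A$ forces $\bigcap_n O^A_{s\restriction n}\ne\emptyset$. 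I anticipate this coupling of the $\GKF(X)$-game with the Banach--Mazur game on $C_k(X',Y)$, made feasible by the countable cellularity of $C_k'(X,Y)$, to be the technical heart of the argument; the remaining bookkeeping is a routine merging of the inductions in Lemma~\ref{l:B2} and Lemma~\ref{l:C2}.
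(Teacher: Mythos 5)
Your set-up and the direction $(1)\Rightarrow(2)$ agree with the paper: both use Proposition~\ref{p:rectifiable} to split $C_k(X,Y)\cong C_k(X',Y)\times C_k'(X,Y)$, note that the factors are open continuous images of the Baire product and hence Baire, and then apply Theorem~\ref{t:B} (after checking via \cite{Gul} that $Y$ is metrizable, separable, $*$-admissible, $*$-first-countable and Choquet). You also correctly isolate the ingredients for $(2)\Rightarrow(1)$: Theorem~\ref{t:B} makes $B=C_k'(X,Y)$ Baire, and since $X'$ is a $G_\delta$-set the set $\dot X$ is $F_\sigma$, so Proposition~\ref{p:cellular} gives countable cellularity of $B$.

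However, the way you finish $(2)\Rightarrow(1)$ has a genuine gap. At exactly this point the paper invokes Theorem~3.1 of \cite{LZ} --- the product of a Baire space and a countably cellular Baire space is Baire --- which is a published, nontrivial theorem and closes the argument in one line. You instead propose to reprove this product statement by merging the tree construction of Lemma~\ref{l:B2} with a Banach--Mazur game on $A=C_k(X',Y)$, and the decisive step of that merger is missing. Concretely: along the chosen branch you obtain a decreasing sequence $(O^A_{s\restriction n})_{n}$ of non-empty open sets in $A$ and assert that ``Baire-ness of $A$ forces $\bigcap_n O^A_{s\restriction n}\ne\emptyset$''. Baire-ness gives no such thing: already in $A=\IR$ the chain $O_n=(0,1/n)$ has empty intersection. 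To exploit Baire-ness of $A$ one must realize the sequence as a run against a fixed strategy of the player $\mathsf E$ in $\GEN(A)$ (Theorem~\ref{t:Oxtoby}), but in such a run the responses are chosen by the adversary and cannot simultaneously be dictated by the $\GKF(X)$-diagonalization that selects the branch; conversely, the branch forced by the failure of $\mathsf F$'s strategy in $\GKF(X)$ cannot be assumed to follow any prescribed play of $\GEN(A)$. Reconciling these two selections is precisely the content of the Li--Zsilinszky theorem, and countable cellularity of $B$ enters there in an essential but quite different way from ``testing $\GEN(A)$ against countably many open sets''; note also that your tree is indexed by the typically uncountable set $\K(X)^{<\w}$, so countably many $A$-coordinate open sets do not suffice to govern all nodes. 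The fix is simple: keep your reduction exactly as is, and at the last step cite \cite[Theorem 3.1]{LZ} instead of the anticipated coupling.
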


\begin{proof} By Proposition~\ref{p:rectifiable}, the function space $C_k(X,Y)$ is homeomorphic to $C_k(X',Y)\times C_k'(X,Y)$.  By \cite{Gul}, the second-countable rectifiable $T_0$-space $Y$ is metrizable and separable. Consequently, $Y$ is $*$-first-countable. Since $Y\ne\{*_Y\}$, the metrizable space $Y$ is $*$-admissible. 
\smallskip

 $(1)\Ra(2)$ Assume that the function space $C_k(X,Y)$ is Baire.  Since the Baire space $C_k(X,Y)$ is homeomorphic to $C_k'(X,Y)\times C_k(X',Y)$, the spaces $C_k'(X,Y)$ and  $C_k(X',Y)$ are Baire. By Corollary~\ref{c:B}, the space $X$ has $\DMOP$.
 \smallskip

$(2)\Ra(1)$ Assume that the space $X$ has $\DMOP$ and the function space $C_k(X',Y)$ is Baire. By Corollary~\ref{c:B}, the space $C_k'(X,Y)$ is Baire. By our assumption, $X'$ is a $G_\delta$-set in $X$. By Proposition~\ref{p:cellular}, the space $C_k'(X,Y)$ has countable cellularity. By Theorem~3.1 of \cite{LZ}, the product of a Baire space and a countably cellular Baire space  is Baire. Consequently, the product $C_k(X',Y)\times C_k'(X,Y)$ is Baire and so is its topological copy $C_k(X,Y)$.
\end{proof}

Applying Theorem~\ref{t:sB} inductively, we can prove the following characterization.

\begin{corollary}\label{c:nB} Let $n\in\IN$ and $X$ be a topological space such that for every $k\le n$ the set $X^{[k]}$ is a $G_\delta$-retract in $X$. Then for any second-countable Choquet rectifiable $T_0$-space $Y\ne\{*_Y\}$, the following conditions are equivalent:
\begin{enumerate}
\item[\textup{(1)}] the function space $C_k(X,Y)$ is Baire;
\item[\textup{(2)}] for every $i\le n$ the space $X^{[i]}$ has $\DMOP$ and the space $C_k(X^{[n]},Y)$ is Baire.
\end{enumerate}
\end{corollary}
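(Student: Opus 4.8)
The plan is to prove the corollary by induction on $n\in\IN$, using Theorem~\ref{t:sB} as the single engine that strips off one derived set at a time, together with the observation that the standing hypotheses are inherited by $X'$ with the parameter lowered by one. Throughout, $Y$ is a second-countable Choquet rectifiable $T_0$-space with $Y\ne\{*_Y\}$, which by \cite{Gul} is metrizable and separable, hence $*$-admissible and $*$-first-countable; so Theorem~\ref{t:sB}, Theorem~\ref{t:B} and Proposition~\ref{p:rectifiable} are all applicable.

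First I would isolate the elementary transfer lemma that lets Theorem~\ref{t:sB} be re-fed into itself: if $A\subset B\subset X$ and $A$ is a $G_\delta$-retract of $X$, then $A$ is a $G_\delta$-retract of $B$. Indeed, any retraction $r\colon X\to A$ restricts to a retraction $r{\restriction}B\colon B\to A$ (as $A\subset B$), and the intersection of a $G_\delta$-subset of $X$ with $B$ is a $G_\delta$-subset of $B$. Taking $A=X^{[k+1]}$ and $B=X^{[k]}$, the hypothesis ``$X^{[k+1]}$ is a $G_\delta$-retract of $X$'' upgrades to ``$X^{[k+1]}=(X^{[k]})'$ is a $G_\delta$-retract of $X^{[k]}$'', so Theorem~\ref{t:sB} applies to each of $X^{[0]},\dots,X^{[n-1]}$; the same computation (with $A=X^{[j]}$, $B=X'$ for $1\le j\le n$) shows that $X'$ again satisfies the standing hypotheses for the parameter $n-1$, making the induction well-posed.

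The routine core of the proof is then a chain of applications of Theorem~\ref{t:sB}: applying it to $X$ gives that $C_k(X,Y)$ is Baire iff $X^{[0]}$ has $\DMOP$ and $C_k(X',Y)$ is Baire, applying it to $X'$ rewrites the latter clause, and so on down to $X^{[n-1]}$. Concatenating the resulting $\DMOP$-conditions shows that $C_k(X,Y)$ is Baire iff $X^{[i]}$ has $\DMOP$ for all $i\le n-1$ and $C_k(X^{[n]},Y)$ is Baire. A convenient way to package the lower $\DMOP$-conditions is to iterate Proposition~\ref{p:rectifiable} one level at a time, producing a homeomorphism $C_k(X,Y)\cong C_k(X^{[n]},Y)\times\prod_{i<n}C_k'(X^{[i]},Y)$; since a coordinate projection of a finite product is an open continuous surjection and open continuous images of Baire spaces are Baire, Baireness of $C_k(X,Y)$ forces Baireness of every factor, and Theorem~\ref{t:B} converts Baireness of $C_k'(X^{[i]},Y)$ into $\DMOP$ of $X^{[i]}$. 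The reverse implication $(2)\Ra(1)$ is immediate from this chain run backwards and never uses the $\DMOP$-condition at the top index $i=n$.

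The step I expect to be the real obstacle is precisely that top index. The peeling and the product decomposition only deliver $\DMOP$ for $i<n$ together with the terminal clause ``$C_k(X^{[n]},Y)$ is Baire'', because the factor $C_k'(X^{[n]},Y)$ can be split off $C_k(X^{[n]},Y)$ via Proposition~\ref{p:rectifiable} only if a \emph{further} derived set $X^{[n+1]}$ is a retract of $X^{[n]}$, which is not among the hypotheses. Thus the forward implication $(1)\Ra(2)$ reduces to the single remaining fact that Baireness of $C_k(X^{[n]},Y)$ already yields $\DMOP$ for $X^{[n]}$. I would therefore concentrate the remaining effort on the terminal space $Z=X^{[n]}$, extracting its $\DMOP$ directly from Baireness of $C_k(Z,Y)$ through the game $\GKF(Z)$ and Theorem~\ref{t:game}, rather than by a retraction splitting. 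This argument should parallel Lemma~\ref{l:B1}: from a strategy $S_{\mathsf F}$ of $\mathsf F$ one builds a strategy $S_{\mathsf E}$ in $\GNE(C_k(Z,Y))$ using a move $U_n\cap[F_n;W]$ with $*_Y\notin\overline W$ and a compact $\kappa(U_n)$ guaranteeing non-emptiness; the delicate point, which does not arise in the $C_k'$-setting, is that the resulting limit function need not send $Z'$ into $\{*_Y\}$, so forcing discreteness of $\{F_n\}$ from $f(\bigcup_n F_n)\subset W$ is exactly where the extra care is required.
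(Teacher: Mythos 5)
Your routine skeleton coincides with the paper's intended argument: the paper's entire proof is the single sentence ``Applying Theorem~\ref{t:sB} inductively,'' and your transfer lemma (a $G_\delta$-retract of $X$ contained in $B\subset X$ is a $G_\delta$-retract of $B$), the resulting chain of applications of Theorem~\ref{t:sB} to $X^{[0]},\dots,X^{[n-1]}$, the decomposition $C_k(X,Y)\cong C_k(X^{[n]},Y)\times\prod_{i<n}C_k'(X^{[i]},Y)$ obtained by iterating Proposition~\ref{p:rectifiable}, and the appeal to Theorem~\ref{t:B} (legitimate, since by \cite{Gul} the space $Y$ is separable metrizable, hence $*$-admissible and $*$-first-countable) are all correct, as is your observation that $(2)\Rightarrow(1)$ is complete and never needs $\DMOP$ of $X^{[n]}$.

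But your proof of $(1)\Rightarrow(2)$ stops short at exactly the point you flag, and the repair you sketch cannot be carried out, so the proposal does not prove the statement as written. The chain yields only: (1) holds iff $X^{[i]}$ has $\DMOP$ for all $i\le n-1$ and $C_k(X^{[n]},Y)$ is Baire. The missing piece is the implication ``$C_k(Z,Y)$ Baire $\Rightarrow$ $Z$ has $\DMOP$'' for $Z=X^{[n]}$, a space for which no retraction hypothesis on $Z'=X^{[n+1]}$ is available, so no splitting of $C_k(Z,Y)$ is possible. Your plan to imitate Lemma~\ref{l:B1} inside $C_k(Z,Y)$ fails for the very reason you name: the limit function $f$ need not map $Z'$ into $\{*_Y\}$, so $f\big(\bigcup_nF_n\big)\subset W$ gives no discreteness of $\{F_n\}$. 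Nor is this a removable technicality: the admissible class of $Y$ contains compact spaces such as $2$, $2^\w$ and $S^7$, and a compact $Y$ has no infinite discrete family of non-empty open sets, which blocks both the Lemma~\ref{l:B1} mechanism and the Gruenhage--Ma escape-to-infinity argument (Theorem~2.1 of \cite{GMa}) --- the latter being precisely how the paper obtains the analogous step for $Y=\IR$ in Corollary~\ref{c:sBR}. You are, however, in good company: the paper's own one-line proof has the same defect, since iterating Theorem~\ref{t:sB} never produces $\DMOP$ at the top index, and the paper nowhere proves the needed implication for general rectifiable $Y$. What you (and the paper) actually prove is the corollary with ``$i\le n-1$'' in condition (2); that weaker version suffices for all later applications, where $X^{[n]}$ is discrete, so that its $\DMOP$ and the Baireness of $C_k(X^{[n]},Y)\cong Y^{X^{[n]}}$ are automatic. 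Alternatively, the top-index step can be rescued under the extra assumption that $Y$ is non-compact: then $Y$ contains an infinite discrete family of non-empty open sets $\{V_k\}_{k\in\w}$, and the Gruenhage--Ma argument run with the sets $[F;V_k]$ in place of the conditions $\min f(F)>k$ shows that Baireness of $C_k(Z,Y)$ implies $\DMOP$ of $Z$.
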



Combining Corollary~\ref{c:nB} with Theorem~\ref{t:ss}, we obtain the following characterization.

\begin{corollary}\label{c:sB} For any stratifiable scattered space $X$ of finite scattered height and any second-countable Choquet rectifiable space $Y\ne\{*_Y\}$, the following conditions are equivalent:
\begin{enumerate}
\item[\textup{(1)}] the function space $C_k(X,Y)$ is Baire;
\item[\textup{(2)}] for every $k<\hbar[X]$ the space $X^{[k]}$ has $\DMOP$.
\end{enumerate}
\end{corollary}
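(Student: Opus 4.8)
The plan is to deduce the equivalence from Corollary~\ref{c:nB} applied with $n:=\hbar[X]-1$, the one extra ingredient being that the ``top'' function space $C_k(X^{[n]},Y)$ is automatically Baire.

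First I would dispose of the low cases. If $X=\emptyset$ the statement is vacuous, so assume $X\ne\emptyset$ and put $m:=\hbar[X]\ge1$. Since $X$ is scattered we have $X^{[m]}=\emptyset$, and since $m$ is finite the derived sets satisfy $X^{[k+1]}=(X^{[k]})'$ (the intersection defining $X^{[k+1]}$ reduces to its smallest term $(X^{[k]})'$ because the family $\{(X^{[j]})'\}_{j\le k}$ is decreasing). If $m=1$ then $X'=X^{[1]}=\emptyset$, i.e.\ $X$ is discrete; in that case $C_k(X,Y)=Y^X$ carries the topology of pointwise convergence (on a discrete domain every map is continuous and the compact subsets, being finite, generate exactly the product topology), so $C_k(X,Y)$ is Choquet by the stability of the Choquet property under Tychonoff products \cite{White}, hence Baire, and (1) holds; moreover the only relevant derived set $X^{[0]}=X$ is discrete and a discrete space trivially has $\DMOP$ (from any moving off family one extracts an infinite pairwise disjoint, hence discrete, subfamily), so (2) holds as well, and the two conditions are (vacuously) equivalent.

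It then remains to treat $m\ge2$, where I set $n:=m-1\in\IN$. By Theorem~\ref{t:ss} the stratifiability of the scattered space $X$ ensures that $X^{[k]}$ is a $G_\delta$-retract of $X$ for every $k<m$, i.e.\ for every $k\le n$; this is exactly the hypothesis of Corollary~\ref{c:nB}. That corollary then asserts that $C_k(X,Y)$ is Baire if and only if $X^{[i]}$ has $\DMOP$ for every $i\le n$ \emph{and} $C_k(X^{[n]},Y)$ is Baire. The crux is to discard this last clause: since $(X^{[n]})'=(X^{[m-1]})'=X^{[m]}=\emptyset$, the space $X^{[n]}$ is discrete, so once more $C_k(X^{[n]},Y)=Y^{X^{[n]}}$ is a Tychonoff power of the Choquet space $Y$, hence Choquet and therefore Baire. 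Consequently condition (2) of Corollary~\ref{c:nB} collapses to ``$X^{[i]}$ has $\DMOP$ for every $i\le n=\hbar[X]-1$'', which is precisely condition (2) of the present statement, and the equivalence follows. The only point demanding care, and the step I would verify most attentively, is the bookkeeping between the finite height $\hbar[X]$ and the index range $i\le n$ in Corollary~\ref{c:nB}, together with the identification of the last derived set $X^{[\hbar[X]-1]}$ as a discrete space; granting these, the corollary is a direct combination of Theorem~\ref{t:ss}, Corollary~\ref{c:nB}, and the product-stability of the Choquet property.
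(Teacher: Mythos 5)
Your proof is correct and follows essentially the same route as the paper, whose own proof is exactly the combination of Theorem~\ref{t:ss} (giving that each $X^{[k]}$, $k<\hbar[X]$, is a $G_\delta$-retract of $X$) with Corollary~\ref{c:nB} applied to $n=\hbar[X]-1$. You merely make explicit the details the paper leaves implicit: the degenerate cases $\hbar[X]\le 1$, and the fact that $X^{[\hbar[X]-1]}$ is discrete, so that $C_k(X^{[\hbar[X]-1]},Y)=Y^{X^{[\hbar[X]-1]}}$ is Choquet (hence Baire) by the product-stability of the Choquet property, which is what makes the last clause of Corollary~\ref{c:nB}(2) automatic.
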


\begin{corollary}\label{c:sBR} For a stratifiable scattered space $X$ of finite scattered height the following conditions are equivalent:
\begin{enumerate}
\item[\textup{(1)}] The function space $C_k(X)$ is Baire.
\item[\textup{(2)}] The space $X$ has $\MOP$.
\item[\textup{(3)}] For every $n<\hbar[X]$ the space $X^{[n]}$ has $\DMOP$.
\end{enumerate}
\end{corollary}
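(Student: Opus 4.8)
The plan is to prove the three conditions equivalent by establishing the cycle $(1)\Ra(2)\Ra(3)\Ra(1)$, where the implications $(1)\Leftrightarrow(3)$ come essentially for free from Corollary~\ref{c:sB}, the classical Gruenhage--Ma observation feeds $\MOP$ into the loop, and a short lemma on retracts carries $\MOP$ down to the derived sets.

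First I would obtain $(1)\Leftrightarrow(3)$ by specializing Corollary~\ref{c:sB} to the target space $Y=\IR$ with distinguished point $0$. The real line $\IR$ is second-countable and Polish, hence Choquet, and it is rectifiable as a topological group under addition; moreover $\IR\ne\{0\}$. Since $C_k(X)=C_k(X,\IR)$ by convention, Corollary~\ref{c:sB} applied with $Y=\IR$ asserts precisely that $C_k(X)$ is Baire if and only if $X^{[k]}$ has $\DMOP$ for every $k<\hbar[X]$, which is the equivalence $(1)\Leftrightarrow(3)$. For $(1)\Ra(2)$ I would invoke the Gruenhage--Ma observation (Theorem~2.1 of \cite{GMa}, recalled in the introduction): a Tychonoff space whose function space $C_k(X)$ is Baire has $\MOP$. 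Being stratifiable, $X$ is Hausdorff and (perfectly) normal, hence Tychonoff, so the hypothesis of that observation is met and $C_k(X)$ Baire yields $\MOP$ for $X$.

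The remaining, and main, step is $(2)\Ra(3)$: assuming $X$ has $\MOP$, I must show that each derived set $X^{[n]}$, $n<\hbar[X]$, has $\DMOP$. Here I would use Theorem~\ref{t:ss}: since $X$ is a stratifiable scattered space of finite scattered height, each $X^{[n]}$ is a $G_\delta$-retract of $X$; fix a retraction $r\colon X\to A$ with $A:=X^{[n]}$. The key lemma, which I would isolate, is that $\MOP$ is inherited by retracts. Given a moving off family $\F$ of compact sets in $A$, I would first verify that $\F$ is moving off already in $X$: for a compact set $K\subset X$ the image $r(K)$ is compact in $A$, so there is a non-empty $F\in\F$ with $r(K)\cap F=\emptyset$; since $F\subset A$ and $r$ fixes $A$ pointwise, any $x\in K\cap F$ would satisfy $x=r(x)\in r(K)\cap F$, a contradiction, whence $K\cap F=\emptyset$. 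Applying $\MOP$ of $X$ then produces an infinite subfamily $\mathcal D\subset\F$ that is strongly discrete in $X$, and intersecting the witnessing neighborhoods with $A$ shows $\mathcal D$ is strongly discrete in $A$ as well. Thus $X^{[n]}$ has $\MOP$, and therefore $\DMOP$, since each space with $\MOP$ has $\DMOP$. This gives $(2)\Ra(3)$ and completes the cycle.

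The only genuinely new point is the retract-inheritance of $\MOP$; every other ingredient is assembled from results already established. The step I expect to require the most care is exactly this lemma, and in particular the observation that being a \emph{retract}, rather than merely a closed subspace, is what allows a moving off family of the subspace to be promoted to a moving off family of the ambient space — so Theorem~\ref{t:ss} is used in an essential way, and the argument would not go through for an arbitrary closed derived set.
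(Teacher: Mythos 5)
Your proof is correct, and two of its three implications coincide with the paper's: $(1)\Ra(2)$ via Theorem~2.1 of \cite{GMa}, and $(3)\Ra(1)$ via Corollary~\ref{c:sB} applied to $Y=\IR$. Where you diverge is $(2)\Ra(3)$: you prove that $\MOP$ is inherited by \emph{retracts}, invoking Theorem~\ref{t:ss} to realize each $X^{[n]}$ as a ($G_\delta$-)retract of $X$, and your argument for this (pulling a compact $K\subset X$ down to $r(K)\subset A$ and using that $r$ fixes $A$ pointwise) is valid. The paper instead uses the simpler and more general fact that $\MOP$ is inherited by \emph{closed} subspaces: the derived sets $X^{[n]}$ are always closed, and if $\F$ is a moving off family of compact sets in a closed subspace $A\subset X$, then for any compact $K\subset X$ the intersection $K\cap A$ is compact in $A$, so some non-empty $F\in\F$ misses $K\cap A$ and hence misses $K$ (since $F\subset A$); strong discreteness then restricts to $A$ exactly as in your argument. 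So the paper's route needs no retraction and no stratifiability for this implication.

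This also means your closing claim is mistaken: it is not true that being a retract ``rather than merely a closed subspace'' is what allows the moving off family to be promoted to the ambient space, and Theorem~\ref{t:ss} is \emph{not} used in an essential way in $(2)\Ra(3)$ --- closedness of $X^{[n]}$ suffices, by the intersection argument above. (Stratifiability is of course still essential elsewhere, namely in Corollary~\ref{c:sB}, on which $(3)\Ra(1)$ rests.) Your proof stands, but the extra hypothesis you flag as indispensable in that step is in fact unnecessary, and the paper's version of the lemma is both shorter and applicable to arbitrary closed subspaces.
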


\begin{proof} $(1)\Ra(2)$ If the function space $C_k(X)$ is Baire, then the space $X$ has $\MOP$ by Theorem~2.1 in \cite{GMa}.
\smallskip

$(2)\Ra(3)$ If $X$ has $\MOP$, then each closed subspace of $X$ has $\MOP$. In particular, for every $k<\hbar[X]$ the closed subspace $X^{[k]}$ has $\MOP$ and hence has $\DMOP$ (because $\MOP$ implies $\DMOP$).
\smallskip

The implication $(3)\Ra(1)$ follows from Corollary~\ref{c:sB} since the real line is a rectifiable Choquet space.
\end{proof}

Analogous result can be proved for the Choquet property.

\begin{theorem}\label{t:sC} Let $X\ne X'$ be a topological space such that $X'$ is a retract of $X$ and $Y\ne\{*_Y\}$ be a second-countable rectifiable $T_0$-space. Then the following conditions are equivalent:
\begin{enumerate}
\item[\textup{(1)}] the function space $C_k(X,Y)$ is Choquet;
\item[\textup{(2)}] the spaces $Y$ and $C_k(X',Y)$ are Choquet and
the space $X$ has $\WDMOP$.
\end{enumerate}
\end{theorem}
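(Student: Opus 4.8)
The plan is to reduce everything to the product decomposition supplied by Proposition~\ref{p:rectifiable} and then feed the two factors into the already-established machinery. Since $X'$ is a retract of $X$ and $Y$ is rectifiable, Proposition~\ref{p:rectifiable} gives a homeomorphism
$$C_k(X,Y)\cong C_k(X',Y)\times C_k'(X,Y),$$
so the Choquet property of $C_k(X,Y)$ becomes a statement about this product. Before invoking Theorem~\ref{t:C}, I would record that the standing hypotheses on $Y$ are met: by \cite{Gul} the second-countable rectifiable $T_0$-space $Y$ is metrizable and separable, hence $*$-first-countable, and since $Y\ne\{*_Y\}$ it is Hausdorff with more than one point, hence $*$-admissible. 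Thus Theorem~\ref{t:C} becomes applicable as soon as we know (or want to prove) that $Y$ is Choquet.

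For $(1)\Ra(2)$ I would argue as follows. Assume $C_k(X,Y)$, equivalently the product $C_k(X',Y)\times C_k'(X,Y)$, is Choquet. Both factors are non-empty (they contain constant functions), so the coordinate projections are open continuous surjections, and since an open continuous image of a Choquet space is Choquet, both $C_k(X',Y)$ and $C_k'(X,Y)$ are Choquet. To obtain that $Y$ itself is Choquet I would use the hypothesis $X\ne X'$: it supplies an isolated point $x\in\dot X$, and then the evaluation map $\delta_x:C_k(X,Y)\to Y$, $\delta_x:f\mapsto f(x)$, is an open continuous surjection (values at an isolated point may be prescribed freely), so $Y$ is Choquet. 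Now all hypotheses of Theorem~\ref{t:C} hold, and the Choquetness of $C_k'(X,Y)$ yields $\WDMOP$ for $X$, which establishes $(2)$.

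For $(2)\Ra(1)$ the argument runs backward. Given that $Y$ is Choquet (and $*$-admissible, $*$-first-countable, as noted) and that $X$ has $\WDMOP$, Theorem~\ref{t:C} gives that $C_k'(X,Y)$ is Choquet. Since $C_k(X',Y)$ is Choquet by assumption and the Tychonoff product of Choquet spaces is Choquet \cite{White}, the product $C_k(X',Y)\times C_k'(X,Y)$ is Choquet, and via the homeomorphism of Proposition~\ref{p:rectifiable} so is $C_k(X,Y)$.

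The only genuinely delicate points, and the ones I would write out carefully, are the two ``open image'' facts used in $(1)\Ra(2)$: that the coordinate projections of the product are open (routine, but needed to push Choquetness onto each factor) and, more importantly, that evaluation at an isolated point is an open surjection of $C_k(X,Y)$ onto $Y$, which is exactly what lets us recover the Choquet property of $Y$ from that of the function space. Everything else is bookkeeping that chains Proposition~\ref{p:rectifiable}, the metrizability result \cite{Gul}, Theorem~\ref{t:C}, and the permanence of Choquetness under open images and products.
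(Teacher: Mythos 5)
Your proposal is correct and takes essentially the same route as the paper: decompose $C_k(X,Y)\cong C_k(X',Y)\times C_k'(X,Y)$ via Proposition~\ref{p:rectifiable}, apply Theorem~\ref{t:C} to the factor $C_k'(X,Y)$, and use the permanence of the Choquet property under open continuous images and Tychonoff products. The only difference is that you explicitly verify that $Y$ itself is Choquet in $(1)\Ra(2)$ by means of the evaluation map $\delta_x$ at an isolated point $x\in\dot X$ (exactly the device the paper uses in the proof of Theorem~\ref{t:ACM}), a step the paper's own proof of this theorem leaves implicit even though the Choquetness of $Y$ is part of condition (2) and is needed as a hypothesis of Theorem~\ref{t:C}; in this respect your write-up is slightly more complete.
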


\begin{proof} By Proposition~\ref{p:rectifiable}, the function space $C_k(X,Y)$ is homeomorphic to $C_k(X',Y)\times C_k'(X,Y)$.
\smallskip

 $(1)\Ra(2)$ Assume that the function space $C_k(X,Y)$ is Choquet.  Since the Choquet space $C_k(X,Y)$ is homeomorphic to $C_k'(X,Y)\times C_k(X',Y)$, the spaces $C_k'(X,Y)$ and  $C_k(X',Y)$ are Choquet, being open continuous images of $C_k(X,Y)$. By Theorem~\ref{t:C}, the space $X$ has $\WDMOP$.
 \smallskip

$(2)\Ra(1)$ Assume that $Y$ is Choquet, $X$ has $\WDMOP$ and  function space $C_k(X',Y)$ is Choquet. By Theorem~\ref{t:C}, the space $C_k'(X,Y)$ is Choquet. Then the space $C_k(X,Y)$ is Choquet, being homeomorphic to the product $C_k'(X,Y)\times C_k(X',Y)$ of two Choquet spaces.
\end{proof}

This theorem has the following corollaries.

\begin{corollary}\label{c:nC} Let $n\in\IN$ and $X\ne X'$ be a topological space such that for every $k\le n$ the set $X^{[k]}$ is a retract of $X$. Then for any second-countable rectifiable $T_0$-space $Y\ne\{*_Y\}$, the following conditions are equivalent:
\begin{enumerate}
\item[\textup{(1)}] the function space $C_k(X,Y)$ is Choquet;
\item[\textup{(2)}] the spaces $Y$ and $C_k(X^{[n]},Y)$ are Choquet, and for every $k\le n$ the space $X^{[k]}$ has $\WDMOP$.
\end{enumerate}
\end{corollary}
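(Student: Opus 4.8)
The plan is to prove the biconditional by induction on $n$, with Theorem~\ref{t:sC} as the sole inductive step, fed with the inductive hypothesis applied to the derived space $X'$. The one preliminary observation I need is that the retract hypothesis is inherited by the derived sets: if $k\le j\le n$ and $r\colon X\to X^{[j]}$ is a retraction, then, since $X^{[j]}\subset X^{[k]}\subset X$, the restriction $r{\restriction}X^{[k]}$ is a retraction of $X^{[k]}$ onto $X^{[j]}$. Consequently $X^{[1]},\dots,X^{[n]}$ are retracts of $X'=X^{[1]}$, and, recalling the identity $(X^{[1]})^{[j]}=X^{[j+1]}$, the space $X'$ satisfies the hypotheses of the corollary with parameter $n-1$. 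I will also use throughout that whenever $Z'$ is a retract of $Z$, Proposition~\ref{p:rectifiable} yields a homeomorphism $C_k(Z,Y)\cong C_k(Z',Y)\times C_k'(Z,Y)$, so that the Choquet property passes freely between $C_k(Z,Y)$ and its two factors, the projections of a product of Choquet spaces being open.

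For the inductive step I first apply Theorem~\ref{t:sC} to $X$ (which is legitimate because $X\ne X'$ and $X'$ is a retract of $X$): $C_k(X,Y)$ is Choquet if and only if $Y$ and $C_k(X',Y)$ are Choquet and $X$ has $\WDMOP$. If $X'=(X')'$, then $X^{[1]}=\dots=X^{[n]}=X'$, each of these spaces has $\WDMOP$ vacuously, $C_k(X^{[n]},Y)=C_k(X',Y)$, and the displayed equivalence is already the statement of the corollary. Otherwise $X'\ne(X')'$ and I apply the inductive hypothesis to $X'$ with parameter $n-1$, obtaining that $C_k(X',Y)$ is Choquet if and only if $Y$ and $C_k((X')^{[n-1]},Y)=C_k(X^{[n]},Y)$ are Choquet and each of $(X')^{[j]}=X^{[j+1]}$, $j\le n-1$, has $\WDMOP$. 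Substituting this into the equivalence for $X$ produces precisely condition (2). In particular the top-level clause ``$X^{[n]}$ has $\WDMOP$'' is supplied by the inductive hypothesis at its own highest derived level $(X')^{[n-1]}=X^{[n]}$, which is the point that makes the bookkeeping of the $\WDMOP$ conditions close exactly.

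This reduces the whole corollary to its base instance $n=1$, which amounts to the implication
\[
C_k(Z,Y)\text{ is Choquet}\ \Longrightarrow\ Z\text{ has }\WDMOP
\]
for $Z=X'$ (the reverse direction, and ``$Y$ is Choquet'' via the open evaluation map $\delta_{x_0}\colon C_k(Z,Y)\to Y$ at an isolated point, being immediate). This implication is where I expect the only real difficulty. When $Z$ has no isolated points it holds vacuously; when $(Z)'$ is a retract of $Z$ it follows by splitting off the factor $C_k'(Z,Y)$ through Proposition~\ref{p:rectifiable} and invoking Theorem~\ref{t:C}; and the same conclusion is reached when $Z'$ is countable, since then $C_k'(Z,Y)=\bigcap_{x\in Z'}\bigcap_m[\{x\};O_m]$ is a $G_\delta$ in the Choquet space $C_k(Z,Y)$, hence itself Choquet.

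The hard part is a non-perfect $Z$ admitting no retraction onto $(Z)'$ and with $Z'$ uncountable: here neither the product splitting nor the $G_\delta$ argument applies, and the trapping device of Lemma~\ref{l:C1} does not transfer verbatim, because a function $f\in\bigcap_n[F_n;W]$ produced by the Choquet strategy need no longer be constant $*_Y$ on $(Z)'$, so that the discreteness (rather than mere pairwise disjointness) of $\{F_n\}$ is no longer forced. My plan for this case is to prove a Choquet analogue of Lemma~\ref{l:C1} directly, letting player $\mathsf E$ in the Choquet game on $C_k(Z,Y)$ trap the play inside the sets $[F_n;W]$ while simultaneously pinning the eventual limit to $*_Y$ along a cofinal tower of compacta that captures the closure of $\bigcup_nF_n$, thereby recovering discreteness and a winning strategy for $\mathsf K$ in $\GKF(Z)$; showing that these two trapping requirements can be met at once, without rendering $\mathsf E$'s moves illegal, is the main technical obstacle of the argument. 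It is worth noting that in the intended applications, where $X$ is a scattered space of finite height and $X^{[n]}$ is discrete, the space $X^{[n]}$ has $\WDMOP$ automatically, so the obstacle disappears there.
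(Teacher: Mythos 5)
Your reduction is faithful to what the paper itself intends (the paper offers nothing beyond ``apply Theorem~\ref{t:sC} inductively''), and your bookkeeping is correct: the retract hypothesis restricts to the derived sets, the degenerate case $X'=(X')'$ is handled properly, and substituting the inductive hypothesis into Theorem~\ref{t:sC} closes the $\WDMOP$ clauses for $k<n$. But what you have written is not a proof: you reduce the whole corollary to the implication ``$C_k(Z,Y)$ Choquet $\Rightarrow$ $Z$ has $\WDMOP$'' and then do not prove it, offering only a plan that you yourself call the main technical obstacle. That obstacle is real, and your sketch does not overcome it. In the simulation, the player $\mathsf E$ can constrain the limit function only on compact sets chosen at finite stages, whereas the accumulation points of $\bigcup_n F_n$ that destroy discreteness are non-isolated points chosen by $\mathsf F$ \emph{after} seeing those compacta; no ``cofinal tower of compacta capturing the closure of $\bigcup_nF_n$'' is available to $\mathsf E$. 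When $Y$ is non-compact the argument can be repaired in the Gruenhage--Ma style: fix a discrete sequence $(W_n)_{n\in\w}$ of non-empty open sets in $Y$ (it exists in every non-compact metrizable space) and trap $f(F_n)\subset W_n$; an accumulation point $z$ of $\bigcup_nF_n$ would then make $f(z)$ an accumulation point of the discrete family $(W_n)_{n\in\w}$, a contradiction. But the hypotheses allow compact $Y$, e.g. $Y=2=\mathbb Z/2$ (second-countable, rectifiable, $T_0$, Choquet), where no such escaping family exists; for $Z$ a Mr\'owka space of a MAD family the needed implication becomes a delicate combinatorial question about clopen sets that neither your sketch nor anything in the paper settles. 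A separate, smaller error: your claim that $C_k'(Z,Y)$ is Choquet when $Z'$ is countable because it is a $G_\delta$-set in the Choquet space $C_k(Z,Y)$ is false; only \emph{dense} $G_\delta$-sets inherit the Choquet property. For instance $\mathbb Q\times\{0\}$ is a closed $G_\delta$-subset of the Choquet space $(\mathbb Q\times\{0\})\cup(\mathbb R\times(0,1])\subset\mathbb R^2$, yet it is not even Baire.

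To be fair, the gap you isolated is not of your making: it is present in the paper as well. Iterating Theorem~\ref{t:sC} proves the equivalence of (1) with condition (2) weakened to ``$X^{[k]}$ has $\WDMOP$ for every $k<n$'' (and the full condition (2) whenever some $X^{[m]}$, $m\le n$, has no isolated points, since the remaining clauses are then vacuous). The extra clause ``$X^{[n]}$ has $\WDMOP$'' would follow from Theorem~\ref{t:C} if one knew that $C_k'(X^{[n]},Y)$ is Choquet, for example via Proposition~\ref{p:rectifiable} if $X^{[n+1]}$ were a retract of $X^{[n]}$ --- but that is not among the hypotheses, and the Choquet property does not pass to the closed subspace $C_k'(X^{[n]},Y)$ of $C_k(X^{[n]},Y)$ in general. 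So your proposal reproduces the paper's argument together with its unproved step, and the honest flagging of that step is the genuinely valuable part of your write-up. As you note at the end, the gap is harmless for the paper's applications (Corollaries~\ref{c:sC} and~\ref{c:sCR}), where $X^{[n]}$ is discrete and has $\WDMOP$ trivially.
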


\begin{corollary}\label{c:sC} For any non-empty stratifiable scattered space of finite scattered height and any second-countable rectifiable $T_0$-space $Y\ne\{*_Y\}$, the following conditions are equivalent:
\begin{enumerate}
\item[\textup{(1)}] the function space $C_k(X,Y)$ is Choquet;
\item[\textup{(2)}] $Y$ is Choquet and for every $k<\hbar[X]$ the space $X^{[k]}$ has $\WDMOP$.
\end{enumerate}
\end{corollary}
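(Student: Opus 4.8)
The plan is to reduce Corollary~\ref{c:sC} to the already-proven Corollary~\ref{c:nC} by choosing the index $n$ to be the top level at which the derived set is still non-empty, and then to recognize the function space over that top level as a mere power of $Y$. First I would record the arithmetic of the scattered height. Since $X$ is non-empty and scattered it contains an isolated point, so $X\neq X'$ and $m:=\hbar[X]\ge 1$. Setting $n:=m-1$, minimality of the scattered height forces $X^{[n]}\neq\emptyset$, while $(X^{[n]})'=X^{[m]}=\emptyset$; hence $X^{[n]}$ is a \emph{non-empty discrete} space. For a discrete space the compact subsets are precisely the finite ones, so the compact-open topology on $C_k(X^{[n]},Y)$ coincides with the product topology, i.e. $C_k(X^{[n]},Y)=Y^{X^{[n]}}$. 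Because the Tychonoff product of Choquet spaces is Choquet \cite{White}, and conversely each coordinate projection $Y^{X^{[n]}}\to Y$ is an open continuous surjection (as $X^{[n]}\neq\emptyset$), I would conclude that $C_k(X^{[n]},Y)$ is Choquet if and only if $Y$ is Choquet.

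Next I would check the retraction hypotheses of Corollary~\ref{c:nC} and apply it. Since $X$ is a stratifiable scattered space of finite scattered height, Theorem~\ref{t:ss} yields that $X^{[k]}$ is a $G_\delta$-retract of $X$ for every $k<m$; in particular $X^{[k]}$ is a retract of $X$ for every $k\le n$ (the case $k=0$ being trivial, with the identity retraction). Assuming $n\ge 1$, Corollary~\ref{c:nC} then says that $C_k(X,Y)$ is Choquet if and only if $Y$ and $C_k(X^{[n]},Y)$ are Choquet and $X^{[k]}$ has $\WDMOP$ for every $k\le n$. Feeding in the equivalence from the first paragraph, the clause ``$Y$ and $C_k(X^{[n]},Y)$ are Choquet'' collapses to ``$Y$ is Choquet''. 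Thus $C_k(X,Y)$ is Choquet if and only if $Y$ is Choquet and $X^{[k]}$ has $\WDMOP$ for every $k\le n=\hbar[X]-1$, i.e. for every $k<\hbar[X]$, which is exactly condition (2).

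Finally I would dispose of the leftover case $n=0$, i.e. $m=\hbar[X]=1$, where $X$ itself is discrete and Corollary~\ref{c:nC} is unavailable. Here $C_k(X,Y)=Y^X$ is Choquet if and only if $Y$ is, and $X$, being discrete, trivially has $\WDMOP$: every family in a discrete space is discrete, so the player $\mathsf K$ wins $\GKF(X)$ regardless of his moves. Hence both sides of the equivalence reduce to ``$Y$ is Choquet'', settling this case and completing the proof.

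The hard part here is essentially structural rather than technical: the single genuine idea is to identify the top discrete derived set $X^{[\hbar[X]-1]}$ and to see that it makes the $C_k(X^{[n]},Y)$-condition in Corollary~\ref{c:nC} degenerate into a condition on $Y$ alone. Everything else is bookkeeping with the scattered-height recursion together with the stratifiability input (Theorem~\ref{t:ss}) and the inductive Choquet result (Corollary~\ref{c:nC}); the only point demanding care is the boundary value $n=0$, which is why I treat the discrete case separately.
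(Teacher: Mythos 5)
Your proof is correct and follows the same route the paper intends: Corollary~\ref{c:sC} is stated without an explicit proof precisely because it is the combination of Corollary~\ref{c:nC} (applied with $n=\hbar[X]-1$) and Theorem~\ref{t:ss}, exactly as you set it up. Identifying $C_k(X^{[n]},Y)$ with the power $Y^{X^{[n]}}$ of the discrete top level $X^{[n]}=X^{[\hbar[X]-1]}\ne\emptyset$, and using that Tychonoff products and open continuous images preserve the Choquet property, is indeed the way to collapse the clause about $C_k(X^{[n]},Y)$ in Corollary~\ref{c:nC} to a clause about $Y$ alone. Your separate treatment of the boundary case $\hbar[X]=1$ is also appropriate, since Corollary~\ref{c:nC} is stated for $n\in\IN$.

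One justification in that boundary case is wrong, although the claim it supports is true. It is not the case that ``every family in a discrete space is discrete'': in the discrete space $\w$ the family $\big\{\{0,n\}:n\ge 1\big\}$ is not discrete, because every neighbourhood of $0$ meets all of its members. Consequently the player $\mathsf K$ does \emph{not} win $\GKF(X)$ ``regardless of his moves'': if $\mathsf K$ always plays $K_n=\emptyset$, then $\mathsf F$ may answer with the sets $\{0,n\}$ and win. What is true is that $\mathsf K$ has a winning strategy on a discrete space $X$: let $S_{\mathsf K}(F_0,\dots,F_{n-1})=\bigcup_{i<n}F_i$, which is finite and hence compact. Any run of the game compatible with this strategy produces pairwise disjoint sets $F_n$, and a pairwise disjoint family of subsets of a discrete space is discrete, as witnessed by the singleton neighbourhoods. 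With this one-line repair your argument is complete.
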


\begin{corollary}\label{c:sCR} For any stratifiable scattered space $X$ of finite scattered height the following conditions are equivalent:
\begin{enumerate}
\item[\textup{(1)}] The function space $C_k(X)$ is Choquet.
\item[\textup{(2)}] For every $n<\hbar[X]$ the space $X^{[n]}$ has $\WDMOP$.
\end{enumerate}
\end{corollary}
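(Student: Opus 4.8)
The plan is to obtain Corollary~\ref{c:sCR} as the special case $Y=\IR$ of the general Corollary~\ref{c:sC}. First I would check that the real line, pointed at $*_\IR=0$, satisfies all the hypotheses imposed on the target space $Y$ in Corollary~\ref{c:sC}: the line $\IR$ is second-countable; being a topological group under addition it is rectifiable by the observation in Section~\ref{s11} (with rectifying homeomorphism $H(y,z)=(y,z-y)$, so that $H(y,y)=(y,0)$); it is Hausdorff and hence $T_0$; and it contains more than one point, so $\IR\neq\{*_\IR\}$.

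Next I would record that $\IR$ is a Choquet space. Indeed, $\IR$ is complete-metrizable, and the diagram of implications in Section~\ref{s4} gives complete-metrizable $\Ra$ Choquet. Consequently, in the equivalence furnished by Corollary~\ref{c:sC} the clause ``$Y$ is Choquet'' is automatically satisfied when $Y=\IR$.

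With these verifications in place, applying Corollary~\ref{c:sC} to the pointed space $Y=\IR$ yields: for a non-empty stratifiable scattered space $X$ of finite scattered height, the function space $C_k(X)=C_k(X,\IR)$ is Choquet if and only if $\IR$ is Choquet and $X^{[k]}$ has $\WDMOP$ for every $k<\hbar[X]$. Since the first clause always holds, this collapses to the desired equivalence of $(1)$ and $(2)$. The only case not covered by Corollary~\ref{c:sC} is $X=\emptyset$, where $\hbar[X]=0$, condition $(2)$ holds vacuously, and $C_k(\emptyset)$ is a one-point space and hence Choquet; so the equivalence persists trivially.

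I do not anticipate a genuine obstacle here: the entire content has been absorbed into Corollary~\ref{c:sC}, whose proof in turn rests on Proposition~\ref{p:rectifiable} (the splitting $C_k(X,Y)\cong C_k(X',Y)\times C_k'(X,Y)$), on Theorem~\ref{t:C} (the function-space characterization of $\WDMOP$), and on the stability of the Choquet property under Tychonoff products and open continuous images. The only points requiring care are the routine verifications that $\IR$ meets the hypotheses on $Y$ and the handling of the trivial empty (or discrete) cases, neither of which presents any real difficulty.
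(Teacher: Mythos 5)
Your proposal is correct and matches the paper's intended derivation: Corollary~\ref{c:sCR} is stated as an immediate specialization of Corollary~\ref{c:sC} to $Y=\IR$, using exactly the fact (made explicit in the paper's proof of Corollary~\ref{c:sBR}) that the real line is a second-countable rectifiable Choquet $T_0$-space, so the clause ``$Y$ is Choquet'' is automatic. Your additional handling of the empty/degenerate case is a harmless refinement, since Corollary~\ref{c:sC} is stated only for non-empty $X$.
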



\end{document}